\theoremstyle{definition}
\newtheorem* {theorem*}{Theorem}
\newtheorem* {conjecture*}{Conjecture}
\newtheorem{theorem}{Theorem}[section]
\newtheorem{thmdef}[theorem]{Theorem-Definition}
\newtheorem{propdef}[theorem]{Proposition-Definition}
\theoremstyle{definition}
\newtheorem* {example*}{Example}
\newtheorem{lemma}[theorem]{Lemma}
\theoremstyle{definition}
\newtheorem{definition}[theorem]{Definition}
\theoremstyle{definition}
\newtheorem{conjecture}[theorem]{Conjecture}
\newtheorem{proposition}[theorem]{Proposition}
\newtheorem{corollary}[theorem]{Corollary}
\newtheorem{algorithm}[theorem]{Algorithm}
\newtheorem{remark}[theorem]{Remark}
\theoremstyle{definition}
\newtheorem {example}[theorem]{Example}
\theoremstyle{definition}
\theoremstyle{definition}
\theoremstyle{definition}
\def\({\left(}
\def\){\right)}
\newcommand{\CC}{\mathbb{C}}
\newcommand{\QQ}{\mathbb{Q}}
\newcommand{\cP}{\mathcal{P}}
\newcommand{\cR}{\mathcal{R}}
\def\cX{\mathcal{X}}
\def\cY{\mathcal{Y}}
\def\NN{\mathbb{N}}
\def\CC{\mathbb{C}}
\def\ZZ{\mathbb{Z}}
\def\GL{\mathrm{GL}}
\def\spanning{\textnormal{-span}}
\def\SMT{\mathrm{SMT}}
\def\SSMT{\mathrm{SSMT}}
\def\SSYT{\mathrm{SSYT}}
\def\SYT{\mathrm{SYT}}
\def\SetMT{\mathrm{SetMT}}
\def\Inc{\mathrm{Inc}}
\newcommand{\supp}{\mathrm{supp}}
\newcommand{\cL}{\mathcal{L}}
\def\fk{\mathfrak}
\def\barr{\begin{array}}
\def\earr{\end{array}}
\def\ba{\begin{aligned}}
\def\ea{\end{aligned}}
\def\be{\begin{equation}}
\def\ee{\end{equation}}
\def\Cyc{\mathrm{Cyc}}
\def\qquand{\qquad\text{and}\qquad}
\def\qquord{\qquad\text{or}\qquad}
\def\inv{\mathrm{Inv}}
\def\I{\mathcal{I}}
\def\DesR{\mathrm{Des}_R}
\def\DesL{\mathrm{Des}_L}
\def\Des{\mathrm{Des}}
\def\id{\mathrm{id}}
\def\PP{\mathbb{P}}
\def\fkS{\fk S}
\def\ben{\begin{enumerate}}
\def\een{\end{enumerate}}
\def\fpf{{\tt {FPF}}}
\def\DesI{\mathrm{Des}_V}
\def\ilambda{\mu}
\def\ellhat{\hat\ell}
\def\Ffpf{\hat F^\fpf}
\def\a{\textbf{a}}
\def\b{\textbf{b}}
\def\ic{\hat c}
\newcommand{\xRightarrow}[2][]{\ext@arrow 0359\Rightarrowfill@{#1}{#2}}
\newcommand{\Fl}{\operatorname{Fl}}
\renewcommand{\O}{\operatorname{O}}
\newcommand{\Sp}{\operatorname{Sp}}
\newcommand{\Ess}{\operatorname{Ess}}
\newcommand{\cA}{\mathcal{A}}
\def\iR{\hat\cR}
\def\F{\mathcal{F}}
\newcommand{\arc}[2]{ \ar @/^#1pc/ @{-} [#2] }
\def\arcstop{\endxy\ }
\def\arcstart{\ \xy<0cm,-.15cm>\xymatrix@R=.1cm@C=.3cm }
\newcommand{\arcstartc}[1]{\ \xy<0cm,-.15cm>\xymatrix@R=.1cm@C=#1cm}
\def\ellhat{\hat\ell}
\def\Ffpf{\hat F^\fpf}
\def\iS{\hat \fkS}
\def\iF{\hat F}
\def\iG{\hat G}
\newcommand{\ct}{\operatorname{ct}}
\newcommand{\pf}{\operatorname{pf}}
\def\iT{\hat{\fk T}}
\def\ipsi{\eta}
\newcommand{\minlex}{\operatorname{lt}}
\let\bxd\boxed
\renewcommand{\boxed}[1]{\hspace{0.7pt}\bxd{\!#1\!}\hspace{0.7pt}}
\def\Bump{{\tt Bump}}
\def\Insert{{\tt Insert}}
\def\SH{SH}
\def\Schensted{{\tt dir}}
\newcommand\ksim{\hspace{1mm}\hat\equiv\hspace{1mm}}
\numberwithin{equation}{section}
\renewcommand{\@makefnmark}{\mbox{\textsuperscript{}}}
\begin{document}
\title{Schur $P$-positivity and involution Stanley symmetric functions}

\author{
    Zachary Hamaker \\
    Department of Mathematics \\
    University of Michigan \\
    { \tt zachary.hamaker@gmail.com}
 \and
    Eric Marberg \\
    Department of Mathematics \\
    HKUST \\
    {\tt eric.marberg@gmail.com}
\vspace{3mm}
\and
    Brendan Pawlowski\footnote{This author was partially supported by NSF grant 1148634.} \\
    Department of Mathematics \\
    University of Michigan \\
    {\tt br.pawlowski@gmail.com}
}

\date{}

\maketitle

\begin{abstract}
The \emph{involution Stanley symmetric functions} 
$\hat{F}_y$
are 
the stable limits of the 
analogues of Schubert polynomials
for the orbits of the orthogonal group in the flag variety.
These symmetric functions are also
generating functions for involution words, and
are indexed by the involutions in the symmetric group.
By construction each $\hat{F}_y$ is a sum of Stanley symmetric functions
 and therefore Schur positive. We prove the stronger fact that 
 these power series 
are Schur $P$-positive. 
We give an algorithm to efficiently compute the decomposition  
of $\hat{F}_y$ into Schur $P$-summands, and prove that this 
decomposition is triangular with respect to the dominance order on partitions.
As an application, we derive pattern avoidance conditions which characterize the involution
Stanley symmetric functions which are equal to Schur $P$-functions. 
We deduce as a corollary
 that the involution Stanley symmetric function of the reverse permutation
 is a Schur $P$-function indexed by a shifted staircase shape.
 These results
 lead to alternate proofs of theorems of Ardila--Serrano and DeWitt on
skew Schur functions which are Schur $P$-functions.  We also prove new Pfaffian formulas for
certain related involution Schubert polynomials.
 \end{abstract}

\tableofcontents
\setcounter{tocdepth}{2}

\section{Introduction}

In the seminal paper \cite{Stan}, Stanley defined 
for each permutation $w$ in the symmetric group $S_n$
 a certain symmetric function $F_w$. 
These symmetric functions are the stable limits of 
 Schubert polynomials, and so arise naturally in
the study of the geometry of the type A complete flag variety. 
They also occur in representation theory
as the characters of both generalized Schur modules
and the $U_q(A_n)$-crystals  introduced by Morse and Schilling in \cite{MorseSchilling}.
More concretely, these objects are
 useful to consider when 
counting the reduced words of permutations.
Stanley's construction was originally motivated as a tool for 
proving the following result:

\begin{theorem}[Stanley \cite{Stan}] \label{intro-thm1}
The cardinalities $(r_n)_{n \geq 1} = (1,1,2,16,768, 292864,\dots)$
of the set of
 reduced words 
for the reverse permutation $w_n = n\cdots 321 \in S_n$
satisfy
$r_n = \tbinom{n}{2}! \cdot 1^{1-n}\cdot 3^{2-n}\cdot 5^{3-n}\cdots (2n-3)^{-1}$
which  is the number of standard Young tableaux of shape $\delta_n=(n-1,\dots,2,1,0)$.
\end{theorem}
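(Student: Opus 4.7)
The plan is to reinterpret the counting problem through symmetric functions. To each $w \in S_n$ one attaches the Stanley symmetric function $F_w$, defined as a generating series over reduced words of $w$ paired with compatible sequences. A direct inspection of the definition shows that the number $r(w)$ of reduced words of $w$ equals the coefficient of $x_1 x_2 \cdots x_{\ell(w)}$ in $F_w$. Granting the Schur expansion $F_w = \sum_\lambda a_{w,\lambda} s_\lambda$ with non-negative integer coefficients (which Stanley conjectured and Edelman--Greene later proved), extracting the same coefficient yields
\[
r(w) = \sum_\lambda a_{w,\lambda}\, f^\lambda,
\]
where $f^\lambda$ counts standard Young tableaux of shape $\lambda$.

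The crux is the identification $F_{w_n} = s_{\delta_n}$, from which $r_n = f^{\delta_n}$ is immediate. I would prove this via the Edelman--Greene correspondence, which sets up a bijection between reduced words of $w$ and pairs $(P, Q)$ of tableaux of a common shape $\lambda$, with $P$ an EG-tableau for $w$ and $Q$ a standard Young tableau; then $a_{w, \lambda}$ counts EG-tableaux of shape $\lambda$. For $w = w_n$ one argues directly---either by induction on $n$, or by analyzing the column-strict EG-insertion of any reduced word of $w_n$---that the only shape occurring is $\delta_n$ and that there is a unique $P$-tableau (the row-superstandard filling). An alternative route, closer to Stanley's original, is to verify inductively that $F_{w_n}$ satisfies the Lascoux--Sch\"utzenberger transition recursion characterizing $s_{\delta_n}$.

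Finally I would apply the hook-length formula to $\delta_n$. The hook lengths of the staircase $(n-1, n-2, \dots, 1)$ are precisely the odd numbers, with $2k-1$ occurring $n-k$ times for $k = 1, \dots, n-1$. Consequently
\[
f^{\delta_n} \;=\; \frac{\binom{n}{2}!}{\prod_{k=1}^{n-1}(2k-1)^{n-k}} \;=\; \tbinom{n}{2}!\cdot 1^{1-n}\cdot 3^{2-n}\cdots (2n-3)^{-1},
\]
matching the formula in the statement.

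The main obstacle is the identification $F_{w_n} = s_{\delta_n}$. This is a genuinely nontrivial statement: even after accepting Schur positivity of $F_w$ in general, pinning down the precise decomposition for the reverse permutation requires either a careful combinatorial analysis of Edelman--Greene insertion specialized to reduced words of $w_n$, or an inductive identification via a transition equation. The other two steps---extracting reduced-word counts from $F_w$, and the hook-length computation for $\delta_n$---are straightforward once this identification is in hand.
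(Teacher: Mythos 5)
Your proposal is correct and follows the same overall framework the paper describes: reduce the count $r_n$ to identifying $F_{w_n}=s_{\delta_n}$, then invoke the hook-length formula for the staircase. The only difference is in the route to the key identification. You propose either a direct Edelman--Greene insertion argument (showing the unique recording shape for reduced words of $w_n$ is $\delta_n$) or an inductive transition recursion. The paper instead derives $F_{w_n}=s_{\delta_n}$ as a corollary of two structural results: $w_n$ is vexillary (2143-avoiding), so by Stanley's vexillary theorem $F_{w_n}$ is a single Schur function, and by the triangularity statement (Theorem~\ref{intro-a-thm}, with $a_j=j-1$ for $w_n$ giving $\lambda=\delta_n$) that Schur function must be $s_{\delta_n}$. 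Your EG route is more combinatorial and self-contained but requires genuine work with the insertion algorithm; the vexillary-plus-triangularity route is shorter once those two results are in hand and is the one the paper leans on, since it parallels the involution analogue (Theorem~\ref{intro-tri-thm} plus Theorem~\ref{intro-ivex-thm}) used later for $\iF_{w_n}$. Your hook-length computation for $\delta_n$ is correct: the hook at box $(i,j)$ is $2(n-i-j)+1$, so $2k-1$ occurs $n-k$ times.
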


Let us explain how $F_w$ is related to the proof of this theorem. 
Let $s_i=(i,i+1) \in S_n $ for $i \in \{1,2,\dots,n-1\}$ and write $\cR(w)$ for the set of reduced words for $w \in S_n$, that is, 
the sequences of simple
transpositions $(s_{i_1}, s_{i_2},\dots,s_{i_\ell})$ of minimal possible length $\ell$ such that
$w = s_{i_1} s_{i_2}\cdots s_{i_\ell}$.
For an arbitrary sequence of simple transpositions $\textbf{a} = (s_{a_1}, s_{a_2},\dots, s_{a_\ell})$,
let $f_{\textbf{a}} \in \ZZ[[x_1,x_2,\dots]]$ denote the formal power series given by summing  the 
monomials $ x_{i_1} x_{i_2}\cdots x_{i_\ell}$
over all positive integers $i_1 \leq i_2 \leq \dots \leq i_\ell$ satisfying $i_j < i_{j+1}$ whenever $a_j < a_{j+1}$.

\begin{definition}\label{intro-def}
The \emph{Stanley symmetric function} of $w \in S_n$ is  
 $F_w = \sum_{\textbf{a} \in \cR(w)} f_{\textbf{a}}$.
 \end{definition}
 
 Our notation  differs from Stanley's in \cite{Stan} by an inversion of indices.
It is not obvious from this definition that $F_w$ is a symmetric function (for an alternate definition making this clear,
see Section~\ref{stab-sect}),
but it is evident that the size of $|\cR(w)|$ is the coefficient of $x_1x_2\cdots x_\ell$ in $F_w$,
where $\ell=\ell(w)$ is the length of $w$.
To find this coefficient we should expand $F_w$ in terms of one of the familiar bases of the algebra of symmetric functions. 
In general, this is difficult to do explicitly, but much can be said in special cases.
Recall that a permutation in $S_n$ is \emph{vexillary} if it is 2143-avoiding, 
and \emph{Grassmannian} if it has at most one right descent.

\begin{theorem}[Lascoux and Sch\"utzenberger \cite{LS}]\label{intro-ls-thm}
$F_w \in \NN\spanning\{ F_v : v \text { is Grassmannian}\}$.
\end{theorem}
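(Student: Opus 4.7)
The plan is to prove this by induction via the \emph{Lascoux--Sch\"utzenberger transition formula}, a stable analogue of Monk's rule for Schubert polynomials. For any non-Grassmannian $w$, I will exhibit an identity of the form $F_w = \sum_i F_{w_i}$ as a positive sum over a finite collection of permutations $w_i$, each strictly simpler than $w$ in a well-founded order; iterating produces a finite tree terminating at Grassmannian permutations.

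To set this up, suppose $w$ is not Grassmannian, so $w$ has at least two right descents. Let $r$ be the largest right descent of $w$ and set $s = \max\{k < r : w(k) < w(r)\}$. Define $v = w\cdot (s,r)$, so that $\ell(v) = \ell(w) - 1$. The transition formula then reads $F_w = \sum F_{v'}$, where $v'$ runs over permutations of the form $v\cdot (s,j)$ for appropriate $j > r$ with $\ell(v') = \ell(w)$, each having largest right descent strictly to the left of $r$ (or at least one fewer right descent overall). This identity is obtained by taking the stable limit of the classical Monk rule $\mathfrak{S}_w\cdot x_r = \sum \mathfrak{S}_{w'}$ for Schubert polynomials and canceling the matched terms which vanish under stabilization.

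The base case is that if $w$ is Grassmannian, then $F_w = s_\lambda$ is itself a single Schur function, indexed by the partition read off from the code of $w$, and so tautologically lies in the claimed span. Termination of the recursion is ensured by ordering permutations lexicographically by the pair (number of right descents, position of the rightmost descent): each application of the transition formula strictly decreases this pair in the chosen well-founded order.

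The main obstacle is verifying the transition identity rigorously, in particular checking that the coefficients on the right-hand side are all $1$ and that the selection rule for the $v'$ does not introduce spurious terms or miss any. A conceptually cleaner alternative route is to invoke Edelman--Greene insertion, which gives a bijection $\cR(w) \leftrightarrow \{(P,Q)\}$ over EG tableaux $P$ and standard Young tableaux $Q$ of a common shape, intertwining the descent-based grading on reduced words with the descent set of $Q$; summing $f_{\textbf{a}}$ in compatible fibers yields directly $F_w = \sum_\lambda a_{w,\lambda}\, s_\lambda$ with $a_{w,\lambda}\in\NN$. Since each Schur function $s_\lambda$ equals $F_v$ for a unique Grassmannian $v$, the theorem follows by either route.
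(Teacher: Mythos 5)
Your primary route matches the paper's sketch of this result in Section~\ref{ppos-sect} (the Lascoux--Sch\"utzenberger tree $\fk T(w)$), but as written it contains two concrete errors. First, the choice $s = \max\{k < r : w(k) < w(r)\}$ is wrong: $(r,s)$ should be the lexicographically maximal \emph{inversion} of $w$, i.e.\ $s = \max\{j > r : w(j) < w(r)\}$. With your $s < r$ and $w(s) < w(r)$, the pair $(s,r)$ is not an inversion, so $\ell(w\cdot(s,r)) = \ell(w)+1$, not $\ell(w)-1$ as you assert; the rest of the recursion then never gets off the ground. Second, the proposed termination invariant (lex order on the pair ``number of right descents, position of rightmost descent'') is simply false. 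For example, take $w = 32514$, which has two descents with the rightmost at position $3$. Here $(r,s)=(3,5)$, $v = w\cdot(3,5) = 32415$, and $\Phi^-(v,3)$ consists of $42315$ and $34215$, both of which again have two descents with rightmost descent at position $3$. Finiteness of $\fk T(w)$ is true but requires a genuinely different argument; the paper does not prove it either, but cites \cite{LS} for it.

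You do flag the fragility of the transition route and offer Edelman--Greene insertion as a cleaner alternative, which is sound: summing $f_\a$ over the fibers of the EG bijection (which intertwines descent sets with those of the recording tableau) yields $F_w = \sum_\lambda a_{w,\lambda}\, s_\lambda$ with $a_{w,\lambda} \in \NN$, and each $s_\lambda$ equals $F_v$ for some Grassmannian $v$ by Theorem~\ref{classical-grass-thm1}. One small correction there: the Grassmannian $v$ with $F_v = s_\lambda$ is not unique --- any $n$-Grassmannian $v$ with $\lambda(v)=\lambda$ and $n \geq \ell(\lambda)$ works --- but this does not affect the conclusion.
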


\begin{theorem}[Stanley \cite{Stan}]
$F_w$ is a Schur function if and only if
 $w$ is vexillary.
\end{theorem}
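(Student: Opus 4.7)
My plan is to proceed via the Edelman--Greene (EG) correspondence, which gives a shape-preserving bijection between $\cR(w)$ and the disjoint union over partitions $\lambda$ of pairs $(P,Q)$, where $Q \in \SYT(\lambda)$ and $P$ is an EG insertion tableau of shape $\lambda$ whose column-reading word lies in $\cR(w)$. Summing $f_{\textbf{a}}$ over the reduced words with a fixed insertion tableau $P$ of shape $\lambda$ reproduces the Schur function $s_\lambda$, so this bijection yields the expansion
$$F_w = \sum_\lambda a_{w,\lambda}\, s_\lambda,$$
where $a_{w,\lambda}$ is the number of EG insertion tableaux for $w$ of shape $\lambda$. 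Since the Schur functions are a basis, $F_w$ is a Schur function if and only if exactly one $a_{w,\lambda}$ is nonzero and equals $1$; equivalently, all reduced words of $w$ insert to tableaux of a common shape and there is a unique such tableau.

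For the ``if'' direction I would argue that vexillary $w$ admits a unique EG insertion shape $\lambda(w)$, namely the partition obtained by sorting the Lehmer code $c(w)$ into weakly decreasing order. The cleanest route is induction on $\ell(w)$ via the Lascoux--Sch\"utzenberger transition equation $F_w = \sum_i F_{v_i}$: one shows that vexillarity and the sorted code are preserved at each step, so iteration terminates at a single dominant permutation (one whose code is already a partition). A direct EG calculation shows that dominant permutations satisfy $F_w = s_{c(w)}$, because every reduced word inserts to the unique super-standard tableau of that shape. The combinatorial heart of this direction is verifying that the transition tree of a vexillary permutation is a single branch — this is exactly where 2143-avoidance enters.

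For the ``only if'' direction I would prove the contrapositive: if $w$ contains a 2143 pattern at positions $a<b<c<d$, then $a_{w,\lambda}\neq 0$ for at least two distinct partitions $\lambda$. The strategy is to reduce via transitions to a minimal non-vexillary configuration and compute $F_w$ there explicitly. For the base case one can take $w = 2143$ itself (or a small extension), check by hand that its EG tableaux appear in two distinct shapes, and then argue that any $w$ containing the pattern inherits this multi-shape behavior by running the transition in reverse: each transition step adds Schur summands but never collapses two distinct shapes into one, so the obstruction propagates upward.

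The main obstacle is this last propagation argument. The forward direction is largely bookkeeping given the transition formula and EG, but in the reverse direction one must ensure that the presence of a 2143 pattern always manifests in two genuinely different EG shapes — not shapes that happen to reunite higher up in the transition tree. The cleanest fix is to phrase everything in terms of invariants of the sorted code: show that a 2143 pattern forces two different partitions to appear as sorted codes among the terminal Grassmannian descendants in the transition tree, and conversely that vexillarity forces all such terminal nodes to share the same sorted code.
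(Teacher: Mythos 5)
The paper does not prove this theorem---it cites it to Stanley~\cite{Stan} (with positivity supplied by Edelman--Greene~\cite{EG})---so I am assessing your proposal on its own terms. Your framing via the Edelman--Greene correspondence is correct and does yield the Schur expansion $F_w = \sum_\lambda a_{w,\lambda} s_\lambda$ with $a_{w,\lambda}$ counting insertion tableaux. However, there is a factual slip in the \emph{if} direction: the Lascoux--Sch\"utzenberger tree terminates at \emph{Grassmannian} permutations (those with at most one right descent, i.e.\ code weakly increasing), not at \emph{dominant} ones (code a partition). Dominant permutations generally have several right descents and do not appear as leaves of $\fk T(w)$. The base case you need is that a Grassmannian $w$ satisfies $F_w = s_{\lambda(w)}$, which is Theorem~\ref{classical-grass-thm1}, not the superstandard-tableau computation for dominant $w$. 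You would also need to actually prove that for vexillary $w$, the set $\fk T_1(w)$ is a singleton $\{v\}$ with $v$ again vexillary and of the same sorted code; this is true, but it is the real content of the direction, not ``bookkeeping.''

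The larger gap is in the \emph{only if} direction, as you anticipate. Your reduction strategy is undermined by the fact that every leaf of the LS tree is Grassmannian, hence vexillary; so a 2143 pattern cannot literally persist to the base of the tree, and there is no ``minimal non-vexillary configuration'' to reduce to. The propagation argument (a child with two shapes forces the parent to have two shapes) is valid in isolation, but you have not shown that the reduction ever reaches a node where two shapes are visibly forced. The standard way to close this gap is exactly the triangularity that appears as Theorem~\ref{ut-thm1}: Stanley proved that $a_{w,\lambda(w)} = a_{w,\lambda'(w)} = 1$ where $\lambda'(w) = \lambda(w^{-1})^T$, with $\lambda(w) \leq \lambda'(w)$ in dominance and all other shapes strictly between; Edelman--Greene's positivity then shows $F_w$ is a single Schur function if and only if $\lambda(w) = \lambda'(w)$. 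It is then a purely combinatorial (and transition-free) fact that $\lambda(w) = \lambda'(w)$ exactly when $w$ is 2143-avoiding. This is precisely the ``invariant of the sorted code'' you gesture at in your final paragraph; making it the centerpiece, and in particular identifying the second canonical shape $\lambda'(w)$, is what turns your sketch into a proof.
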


Grassmannian permutations are vexillary, so these theorems imply the following corollary,
first proved by Edelman and Greene \cite{EG}, who also gave the first bijective proof of
Theorem~\ref{intro-thm1}.

\begin{corollary}[Edelman and Greene \cite{EG}]
Each $F_w$ is Schur positive. 
\end{corollary}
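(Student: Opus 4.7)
The plan is to chain the two preceding theorems together, using the containment of Grassmannian permutations in the class of vexillary permutations as the bridge.

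First, I apply the Lascoux--Sch\"utzenberger theorem to obtain an expansion $F_w = \sum_v c_{w,v} F_v$ with nonnegative integer coefficients $c_{w,v}$, where the sum ranges over Grassmannian permutations $v$. Second, I verify that every Grassmannian permutation is vexillary: if some Grassmannian $v$ contained a $2143$-pattern at positions $a<b<c<d$, then the inequalities $v(a) > v(b)$ and $v(c) > v(d)$ would each force a descent somewhere in the index ranges $[a,b-1]$ and $[c,d-1]$, respectively. These ranges are disjoint since $b - 1 < c$, so $v$ would have at least two descents, contradicting the Grassmannian hypothesis. Third, Stanley's theorem then gives $F_v = s_{\lambda(v)}$ for some partition $\lambda(v)$ attached to each Grassmannian $v$ appearing in the sum. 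Substituting yields
\[ F_w \;=\; \sum_v c_{w,v}\, s_{\lambda(v)}, \]
a nonnegative integer combination of Schur functions, which is precisely Schur positivity.

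There is essentially no obstacle once the two preceding theorems are in hand --- the real content has been absorbed into them. The only step requiring any verification is the implication Grassmannian $\Rightarrow$ vexillary, which is the short combinatorial check sketched above. A secondary observation worth recording is that this route actually produces nonnegative \emph{integer} expansion coefficients in the Schur basis, which is the form needed for enumerative consequences such as the reduced-word count in Theorem~\ref{intro-thm1}.
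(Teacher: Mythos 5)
Your argument is exactly the one the paper intends: chain Theorem~\ref{intro-ls-thm} with Stanley's vexillary characterization via the observation that Grassmannian permutations are vexillary. The short combinatorial check you supply for that last observation is correct and is simply left implicit in the text.
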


With a little more notation, one can make a stronger statement.
Write  $<$ for the dominance order on integer partitions. Combining results from \cite{LS, Stan}
gives the following.

\begin{theorem}[Stanley \cite{Stan}, Lascoux and Sch\"utzenberger \cite{LS}] \label{intro-a-thm}
Let $w \in S_n$ and let $a_j$ be the number of positive integers $i<j$ such that $w(i)>w(j)$.
If
 $\lambda$ is the transpose of the partition
given by sorting $(a_1,a_2,\dots,a_n)$,  then $F_w \in s_\lambda + \NN\spanning \{ s_\mu : \mu < \lambda\}$.
\end{theorem}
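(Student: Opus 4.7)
My plan is to reduce the theorem to two monomial-level claims about $F_w$ and then exploit the unitriangularity of Schur functions with respect to dominance. Since $s_\mu = m_\mu + \sum_{\nu < \mu} K_{\mu\nu} m_\nu$ with $K_{\mu\mu} = 1$, if I can establish (i) that the coefficient of $x_1^{\lambda_1} x_2^{\lambda_2} \cdots$ in $F_w$ equals $1$ and (ii) that every monomial $x^\alpha$ appearing in $F_w$ satisfies $\mathrm{sort}(\alpha) \leq \lambda$ in dominance, then a descending induction on dominance yields $F_w = s_\lambda + \sum_{\mu < \lambda} c_\mu s_\mu$ for some integers $c_\mu$. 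The Schur positivity corollary above then forces $c_\mu \geq 0$.

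From Definition~\ref{intro-def}, the coefficient of $x^\alpha$ in $F_w$ equals the number of pairs $(\textbf{a}, B)$ where $\textbf{a} \in \cR(w)$ and $B$ partitions $\textbf{a}$ into consecutive blocks of sizes $\alpha_1, \alpha_2, \ldots$ with every ascent of $\textbf{a}$ at a block boundary---equivalently, each block is weakly decreasing. For claim (i), using the symmetry of $F_w$, it suffices to count such factorizations with block sizes given by the Lehmer code $c(w) = (c_1, \ldots, c_n)$, whose sorted sequence is $\lambda$. I would exhibit one such factorization via the canonical reduced word associated to the Rothe diagram of $w$---its row $i$ contains $c_i$ cells, and a row-by-row reading gives a reduced word whose natural ascent-compatible block structure realizes the content $c(w)$---and then argue uniqueness from the rigidity of block decompositions whose sizes equal the code.

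For claim (ii), I would invoke Edelman--Greene insertion, which provides a bijection $\cR(w) \to \bigsqcup_\mu \mathrm{EG}(w, \mu) \times \SYT(\mu)$ and hence the Schur expansion $F_w = \sum_\mu |\mathrm{EG}(w, \mu)|\, s_\mu$; then (ii) reduces to showing that every Edelman--Greene tableau for $w$ has shape bounded above by $\lambda$ in dominance. Alternatively, one can argue by induction on $\ell(w)$ via the transition formula, whose non-negative terms all correspond to permutations whose shapes do not exceed $\lambda$ in dominance. The main obstacle in either approach is this monotonicity: dominance depends on all partial sums of partition parts, so one must track how those partial sums evolve under column insertion or the transition recurrence, which requires delicate combinatorial bookkeeping on reduced words. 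Once monotonicity is secured, claims (i) and (ii) combine with the initial reduction to finish the proof.
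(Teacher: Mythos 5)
The paper does not actually prove Theorem~\ref{intro-a-thm}; it is stated in the introduction as a combination of results from Stanley and Lascoux--Sch\"utzenberger, with positivity of the coefficients coming from Edelman--Greene (this is also remarked after Theorem~\ref{ut-thm1}, which is the precise statement from Stanley). So I am comparing your argument against the cited literature rather than a proof in the paper. Your overall strategy---reduce to unitriangularity of the Schur basis against monomials under dominance, establish the two monomial-level claims, and invoke Schur positivity from Edelman--Greene to upgrade $\ZZ$ to $\NN$---is a reasonable plan and close in spirit to Stanley's original argument.

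However, there is a concrete error in your identification of $\lambda$. You assert that sorting the Lehmer code $c(w)$ gives $\lambda$, i.e.\ that $\lambda=\lambda(w)$ in the paper's notation. This is false in general. Unwinding the definitions, $a_j(w)=|\{i<j:w(i)>w(j)\}|=c_{w(j)}(w^{-1})$, so $(a_1,\dots,a_n)$ is a permutation of the code of $w^{-1}$, and sorting it yields $\lambda(w^{-1})$; hence the theorem's $\lambda$ is $\lambda(w^{-1})^T$, which is $\lambda'(w)$ in the notation of Theorem~\ref{ut-thm1}. In general $\lambda(w)\le\lambda'(w)$ with equality only in the vexillary case. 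For $w=2143$ one has $c(w)=(1,0,1,0)$ so $\lambda(w)=(1,1)$, whereas $(a_1,\dots,a_4)=(0,1,0,1)$ sorts to $(1,1)$ with transpose $\lambda=(2)$. A direct computation gives $F_{2143}=e_2+h_2=s_{(2)}+s_{(1,1)}$; here the coefficient of $x_1x_2$ (the $\lambda(w)$-monomial) is $2$, not $1$, and the monomial $x_1^2$ dominates it, so with $\lambda=\lambda(w)$ both your claims (i) and (ii) fail. The canonical factorization you propose (row reading of the Rothe diagram with block sizes $c(w)$) only witnesses the non-leading monomial $x^{c(w)}$ and cannot be used as stated. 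To salvage the argument you must carry out (i) and (ii) for the correct leading shape $\lambda=\lambda'(w)=\lambda(w^{-1})^T$, which requires a different witness for (i) and a genuine dominance bound for (ii)---the latter you acknowledge as the ``delicate combinatorial bookkeeping,'' which is exactly the content of \cite[Theorem 4.10]{Stan} and is not reproduced here.
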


Coming full circle, the reverse permutation $w_n  \in S_n$ is certainly vexillary, 
and so the preceding theorem has this corollary, which implies Theorem~\ref{intro-thm1}
by the familiar hook length formula:

\begin{corollary}
If $n$ is a positive integer then $F_{w_n} = s_{\delta_n}$
for $\delta_n=(n-1,\dots,2,1,0)$.
\end{corollary}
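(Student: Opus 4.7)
The plan is to chain together the three results just stated: the vexillary characterization of Schur-function Stanley symmetric functions, the identification of the leading Schur summand, and direct combinatorics on $w_n$.

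First I would check that $w_n = n\cdots 321$ is vexillary. Since $w_n$ is strictly decreasing, it contains no subsequence of length four whose relative order is $2143$ (in fact, no ascending pair at all). Stanley's vexillary criterion then forces $F_{w_n}$ to be a single Schur function $s_\lambda$ for some partition $\lambda$.

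Next I would pin down $\lambda$ via Theorem~\ref{intro-a-thm}. For the reverse permutation $w_n(j) = n+1-j$, and since $w_n$ is strictly decreasing every index $i<j$ contributes to the inversion count at position $j$, so
\[ a_j = \#\{i < j : w_n(i) > w_n(j)\} = j-1 \qquad \text{for } j=1,2,\dots,n. \]
Sorting the sequence $(a_1,a_2,\dots,a_n)=(0,1,2,\dots,n-1)$ in weakly decreasing order yields the staircase partition $\delta_n = (n-1,n-2,\dots,1,0)$, which is self-conjugate. Hence the transpose appearing in Theorem~\ref{intro-a-thm} is again $\delta_n$, so $\lambda = \delta_n$ and the theorem forces $F_{w_n} \in s_{\delta_n} + \NN\spanning\{s_\mu : \mu < \delta_n\}$.

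Combining the two constraints, $F_{w_n}$ is a single Schur function, and its leading term in dominance order is $s_{\delta_n}$, so $F_{w_n} = s_{\delta_n}$. There is essentially no obstacle here beyond bookkeeping: once the vexillary check and the code computation are in place, the two theorems of Stanley and Lascoux--Sch\"utzenberger do all the work. The only point that deserves a line of commentary is the self-conjugacy of the staircase, since $\delta_n$ is what guarantees the sorted code and its transpose coincide.
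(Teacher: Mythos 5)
Your proof is correct and follows exactly the route the paper intends: the paper simply remarks that $w_n$ is ``certainly vexillary'' and invokes the preceding theorem, while you supply the routine bookkeeping (no 2143 pattern since $w_n$ is strictly decreasing; the code sorts to $\delta_n$, which is self-conjugate; vexillarity forces a single Schur summand, which triangularity identifies as $s_{\delta_n}$). Nothing more is needed.
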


We mention all of this as prelude to our main results,
which arise out of formally similar counting problems.
There are by now a multitude of generalizations \cite{BH,Lam2} of the symmetric functions $F_w$.
The one which will be of interest here comes from
the following construction for involutions in Coxeter groups.

Let $\I_n = \{ w \in S_n : w^2=1\}$ denote the set of involutions in $S_n$.
It is well-known (see Section~\ref{inv-schub-sect}) that there exists
a unique associative product $\circ : S_n \times S_n \to S_n$
such that $w\circ s_i = ws_i$ if $w(i) <w(i+1)$,  $w\circ s_i = w$ if $w(i) > w(i+1)$,
and $(v\circ w)^{-1} = w^{-1}\circ v^{-1}$.
For each $y \in \I_n$ define $\iR(y)$ as the set of sequences of simple
transpositions $(s_{i_1}, s_{i_2},\dots,s_{i_\ell})$ of minimal possible length $\ell$ such that
$
y =s_{i_\ell} \circ \cdots \circ s_{i_2} \circ  s_{i_1}\circ  s_{i_2}\circ \cdots\circ s_{i_\ell}.
$
Up to minor differences in notation,
the elements of $\iR(y)$ are the same as what Richardson and Springer \cite{RichSpring,RichSpring2}
call ``admissible sequences'', what Hultman \cite{H1,H2,H3} calls ``$\underline S$-expressions,
what Hu and Zhang \cite{HuZhang,HuZhang2} call ``$\textbf{I}_*$-expressions,''
and what we \cite{HMP1,HMP2,HMP3} have been calling \emph{involution words}.

There are a few different reasons why
one might consider this definition. Geometrically, the notion comes up
(e.g., in \cite{CJ,CJW,RichSpring}) when one
studies the action of the
orthogonal group $\O_n(\CC)$ on the flag variety. The orbits of this action are indexed by $\I_n$,
and the ``Bruhat order'' 
 induced by reverse inclusion of orbit closures coincides with the Bruhat order of $S_n$ restricted to
 $\I_n$.
In
representation theory, involution words arise in the study of
the Iwahori-Hecke algebra modules constructed by Lusztig and Vogan in \cite{LV1,LV2,LV3};
see, for example, the applications in
\cite{HuZhang,HuZhang2,Marberg}.
Finally, in combinatorics, these objects are interesting 
 in view of identities like the following, which we proved in \cite{HMP1}.
Note here that  $w_n = n \cdots321$ belongs to $\I_n$.

\begin{theorem}[See \cite{HMP1}] \label{our-thm1}
The numbers $(\hat r_n)_{n \geq 1} = (1,1,2,8,80,2688,\dots)$ giving the size of $\iR(w_n)$
satisfy
$\hat r_n = \tbinom{P+Q}{P} r_p r_q$
where 
$p=\lceil \frac{n+1}{2}\rceil$, $q= \lfloor \frac{n+1}{2}\rfloor$,
$P = \binom{p}{2}$, $Q = \binom{q}{2}$, and
$r_n$ is as in Theorem~\ref{intro-thm1}. 
\end{theorem}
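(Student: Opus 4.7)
The plan is to derive the numerical identity from a factorization of involution Stanley symmetric functions. First, an elementary length computation gives
\[
\hat\ell(w_n) \;=\; \tfrac{1}{2}\bigl(\tbinom{n}{2} + \lfloor n/2\rfloor\bigr) \;=\; \lceil n/2\rceil \lfloor n/2\rfloor \;=\; P + Q,
\]
so $\hat F_{w_n}$ has degree $P+Q$ and, by definition of the generating function,
\[
\hat r_n \;=\; [x_1 x_2 \cdots x_{P+Q}]\, \hat F_{w_n}.
\]
On the other hand, extracting the same monomial from the product $F_{w_p}\cdot F_{w_q}$ yields $\tbinom{P+Q}{P}\, r_p\, r_q$: one chooses a $P$-subset of the indices $\{1,\ldots,P+Q\}$ to supply $F_{w_p}$ in $\tbinom{P+Q}{P}$ ways, and each choice contributes $r_p$ and $r_q$ via Theorem~\ref{intro-thm1} applied to the vexillary permutations $w_p$ and $w_q$. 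Hence the theorem is equivalent to the symmetric-function identity $\hat F_{w_n} = F_{w_p}\cdot F_{w_q}$, or to the weaker equality of their square-free coefficients.

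To establish this identity I would use the atom decomposition of involution words: $\iR(y) = \bigsqcup_{w\in\cA(y)} \cR(w)$ for any involution $y$, which yields $\hat F_y = \sum_{w\in\cA(y)} F_w$. For the longest element $w_n$, I would identify $\cA(w_n)$ as the set of ``shuffle'' permutations arising by choosing, for each step of the involution word, which of the two halves of $w_n$'s cycle structure (restricted to the first $p$ and last $q$ entries of $\{1,\ldots,n\}$, with the middle fixed point shared when $n$ is odd) is being resolved. Each such atom should turn out to be vexillary, and its Stanley symmetric function should factor as a product of two Schur staircase functions; summing over $\cA(w_n)$ then reassembles the full product $F_{w_p}\cdot F_{w_q} = s_{\delta_p}\cdot s_{\delta_q}$.

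The main obstacle is describing $\cA(w_n)$ precisely and verifying the atom-level factorization. A concrete route is via an Edelman--Greene-type insertion on involution words, sending each $\textbf{a} \in \iR(w_n)$ to a triple consisting of a reduced word for $w_p$, a reduced word for $w_q$, and a standard Young tableau of a suitable two-block staircase shape recording the shuffle pattern; the tableau count is $\tbinom{P+Q}{P}$ by construction. Alternatively, one can work purely with symmetric functions, using the Schur-$P$-positivity result of the present paper to write $\hat F_{w_n} = P_{\nu}$ for an appropriate shifted staircase $\nu$, and then invoking a Stembridge-type identity $P_{\nu} = s_{\delta_p}\cdot s_{\delta_q}$. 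In either route, once the factorization $\hat F_{w_n} = F_{w_p}\cdot F_{w_q}$ is proved, extracting the coefficient of $x_1 x_2 \cdots x_{P+Q}$ gives the claimed formula for $\hat r_n$.
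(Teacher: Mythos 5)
Your reduction of the counting identity to the symmetric-function identity $\hat F_{w_n} = F_{w_p}F_{w_q} = s_{\delta_p}s_{\delta_q}$, and the subsequent coefficient extraction $[x_1\cdots x_{P+Q}]\bigl(s_{\delta_p}s_{\delta_q}\bigr) = \tbinom{P+Q}{P}r_p r_q$, are both correct. Your ``Route~2'' --- first using the paper's Schur-$P$-triangularity and $P$-vexillary results to get $\hat F_{w_n} = P_{(n-1,n-3,\dots)}$, then invoking the Stembridge-type identity $P_{(n-1,n-3,\dots)} = s_{\delta_p}s_{\delta_q}$ --- is exactly the paper's proof: this is Corollary~\ref{intro-wn-cor}, which is proved from Theorems~\ref{intro-ivex-thm} and~\ref{intro-tri-thm} together with \cite[Theorem 9.3]{Stem}, after which the paper remarks that Theorem~\ref{our-thm1} ``follows as a simple exercise.'' So the main thread of your argument matches the paper.

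Your ``Route~1,'' however, is wrong as stated. You hope that each atom $w\in\cA(w_n)$ is vexillary and that each $F_w$ itself factors as a product of two staircase Schur functions, with the sum over atoms reassembling $s_{\delta_p}s_{\delta_q}$. Already for $n=3$ this fails: $\cA(w_3) = \{231,312\}$ with $F_{231}=s_{(2)}$ and $F_{312}=s_{(1,1)}$, and neither summand is a product of staircases. What is true is that the \emph{sum} $s_{(2)}+s_{(1,1)} = s_{(1)}^2 = s_{\delta_2}s_{\delta_2}$, i.e.\ the Littlewood--Richardson decomposition of the product appears in the atom sum --- there is no atom-level factorization. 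If you wanted to pursue an atom-by-atom argument you would instead need a bijection between $\cA(w_n)$ (or between $\iR(w_n)$) and the appropriate Littlewood--Richardson data, which is a genuinely harder combinatorial task and is not what the paper does; the Schur-$P$ route is the intended shortcut.
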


This result shows that $\hat r_n$ is the number of standard bitableaux of shape
$(\delta_p, \delta_q)$, which is also  the dimension of the largest complex irreducible
 representation of the hyperoctahedral group of rank $P+Q$.
These numbers form a subsequence of \cite[A066051]{OEIS}.
To prove Theorem~\ref{our-thm1}, we introduced in \cite{HMP1} the following analogue of $F_w$:

\begin{definition}\label{intro-hatF-def}
The \emph{involution Stanley symmetric function} of $y \in \I_n$ is
$\iF_y = \sum_{\textbf{a} \in \iR(y)} f_{\textbf{a}}$.
\end{definition}

As with Definition~\ref{intro-def}, while it is evident that $|\iR(y)|$
can be extracted as a coefficient of $\iF_y$,
this formulation does not make clear that $\iF_y$ is a symmetric function,
or reveal the important fact that it is a multiplicity-free sum of
(ordinary) Stanley symmetric functions. 
(An alternate definition which indicates these properties appears in Section~\ref{inv-schub-sect}.)
These observations show that $\iF_y$ is manifestly Schur positive.
Our primary aim in this work is to prove that the symmetric functions $\iF_y$
have a stronger positivity property.

Within the ring of symmetric functions is the
 subalgebra $\QQ[p_1,p_3,p_5,\dots]$ generated by the odd power-sum functions.
 This algebra
 arises in few different
 places in the literature (e.g., \cite{BH, J16, P2,Schur, Stem}),
 and has a distinguished basis $\{ P_\lambda\}$ indexed by strict integer partitions
 (that is, partitions with all distinct parts),
whose elements $P_\lambda$ are called \emph{Schur $P$-functions}.
See Section~\ref{ss:schur-p} for the precise definition.
With this notation we can summarize our main results.
Define a permutation $y$ to be \emph{I-Grassmannian} if it has the form
$y = (\phi_1,m+1)(\phi_2,m+2)\cdots(\phi_r,m+r)$
for some positive integers $0 < \phi_1 < \phi_2 < \dots < \phi_r \leq m$.
In Section~\ref{igrass-sect}, we prove the following:

\begin{theorem}\label{our-thm2}
$\iF_y \in \NN\spanning\left\{ \iF_v : v \text { is I-Grassmannian}\right\}$.
\end{theorem}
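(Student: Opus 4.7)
The plan follows the pattern of Lascoux and Sch\"utzenberger's proof of Theorem~\ref{intro-ls-thm}, adapted to the involution setting. The starting point is the fact highlighted in the introduction that $\iF_y$ is a multiplicity-free sum of ordinary Stanley symmetric functions; concretely, $\iF_y = \sum_{w \in \cA(y)} F_w$ for a canonical set $\cA(y)$ of ``atoms'' of $y$ (established in the authors' prior work). Applying Theorem~\ref{intro-ls-thm} to each $F_w$ then produces an expansion
\[
\iF_y = \sum_{v \text{ Grassmannian}} d_{y,v}\, F_v \qquad \text{with } d_{y,v} \in \NN.
\]
The core task is to regroup this Grassmannian expansion into $\iF_y = \sum_{y'} c_{y,y'}\, \iF_{y'}$ over I-Grassmannian $y'$, with $c_{y,y'} \in \NN$.

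As a first step I would give an explicit description of $\cA(y')$ when $y' = (\phi_1, m{+}1)(\phi_2, m{+}2)\cdots(\phi_r, m{+}r)$ is I-Grassmannian. The goal is to show that the family $\{\cA(y') : y' \text{ is I-Grassmannian}\}$ partitions the set of Grassmannian permutations. This is plausible because I-Grassmannian involutions are parameterized by the same data $(0 < \phi_1 < \cdots < \phi_r \leq m)$ that index shifted shapes, matching the one-parameter family of Grassmannian permutations; and because the ``$\iF = \sum F_w$'' decomposition is multiplicity-free, no Grassmannian $v$ can belong to $\cA(y')$ for two different I-Grassmannian $y'$. Granting such a partition, regrouping reduces to proving that each coefficient $d_{y,v}$ depends only on the block $\cA(y')$ containing $v$, so that the common value can be taken as $c_{y,y'}$.

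To prove this block-constancy I would develop an \emph{involution transition formula}: a recurrence expressing $\iF_y = \sum c(y,y'')\,\iF_{y''}$ as a nonnegative integer combination of $\iF_{y''}$ for involutions $y''$ that are strictly simpler than $y$ with respect to a suitable statistic (e.g. the position of the last descent, or an ``essential set''-type measure at which the I-Grassmannian condition first fails). Such a transition would be derived by applying the classical Monk/Lascoux--Sch\"utzenberger transition summand-by-summand in $\iF_y = \sum_{w\in\cA(y)} F_w$ and then verifying that the resulting permutations reorganize into complete atom sets $\cA(y'')$. An induction on the statistic, with I-Grassmannian involutions as the base case, then yields the theorem.

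The main obstacle is precisely this verification that the output of the classical transition, when applied to all atoms of $y$ simultaneously, collects into unions of complete atom sets $\cA(y'')$ with constant multiplicity. Equivalently, one must show that the local rewriting used by Lascoux--Sch\"utzenberger is compatible with the equivalence relation on reduced words induced by the atom decomposition. I would expect this compatibility to follow from an analysis of involution Coxeter-Knuth / braid relations (parallel to the tools used in our prior work on involution words), and I anticipate that the bookkeeping in the case where the critical descent of $y$ interacts with a $2$-cycle of $y$ will require the most care.
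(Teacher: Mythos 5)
Your high-level instinct---to mimic the Lascoux--Sch\"utzenberger tree via a transition formula and an induction down to a ``Grassmannian'' base case---is exactly the shape of the paper's argument. However, the specific route you sketch has several genuine gaps. First, the ``partition of Grassmannian permutations into atom sets $\cA(y')$'' framing does not reduce the problem: the block-constancy of the coefficients $d_{y,v}$ \emph{is} the theorem, not a preliminary step; and the partition claim itself is neither verified nor actually used in the paper. Second, and more seriously, the key ingredient you defer to---an involution transition formula---cannot simply be ``derived by applying the classical Monk/Lascoux--Sch\"utzenberger transition summand-by-summand'' over $\cA(y)$: the assertion that the outputs regroup into complete atom sets $\cA(y'')$ is precisely the nontrivial content, and it is exactly what is proved (directly at the level of $\iS_y$, not atom-by-atom) in the cited prior work \cite{HMP3} to give Theorems~\ref{invmonk-thm} and \ref{invmonk-thm2}. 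Your proof as written treats this as a verification to be carried out, not a theorem to be proved, which is where the argument breaks.

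Two further ingredients are missing. The recursion must be steered by the maximal \emph{visible} inversion/descent of $y$ (a weaker notion than ordinary inversions: $(i,j)$ with $y(j) \leq \min\{i, y(i)\}$), not by the last descent or an essential-set measure; this is what produces the involution $\ipsi(z)$ with $\hat\Phi^+(\ipsi(z),q) = \{z\}$ (Lemma~\ref{it-lem00}), the exact analogue of ``$\Phi^+(w(r,s),r) = \{w\}$'' in the classical tree, and makes Corollary~\ref{it-cor1}(b) give $\iF_z = \sum_{v \in \iT_1(z)} \iF_v$. Without the visible-inversion notion the recursion does not terminate at I-Grassmannian involutions. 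Finally, termination itself is not automatic: the paper needs the bounded-descent lemmas (Lemmas~\ref{it-lem0}, \ref{it-lem1}, \ref{it-lem2}) to show the involution LS tree is finite, and your proposal leaves this entirely unaddressed. In short, what you have is the right skeleton, but you would need to take the involution transition formula as given rather than re-derive it from atoms, discover the visible-inversion concept, and supply a finiteness argument.
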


Define $y \in \I_n$ to be \emph{$P$-vexillary} if $\iF_y$ is a Schur $P$-function.

\begin{theorem}\label{intro-ivex-thm}
There is a pattern avoidance condition characterizing $P$-vexillary involutions.
All I-Grassmannian involutions as well as the reverse permutations $w_n$ are $P$-vexillary.
\end{theorem}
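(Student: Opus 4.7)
The plan is to use Theorem~\ref{our-thm2} to reduce the entire question to the I-Grassmannian case, and then to identify pattern avoidance as precisely the condition ensuring the resulting expansion has a single term.

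First, I would prove directly that for any I-Grassmannian $y = (\phi_1,m+1)(\phi_2,m+2)\cdots(\phi_r,m+r)$, the symmetric function $\iF_y$ equals a single Schur $P$-function $P_{\lambda(y)}$ for an explicit strict partition $\lambda(y)$ read off from the parameters $(\phi_1,\dots,\phi_r;m)$ (the natural candidate being $\lambda(y) = (m+r-\phi_r,\, m+r-1-\phi_{r-1},\,\dots,\,m+1-\phi_1)$ with zero parts dropped). The cleanest route is to construct an explicit descent-preserving bijection between $\iR(y)$ and standard shifted tableaux of shape $\lambda(y)$, so that grouping terms by descent pattern on each side recovers the expansion of $P_{\lambda(y)}$ into fundamental quasisymmetric functions. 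An alternative that leverages existing machinery is to start from the known multiplicity-free decomposition of $\iF_y$ into ordinary Stanley symmetric functions $F_w$, identify which Grassmannian $w$ appear, and reassemble their Schur summands into a single $P_{\lambda(y)}$ via the classical identity $P_\lambda = \sum_\mu g^\lambda_\mu s_\mu$.

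Combined with Theorem~\ref{our-thm2}, Step~1 gives for every $y \in \I_n$ an expansion
$$\iF_y = \sum_{v} c_{yv}\, P_{\lambda(v)},\qquad c_{yv}\in\NN,$$
with $v$ ranging over I-Grassmannians, so $y$ is $P$-vexillary if and only if exactly one $c_{yv}$ is nonzero. Using the triangularity of this decomposition with respect to dominance order (promised in the abstract), the leading coefficient is automatically $1$, and the question reduces to whether every subleading $c_{yv}$ vanishes. I would extract the correct list of forbidden patterns empirically, by computing $\iF_y$ for small $y \in \I_n$ and isolating the minimal involutions whose expansion acquires a second summand; the characterization then splits into two halves: (i) if $y$ avoids every pattern in the list, an induction on length using the explicit algorithm for the $P$-expansion (also advertised in the abstract) shows that only the leading term survives, while (ii) if $y$ contains a forbidden pattern, one exhibits a specific subleading summand produced directly by that pattern.

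The two concrete claims then follow: the I-Grassmannian case is immediate from Step~1, and for the reverse permutation $w_n$ one checks that its one-line structure contains none of the forbidden patterns---a direct combinatorial verification from the shape of $w_n$---and separately identifies $\lambda(w_n)$ as a shifted staircase of the form $(n-1,n-3,n-5,\dots)$. As an independent sanity check, the number of standard shifted tableaux of this staircase shape must then match $\hat r_n$ of Theorem~\ref{our-thm1}. The principal obstacle is isolating the correct pattern list and proving both directions of the equivalence: unlike the classical vexillary case, which is governed by the single pattern $2143$, the involution setting is expected to require several patterns, and verifying that each one genuinely produces an extra Schur $P$-summand rather than merely rearranging the leading one is likely the most delicate step.
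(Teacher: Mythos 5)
Your high-level plan matches the paper's strategy, but there are two substantive issues, one a concrete error and one a serious gap.

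First, the explicit strict partition you propose for an I-Grassmannian
$y = (\phi_1,m+1)(\phi_2,m+2)\cdots(\phi_r,m+r)$ is wrong. You write
$\lambda(y) = (m+r-\phi_r,\, m+r-1-\phi_{r-1},\,\dots,\,m+1-\phi_1)$, but this sequence is generically
\emph{increasing} (the $k$th term minus the $(k+1)$th term is $\phi_{r-k}-\phi_{r-k+1}+1 \le 0$), so it is not even a
partition in general, let alone strict. For example $m=3$, $r=2$, $\phi=(1,3)$ gives $(2,3)$. The correct
shape is $\ilambda(y) = (m+1-\phi_1, m+1-\phi_2,\dots, m+1-\phi_r)$, which is strictly decreasing precisely
because the $\phi_i$ are distinct. (With $y=(1,4)(2,5)$ one checks
$\hat D(y)=\{(1,1),(2,1),(3,1),(2,2),(3,2)\}$, giving $\ilambda(y)=(3,2)$, not $(3,3)$.) The paper proves
$\iF_y=P_{\ilambda(y)}$ algebraically, by writing $\iS_y$ as a chain of isobaric divided differences applied to
$x^{\ilambda(y)}G_{r,n}$ and comparing with the corresponding formula defining $P_\lambda$
(Lemmas~\ref{pi-lem}, \ref{igrass-lem4} and Theorem~\ref{igrass-thm}); your two alternative routes (a
descent-preserving bijection to standard shifted tableaux, or reassembling the $F_w$-summands) are plausible
and the former is essentially realized in Section~\ref{insertion-sect}, but neither is the route the paper
actually takes in Section~\ref{igrass-sect}.

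Second, and more importantly, the part where you claim ``an induction on length using the explicit algorithm
shows that only the leading term survives'' is not a proof, and an induction on $\ell$ or $\ellhat$ will not
close the argument as stated. The difficulty is showing that if $z$ avoids a finite list of patterns, then the
unique child $v$ in the involution Lascoux--Sch\"utzenberger tree also avoids them, so that the expansion
stays monic all the way down. The paper formalizes this via the notion of a ``bad $P$-pattern'' (a
$z$-invariant set $E$ of at most four $z$-orbits with $[z]_E$ not $P$-vexillary), proves that $|\iT_1(z)|\ge 2$
forces a bad pattern (Lemma~\ref{vex-lem3}), proves the key propagation Lemma~\ref{vex-lem4}
($z$ has a bad $P$-pattern iff its unique child $v$ does) by a structural reduction to $\I_{12}$ plus a
brute-force computer check of $73{,}843$ cases, and then inducts on the finite height of the tree $\iT(z)$.
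This is where the technical weight of the theorem sits, and your sketch does not engage with it. Your
remaining observations --- triangularity forcing the leading coefficient to be $1$, finding the pattern list
empirically, checking $w_n$ against the list and identifying $\ilambda(w_n)=(n-1,n-3,\dots)$ --- are all
correct and consistent with the paper.
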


This statement paraphrases Theorems~\ref{igrass-thm} and \ref{vex-thm}.
For the finite list of patterns that must be avoided, see Corollary~\ref{ivex-cor}.
In Section~\ref{i-vex-sect}, we use this  list to derive a new proof of a theorem of DeWitt \cite{DeWitt},
classifying the skew Schur functions which are Schur $P$-functions.
The last two theorems together imply the following:

\begin{corollary}\label{our-cor1}
Each $\iF_y$ is Schur $P$-positive, that is,
$\iF_y \in \NN\spanning \{ P_\lambda : \lambda\text{ is a strict partition}\}$.
\end{corollary}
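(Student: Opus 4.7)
The plan is to obtain the corollary by combining Theorem~\ref{our-thm2} with the portion of Theorem~\ref{intro-ivex-thm} asserting that every I-Grassmannian involution is $P$-vexillary. Both ingredients are already available, so the argument amounts to composing the two expansions.

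First I would apply Theorem~\ref{our-thm2} to fix, for each $y \in \I_n$, an expansion
\[
\iF_y \;=\; \sum_{v \text{ I-Grassmannian}} c_{y,v}\, \iF_v,
\qquad c_{y,v} \in \NN,
\]
where only finitely many coefficients are nonzero. The existence of such an expansion with nonnegative integer coefficients is exactly the content of Theorem~\ref{our-thm2}; no further properties of the $c_{y,v}$ are needed for the corollary.

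Next I would invoke the second assertion of Theorem~\ref{intro-ivex-thm}: every I-Grassmannian involution $v$ is $P$-vexillary, so by definition $\iF_v = P_{\lambda(v)}$ for some strict partition $\lambda(v)$. (In the body of the paper, the strict partition $\lambda(v)$ attached to an I-Grassmannian $v = (\phi_1,m{+}1)\cdots(\phi_r,m{+}r)$ is read off directly from the data $(\phi_1,\dots,\phi_r)$; for the present corollary, only the existence of such a $\lambda(v)$ matters.) Substituting this identity into the expansion above yields
\[
\iF_y \;=\; \sum_{v \text{ I-Grassmannian}} c_{y,v}\, P_{\lambda(v)},
\]
which exhibits $\iF_y$ as a nonnegative integer combination of Schur $P$-functions, proving Corollary~\ref{our-cor1}.

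There is no serious obstacle at this stage: the two real theorems (Theorem~\ref{our-thm2} and the I-Grassmannian case of Theorem~\ref{intro-ivex-thm}) are doing all the work. The only bookkeeping point is to make sure that the I-Grassmannian indexing set used in Theorem~\ref{our-thm2} is literally the same one to which the $P$-vexillary statement of Theorem~\ref{intro-ivex-thm} is applied, so that the substitution $\iF_v = P_{\lambda(v)}$ is legitimate term-by-term; this is true by construction, and the corollary follows.
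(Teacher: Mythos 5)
Your proposal matches the paper's argument exactly: the paper derives Corollary~\ref{our-cor1} by combining Theorem~\ref{our-thm2} (proved as Corollary~\ref{it-thm-cor0} via the involution Lascoux--Sch\"utzenberger tree) with Theorem~\ref{igrass-thm}, which shows $\iF_v = P_{\ilambda(v)}$ for I-Grassmannian $v$. The substitution you describe is precisely the deduction made in the text, so the proof is correct and takes the same route.
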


In Section~\ref{tri-sect}, we prove the following analogue of Theorem~\ref{intro-a-thm}.
One can use this theorem to recover some results of Ardila and Serrano \cite{AS};
see Corollary~\ref{ardila-cor1}.

\begin{theorem}\label{intro-tri-thm}
Let $y \in \I_n$ and let $b_i$ be the number of positive integers $j $
with 
$j \leq  i < y(j)$ and $j < y(i)$.
If $\mu$ is the transpose of the partition given by sorting $(b_1,b_2,\dots,b_n)$,
then $\mu$ is strict  and
$\iF_y \in P_\mu + \NN\spanning\{P_\lambda : \lambda  <\mu\}$ where $<$ is the dominance
order on strict partitions.
\end{theorem}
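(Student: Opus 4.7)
The plan is to bootstrap from Stanley's classical triangularity (Theorem~\ref{intro-a-thm}) and Schur $P$-positivity (Corollary~\ref{our-cor1}). Two auxiliary ingredients drive the argument: the multiplicity-free decomposition $\iF_y = \sum_{v \in \cA(y)} F_v$ of $\iF_y$ into ordinary Stanley functions, indexed by a set $\cA(y) \subseteq S_n$ of ``atoms'' of $y$ (noted in the discussion after Definition~\ref{intro-hatF-def}); and the standard unitriangular expansion
\[
P_\lambda \;=\; s_\lambda \;+\; \sum_{\nu < \lambda} a_{\lambda\nu}\, s_\nu
\]
of a Schur $P$-function in the Schur basis, taken over the dominance order on partitions.

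Combining the atom decomposition with Theorem~\ref{intro-a-thm}, each $F_v$ has leading Schur term $s_{\lambda(v)}$, where $\lambda(v)$ is the transpose of the sorted Lehmer code of $v$, plus lower-dominance corrections. Hence the Schur expansion of $\iF_y$ has dominance-maximum index $\lambda^* := \max\{\lambda(v) : v \in \cA(y)\}$, with the coefficient of $s_{\lambda^*}$ equal to the number of atoms attaining this maximum. Writing $\iF_y = \sum_\tau c_\tau P_\tau$ via Corollary~\ref{our-cor1} and equating leading Schur terms across the unitriangular $P \to s$ transition, the maximal $\tau$ with $c_\tau > 0$ must equal $\lambda^*$, and $c_{\lambda^*}$ counts the maximizing atoms.

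The main combinatorial task, and the principal obstacle, is to verify two claims: \emph{(i)}~there is a unique atom $v^* \in \cA(y)$ attaining $\lambda(v^*) = \lambda^*$; and \emph{(ii)}~$\lambda^*$ coincides with the partition $\mu$ of the theorem, i.e.\ the transpose of the sorting of $(b_1,\ldots,b_n)$. A natural candidate for $v^*$ is the atom that records the involution inversions of $y$ in the ``greedy'' way below the diagonal; one then checks that the row-lengths of its Rothe diagram are precisely the integers $b_i$, so that transposing and sorting recovers $\mu$. Interpreting the condition ``$j \leq i < y(j)$ and $j < y(i)$'' as a count of boxes in row $i$ of this diagram makes the match between the two descriptions transparent. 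Uniqueness in \emph{(i)} follows by showing that any other atom's code yields a strictly smaller transpose-partition in dominance, which can be carried out using the braid- and commutation-move description of $\cA(y)$ already employed in proving Theorem~\ref{our-thm2}. Strictness of $\mu$ is then automatic, since $\mu = \lambda^*$ arises as the index of a Schur $P$-function occurring in $\iF_y$, and the stated conclusion $\iF_y \in P_\mu + \NN\spanning\{P_\lambda : \lambda < \mu\}$ follows.
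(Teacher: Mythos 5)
Your approach differs genuinely from the paper's, and in its present form has a real gap. The paper does not try to locate the dominance-maximal Schur term of $\iF_y$ directly. Instead it locates the dominance-\emph{minimal} term, and then uses a symmetry argument to get the maximum for free: since $\iF_y$ is Schur $P$-positive (Corollary~\ref{our-cor1}) and each $P_\lambda$ is fixed by the ring involution $\omega$ sending $s_\lambda \mapsto s_{\lambda^T}$, the Schur expansion of $\iF_y$ is invariant under transposition of indices. The minimum is easy: there is a unique $<_\cA$-minimal atom $\alpha_{\min}(y)$, the map $v \mapsto \lambda(v)$ (sorted code) is strictly order-preserving from $(\cA(y),<_\cA)$ to dominance (the paper's Lemma~\ref{ut-lem1}), the min Stanley term of each $F_v$ is $s_{\lambda(v)}$, and $\lambda(\alpha_{\min}(y))$ equals $\mu^T$ because $c(\alpha_{\min}(y)) = \hat c(y) = (b_1,\dots,b_n)$. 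Applying $\omega$-invariance then forces the maximum index to be $\mu$ with coefficient $1$, and the $P\to s$ unitriangularity finishes.

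Your proposal instead tries to identify the maximum directly as $\max_v(\text{leading Schur index of } F_v)$, but two things go wrong. First, the dominance-maximal Schur index of $F_v$ is $\lambda'(v)=\lambda(v^{-1})^T$ (transpose of the sorted code of $v^{-1}$), not $\lambda(v)^T$. Second, the ``greedy'' atom whose Rothe diagram has row lengths $(b_1,\dots,b_n)$ is exactly $\alpha_{\min}(y)$, and $\lambda'(\alpha_{\min}(y))$ is generally \emph{not} $\mu$. Concretely, for $y=(1,3)=321$ we have $(b_1,b_2,b_3)=(1,1,0)$ and $\mu=(2)$; the greedy atom is $\alpha_{\min}(y)=231$, but $F_{231}=s_{(1,1)}$ so its leading Schur index is $(1,1)=\mu^T$, not $\mu$. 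The term $s_{(2)}$ in $\iF_y=s_{(1,1)}+s_{(2)}$ comes from the \emph{other} atom $312$. So the atom matching the $b_i$ and the atom producing the dominance-maximal Schur term are different objects, and conflating them breaks your identification of $\lambda^*$ with $\mu$. Moreover, your claimed uniqueness of the maximizing atom is asserted but not established: Lemma~\ref{ut-lem1} gives strict monotonicity of $\lambda$ along $<_\cA$, which pins down the \emph{minimum} (at $\alpha_{\min}(y)$) but not a unique maximum, since $(\cA(y),<_\cA)$ can have several maximal elements. The $\omega$-symmetry step is precisely what circumvents these difficulties, and it is the ingredient your argument is missing.
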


Our proof of  Theorem~\ref{our-thm2} is constructive and, combined with the previous theorem,
gives an efficient algorithm for computing the expansion of any 
$\iF_y$ into Schur $P$-summands. This represents a massive generalization of our main results in
\cite{HMP1}, which computed $\iF_y$ in a rather limited special case, the most important example
of which occurs  when $y=w_n$. We can now derive a formula for $\iF_{w_n}$ as
an almost trivial corollary, from which Theorem~\ref{our-thm1} follows as a simple exercise:

\begin{corollary}\label{intro-wn-cor}
It holds that $\iF_{w_n} = P_{(n-1,n-3,n-5,\dots)} = s_{\delta_p} s_{\delta_q}$
for $p=\lceil \frac{n+1}{2}\rceil$ and $q= \lfloor \frac{n+1}{2}\rfloor$.
\end{corollary}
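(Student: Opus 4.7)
The plan is to combine Theorems~\ref{intro-ivex-thm} and \ref{intro-tri-thm} to identify $\iF_{w_n}$ as a single Schur $P$-function, compute its index by direct calculation, and then derive the product form via a classical identity for Schur $P$-functions of shifted staircase shape.

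First, Theorem~\ref{intro-ivex-thm} explicitly lists the reverse permutations $w_n$ among the $P$-vexillary involutions, so $\iF_{w_n} = P_\mu$ for a unique strict partition $\mu$. To pin down $\mu$, apply Theorem~\ref{intro-tri-thm}, which identifies $\mu$ as the conjugate of the weakly decreasing rearrangement of $(b_1, \dots, b_n)$. For $y = w_n$ one has $y(k) = n+1-k$, and the three conditions $j \leq i$, $i < y(j)$, and $j < y(i)$ all reduce to $1 \leq j \leq \min(i, n-i)$; hence $b_i = \min(i, n-i)$. Sorting these values in weakly decreasing order (and dropping the unique trailing zero at $i = n$) gives the partition $(m, m-1, m-1, \dots, 1, 1)$ when $n = 2m$ and $(m, m, m-1, m-1, \dots, 1, 1)$ when $n = 2m+1$, whose conjugate in either case is the shifted staircase $(n-1, n-3, n-5, \dots)$. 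For example, $n=6$ yields $(b_1, \dots, b_6) = (1, 2, 3, 2, 1, 0)$, sorted to $(3, 2, 2, 1, 1)$, with conjugate $(5, 3, 1)$. This establishes the first equality $\iF_{w_n} = P_{(n-1, n-3, n-5, \dots)}$.

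For the second equality, the plan is to appeal to the classical identity expressing the Schur $P$-function of shifted staircase shape $(p+q-1, p+q-3, \dots)$ as a product of two Schur functions of straight staircase shape $\delta_p$ and $\delta_q$. Since $p+q = n$, this matches the shape produced above. The identity can be verified directly using Stembridge's Pfaffian formula for $P_\lambda$, via a shifted Littlewood--Richardson argument, or by an RSK-style bijection pairing standard Young tableaux of shapes $\delta_p$ and $\delta_q$ with shifted tableaux of shape $(n-1, n-3, \dots)$; equivalently, viewing $s_{\delta_p} s_{\delta_q}$ as the skew Schur function of the disjoint union $\delta_p \sqcup \delta_q$, the identity is a special case of the results of Ardila--Serrano and DeWitt recovered elsewhere in the paper.

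The main obstacle is the second equality: the first one reduces to the small bookkeeping computation above once the prior theorems are in hand, whereas the product identity requires either a separate proof or a careful citation. As a bonus, Theorem~\ref{our-thm1} then follows as a short exercise, since $\hat r_n = |\iR(w_n)|$ is the coefficient of $x_1 x_2 \cdots x_{\binom{n}{2}}$ in $\iF_{w_n}$, and extracting this coefficient from the product $s_{\delta_p} s_{\delta_q}$ gives the multinomial $\binom{P+Q}{P}$ shuffles of SYT of shapes $\delta_p$ and $\delta_q$, whose counts are $r_p$ and $r_q$ by Theorem~\ref{intro-thm1}.
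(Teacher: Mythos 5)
Your proposal is correct and takes essentially the same approach as the paper: the first equality is deduced from Theorems~\ref{intro-ivex-thm} and \ref{intro-tri-thm} (you fill in the $b_i = \min(i,n-i)$ calculation that the paper leaves implicit), and the second equality is deferred to a known identity (the paper cites Stembridge's Theorem~9.3 in \cite{Stem} or \cite[Theorem~1.4]{HMP1}). One small slip in your closing aside: $\hat r_n$ is the coefficient of $x_1 x_2 \cdots x_{\ellhat(w_n)}$ in $\iF_{w_n}$, where $\ellhat(w_n) = P+Q = \tfrac{1}{2}\bigl(\tbinom{n}{2}+\lfloor n/2\rfloor\bigr)$, not of $x_1\cdots x_{\binom{n}{2}}$; with that correction, the derivation of Theorem~\ref{our-thm1} from the product form goes through as you describe.
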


\begin{proof}
The first equality holds by Theorems~\ref{intro-ivex-thm} and \ref{intro-tri-thm}.
The second equality is a consequence of \cite[Theorem 1.4]{HMP1}
or \cite[Theorem 9.3]{Stem}.
\end{proof}

Our proofs of Theorems~\ref{our-thm2}, \ref{intro-ivex-thm}, and \ref{intro-tri-thm} are algebraic.
In Section~\ref{insertion-sect} we present a direct, bijective proof 
of Corollary~\ref{our-cor1}
 based on Patrias and Pylyavskyy's notion of
 \emph{shifted Hecke insertion} \cite{PP}. This alternate proof shows
 that 
 the coefficients in the Schur $P$-expansion of $\iF_y$ 
are the cardinalities of certain sets of shifted tableaux; see Corollary~\ref{t:insertion-cor}.

If we work with a  rescaled form of $\iF_y$,
then the results above may be reinterpreted in terms of the \emph{Schur $Q$-functions} $\{Q_\lambda\}$,
defined by $Q_\lambda = 2^{\ell(\lambda)} P_\lambda$ for strict partitions $\lambda$ with $\ell(\lambda)$ parts.
Let $\kappa(y)$ denote
the number of nontrivial cycles of an involution $y \in \I_n$ and define $\iG_y = 2^{\kappa(y)}\iF_y$. 
Clearly $\iG_y$ is a positive linear combination of Schur $Q$-functions, and 
in Section~\ref{schur-q-sect} we show that the coefficients which appear are actually positive integers.
Unlike the situation in Theorem~\ref{intro-tri-thm}, 
 the Schur $Q$-expansion of $\iG_y$, while still triangular with respect to dominance order, is no longer necessarily monic.

There is a noteworthy $Q$-analogue of  a vexillary permutation.
Define $y \in \I_n$ to be \emph{$Q$-vexillary} if $\iG_y$ is a Schur $Q$-function.
We prove the following in Section~\ref{schur-q-sect}.

\begin{theorem}\label{q-vex-thm}
An involution $y \in \I_n$ is $Q$-vexillary if and only if $y$ is vexillary, i.e., 2143-avoiding.
Every $Q$-vexillary involution is also $P$-vexillary, but not vice versa.
\end{theorem}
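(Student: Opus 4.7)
The plan is to deduce Theorem~\ref{q-vex-thm} from Theorem~\ref{intro-tri-thm} together with the pattern-avoidance characterization of $P$-vexillary involutions from Theorem~\ref{intro-ivex-thm} and Corollary~\ref{ivex-cor}.

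First I observe that ``$y$ is $Q$-vexillary'' means $\iG_y = Q_\mu = 2^{\ell(\mu)} P_\mu$ for some strict partition $\mu$, and since $\iG_y = 2^{\kappa(y)} \iF_y$ this is equivalent to
\[
\iF_y \;=\; 2^{\ell(\mu) - \kappa(y)}\, P_\mu.
\]
By Theorem~\ref{intro-tri-thm} the Schur $P$-expansion of $\iF_y$ has leading coefficient $1$ at $P_\mu$, so the scalar prefactor must be $1$, forcing $\ell(\mu) = \kappa(y)$ and $\iF_y = P_\mu$. Hence $Q$-vexillarity is equivalent to the conjunction of $P$-vexillarity with the numerical equality $\ell(\mu) = \kappa(y)$, where $\mu$ is the strict partition from Theorem~\ref{intro-tri-thm}. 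In particular, every $Q$-vexillary involution is $P$-vexillary.

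Next I unpack $\ell(\mu) = \kappa(y)$ combinatorially. Since $\mu$ is the transpose of the sorted tuple $(b_1, \ldots, b_n)$, one has $\ell(\mu) = \max_i b_i$, and each $b_i \leq \kappa(y)$ because the $j$'s contributing to $b_i$ are distinct smaller ends of $2$-cycles of $y$. Thus $\ell(\mu) = \kappa(y)$ holds if and only if there is a position $i$ such that every $2$-cycle $(a,c)$ of $y$ with $a < c$ satisfies both $a \leq i < c$ and $y(i) > a$; geometrically, all arcs of the matching diagram of $y$ must simultaneously straddle a common position whose image exceeds every smaller arc endpoint.

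The main step is to identify this conjunction combinatorially with $2143$-avoidance. If $y$ avoids $2143$, then the explicit pattern list from Corollary~\ref{ivex-cor} shows $y$ is $P$-vexillary, and an arc-by-arc analysis shows that the matching diagram of a $2143$-avoiding involution always admits a common straddling position of the required type. Conversely, a $2143$ occurrence in a $P$-vexillary $y$ forces two arcs to be disjoint or oppositely interleaved in a manner that obstructs every candidate straddling position, yielding $\ell(\mu) < \kappa(y)$. The explicit example $y = 2143 = (1,2)(3,4) \in \I_4$ establishes the ``not vice versa'' clause: the only involution words are $(1,3)$ and $(3,1)$, so $\iF_y = e_2 + h_2 = P_{(2)}$, hence $y$ is $P$-vexillary, yet $\kappa(y) = 2 > 1 = \ell((2))$, so $y$ is not $Q$-vexillary. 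The hard part will be the case analysis in this last step: one must verify that each additional pattern from Corollary~\ref{ivex-cor} beyond $2143$ does not by itself prevent the straddling condition, while every $2143$ occurrence geometrically prevents it.
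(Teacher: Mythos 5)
Your opening reduction is correct and is a clean extension of the paper's own first step: since $\iG_y = 2^{\kappa(y)}\iF_y$, the monic leading coefficient guaranteed by Corollary~\ref{ut-cor} forces $\iG_y = Q_\mu$ to be equivalent to the conjunction of $\iF_y = P_\mu$ (with $\mu = \ilambda(y)$) and $\ell(\mu) = \kappa(y)$. Your reinterpretation of $\ell(\mu)=\kappa(y)$ as the existence of a row index $i$ such that every $2$-cycle $(a,c)$ of $y$ with $a<c$ satisfies $a\leq i<c$ and $a<y(i)$ is also correct, since $\ell(\mu)=\max_i b_i$, each $b_i\leq\kappa(y)$, and $b_i=\kappa(y)$ precisely when $(i,a)\in\hat D(y)$ for every smaller end $a$ of a $2$-cycle. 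The example $y=(1,2)(3,4)$ is also verified correctly: $\iF_y = e_2+h_2 = P_{(2)}$ while $\kappa(y)=2 > 1 = \ell((2))$.

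The genuine gap is the entire ``arc-by-arc analysis.'' You correctly flag this as the hard part, but the proof is simply not there, and this step is the actual combinatorial core of the theorem. There are two specific difficulties you underestimate. First, a $2143$ occurrence alone does \emph{not} obstruct the straddling condition: $y=(1,4)(3,6)$ contains $2143$, yet $i=3$ straddles both arcs $(1,4)$ and $(3,6)$ with $y(3)=6 > 1$. It fails to be $Q$-vexillary only because it is not $P$-vexillary (it is on the Corollary~\ref{ivex-cor} list). So your converse direction must genuinely use the $P$-vexillarity hypothesis in an essential, nontrivial way, and the interaction between ``$y$ is $P$-vexillary,'' ``$y$ has a $2143$ pattern,'' and ``no common straddling position'' requires a careful argument you have not given. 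Second, even the forward direction --- $2143$-avoiding implies a common straddling position with $y(i)>a$ for all smaller ends $a$ --- is not obvious; in particular the inequality $a<y(i)$ is not automatic when $i$ is the larger end of a $2$-cycle, and ruling out bad configurations for $i$ requires an argument.

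The paper's route is quite different: it develops a local-to-global ``bad $Q$-pattern'' machinery (Lemmas~\ref{q-vex-lem1} and \ref{q-vex-lem2}, the latter via a brute-force check in $\I_{12}$), reduces $Q$-vexillarity to a finite pattern-avoidance condition on $8$-element $z$-invariant subsets, determines the resulting five patterns by computer in $\I_8$, and then matches them against all ways a $2143$ occurrence can distribute fixed points among $\{i,j,k,l\}$. Your approach, if carried through, would replace both computer checks with a conceptual argument, which would be an improvement; but as written, the crucial equivalence is only asserted, so the proposal does not constitute a proof.
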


There is a  story parallel to all of this when we consider only fixed-point-free involutions in $S_{2n}$,
which parametrize
the orbits of the symplectic group
 $\Sp_{2n}(\CC)$ acting on the flag variety. 
There is a family of ``fixed-point-free'' involution Stanley symmetric functions $\Ffpf_z$,
 for which Theorems~\ref{our-thm2} and \ref{intro-ivex-thm} and most other results here have 
 interesting analogues.
The proofs of some these statements turn out to be significantly more
complicated than their predecessors.
 In order to keep the present article to a manageable length, we defer this material to \cite{HMP5}.

Here is an outline of what follows.
The proofs of our main results depend crucially on an interpretation of the symmetric functions
$\iF_y$ as stable limits of polynomials introduced by Wyser and Yong \cite{WY}
to represent the cohomology classes of certain orbit closures in the flag variety.
In \cite{HMP3}, we proved transition formulas for these cohomology representatives,
which we refer to as \emph{involution Schubert polynomials}.
After some preliminaries in Section~\ref{prelim-sect},
we review these transition formulas in Section~\ref{invtrans-sect}
and use them to derive some relevant identities for $\iF_y$.
We prove the theorems sketched in this introduction
in Section~\ref{schurp-sect}, and describe an alternate bijective proof of Schur $P$-positivity in Section~\ref{insertion-sect}.
Along the way, we also establish a few other results,
such as Pfaffian formulas
for certain involution Stanley symmetric functions and Schubert polynomials
(see Section~\ref{pfaffian-sect}).


\subsection*{Acknowledgements}

We thank
Dan Bump, Michael Joyce,
Vic Reiner,
Alex Woo,
Ben Wyser, and
Alex Yong for many helpful conversations during the development of this paper.

\section{Preliminaries}
\label{prelim-sect}

Let $\PP\subset \NN \subset \ZZ$ denote the respective sets of positive, nonnegative, and all 
integers. For $n \in \PP$, let $[n]=\{1,2,\dots,n\}$.
The \emph{support} of a permutation $w : X\to X$ is the set  $\supp(w) = \{ i \in X: w(i)\neq i\}$.
Define $S_\ZZ$ as the group of permutations of $\ZZ$ with finite support,
and let $S_\infty\subset S_\ZZ$ be the subgroup of permutations with support contained in $ \PP$.
We view $S_n$ as the subgroup of $S_\infty$ consisting of the permutations fixing all integers $i \notin [n]$.

Throughout, we let $s_i=(i,i+1) \in S_\ZZ$ for $i \in \ZZ$.
Let $\cR(w)$ be the set of reduced words for $w \in S_\ZZ$
and write $\ell(w)$ for the 
common length of these words. 
We let $\DesL(w)$ and $\DesR(w)$ denote the
\emph{left}  and \emph{right descent sets} of $w \in S_\ZZ$,
consisting of the simple transpositions $s_i$ such that $\ell(s_iw)<\ell(w)$ and
$\ell(ws_i)<\ell(w)$, respectively.  Recall that $s_i \in \DesR(w)$
if and only if $w(i)> w(i+1)$,
and that $\ell(w) $ is the cardinality of
$\inv(w) = \{ (i,j) \in \ZZ\times \ZZ : i<j\text{ and }w(i)>w(j)\}.
$
Let $<$ denote the \emph{(strong) Bruhat order} on $S_\ZZ$,
that is, the weakest  partial order on $S_\ZZ$ with $w < wt$ if $t $
is a transposition and $\ell(w)<\ell(wt)$. 
We write $u \lessdot v$ for $u,v \in S_\ZZ$ if $ \{ w \in S_\ZZ : u\leq w<v\}= \{u\}$.
The poset $(S_\ZZ, \leq)$ contains $S_\infty$ as a lower ideal and is graded with rank function
$\ell$. Consequently $u \lessdot v$  if and only if $u<v$ and $\ell(v) = \ell(u)+1$.
If $t=(a,b) \in S_\ZZ$ for integers $a<b$,
then $u \lessdot ut$ if and only if $u(a) < u(b)$ and no $i \in \ZZ$
exists with $a<i<b$ and $u(a)<u(i)<u(b)$.

\subsection{Divided difference operators}\label{divided-sect}

We recall a few technical facts about \emph{divided difference operators} from the references
\cite{Knutson, Macdonald, Macdonald2, Manivel}.
Let $\cL = \ZZ\left[x_1,x_2,\dots,x_1^{-1},x_2^{-1},\dots\right]$ be the ring of Laurent
polynomials over $\ZZ$ in a countable set of commuting indeterminates, and let
$\cP = \ZZ[x_1,x_2,\dots]$ be the subring of polynomials in $\cL$.
The group $S_\infty$ acts on $\cL$ by permuting variables, and one defines
\[ \partial_i f = (f - s_i f)/(x_i-x_{i+1})\qquad\text{for $i \in \PP$ and $f \in \cL$}.\]
The \emph{divided difference operator} $\partial_i$ defines a map $\cL \to \cL$ which restricts to
a map $\cP\to \cP$. It is clear by definition that $\partial_i f = 0$ if and only if $s_i f = f$.
If $f \in \cL$ is homogeneous and $\partial_i f \neq 0 $ then $\partial_i f $ is
homogeneous of degree $\deg(f)-1$.
If  $f,g \in \cL$  then $\partial_i(fg) =(\partial_if)g +  (s_if) \partial_i g$,
and if $\partial_i f = 0$, then
$\partial_i(fg) = f \partial_i g$.

%
%

The divided difference operators satisfy $\partial_i^2=0$ as well as the usual braid relations for
$S_\infty$, and so if $w \in S_\infty$ then $\partial_{i_1}\partial_{i_2}\cdots \partial_{i_k}$ is
the same map  $\cL\to \cL$  for all reduced words $(s_{i_1},s_{i_2},\dots,s_{i_k}) \in \cR(w)$.
We denote this map by
$\partial_w :\cL \to \cL$ for $w \in S_\infty.$
For $n \in \PP$, let $w_n = n\cdots 321\in S_n$ be the reverse permutation and define
$\Delta_n = \prod_{1\leq i<j \leq n}(x_i-x_j)$.
The following  identity is \cite[Proposition 2.3.2]{Manivel}.

\begin{lemma}[See \cite{Manivel}]\label{man-lem}
If $n \in \PP$ and $f \in \cL$ then
$\partial_{w_n} f = \Delta_n^{-1} \sum_{\sigma \in S_n} (-1)^{\ell(\sigma)} \sigma f$.
\end{lemma}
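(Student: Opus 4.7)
The strategy is to reduce this operator identity to a finite verification by invoking a freeness result for Laurent polynomials over their symmetric functions. Write $A_n(f) := \Delta_n^{-1}\sum_{\sigma \in S_n}(-1)^{\ell(\sigma)}\sigma(f)$ for the right-hand side as an operator on $\cL$; this is well-defined since the numerator is antisymmetric under every simple transposition in $S_n$ and so divisible by $\Delta_n$. Both $A_n$ and $\partial_{w_n}$ are $\cL^{S_n}$-linear: the identity $\sigma(fg) = g\sigma(f)$ for $g \in \cL^{S_n}$ gives $A_n(fg) = gA_n(f)$, while $\partial_{w_n}(fg) = g\partial_{w_n}(f)$ follows by iterating $\partial_i(fg) = g\partial_i(f)$ (valid when $s_ig = g$, since then $\partial_i g = 0$). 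Moreover, both operators commute with multiplication by $x_j^{\pm1}$ for every $j > n$, so it suffices to verify the identity on $\cL^{(n)} := \ZZ[x_1^{\pm}, \ldots, x_n^{\pm}]$.

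The next step is to invoke the classical fact that $\ZZ[x_1, \ldots, x_n]$ is a free module over its $S_n$-invariant subring with basis the $n!$ staircase monomials $x_1^{\alpha_1}\cdots x_n^{\alpha_n}$ satisfying $0 \leq \alpha_i \leq n-i$ for each $i$. Localizing at the invariant element $x_1x_2\cdots x_n$ extends this to a basis of $\cL^{(n)}$ over its $S_n$-invariants. Since both $A_n$ and $\partial_{w_n}$ are $\cL^{(n), S_n}$-linear by the previous paragraph, the lemma reduces to showing that $A_n$ and $\partial_{w_n}$ agree on each staircase monomial.

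For $A_n$, the Jacobi bi-alternant formula gives $\Delta_n\cdot A_n(x_1^{\alpha_1}\cdots x_n^{\alpha_n}) = \det(x_i^{\alpha_j})_{i,j=1}^n$, which vanishes whenever the $\alpha_j$ are not pairwise distinct. Under the staircase constraints, the unique option with distinct entries is $\alpha = \delta_n = (n-1, n-2, \ldots, 1, 0)$, and the determinant then equals the Vandermonde $\Delta_n$, so $A_n(x_1^{n-1}x_2^{n-2}\cdots x_{n-1}) = 1$. In parallel, $\partial_{w_n}$ is homogeneous of degree $-\binom{n}{2}$ while every staircase monomial has total degree at most $\binom{n}{2}$, with equality only for $\alpha = \delta_n$, forcing $\partial_{w_n}$ to kill every other staircase monomial. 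What remains is to check $\partial_{w_n}(x_1^{n-1}\cdots x_{n-1}) = 1$; this I would handle by induction on $n$ via the length-additive factorization $w_n = w_{n-1}\cdot(s_{n-1}s_{n-2}\cdots s_1)$, which reduces the computation to repeatedly applying the basic identity $\partial_i(x_i^{k+1}x_{i+1}^k) = x_i^kx_{i+1}^k$ (commuting each $\partial_i$ past the $s_i$-invariant factors of the monomial) to get $\partial_{s_{n-1}\cdots s_1}(x_1^{n-1}\cdots x_{n-1}) = x_1^{n-2}\cdots x_{n-2}$, and then applying the inductive hypothesis to $\partial_{w_{n-1}}$. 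The main potential obstacle is this final residual computation, which while straightforward requires choosing the right factorization of $w_n$ and carefully tracking how each $\partial_i$ interacts with the symmetric factors of the staircase monomial.
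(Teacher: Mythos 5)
The paper gives no proof of this lemma; it is cited directly to \cite[Proposition 2.3.2]{Manivel}, so there is no in-paper argument to compare against. Your proof is correct and follows the standard route: reduce to $\cL^{S_n}$-linearity, check on a module basis of staircase monomials, and verify that both sides vanish on all staircase monomials except $x^{\delta_n}$, where both equal $1$.

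A few points deserve tightening. First, the degree argument that $\partial_{w_n}$ annihilates staircase monomials $x^\alpha$ with $\alpha \neq \delta_n$ implicitly uses that $\partial_{w_n}$ restricts to a map $\cP \to \cP$: a nonzero homogeneous element of $\cL$ can have negative degree, so ``homogeneous of negative degree'' forces zero only because $x^\alpha \in \cP$ and each $\partial_i$ preserves $\cP$ (as the paper notes in Section~\ref{divided-sect}). You should say so explicitly. Second, the freeness of $\ZZ[x_1,\dots,x_n]$ over $\Lambda_n = \ZZ[x_1,\dots,x_n]^{S_n}$ with staircase basis is a nontrivial integral statement (it is where $\ZZ$-coefficients matter), and it should be cited---it appears in \cite[Section 2.5]{Manivel} or Demazure's work on integral invariants; alternatively, for your purposes \emph{generation} over $\Lambda_n$ would already suffice and is slightly easier to establish. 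Third, when you pass to $\cL^{(n)}$ by localizing at $x_1\cdots x_n$, you should note that taking $S_n$-invariants commutes with this localization (clear since the multiplicative set is $S_n$-stable), so the staircase monomials do remain a basis over $\bigl(\cL^{(n)}\bigr)^{S_n}$. None of these affect the validity of the argument; they are gaps in exposition rather than in substance. The closing induction establishing $\partial_{w_n}(x^{\delta_n}) = 1$ via the reduced factorization $w_n = w_{n-1} \cdot (s_{n-1}\cdots s_1)$ is correct as you outline it: applying $\partial_1,\dots,\partial_{n-1}$ in succession, commuting each $\partial_i$ past the $s_i$-invariant part of the monomial, does carry $x^{\delta_n}$ to $x^{\delta_{n-1}}$.
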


For $i \in \PP$ the \emph{isobaric divided difference operator} $\pi_i : \cL \to \cL$ is defined by
\[
\pi_i(f) = \partial_i(x_if) = f + x_{i+1}\partial_i f\qquad\text{for $f \in \cL$}.
\]
Observe that $\pi_i f=f$ if and only if $s_i f=f$, in which case $\pi_i(fg) = f \pi_i(g)$ for
$g \in \cL$.
If
$f \in \cL$ is homogeneous  with   $\pi_i f \neq 0 $, then $\pi_i f $ is homogeneous of the same
degree. The isobaric divided difference operators also satisfy the braid relations for $S_\infty$,
so we may define
$\pi_w=\pi_{i_1}\pi_{i_2}\cdots \pi_{i_k}$ for any $(s_{i_1},s_{i_2},\dots,s_{i_k}) \in \cR(w)$.
Moreover, $\pi_i^2=\pi_i$. 
%

Given $a,b \in \PP$ with $a< b$, define
$\partial_{b,a} = \partial_{b-1} \partial_{b-2}\cdots \partial_a$
and
$\pi_{b,a} = \pi_{b-1} \pi_{b-2}\cdots \pi_a$. For numbers $a,b \in \PP$ with $a\geq b$, we set
$\partial_{b,a} = \pi_{b,a} = \id$.
It is convenient here to note the following identity.

\begin{lemma}\label{pi-lem}
If $a\leq b$ and  $f \in \cL$ are such that $\partial_i f = 0$ for $a< i < b$,
then
$\pi_{b,a} f = \partial_{b,a}\(x_a^{b-a}f\).$
\end{lemma}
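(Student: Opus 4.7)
The plan is to proceed by induction on the difference $b - a \geq 0$. The base case $b = a$ is trivial since both $\pi_{a,a}$ and $\partial_{a,a}$ are defined to equal the identity. For the inductive step, I would peel off the innermost factor $\pi_a$ by setting $g = \pi_a f = f + x_{a+1}\partial_a f$, and first verify that $g$ satisfies the analogous hypothesis for the pair $(a+1, b)$. Indeed, for each $i$ with $a+1 < i < b$ one has $|i - a| \geq 2$, so $\partial_i$ commutes with both $\partial_a$ and multiplication by $x_a$; combined with $\partial_i f = 0$ this yields $\partial_i g = \partial_a(x_a \partial_i f) = 0$.

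Invoking the inductive hypothesis then gives
\[
\pi_{b,a} f \;=\; \pi_{b-1, a+1}(g) \;=\; \partial_{b-1,a+1}\!\bigl(x_{a+1}^{b-a-1} f + x_{a+1}^{b-a}\partial_a f\bigr).
\]
On the other hand, iterating the Leibniz identity $\partial_a(x_a h) = h + x_{a+1}\partial_a h$ produces the expansion
\[
\partial_a(x_a^{b-a} f) \;=\; \sum_{j=0}^{b-a-1} x_a^{b-a-1-j} x_{a+1}^j\, f \;+\; x_{a+1}^{b-a}\partial_a f,
\]
so the desired equality $\pi_{b,a} f = \partial_{b,a}(x_a^{b-a} f) = \partial_{b-1,a+1}\bigl(\partial_a(x_a^{b-a} f)\bigr)$ reduces, after pulling out $x_a^{b-a-1-j}$ (which commutes with every $\partial_i$ for $i \geq a+1$), to showing that
\[
\sum_{j=0}^{b-a-2} x_a^{b-a-1-j}\, \partial_{b-1,a+1}(x_{a+1}^j f) \;=\; 0.
\]

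The heart of the argument, and the main obstacle, is the sub-claim that $\partial_{b-1,a+1}(x_{a+1}^j f) = 0$ for every $0 \leq j \leq b-a-2$. I would establish it by repeatedly invoking the Leibniz rule in its simplified form $\partial_i(hf) = (\partial_i h)\,f$, valid here because $\partial_i f = 0$ for all $a+1 \leq i \leq b-1$, to pull $f$ outside and reduce the claim to $\partial_{b-1,a+1}(x_{a+1}^j) \cdot f$. Since $\partial_{b-1,a+1}$ is a composition of $b-1-a$ divided differences, each of which strictly lowers the degree of a nonzero homogeneous input by $1$, and since $\deg(x_{a+1}^j) = j$ is strictly less than $b-1-a$, this composite must annihilate $x_{a+1}^j$. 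This closes the induction.
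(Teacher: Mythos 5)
Your argument is correct and follows essentially the same route as the paper: induct on $b-a$, peel off $\pi_a$ to form $g=\pi_a f$, verify $\partial_i g=0$ for $a+1<i<b$, apply the inductive hypothesis, and then use the Leibniz rule together with a degree count to absorb the discrepancy — the only cosmetic difference is that you fully expand $\partial_a(x_a^{b-a}f)$ as a sum before killing the extra terms, whereas the paper applies Leibniz once and annihilates a single product. One notational slip to fix: with the paper's convention $\pi_{b,a}=\pi_{b-1}\cdots\pi_a$, factoring off $\pi_a$ yields $\pi_{b,a+1}g$ (not $\pi_{b-1,a+1}g$), and likewise $\partial_{b,a}=\partial_{b,a+1}\partial_a$, so every $\pi_{b-1,a+1}$ and $\partial_{b-1,a+1}$ in your write-up should read $\pi_{b,a+1}$ and $\partial_{b,a+1}$; your own factor count ($b-1-a$ operators) already matches $\partial_{b,a+1}$, confirming this is just a subscript typo and not a gap.
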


\begin{proof}
Assume $a<b$. By induction 
$\pi_{b,a} f = \pi_{b,a+1}\pi_{a} f
= \partial_{b,a+1}\( x_{a+1}^{b-a-1} \pi_a f\)
= \partial_{b,a}\(x_a^{b-a}f\) - \partial_{b,a+1}\(\partial_a\( x_a^{b-a-1}\) x_a f\)
= \partial_{b,a}\(x_a^{b-a}f\) - (x_af)  \partial_{b,a}\(x_a^{b-a-1}\).
$
Since $x_a^{b-a-1}$ has degree less than $b-a$, we have
$ \partial_{b,a}\(x_a^{b-a-1}\)=0$, so $\pi_{b,a}f = \partial_{b,a}(x_a^{b-a}f)$ as desired.
\end{proof}

For a sequence of integers  $a = (a_1,a_2,\dots)$ of either finite length or with only finitely many
nonzero terms, we let $x^a = x_1^{a_1} x_2^{a_2}\cdots \in \cL$. For $n \in \PP$, define
$\delta_n = (n-1,\dots,2,1,0)$.

\begin{lemma}\label{pi-lem2}
If $n \in \PP$ and $f \in \cL$ then $\pi_{w_n} f = \partial_{w_n}(x^{\delta_n} f)$.
\end{lemma}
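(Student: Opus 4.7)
The plan is to induct on $n$, using a reduced factorization of $w_n$ to reduce to an auxiliary ``ascending'' analogue of Lemma~\ref{pi-lem}. The base $n=1$ is trivial since $w_1=\id$, $x^{\delta_1}=1$, and both sides equal $f$. For the inductive step I would use the reduced factorization $w_n=(s_1s_2\cdots s_{n-1})\cdot w_{n-1}$, where $w_{n-1}$ denotes the longest element of $S_{n-1}\subset S_\infty$ (reversing $\{1,\dots,n-1\}$ and fixing $n$); this is reduced because $(n-1)+\binom{n-1}{2}=\binom{n}{2}=\ell(w_n)$, and one checks directly that $((s_1\cdots s_{n-1})w_{n-1})(k)=n+1-k$. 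Consequently $\pi_{w_n}=\pi_1\pi_2\cdots\pi_{n-1}\pi_{w_{n-1}}$ and $\partial_{w_n}=\partial_1\partial_2\cdots\partial_{n-1}\partial_{w_{n-1}}$.

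Set $g=\pi_{w_{n-1}}(f)$, which by the inductive hypothesis equals $\partial_{w_{n-1}}(x^{\delta_{n-1}}f)$. Since $s_i\in\DesL(w_{n-1})$ for $1\le i\le n-2$, we have $\pi_i\pi_{w_{n-1}}=\pi_{w_{n-1}}$ and hence $\pi_ig=g$, so $\partial_ig=0$ for each such $i$; in particular $g$ is symmetric in $x_1,\dots,x_{n-1}$. The main claim then reduces to the following auxiliary identity: for any $h\in\cL$ with $\partial_ih=0$ for $1\le i\le k-1$,
\[
\pi_1\pi_2\cdots\pi_k(h)=\partial_1\partial_2\cdots\partial_k(x_1x_2\cdots x_k\cdot h).
\]

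For this auxiliary identity I would induct on $k$. The base $k=1$ is the defining relation $\pi_1h=\partial_1(x_1h)$. For the step, set $h'=\pi_kh=h+x_{k+1}\partial_kh$; the commutations $\partial_i\partial_k=\partial_k\partial_i$ for $|i-k|\ge 2$ yield $\partial_ih'=0$ for $1\le i\le k-2$, so the inductive hypothesis applies to $h'$ at level $k-1$, giving $\pi_1\cdots\pi_{k-1}(h')=\partial_1\cdots\partial_{k-1}(x_1\cdots x_{k-1}\cdot h')$. A short Leibniz-rule calculation, using that $x_1\cdots x_{k-1}$ is both $\partial_k$-annihilated and $s_k$-fixed, shows $\partial_k(x_1\cdots x_k\cdot h)=x_1\cdots x_{k-1}\cdot\partial_k(x_kh)=x_1\cdots x_{k-1}\cdot\pi_kh=x_1\cdots x_{k-1}\cdot h'$, which combined with the previous equality gives $\pi_1\cdots\pi_k(h)=\partial_1\cdots\partial_k(x_1\cdots x_k\cdot h)$.

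Applying the auxiliary identity with $k=n-1$ and $h=g$, and noting that $x_1\cdots x_{n-1}$ is $S_{n-1}$-symmetric and hence commutes with $\partial_{w_{n-1}}$, together with the factorization $x_1\cdots x_{n-1}\cdot x^{\delta_{n-1}}=x^{\delta_n}$, we conclude
\[
\pi_{w_n}(f)=\partial_1\cdots\partial_{n-1}(x_1\cdots x_{n-1}\cdot g)=\partial_1\cdots\partial_{n-1}\partial_{w_{n-1}}(x^{\delta_n}f)=\partial_{w_n}(x^{\delta_n}f).
\]
The delicate step is the auxiliary induction: one must verify that the partial symmetries $\partial_ih=0$ for $i<k$ survive, degrading to $i<k-1$, after a single application of $\pi_k$, which is exactly what allows the nested recursion to close.
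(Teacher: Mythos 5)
Your proof is correct and follows essentially the same route as the paper: factor $w_n=(s_1s_2\cdots s_{n-1})\,w_{n-1}$, reduce to the identity $\pi_1\cdots\pi_{n-1}(h)=\partial_1\cdots\partial_{n-1}(x_1\cdots x_{n-1}h)$, and close by induction on $n$ using that $x_1\cdots x_{n-1}$ is $S_{n-1}$-invariant. The one thing worth flagging is that the partial-symmetry hypothesis ($\partial_i h=0$ for $i<k$) you impose on the auxiliary identity is superfluous: $\pi_1\cdots\pi_k(h)=\partial_1\cdots\partial_k(x_1\cdots x_k h)$ holds for \emph{every} $h\in\cL$, because in the inductive step one can apply the inductive hypothesis directly to $h'=\pi_k h$ with no symmetry assumption, and then pull $\partial_k$ outside using only the $s_k$-invariance of $x_1\cdots x_{k-1}$. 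The paper uses this unconditional form, which removes the need to track how the symmetries of $g=\pi_{w_{n-1}}f$ degrade under successive applications of $\pi_k$ (the step you call the ``delicate'' one). Dropping that hypothesis makes your argument both shorter and cleaner, while everything else matches.
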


\begin{proof}
Assume $n>1$ and let $c=s_1s_2\cdots s_{n-1} \in S_n$.
One checks that $\pi_{w_n} = \pi_c \pi_{w_{n-1}}$ and
$\partial_{w_n} = \partial_c\partial_{w_{n-1}}$ and
$\pi_c f = \partial_c (x_1x_2\cdots x_{n-1} f)$ for $f \in \cL$.
Hence, by induction,
$\pi_{w_n} f
= \pi_c\pi_{w_{n-1}} f
= \pi_c \partial_{w_{n-1}} (x^{\delta_{n-1}} f)
= \partial_c \partial_{w_{n-1}} (x_1x_2\cdots x_{n-1}x^{\delta_{n-1}} f)
=\partial_{w_n}(x^{\delta_n} f)$.
\end{proof}

\subsection{Schubert polynomials and Stanley symmetric functions}\label{stab-sect}

The \emph{Schubert polynomial}   corresponding to  $y \in S_n$ is the polynomial
$\fkS_y = \partial_{y^{-1}w_n}x^{\delta_n} \in \cP$, where as above  we let
$w_n=n\cdots321 \in S_n$ and $x^{\delta_n} = x_1^{n-1} x_2^{n-2}\cdots  x_{n-1}^1.$
This formula for $\fkS_y$ is independent of the choice of $n$
such that $y \in S_n$, and we consider
the Schubert polynomials to be a family indexed by $S_\infty$.
Some useful
references for the basic properties of $\fkS_w$ include \cite{BJS,Knutson,Macdonald,Manivel}.
Since $\partial_i^2=0$, it follows directly from the definition that
\be\label{maineq}
\fkS_1=1
\qquand
\partial_i \fkS_w = \begin{cases} \fkS_{ws_i} &\text{if $s_i \in \DesR(w)$}
\\
0&
\text{if $s_i \notin \DesR(w)$}\end{cases}
\qquad\text{for each $i \in \PP$}.
\ee
Conversely,
one can show that
$\{ \fkS_w\}_{w \in S_\infty}$ is the unique family of homogeneous polynomials indexed by
$S_\infty$ satisfying \eqref{maineq}; see \cite[Theorem 2.3]{Knutson} or the introduction of \cite{BH}.
One checks as an exercise that $\deg \fkS_w=\ell(w)$
and
$\fkS_{s_i} = x_1+x_2+\dots +x_i$ for $i \in \PP$.
The polynomials $\fkS_w$ for $w \in S_\infty$ are linearly independent, and form a $\ZZ$-basis for $\cP$ 
\cite[Proposition 2.5.4]{Manivel}.

Let $\ZZ[[x_1,x_2,\dots]]$ be the ring of formal power series of bounded degree in the commuting
variables $x_i$ for $i \in \PP$. 
Let $\Lambda\subset \ZZ[[x_1,x_2,\dots]]$ be the usual subring of \emph{symmetric functions}.
A sequence of power series $f_1,f_2,\dots $ has a limit $\lim_{n\to \infty} f_n\in \ZZ[[x_1,x_2,\dots]]$
  if for each fixed monomial the
corresponding coefficient sequence is eventually constant.
For  $n \in \NN$,  let $\rho_n : \ZZ[[x_1,x_2,\dots]] \to \ZZ[x_1,x_2,\dots,x_n]$ denote the
 homomorphism induced by setting $x_{i}=0$ for $i>n$.

\begin{lemma}\label{stab0-lem}
Let $f_1,f_2,\dots \in \cL$ be a sequence of homogeneous  Laurent polynomials and suppose for
some $N \in \NN$ it holds that $f_N\neq 0$ and that  $\rho_n f_{n+1} = f_n$  and
$x^{\delta_n} f_{n+1} \in \cP$ for all $n\geq N$.
Then $F = \lim_{n\to \infty} \pi_{w_n} f_n$ exists and belongs to $\Lambda$,
and satisfies $\rho_n F = \pi_{w_n} f_n$ for all $n\geq N$.
\end{lemma}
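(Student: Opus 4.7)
The plan is to combine Lemmas \ref{pi-lem2} and \ref{man-lem} to rewrite
\[
F_n \;:=\; \pi_{w_n} f_n \;=\; \partial_{w_n}(x^{\delta_n} f_n) \;=\; \Delta_n^{-1} \sum_{\sigma \in S_n} \sgn(\sigma)\, \sigma(x^{\delta_n} f_n),
\]
and then establish three things in sequence: (i) each $F_n$ is a well-defined polynomial, symmetric in $x_1,\dots,x_n$; (ii) the stability identity $\rho_n F_{n+1} = F_n$ holds for all $n \geq N$; (iii) the limit $F = \lim_n F_n$ exists in $\ZZ[[x_1,x_2,\dots]]$, lies in $\Lambda$, and satisfies $\rho_n F = F_n$ for $n \geq N$.

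For (i), I first observe that $x^{\delta_n} f_n \in \cP$: since $x^{\delta_n}$ involves only $x_1,\dots,x_{n-1}$, the map $\rho_n$ fixes it and $\rho_n(x^{\delta_n} f_{n+1}) = x^{\delta_n} f_n$, so the hypothesis $x^{\delta_n} f_{n+1} \in \cP$ forces $x^{\delta_n} f_n \in \cP$. The symmetry of $F_n$ in $x_1,\dots,x_n$ is then immediate from the displayed alternating-sum formula, as $\Delta_n^{-1}$ times an antisymmetric polynomial is symmetric.

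The main obstacle is (ii), and this is where essentially all the work lies. The key tool is the factorization $x^{\delta_{n+1}} = (x_1 x_2 \cdots x_n)\, x^{\delta_n}$. Using it, I split the sum defining $\Delta_{n+1} F_{n+1}$ along the coset decomposition $S_{n+1} = S_n \sqcup (S_{n+1} \setminus S_n)$, where $S_n$ is identified with the stabilizer of $n+1$. For any $\sigma \in S_{n+1}\setminus S_n$, the permutation moves $n+1$ into $[n]$, so $\sigma(x_1\cdots x_n)$ is divisible by $x_{n+1}$; combined with $\sigma(x^{\delta_n} f_{n+1}) \in \cP$, the summand $\sigma(x^{\delta_{n+1}} f_{n+1})$ is divisible by $x_{n+1}$ and hence killed by $\rho_n$. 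For $\sigma \in S_n$, the maps $\rho_n$ and $\sigma$ commute, yielding $\rho_n \sigma(x^{\delta_{n+1}} f_{n+1}) = (x_1\cdots x_n)\,\sigma(x^{\delta_n} f_n)$ via $\rho_n f_{n+1} = f_n$. Summing with signs and using $\rho_n(\Delta_{n+1}) = (x_1\cdots x_n)\Delta_n$ then produces $\rho_n F_{n+1} = F_n$ after cancelling the common factor $(x_1\cdots x_n)\Delta_n$.

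Part (iii) is then formal. The stability in (ii) implies that the coefficient of any fixed monomial in $F_n$ is eventually constant, so $F = \lim_n F_n$ exists in $\ZZ[[x_1,x_2,\dots]]$, and $\rho_n F = F_n$ holds because $F_n$ is itself a polynomial in $x_1,\dots,x_n$. Symmetry of each $F_n$ in $x_1,\dots,x_n$ passes to symmetry of $F$ in all variables by this identity. Finally, since $f_N \neq 0$ and $f_n = \rho_n f_{n+1}$ forces every $f_n$ (for $n \geq N$) to be nonzero and homogeneous of the common degree $d := \deg f_N$, each $F_n$ is homogeneous of degree $d$, so $F$ has bounded degree and therefore $F \in \Lambda$.
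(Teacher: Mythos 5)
Your proof is correct and follows the same strategy as the paper's: apply Lemmas \ref{pi-lem2} and \ref{man-lem} to write $\pi_{w_n} f_n$ as an alternating sum, establish the stability identity $\rho_n \pi_{w_{n+1}} f_{n+1} = \pi_{w_n} f_n$ by splitting over $S_n \subset S_{n+1}$ and killing the off-stabilizer terms using the hypothesis $x^{\delta_n} f_{n+1} \in \cP$ together with $\rho_n \Delta_{n+1} = x_1 \cdots x_n \Delta_n$, and then deduce existence of the limit and membership in $\Lambda$ formally. You have simply made explicit the step the paper labels as "straightforward," and the argument is sound.
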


This lemma is false without a condition like homogeneity to control $\deg f_n$.

\begin{proof}
Since $\rho_n f_{n+1} = f_n$ for $n\geq N$ and since each $f_n$ is homogeneous, we must have
$f_n\neq 0$ and  $\deg f_n = \deg f_N$ for all $n\geq N$.
Note by Lemma \ref{man-lem} that if $n\geq N$ then $\pi_{w_n} f_n \in \cP$
is invariant under the action of $S_n$. As such,
to prove the lemma it suffices to show that
$\rho_n \pi_{n+1} f_{n+1} = \pi_n f_n$ for all $n\geq N$.
This is straightforward from Lemmas \ref{man-lem} and \ref{pi-lem2}
on noting that $\rho_n \Delta_{n+1} = x_1x_2\cdots x_n \Delta_n$ and that if $w \in S_{n+1}$
but $w \notin S_n$ for some $n\geq N$ then by hypothesis $\rho_n w (x^{\delta_{n+1}}f_{n+1}) =0$.
\end{proof}

\begin{corollary}\label{stab-cor}
If $p \in \cP$ is any polynomial then
$\lim_{n \to \infty} \pi_{w_n} p $ exists and belongs to $\Lambda$.
\end{corollary}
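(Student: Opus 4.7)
The plan is to reduce to the homogeneous case so that Lemma~\ref{stab0-lem} applies directly. Since $p \in \cP$, I can write $p = \sum_{d \geq 0} p_d$ where $p_d$ is the homogeneous component of $p$ of degree $d$, and only finitely many $p_d$ are nonzero. Because $p$ involves only finitely many variables, there exists $N \in \PP$ such that $p \in \ZZ[x_1,\dots,x_N]$, and hence $p_d \in \ZZ[x_1,\dots,x_N]$ for every $d$. By linearity of $\pi_{w_n}$, it suffices to prove the corollary for each homogeneous piece $p_d$, and then sum finitely many convergent sequences of power series.

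So fix $d$ with $p_d \neq 0$ and define a sequence of Laurent polynomials by setting $f_n = p_d$ for every $n \geq N$. Each $f_n$ is homogeneous of degree $d$; since $p_d$ involves only the variables $x_1,\dots,x_N$, we have $\rho_n f_{n+1} = \rho_n p_d = p_d = f_n$ for all $n \geq N$; and since $p_d \in \cP$ has nonnegative exponents, clearly $x^{\delta_n} f_{n+1} \in \cP$ for all $n \geq N$. The hypotheses of Lemma~\ref{stab0-lem} are thus satisfied, so the limit $F_d = \lim_{n\to\infty} \pi_{w_n} p_d$ exists, belongs to $\Lambda$, and satisfies $\rho_n F_d = \pi_{w_n} p_d$ for all $n \geq N$.

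Summing over the finitely many nonzero degree components, $F := \sum_d F_d \in \Lambda$, and for every $n \geq N$ we have $\rho_n F = \sum_d \pi_{w_n} p_d = \pi_{w_n} p$. Thus $\lim_{n\to\infty} \pi_{w_n} p = F \in \Lambda$, as desired.

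There is no real obstacle here: once one observes that Lemma~\ref{stab0-lem} requires homogeneity, the only thing to check is that decomposing $p$ into homogeneous parts and applying the lemma termwise to the constant sequence $f_n = p_d$ trivially fulfills the stabilization and positivity-of-exponents hypotheses.
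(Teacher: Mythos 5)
Your proof is correct and is the natural argument: the paper states the corollary without proof immediately after Lemma~\ref{stab0-lem}, and decomposing $p$ into its homogeneous components, applying the lemma to each constant sequence $f_n=p_d$, and summing the finitely many resulting limits is exactly the intended deduction. Verifying the three hypotheses ($f_N\neq 0$, $\rho_n f_{n+1}=f_n$, and $x^{\delta_n}f_{n+1}\in\cP$) is immediate as you observe, and linearity of both $\pi_{w_n}$ and the coefficientwise limit closes the argument.
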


For Schubert polynomials, the limit in this corollary has a noteworthy alternate form.

\begin{definition}\label{shift-def}
For  $w \in S_\ZZ$ and $N\in \ZZ$, let $w \gg N \in S_\ZZ$ denote the map
$i\mapsto w(i-N) +N$. 
\end{definition}

Note that $\supp(w \gg N) = \{ i+N  : i \in \supp(w)\}$.
The following lemma is equivalent to  \cite[Eq.\ (4.25)]{Macdonald},
or to the combination of
\cite[Proposition 2.12, Corollary 3.38, and Theorem 3.40]{HMP1}.

\begin{lemma}[See \cite{Macdonald}]
\label{stab-lem}
If $w \in S_\infty$, $\DesR(w) \subset \{s_1,\dots, s_n\}$, and $N\geq n$, then
$\pi_{w_n} \fkS_w   = \rho_n \fkS_{w \gg N}$.
\end{lemma}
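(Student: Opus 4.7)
The plan is to reduce both sides to a common combinatorial expression, using a reduced factorization of $(w \gg N)^{-1} w_M$ on the right and Lemma~\ref{pi-lem2} on the left.

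I would first show that $\fkS_w \in \ZZ[x_1, \ldots, x_n]$. By \eqref{maineq}, the hypothesis $\DesR(w) \subseteq \{s_1, \ldots, s_n\}$ gives $\partial_i \fkS_w = 0$ for every $i > n$, so $\fkS_w$ is invariant under each adjacent transposition $s_i$ with $i \geq n+1$, and hence under all permutations of the infinitely many variables $x_{n+1}, x_{n+2}, \ldots$. A polynomial with this symmetry cannot involve these variables. Lemma~\ref{pi-lem2} then rewrites the left-hand side as $\pi_{w_n}\fkS_w = \partial_{w_n}(x^{\delta_n}\fkS_w)$.

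For the right-hand side, fix $K \in \PP$ with $w \in S_K$ and set $M = N+K$, so that $w \gg N \in S_M$ and $\fkS_{w \gg N} = \partial_{(w \gg N)^{-1} w_M}x^{\delta_M}$. A direct computation produces the reduced factorization $(w \gg N)^{-1} w_M = \alpha\beta$, where $\alpha = (w^{-1}w_K)\gg N$ uses only the simple transpositions $s_{N+1}, \ldots, s_{M-1}$ and $\beta$ is the permutation with one-line notation $(N+1, N+2, \ldots, M, N, N-1, \ldots, 1)$. An inversion count verifies $\ell(\alpha) + \ell(\beta) = \binom{M}{2} - \ell(w) = \ell((w \gg N)^{-1} w_M)$, so $\partial_{(w \gg N)^{-1} w_M} = \partial_\alpha\partial_\beta$ and $\fkS_{w \gg N} = \partial_\alpha\partial_\beta\, x^{\delta_M}$. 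As a further structural check, $\beta$ itself factors as $\beta = w_N\sigma$ where $\sigma$ is the cyclic shift of one-line notation $(N+1, \ldots, M, 1, \ldots, N)$; since $w_M\sigma^{-1} = w_N(w_K \gg N)$ has factors with disjoint supports, the Schubert product identity yields $\partial_\sigma x^{\delta_M} = x^{\delta_N}\fkS_{w_K \gg N}$, and another application of Lemma~\ref{pi-lem2} gives $\partial_\beta x^{\delta_M} = \pi_{w_N}\fkS_{w_K \gg N}$.

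The remaining task is to identify $\rho_n\fkS_{w \gg N}$ with $\partial_{w_n}(x^{\delta_n}\fkS_w)$. The cleanest approach uses the Billey-Jockusch-Stanley (BJS) formula: the reduced words of $w \gg N$ are $N$-shifts of reduced words of $w$, so the compatible sequences $(b_1, \ldots, b_\ell)$ in the BJS expansion of $\fkS_{w \gg N}$ satisfy $b_j \leq i_j + N$. After $\rho_n$, the surviving terms have $b_j \leq n$, and since $N \geq n$ the bound $b_j \leq i_j + N$ becomes vacuous; thus $\rho_n\fkS_{w \gg N}$ depends only on $w$ and $n$, equaling the sum over reduced words of $w$ of all compatible sequences valued in $[n]$. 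The main obstacle will then be matching this truncated BJS sum with $\pi_{w_n}\fkS_w$: I expect this to follow either from a Demazure-character analysis showing how $\pi_{w_n}$ acts on the full BJS expansion of $\fkS_w$ term by term, or from an induction on $\ell(w)$ using \eqref{maineq} together with the $N$-stability of $\rho_n\fkS_{w \gg N}$ just established.
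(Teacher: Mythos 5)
Your opening reduction is sound: $\fkS_w$ lies in $\ZZ[x_1,\dots,x_n]$ when $\DesR(w)\subset\{s_1,\dots,s_n\}$, and Lemma~\ref{pi-lem2} rewrites the left side as $\partial_{w_n}(x^{\delta_n}\fkS_w)$. The reduced factorization $(w\gg N)^{-1}w_M=\alpha\beta$ in your second paragraph also checks out (the inversion count $\ell(\alpha)+\ell(\beta)=\binom{M}{2}-\ell(w)$ is correct), but you abandon this line before it produces anything; the ``structural check'' $\partial_\beta x^{\delta_M}=\pi_{w_N}\fkS_{w_K\gg N}$, while true, is never used. Your BJS truncation in the third paragraph is the genuine content: it correctly shows that for $N\geq n$ the bound $b_j\leq a_j+N$ becomes vacuous once $b_j\leq n$, so $\rho_n\fkS_{w\gg N}$ equals the $n$-variable truncation of $F_w$ and in particular is independent of $N\geq n$.

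The gap is the last step, and it is the heart of the lemma. Having reduced the right side to $\rho_n F_w$, you still need $\pi_{w_n}\fkS_w=\rho_n F_w$, and you explicitly defer this: ``I expect this to follow either from a Demazure-character analysis\,\ldots\,or from an induction on $\ell(w)$'' is a plan, not a proof, and neither route is carried out. Without it you have only shown that the right-hand side is stable in $N$, which is useful but strictly weaker than the claimed identity. (Note that the paper itself does not supply a proof of this lemma; it cites Macdonald's Eq.\ (4.25) and results of \cite{HMP1}, so there is no in-paper argument to compare against --- but as written your proposal is not a complete proof.)
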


By Lemmas \ref{stab0-lem} and  \ref{stab-lem} we deduce the following:

\begin{thmdef}[See \cite{Macdonald,Stan}]\label{F-def}
If $w \in S_\ZZ$ then
$F_w = \lim_{N\to \infty} \fkS_{w\gg N}$ is a well-defined symmetric function,
which we refer to as the \emph{Stanley symmetric function} of $w$.
\end{thmdef}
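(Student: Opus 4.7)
The plan is to string together Corollary \ref{stab-cor} and Lemma \ref{stab-lem} to identify the two natural candidates for the stable limit.

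First I would reduce to the case $w \in S_\infty$. Since $\supp(w)$ is finite, there is some $N_0 \in \ZZ$ such that $w \gg N_0 \in S_\infty$, and then $\fkS_{w \gg N} = \fkS_{(w \gg N_0) \gg (N-N_0)}$ for all $N \geq N_0$. So the limit $\lim_{N \to \infty} \fkS_{w \gg N}$ is unchanged under replacing $w$ by $w \gg N_0$, and I may assume $w \in S_\infty$ from the start. (In particular this shows $F_w = F_{w \gg 1}$.)

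Next, for $w \in S_\infty$, apply Corollary \ref{stab-cor} to the polynomial $p = \fkS_w \in \cP$. This produces a symmetric function
\[
\wt F_w \;:=\; \lim_{n \to \infty} \pi_{w_n} \fkS_w \;\in\; \Lambda.
\]
Inspecting the proof of Lemma \ref{stab0-lem} with the constant sequence $f_n = \fkS_w$, one moreover sees that $\rho_n \wt F_w = \pi_{w_n} \fkS_w$ whenever $n$ is large enough, e.g. for all $n$ with $\DesR(w) \subset \{s_1,\dots,s_n\}$.

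Now I combine this with Lemma \ref{stab-lem}. Fix any $n_0$ with $\DesR(w) \subset \{s_1,\dots,s_{n_0}\}$. For every $n \geq n_0$ and every $N \geq n$, Lemma \ref{stab-lem} gives
\[
\rho_n \fkS_{w \gg N} \;=\; \pi_{w_n} \fkS_w \;=\; \rho_n \wt F_w.
\]
To finish, fix an arbitrary monomial $x^\alpha$ in the variables $x_1,\dots,x_m$ and choose $n \geq \max(m,n_0)$. Then for all $N \geq n$, the coefficient of $x^\alpha$ in $\fkS_{w \gg N}$ equals the coefficient of $x^\alpha$ in $\rho_n \fkS_{w \gg N} = \rho_n \wt F_w$, which is the coefficient of $x^\alpha$ in $\wt F_w$. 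Since this is independent of $N$ for $N$ large, the coefficientwise limit $\lim_{N \to \infty} \fkS_{w \gg N}$ exists and equals $\wt F_w$, which lies in $\Lambda$. Setting $F_w := \wt F_w$ gives the theorem-definition.

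There is no real obstacle here; both ingredients have been set up. The only point requiring a bit of care is that Lemma \ref{stab-lem} is stated only for $w \in S_\infty$ with $N \geq n$, so one must be careful to first pick $n$ large enough to accommodate both the descent condition on $w$ and the support of the monomial being tracked, and only then let $N$ grow. The reduction from $S_\ZZ$ to $S_\infty$ by shifting is a triviality that is nevertheless worth recording explicitly.
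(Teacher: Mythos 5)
Your argument is correct and matches the paper's intended route: the paper simply says ``By Lemmas \ref{stab0-lem} and \ref{stab-lem} we deduce the following,'' and you have carried out precisely that combination (taking the constant sequence $f_n = \fkS_w$ in Lemma \ref{stab0-lem}, then invoking Lemma \ref{stab-lem} to identify $\pi_{w_n}\fkS_w$ with $\rho_n \fkS_{w\gg N}$), plus the harmless reduction from $S_\ZZ$ to $S_\infty$ by shifting. One small remark: the identity $\rho_n \wt F_w = \pi_{w_n}\fkS_w$ is already part of the \emph{statement} of Lemma \ref{stab0-lem}, so there is no need to appeal to its proof.
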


It follows from results in \cite{BJS} that 
this definition gives the same power series as Definition~\ref{intro-def}.
It is clear 
that $F_w = F_{w\gg N}$ for any $N \in \ZZ$.
By Lemma \ref{stab-lem}, if $w \in S_\infty$ then $F_w = \lim_{n\to \infty} \pi_{w_n}\fkS_w$.

\subsection{Involution Schubert polynomials}\label{inv-schub-sect}

Let $\I_n$, $\I_\infty$, and $\I_\ZZ$ denote the sets of involutions  
in $S_n$,  $S_\infty$, and $S_\ZZ$.
The involutions in these groups are the permutations whose cycles all have at most two elements.
For $y \in \I_\ZZ$ define 
\[\Cyc_\ZZ(y) = \{ (i,j) \in \ZZ\times \ZZ : i \leq j = y(i)\}
\qquand
 \Cyc_\PP(y) = \Cyc_\ZZ(y)\cap (\PP\times \PP).\]
It is often convenient to identify  elements  of $\I_n$, $\I_\infty$, or $\I_\ZZ$ with the partial
matchings on  $[n]$, $\PP$, or $\ZZ$ in which
distinct vertices are connected by an edge whenever they form a nontrivial cycle.
By convention, we draw such matchings so that the vertices are points on a horizontal   axis, ordered from left to right,
and the edges appear as  convex curves in the upper half plane. For example,
\[
 (1,6)(2,7)(3,4) \in \I_7
 \qquad
 \text{is represented as}
 \qquad
\arcstart{
*{.}  \arc{1.}{rrrrr}   & *{.}  \arc{1.}{rrrrr}   & *{.} \arc{.5}{r} & *{.} & *{.} & *{.} & *{.}
}\endxy
\]
We often omit the numbers labeling the vertices in  matchings corresponding to
involutions in $\I_\infty$.

The next four propositions can all be recast as more general statements 
about twisted involutions in arbitrary Coxeter groups, and appear in this form
in \cite{RichSpring,RichSpring2} or \cite{H1,H2,H3,H4}.

\begin{propdef}
\label{demazure-lem}

There exists a unique associative product
$\circ : S_\ZZ \times S_\ZZ \to S_\ZZ$ such that
$u\circ v = uv$ if $\ell(uv) = \ell(u) +\ell(v)$ and $s_i\circ s_i = s_i$ for all $i \in \ZZ$.
\end{propdef}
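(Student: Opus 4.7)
The plan is to construct $\circ$ explicitly via right-multiplication operators and then verify the axioms, associativity, and uniqueness. For each $i \in \ZZ$ define $\rho_i : S_\ZZ \to S_\ZZ$ by setting $\rho_i(u) = us_i$ when $s_i \notin \DesR(u)$ and $\rho_i(u) = u$ otherwise. The first step is to check the $0$-Hecke relations $\rho_i^2=\rho_i$, $\rho_i\rho_j=\rho_j\rho_i$ for $|i-j|\geq 2$, and $\rho_i\rho_{i+1}\rho_i=\rho_{i+1}\rho_i\rho_{i+1}$; the first is immediate from the definition, and the other two follow from the exchange condition for $S_\ZZ$, these being standard facts also implicit in \cite{H1,RichSpring}. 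Because these coincide with the defining braid relations of $S_\ZZ$, the composition $\rho_v := \rho_{i_k}\cdots\rho_{i_1}$ is independent of the chosen reduced word $(s_{i_1},\ldots,s_{i_k})$ of $v$, and I define $u \circ v := \rho_v(u)$.

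The two axioms in the statement are then immediate: $s_i \circ s_i = \rho_i(s_i) = s_i$ since $s_i \in \DesR(s_i)$, and when $\ell(uv)=\ell(u)+\ell(v)$, concatenating reduced words of $u$ and $v$ produces a reduced word of $uv$, so applying the letters of a reduced word of $v$ to $u$ only ever increases length and yields $u\circ v = uv$. For associativity, extend $\rho$ to arbitrary (not necessarily reduced) words $\mathbf{a}=(s_{a_1},\ldots,s_{a_k})$ via $\rho_\mathbf{a}:=\rho_{a_k}\cdots\rho_{a_1}$, and prove the key identity $\rho_\mathbf{a} = \rho_{\rho_\mathbf{a}(e)}$ by induction on $k$: writing $\mathbf{a}=\mathbf{a}'\cdot(s_i)$ and $v:=\rho_{\mathbf{a}'}(e)$, either $\rho_i(v)=vs_i$ (and appending $s_i$ to a reduced word of $v$ gives one of $vs_i$) or $\rho_i(v)=v$, in which case the strong exchange condition furnishes a reduced word of $v$ ending in $s_i$, so $\rho_i\rho_v=\rho_v$ using $\rho_i^2=\rho_i$. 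Applying this to the concatenation $\mathbf{a}\cdot\mathbf{b}$ of reduced words of $v$ and $w$, and noting that $\rho_{\mathbf{a}\cdot\mathbf{b}}(e) = \rho_\mathbf{b}(v) = v\circ w$, one obtains $\rho_w\rho_v = \rho_{v\circ w}$, hence $(u\circ v)\circ w = \rho_w(\rho_v(u)) = \rho_{v\circ w}(u) = u\circ(v\circ w)$.

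For uniqueness, let $\circ'$ be any associative product satisfying the axioms. When $s_i\notin\DesR(u)$, the length-additive axiom forces $u\circ' s_i = us_i$. When $s_i\in\DesR(u)$, write $u=u's_i$ with $\ell(u')=\ell(u)-1$; then $u'\circ' s_i = u$ by the length-additive axiom, and associativity gives $u\circ' s_i = (u'\circ' s_i)\circ' s_i = u'\circ'(s_i\circ' s_i) = u'\circ' s_i = u = \rho_i(u)$. Induction on $\ell(v)$ using associativity then shows $u\circ' v = u\circ v$ for all $v$. The main obstacle is really the verification of the $0$-Hecke braid relations for the $\rho_i$; everything else is bookkeeping, and given the excerpt's remark that the proposition is a special case of results in \cite{RichSpring,RichSpring2,H1,H2,H3,H4}, the shortest presentation may simply cite those sources for the braid relations.
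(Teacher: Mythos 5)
Your proof is correct. The paper does not actually supply an argument for this statement: it precedes Proposition-Definition~\ref{demazure-lem} with the remark that "the next four propositions can all be recast as more general statements about twisted involutions in arbitrary Coxeter groups, and appear in this form in \cite{RichSpring,RichSpring2} or \cite{H1,H2,H3,H4}," and then states the result without proof. So your contribution is a self-contained construction where the paper simply points to the literature.

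Your route is the standard one for building the Demazure (0-Hecke) product: introduce the right-multiplication operators $\rho_i$, observe that they satisfy the 0-Hecke relations $\rho_i^2=\rho_i$ together with the braid relations of $S_\ZZ$, invoke Matsumoto's theorem to get a well-defined operator $\rho_v$ for every $v$, and set $u\circ v=\rho_v(u)$. The lemma $\rho_{\a}=\rho_{\rho_{\a}(e)}$ for arbitrary words $\a$ (proved by induction using the strong exchange condition in the case $s_i\in\DesR(v)$) is exactly the right device to convert the operator identity $\rho_w\rho_v=\rho_{v\circ w}$ into associativity, and your uniqueness argument, reducing to the case $v=s_i$ and then inducting on $\ell(v)$, is clean and complete. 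The one place where you lean on external input is the verification of the 0-Hecke braid relations for the $\rho_i$; as you note, this reduces to a finite check inside the rank-two parabolic $\langle s_i,s_{i+1}\rangle$ (resp.~commutativity for $|i-j|\geq 2$) and is standard. That is the same debt the paper incurs by citing \cite{RichSpring,H2}, so your approach neither gains nor loses rigor there, but it does make the construction explicit, which is useful for a reader who wants to compute with $\circ$ rather than just know it exists.
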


Clearly $s\circ w = w\circ t  = w$ if $s \in \DesL(w)$ and $t \in \DesR(w)$.
If 
$(t_{1},t_{2},\dots,t_{k}) \in \cR(w)$  then
$w = t_{1} \circ t_{2} \circ \dots \circ t_{k} =
t_{1}  t_{2}  \dots  t_{k}  $.
The exchange principle
for Coxeter groups implies the following:

\begin{proposition}
If $y \in \I_\ZZ$ and $s=s_i$ then
$ s\circ y \circ s =
\begin{cases}
sys &  \text{if } s \notin\DesR(y) \text{ and }ys \neq sy \\
ys  &  \text{if } s \notin\DesR(y) \text{ and }sy=ys \\
y   &  \text{if } s \in \DesR(y).
\end{cases}
$
\end{proposition}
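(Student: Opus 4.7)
The plan is to use the associativity of $\circ$ together with the characterization of the Demazure product supplied by the preceding Proposition-Definition, which reduces multiplication by a simple reflection to a choice between two formulas: $w \circ s = ws$ if $s \notin \DesR(w)$ and $w \circ s = w$ otherwise, and symmetrically for left multiplication. Writing $s \circ y \circ s = s \circ (y \circ s)$, I would compute $y \circ s$ first and then left-multiply by $s$, and trace through the possible descent configurations.

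The key observation to unlock the argument is that since $y$ is an involution, $y = y^{-1}$, so $\DesL(y) = \DesR(y^{-1}) = \DesR(y)$. Thus if $s \in \DesR(y)$, then both $y \circ s = y$ and $s \circ y = y$, giving $s \circ y \circ s = y$ directly. In the remaining case $s \notin \DesR(y)$, we have $\ell(ys) = \ell(y) + 1$ and $y \circ s = ys$, so everything comes down to deciding whether $s \in \DesL(ys)$. If $sy = ys$, one computes $s \cdot (ys) = (sy)s = (ys)s = y$, whose length is $\ell(y) = \ell(ys) - 1$, so $s \in \DesL(ys)$ and therefore $s \circ ys = ys$, as claimed.

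The main obstacle is the final subcase: $s \notin \DesR(y)$ and $sy \neq ys$, where one must show $s \notin \DesL(ys)$, so that $s \circ ys = sys$. I would attack this by translating descent conditions into inequalities: setting $s = s_i$ and $a = y(i), b = y(i+1)$, the hypothesis $s \notin \DesR(y)$ reads $a < b$, while $s \in \DesL(ys) = \DesR(sy)$ (using $y = y^{-1}$ again, so $(ys)^{-1} = sy$) reads $sy(i) > sy(i+1)$, i.e., $s(a) > s(b)$. A short case analysis on the positions of $a$ and $b$ relative to $\{i, i+1\}$—using that the involution property forces $a \ne i+1$ and $b \ne i$—shows that $a < b$ together with $s(a) > s(b)$ forces $a = i$ and $b = i+1$, i.e., $y$ fixes both $i$ and $i+1$, which is exactly the case $sy = ys$ that we have excluded.

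Alternatively, one may give a Coxeter-theoretic proof bypassing the case analysis by appealing to the standard fact that $\ell(sws) - \ell(w) \in \{-2, 0, 2\}$ and that $\ell(sws) = \ell(w)$ iff $sw = ws$; combined with $\ell(ys) = \ell(y) + 1$, the assumption $sy \ne ys$ forces $\ell(sys) = \ell(y) + 2 = \ell(ys) + 1$, which is equivalent to $s \notin \DesL(ys)$ as required. This viewpoint also makes transparent why the statement generalizes to arbitrary Coxeter groups, matching the form in which the result appears in \cite{RichSpring,RichSpring2,H1,H2,H3,H4}.
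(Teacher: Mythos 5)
Your proof is correct. The paper does not actually write out a proof---it simply invokes ``the exchange principle for Coxeter groups'' before the statement---and your second argument (via $\ell(sws)-\ell(w)\in\{-2,0,2\}$ with equality iff $sw=ws$) is exactly the Coxeter-theoretic content that remark is pointing at, so in that sense you match the paper's intent. Your first argument, translating descent conditions into inequalities on $a=y(i)$, $b=y(i+1)$, is a more hands-on alternative that works specifically in $S_\ZZ$; it buys a self-contained verification but loses the immediate generalization to arbitrary Coxeter groups that the abstract argument gives for free. One tiny imprecision worth tightening: you say ``the involution property forces $a\ne i+1$ and $b\ne i$,'' but in fact $a=i+1$ is perfectly possible for an involution (it means $y$ contains the cycle $(i,i+1)$); what rules it out in your subcase is that it would give $b=y(i+1)=i<a$, contradicting the standing hypothesis $a<b$. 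The conclusion and the rest of the case analysis are unaffected.
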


Thus, if $y \in \I_\ZZ$ then $s\circ y \circ s \in \I_\ZZ$,
and by induction on length one may deduce:

\begin{proposition}
If $y \in \I_\ZZ$ then $y = w^{-1} \circ w$  for some $w \in S_\ZZ$.
\end{proposition}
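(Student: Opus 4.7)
The plan is to induct on the length $\ell(y)$. For the base case $\ell(y)=0$ we have $y=1$ and the identity $1 = 1^{-1}\circ 1$ does the job. For the inductive step, the strategy is to use a right descent $s=s_i\in\DesR(y)$ to peel off one ``layer'' of conjugation, producing a strictly shorter involution $y'$ with $y = s\circ y'\circ s$; the inductive hypothesis applied to $y'$ will then finish the argument.

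More precisely, given $s\in\DesR(y)$, I would define $y'$ by cases that mirror the previous proposition. If $sy=ys$, set $y' = ys$; this is an involution since $(ys)^2 = ysys = y\cdot s\cdot s\cdot y^{-1}\cdot\ldots$, and more cleanly because $sy=ys$ forces $(ys)^2 = y^2s^2 = 1$, with $\ell(y') = \ell(y)-1$. If instead $sy\neq ys$, set $y' = sys$, which is an involution because $(sys)^2 = sy(s^2)ys = sy^2s = 1$, and I would verify $\ell(y') = \ell(y)-2$: since $s\in\DesR(y)$ gives $\ell(ys)=\ell(y)-1$, it remains to check that $s\in\DesL(ys)$, i.e., that $s(y(i))>s(y(i+1))$; the hypothesis $sy\neq ys$ says $y$ does not preserve $\{i,i+1\}$ setwise, so $\{y(i),y(i+1)\}$ is disjoint from $\{i,i+1\}$ and $s$ acts trivially on both values, giving $s(y(i))=y(i)>y(i+1)=s(y(i+1))$.

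In either case $\ell(y')<\ell(y)$, and I would check that $s\notin\DesR(y')$ so that the previous proposition applies to $y'$ and delivers $s\circ y'\circ s = y$. In the commuting case this is immediate from $y's=y$ and $\ell(y)>\ell(y')$; in the non-commuting case it follows from $y's = sy$ having length $\ell(y)-1>\ell(y')$, together with $sy'\neq y's$ (which is inherited from $sy\neq ys$). Now by the inductive hypothesis write $y' = v^{-1}\circ v$ for some $v\in S_\ZZ$. Using associativity of $\circ$ together with the identity $(a\circ b)^{-1} = b^{-1}\circ a^{-1}$ built into Proposition-Definition~\ref{demazure-lem}, setting $w = v\circ s$ gives
\[
w^{-1}\circ w \;=\; (s\circ v^{-1})\circ(v\circ s) \;=\; s\circ(v^{-1}\circ v)\circ s \;=\; s\circ y'\circ s \;=\; y,
\]
completing the induction.

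The main obstacle is the length computation for $y'=sys$ in the non-commuting case, together with the case analysis ensuring that $s\notin\DesR(y')$ so that the previous proposition can be run ``in reverse.'' Everything else is a routine application of the inductive hypothesis and the built-in anti-automorphism property of the Demazure product.
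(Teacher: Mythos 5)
Your overall strategy—induction on $\ell(y)$, using a right descent $s$ together with the preceding proposition to write $y = s\circ y'\circ s$ with $\ell(y')<\ell(y)$, then invoking associativity and the anti-automorphism identity $(a\circ b)^{-1}=b^{-1}\circ a^{-1}$—is exactly the paper's intended argument (the paper simply compresses it to ``by induction on length one may deduce'').

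One step in your length computation for $y'=sys$ is incorrect as stated: $sy\neq ys$ does \emph{not} imply that $\{y(i),y(i+1)\}$ is disjoint from $\{i,i+1\}$. Since $y$ is an involution, $sy\neq ys$ only says $\{y(i),y(i+1)\}\neq\{i,i+1\}$, and it can happen that exactly one of $y(i),y(i+1)$ lies in $\{i,i+1\}$, necessarily as a fixed point (for example $y=(i+1,i+2)$ has $y(i)=i$ yet $sy\neq ys$). In such a case $s$ does \emph{not} act trivially on $y(i)$, so your inference breaks. The desired inequality $s(y(i))>s(y(i+1))$ is still true, but needs the extra cases: if $y(i)=i$ then $s\in\DesR(y)$ forces $y(i+1)<i$, so $s(y(i))=i+1>y(i+1)=s(y(i+1))$; if $y(i+1)=i+1$ then $y(i)>i+1$, so $s(y(i))=y(i)>i=s(y(i+1))$; and the cases $y(i)=i+1$ or $y(i+1)=i$ cannot occur since $y$ is an involution and $\{y(i),y(i+1)\}\neq\{i,i+1\}$. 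With this repair, the length count $\ell(sys)=\ell(y)-2$ holds and the rest of your argument, including the verifications that $s\notin\DesR(y')$ and that $sy'\neq y's$ in the non-commuting case, is correct.
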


%

For $y \in \I_\ZZ$, let $\cA(y)$ denote the finite, nonempty set of permutations $w \in S_\ZZ$ of minimal length
such that $y = w^{-1}  \circ w$. 
The set $\iR(y)$ defined in the introduction is precisely $\iR(y) = \bigcup_{w \in \cA(y)} \cR(w)$.
We refer to the elements of $\cA(y)$
 as the \emph{atoms} of $y$. 
Let $\ellhat(y)$ denote the common length of each $w \in \cA(y)$.
One can show that $\ellhat =  \frac{1}{2}\( \ell+\kappa\)$,
where $\kappa(y)$ is the number of nontrivial cycles of $y \in \I_\ZZ$.

\begin{definition} \label{iS-def}
The \emph{involution Schubert polynomial} of $y \in \I_\infty$ is 
$ \iS_y = \sum_{w \in \cA(y)} \fkS_w.$
\end{definition}

\begin{example}\label{fkS-ex}
We have 
$\cA(321) = \{ 231, 312\}$, so
$\iS_{321} = \fkS_{231} + \fkS_{312} = x_1^2  + x_1 x_2.$
\end{example}

The essential algebraic properties of the polynomials $\iS_y$ are given by \cite[Theorem 3.11]{HMP1}:

\begin{theorem}[See \cite{HMP1}]\label{ithm}
The involution Schubert polynomials $\{ \iS_y\}_{y \in \I_\infty}$
are the unique family of homogeneous polynomials indexed by $\I_\infty$
such that if $i \in \PP$ and $s=s_i$ then
\be\label{i-eq}
\iS_{1} = 1
\qquand
\partial_i \iS_y =
\begin{cases}
\iS_{sys} &\text{if $s \in \DesR(y)$ and $sy\neq ys$} \\
\iS_{ys}  &\text{if $s \in \DesR(y)$ and $sy=ys$} \\
0         &\text{if $s \notin \DesR(y)$.}
\end{cases}
\ee
\end{theorem}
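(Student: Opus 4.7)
The plan has two parts: establish existence of the stated recursion for the polynomials $\iS_y = \sum_{w \in \cA(y)} \fkS_w$ defined in Definition~\ref{iS-def}, and then deduce uniqueness from the fact that Schubert polynomials form a $\ZZ$-basis of $\cP$.

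For existence, the base case $\iS_1 = 1$ is immediate since $\cA(1) = \{1\}$ and $\fkS_1 = 1$. For the divided-difference identities, I apply \eqref{maineq} inside the sum to get
\[
\partial_i \iS_y \;=\; \sum_{w \in \cA(y)} \partial_i \fkS_w \;=\; \sum_{\substack{w \in \cA(y)\\ s_i \in \DesR(w)}} \fkS_{ws_i}.
\]
So the identity reduces to three combinatorial claims about atoms, matching the three cases of \eqref{i-eq}: (a) if $s_i \notin \DesR(y)$, then no $w \in \cA(y)$ has $s_i \in \DesR(w)$; (b) if $s_i \in \DesR(y)$ and $s_iy \neq ys_i$, then $w \mapsto ws_i$ is a bijection from $\{w \in \cA(y) : s_i \in \DesR(w)\}$ onto $\cA(s_iys_i)$; (c) if $s_i \in \DesR(y)$ and $s_iy = ys_i$, then $w \mapsto ws_i$ is a bijection from $\{w \in \cA(y) : s_i \in \DesR(w)\}$ onto $\cA(ys_i)$. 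These statements are standard properties of atoms of twisted involutions, traceable to the work of Richardson--Springer \cite{RichSpring,RichSpring2} and Hultman \cite{H1,H2,H3}, and can be extracted in the present form from \cite{HMP1}. Substituting each bijection into the displayed sum yields $\iS_{s_iys_i}$, $\iS_{ys_i}$, or $0$ as required.

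For uniqueness, suppose $\{\iS'_y\}_{y \in \I_\infty}$ is another family of homogeneous polynomials satisfying \eqref{i-eq}. I induct on $\ellhat(y)$. The base $\ellhat(y) = 0$ forces $y = 1$ and both polynomials equal the constant $1$. For the inductive step, the right-hand side of \eqref{i-eq} involves only involutions with strictly smaller value of $\ellhat$, so by induction the two families agree on those, and hence $\partial_i(\iS_y - \iS'_y) = 0$ for \emph{every} $i \in \PP$. Expand the difference in the Schubert basis, $\iS_y - \iS'_y = \sum_v c_v \fkS_v$, where homogeneity restricts $v$ to $\ell(v) = \ellhat(y)$. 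By \eqref{maineq}, for each $i$,
\[
0 \;=\; \partial_i(\iS_y - \iS'_y) \;=\; \sum_{\substack{v \\ s_i \in \DesR(v)}} c_v \fkS_{vs_i},
\]
and linear independence of Schubert polynomials forces $c_v = 0$ whenever $v$ has \emph{any} right descent. Since every element of $S_\infty \setminus \{1\}$ has a right descent, the only surviving term is $v = 1$, giving $\iS_y - \iS'_y \in \ZZ$; but this must vanish by homogeneity when $\ellhat(y) \geq 1$.

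The main obstacle is verifying the three atom bijections underlying the existence argument. These are not entirely obvious — they require that right multiplication by $s_i$ interacts correctly with the $\circ$-product defining atoms — but they are exactly the twisted-involution analogues of the standard descent-exchange behavior for Coxeter groups, and their proofs in \cite{HMP1} rely only on the Matsumoto-style exchange principle for involution words together with the characterization of $\cA(y)$ as minimal-length representatives of $y$ under the relation $w \mapsto w^{-1} \circ w$.
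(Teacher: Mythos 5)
Your proof is correct. The paper itself gives no argument for this theorem—it simply cites \cite[Theorem~3.11]{HMP1}—so there is no in-paper proof to compare against, but your reconstruction is the natural one: the decomposition $\iS_y = \sum_{w\in\cA(y)}\fkS_w$ combined with \eqref{maineq} immediately yields $\partial_i\iS_y = \sum_{w\in\cA(y),\, s_i\in\DesR(w)}\fkS_{ws_i}$, so existence does reduce to the three atom bijections you list (which are indeed established in \cite{HMP1} and ultimately go back to the exchange principle for twisted involutions). Your uniqueness argument is self-contained and correct; one could streamline it slightly by noting directly that a homogeneous polynomial in $\cP$ annihilated by every $\partial_i$ is symmetric in all variables, hence constant, hence zero in positive degree—but your Schubert-basis version accomplishes the same thing and is a fine alternative.
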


Observe that if $s_i \notin \DesR(y)$ then $\partial_i\iS_{s\circ y\circ s} = \iS_y$.
Since $\fkS_w$  has degree $\ell(w)$, it follows that $\iS_y$  has degree $\ellhat(y)$.
As the sets $\cA(y)$ for $y \in \I_\infty$ are pairwise disjoint,
the polynomials $\iS_y$ for $y \in \I_\infty$ are linearly independent.

The involution Schubert polynomials were introduced in a  rescaled form by Wyser and Yong
in \cite{WY}, where they were denoted $\Upsilon_{y; (\GL_n, \O_n)}$.  
The precise relationship is $2^{\kappa(y)} \iS_y = \Upsilon_{y; (\GL_n, \O_n)}$; see the discussion in \cite[Section 3.4]{HMP1}.
Wyser and Yong's definition was motivated by the study of the action of the orthogonal group
$\O_n(\CC)$ on the flag variety $\Fl(n) = \GL_n(\CC)/B$, with $B\subset \GL_n (\CC)$ denoting the
Borel subgroup of lower triangular matrices. It follows from \cite{WY} that
the involution Schubert polynomials $\iS_y$, rescaled by the factor $2^{\kappa(y)}$,
are cohomology representatives for the closures of the $\O_n(\CC)$-orbits in $\Fl(n)$,
and so are special cases of an older formula of Brion \cite[Theorem 1.5]{Brion98}.
See the discussion in \cite{HMP1, HMP3}.

The symmetric functions $\iF_y$ presented in the introduction are related to the polynomials $\iS_y$
by the following formula, which is equivalent to Definition~\ref{intro-hatF-def} 
since $\iR(y)= \bigcup_{w \in \cA(y)} \cR(w)$.

\begin{definition}\label{iF-def}
The \emph{involution Stanley symmetric function} of
$y \in \I_\ZZ$ is the power series 
$\iF_y = \sum_{w \in \cA(y)} F_w = \lim_{N\to \infty} \iS_{y \gg N} \in \Lambda.$
\end{definition}

The second equality in this definition holds by Theorem-Definition \ref{F-def}.
Note that $\iF_y$ is a homogeneous symmetric function of
degree $\ellhat(y)$.
If $y \in \I_\infty$, then $\iF_y = \lim_{n\to \infty} \pi_{w_n} \iS_y$.

\subsection{Schur $P$-functions}
\label{ss:schur-p}

Our main results will relate $\iF_y$ to the \emph{Schur $P$-functions} in $\Lambda$.
These symmetric functions were introduced  in work of Schur on the projective
representations of the symmetric group \cite{Schur} but have since
arisen in a variety of other contexts (see, e.g., \cite{BH, J16, P2}).
We briefly review some of their properties from
\cite[\S6]{Stem} and \cite[{\S}III.8]{Macdonald2}.
For integers  $0\leq r \leq n$, let
\[
G_{r,n} = \prod_{i \in [r]} \prod_{j \in [n-i]} \(1 +x_i^{-1} x_{i+j}\) \in \cL.
\]
For a partition $\lambda = (\lambda_1,\lambda_2,\dots)$, let $\ell(\lambda)$
denote the largest index $i \in \PP$ with $\lambda_i\neq 0$.
The partition $\lambda$ is \emph{strict} if $\lambda_i \neq \lambda_{i+1}$
for all $i < \ell(\lambda)$.
Recall that $x^\lambda = x_1^{\lambda_1}x_2^{\lambda_2}\cdots x_{r}^{\lambda_r}$
for $r = \ell(\lambda)$.

\begin{definition}\label{schurp-def}
For a strict partition $\lambda$
with $r=\ell(\lambda)$ parts, 
let $P_\lambda =  \lim_{n\to \infty} \pi_{w_n} \(x^\lambda G_{r,n}\) \in \Lambda$.
The symmetric function $P_\lambda$ is the \emph{Schur $P$-function} corresponding to $\lambda$.
\end{definition}

By Lemma \ref{stab0-lem}, this formula for $P_\lambda$ gives a well-defined,
homogeneous symmetric function of degree $\sum_i \lambda_i$,
and   $\rho_n P_\lambda = \pi_{w_n} \(x^\lambda G_{r,n}\) $ for $n\geq r = \ell(\lambda)$.
We emphasize this definition of $P_\lambda$ for its
compatibility with our definition of $F_w$ in Section \ref{stab-sect}.
One can show that the Schur function
 $s_\lambda$ is given by a similar limit: namely, $s_\lambda = \lim_{n\to\infty} \pi_{w_n} x^\lambda$.

Some other similarities exist between  $s_\lambda$ and $P_\lambda$.
Whereas the Schur functions form a $\ZZ$-basis for $\Lambda$, the Schur $P$-functions
form a $\ZZ$-basis for the subring  $\QQ[ p_1,p_3,p_5,\dots ] \cap \Lambda$
generated by the odd-indexed power sum symmetric functions \cite[Corollary 6.2(b)]{Stem}.
Each Schur $P$-function $P_\lambda$ is itself Schur positive \cite[Eq.\ (8.17), {\S}III.8]{Macdonald2}.

The symmetric functions $P_\lambda$ may be described more concretely as generating functions for certain shifted tableaux.
We review this perspective in Section~\ref{ss:schur-p2}.

\section{Transition formulas}\label{invtrans-sect}

In this section, we review the transition formula for $\iS_y$
proved in \cite{HMP3}.
This result is  similar to the following  identity for $\fkS_w$.
Given $y \in S_\ZZ$ and $r \in \ZZ$, define $\Phi^\pm(y,r)$ as the set of permutations $w \in S_\ZZ$
such that $\ell(w) = \ell(y)+1$ and $w= y(r,s)$ for some $s \in \ZZ$ with $\pm(r-s) < 0$.

\begin{theorem}[See \cite{Knutson}]
\label{transition-thm}
If $y \in S_\infty$ and
$r \in \PP$ then $x_r \fkS_y   = \sum_{ w \in \Phi^+(y,r)} \fkS_{w} -  \sum_{w \in \Phi^-(y,r)} \fkS_{w}$
where we set $\fkS_w = 0$ for $w \in S_\ZZ - S_\infty$.
\end{theorem}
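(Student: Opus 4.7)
The plan is to derive this transition formula as a consequence of Monk's rule together with a telescoping trick.

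First, I would invoke Monk's rule: for each $k \in \PP$ and $y \in S_\infty$,
\begin{equation*}
\fkS_{s_k} \cdot \fkS_y = \sum_{\substack{(i,j)\,:\, i \leq k < j \\ \ell(y(i,j)) = \ell(y)+1}} \fkS_{y(i,j)}.
\end{equation*}
Since $\fkS_{s_k} = x_1 + x_2 + \cdots + x_k$, we have $x_r = \fkS_{s_r} - \fkS_{s_{r-1}}$ (with the convention $\fkS_{s_0} = 0$). Applying Monk's rule to each of $\fkS_{s_r} \fkS_y$ and $\fkS_{s_{r-1}} \fkS_y$ and subtracting yields an expansion of $x_r \fkS_y$ as a signed sum of certain $\fkS_{y(i,j)}$, each satisfying $\ell(y(i,j)) = \ell(y)+1$.

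Next, I would analyze the cancellations. Pairs $(i,j)$ with $i < r$ and $j > r$ appear in both Monk expansions with the same sign and cancel. What survives on the positive side are pairs with $i = r$ and $j > r$, which contribute $\sum_{s>r} \fkS_{y(r,s)} = \sum_{w \in \Phi^+(y,r)} \fkS_w$. What survives on the negative side (i.e., from $-\fkS_{s_{r-1}} \fkS_y$) are pairs with $i < r$ and $j = r$, contributing $-\sum_{s<r} \fkS_{y(s,r)} = -\sum_{w \in \Phi^-(y,r)} \fkS_w$. Summing recovers the desired identity.

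The main obstacle would be Monk's rule itself; if we did not wish to cite it as a black box, the alternative strategy is to verify the transition formula directly by induction using the characterization \eqref{maineq} of $\fkS_y$ via divided difference operators. Since $\{\fkS_w\}_{w \in S_\infty}$ is a $\ZZ$-basis for $\cP$, it suffices to match coefficients; for $i \notin \{r-1, r\}$ the Leibniz rule gives $\partial_i(x_r \fkS_y) = x_r \partial_i \fkS_y$, allowing an inductive reduction, while for $i \in \{r-1, r\}$ one uses the identities $\partial_r(x_r f) = f + x_{r+1}\partial_r f$ and $\partial_{r-1}(x_r f) = -f + x_{r-1}\partial_{r-1} f$ to relate both sides. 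In either approach, the combinatorial heart of the argument is tracking exactly which transpositions $(i,j)$ satisfy $\ell(y(i,j)) = \ell(y)+1$, which follows from the lifting/exchange property for the symmetric group.
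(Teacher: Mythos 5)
The paper does not prove this theorem; it simply cites it (as \cite[Corollary 3.3]{Knutson}) and remarks that it ``is equivalent to Monk's rule.'' Your first argument is precisely the derivation that this remark implicitly invites, and it is correct: writing $x_r = \fkS_{s_r} - \fkS_{s_{r-1}}$, applying Monk's rule to both products, and cancelling the common terms with $i<r<j$ leaves exactly $\sum_{j>r}\fkS_{y(r,j)} - \sum_{i<r}\fkS_{y(i,r)}$ over the appropriate length-increasing transpositions. One small point you elide: Monk's rule only produces transpositions $(i,j)$ with $i\geq 1$, whereas $\Phi^-(y,r)$ as defined allows $s\leq 0$; the two agree because any $w=y(s,r)$ with $s\leq 0$ and $\ell(w)=\ell(y)+1$ necessarily moves a non-positive integer and hence lies in $S_\ZZ\setminus S_\infty$, so $\fkS_w=0$ by the stated convention. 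Your sketched alternative via divided differences (matching $\partial_i$ applied to both sides using $\partial_r(x_r f)=f+x_{r+1}\partial_r f$ and $\partial_{r-1}(x_r f)=-f+x_{r-1}\partial_{r-1}f$) is also a standard and workable route, but the Monk's-rule derivation is the one aligned with the paper's pointer to the literature.
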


%
%
This formula appears, for example,
as \cite[Corollary 3.3]{Knutson},
and is equivalent to \emph{Monk's rule} (see \cite[\S2.7]{Manivel}).
Taking limits transforms this 
to the following identity, which is \cite[Theorem 3]{Little}.

\begin{theorem}
\label{transition-thm2}
If $y \in S_\ZZ$ and $r \in \ZZ$ then $\sum_{ w \in \Phi^-(y,r)} F_{w} =  \sum_{w \in \Phi^+(y,r)} F_{w}$.
\end{theorem}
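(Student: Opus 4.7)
The plan is to deduce Theorem~\ref{transition-thm2} from Theorem~\ref{transition-thm} by a stable-limit argument, in the spirit of Theorem-Definition~\ref{F-def}. After replacing $y$ by $y \gg N_0$ and $r$ by $r + N_0$ for a sufficiently large $N_0$, we may assume $y \in S_\infty$ and $r \in \PP$, since $F_w = F_{w \gg N_0}$ for any $N_0 \in \ZZ$. To prove equality of the two symmetric functions in $\Lambda$, it suffices to verify agreement after applying each truncation $\rho_n$ for $n \in \PP$.

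Fix $n$ and choose $N$ large enough that (i) $r + N > n$, (ii) $w \gg N \in S_\infty$ for every $w$ in the finite set $\Phi^+(y,r) \cup \Phi^-(y,r)$, and (iii) $\rho_n \fkS_{w \gg N} = \rho_n F_w$ for each such $w$. Condition (iii) is attainable because $\fkS_{w \gg N}$ is homogeneous of degree $\ell(w)$ (independent of $N$), so the coefficient-wise convergence $\fkS_{w \gg N} \to F_w$ from Theorem-Definition~\ref{F-def} becomes eventual equality after truncation to $n$ variables. Apply Theorem~\ref{transition-thm} to the pair $(y \gg N,\, r + N)$ to obtain
\[
x_{r+N}\, \fkS_{y \gg N} \;=\; \sum_{w' \in \Phi^+(y \gg N,\, r+N)} \fkS_{w'} \;-\; \sum_{w' \in \Phi^-(y \gg N,\, r+N)} \fkS_{w'}.
\]

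The key combinatorial step is to verify that $w \mapsto w \gg N$ gives a bijection $\Phi^\pm(y, r) \to \Phi^\pm(y \gg N,\, r+N)$. This follows from the identity $(y \gg N)(r+N,\, s+N) = (y(r,s)) \gg N$, from the shift $v \mapsto v \gg N$ preserving the length function, and from the equivalence $\pm(r - s) < 0 \Leftrightarrow \pm((r+N) - (s+N)) < 0$. With this bijection in hand, apply $\rho_n$ to the displayed identity and use that $\rho_n(x_{r+N}) = 0$ whenever $r + N > n$; the left side vanishes and, after invoking condition (iii), the right side becomes
\[
\sum_{w \in \Phi^+(y, r)} \rho_n F_w \;-\; \sum_{w \in \Phi^-(y, r)} \rho_n F_w \;=\; 0.
\]
Since this holds for every $n$, the two sums agree in $\Lambda$, proving the theorem. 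The main obstacle is the careful bookkeeping of the shift bijection, especially confirming that the sign conventions and memberships in $S_\infty$ transport correctly under $w \mapsto w \gg N$; once that is done, the rest is routine limit-passage from Theorem~\ref{transition-thm}.
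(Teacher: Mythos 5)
Your proof is correct and matches the approach the paper intends: the paper cites Little and remarks only that Theorem~\ref{transition-thm2} follows from Theorem~\ref{transition-thm} ``by taking limits,'' and your argument fills in precisely that stable-limit passage (shift bijection on $\Phi^\pm$, vanishing of $\rho_n(x_{r+N}\fkS_{y\gg N})$, eventual agreement of $\rho_n\fkS_{w\gg N}$ with $\rho_n F_w$). It is also essentially identical to the paper's own proof of the parallel involution identity in Theorem~\ref{invmonk-thm2}.
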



To state a transition formula for the involution Schubert polynomials $\iS_y$,
 we need to review a few technical properties of the Bruhat order  $<$ on $S_\ZZ$ restricted to $\I_\ZZ$.
Our notation follows Section~\ref{inv-schub-sect}.
More general results of Hultman 
imply the following useful facts:

\begin{theorem}[Hultman \cite{H1,H2,H3}] \label{invbruhat-thm}
The following properties  hold:
\ben
\item[(a)] $(\I_\ZZ,<)$ is a graded poset with rank function $\ellhat$.

\item[(b)] Fix $y,z \in \I_\ZZ$
and $w \in \cA(z)$. Then $y\leq z$ if and only if there exists $v \in \cA(y)$ with $v \leq w$.

\een
\end{theorem}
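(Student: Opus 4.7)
The plan is to follow the approach of Hultman (in the spirit of Richardson--Springer) and prove both parts simultaneously by induction on $\ellhat$. The foundational ingredient is the following recursive identity: if $y \in \I_\ZZ$ with $y \neq 1$ and $s \in \DesR(y)$ (which exists since $y \neq 1$), then $y' := s \circ y \circ s \in \I_\ZZ$ satisfies $y' < y$ in Bruhat order and $\ellhat(y) - \ellhat(y') = 1$. This is verified case-by-case using $\ellhat = \frac{1}{2}(\ell + \kappa)$: if $sy = ys$, then $y' = ys$ and both $\ell$ and $\kappa$ decrease by one; if $sy \neq ys$, then $y' = sys$, $\ell$ drops by two, and $\kappa$ is unchanged. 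Iterating produces a descending saturated chain from $y$ to $1$ in $\I_\ZZ$ of length $\ellhat(y)$, so maximal chains have length at least $\ellhat(y)$.

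For (a), it remains to rule out maximal chains of greater length, i.e., to show that whenever $y < z$ in $\I_\ZZ$ with no involution strictly between them, $\ellhat(z) - \ellhat(y) = 1$. The engine is the standard lifting property of Bruhat order on $S_\ZZ$: for $u < v$ and a simple reflection $s$ with $s \in \DesR(v)$, one has $us \leq v$ always, and $u \leq vs$ or $us \leq vs$ depending on whether $s \in \DesR(u)$. Applying this on both sides gives the following \emph{bi-directional lifting} for involutions: if $y < z$ in $\I_\ZZ$ and $s \in \DesR(z)$, then setting $z' = s \circ z \circ s$, either $y \leq z'$ or $s \circ y \circ s \leq z'$. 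In either case one produces an involution strictly between $y$ and $z$ whenever $\ellhat(z) - \ellhat(y) \geq 2$, and (a) follows by induction on $\ellhat(z)$.

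For (b), both directions are handled by induction on $\ellhat(z)$ using the same descent. The base case $z = 1$ is trivial. For the step, fix $w \in \cA(z)$ and choose $s \in \DesR(w) \subseteq \DesR(z)$; set $z' = s \circ z \circ s$ and $w' = ws$, so that $w' \in \cA(z')$ with $\ell(w') = \ell(w) - 1$. For ($\Rightarrow$), if $s \in \DesR(y)$ we set $y' = s \circ y \circ s$ and use bi-directional lifting to obtain $y' \leq z'$; the inductive hypothesis then yields $v' \in \cA(y')$ with $v' \leq w'$, which extends (by the atom correspondence under the descent operation) to $v \in \cA(y)$ with $v \leq w$. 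If $s \notin \DesR(y)$, then $y \leq z'$ directly by lifting, and induction gives $v \in \cA(y)$ with $v \leq w' \leq w$. For ($\Leftarrow$), the argument is parallel: from $v \leq w$ one derives $v' \leq w'$ (or $v \leq w'$) using the lifting lemma on the atom side, and then the inductive hypothesis applied at level $\ellhat(z')$ yields $y' \leq z'$ (or $y \leq z'$), which lifts back to $y \leq z$.

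The main obstacle is the bookkeeping required to relate $\cA(y)$ and $\cA(s \circ y \circ s)$ when $s \notin \DesR(y)$, i.e., showing that every atom of the conjugate is obtained from some atom of $y$ by right-multiplication by $s$ (with the correct length increment and preserved Bruhat comparisons), and that this correspondence respects the hypotheses produced by the inductive argument. Once this atom-level descent is pinned down cleanly via the exchange condition in the two cases $sy = ys$ versus $sy \neq ys$, both parts of the theorem follow by the parallel inductions above.
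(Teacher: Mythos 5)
The paper does not actually prove this theorem: it is stated with the attribution ``Hultman \cite{H1,H2,H3}'' and the preceding sentence says ``More general results of Hultman imply the following useful facts.'' So there is no in-paper proof to compare against. Your sketch does follow the strategy used in those references (Hultman, building on Richardson--Springer): a descent-based induction driven by a lifting property of Bruhat order transported to (twisted) involutions. That is the right idea.

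That said, two things need attention. First, a notational clash with the paper: under the Demazure product $\circ$ as defined here, if $s \in \DesR(y)$ then $s \circ y \circ s = y$ (this is stated right after Proposition-Definition~\ref{demazure-lem}). What you want when you write ``$y' := s \circ y \circ s$ with $y' < y$'' is the \emph{descending} operation, namely $y' = sys$ if $sy \neq ys$ and $y' = ys = sy$ if $sy = ys$; you should introduce separate notation for this rather than overloading $\circ$.

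Second, the step ``In either case one produces an involution strictly between $y$ and $z$ whenever $\ellhat(z)-\ellhat(y)\geq 2$'' is not immediate in the branch $s \in \DesR(y)$. There the lifting only gives $y' \leq z'$, which lives in a shorter interval but does not itself produce an element between $y$ and $z$; you have to first establish rank-compatibility ($y<z\Rightarrow\ellhat(y)<\ellhat(z)$, which itself needs the observation that $y'=z'$ would force $y=z$), then apply the inductive hypothesis to $[y',z']$ to find a cover $w'$ of $y'$, and finally lift $w'$ back across $s$ with a further case split on whether $s \in \DesR(w')$. This ``chain lifting'' is the technical heart of the argument and is precisely where Hultman's proof does real work; as written your sketch asserts the conclusion without carrying out the lift. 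The same gap propagates into part (b): the sentence ``which extends (by the atom correspondence under the descent operation) to $v \in \cA(y)$ with $v \leq w$'' is exactly the atom-level version of the lift and is acknowledged but not done. None of this is fatal --- the approach is correct --- but both the bi-directional lifting lemma and the lift-back step should be stated and proved explicitly before the inductions can be considered complete.
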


We write $y\lessdot_\I z$ if $z \in \I_\ZZ$ covers $y \in \I_\ZZ$ in the partial order
given by restricting $<$ to $\I_\ZZ$, that is, if $\{ w \in \I_\ZZ : y \leq w < z\} = \{y\}$.
While $y \lessdot_\I z $ $\Rightarrow$ $y<z$ and
$y \lessdot z$ $\Rightarrow $ $y \lessdot_\I z$, it does not hold that
$y \lessdot_\I z$ $\Rightarrow$ $y \lessdot z$ for $y,z \in \I_\ZZ$.
Given a finite set $E\subset \ZZ$ of size $n$,
write $\phi_E$ and $\psi_E$ for the unique order-preserving bijections
$\phi_E : [n] \to E$ and $\psi_E : E \to [n]$.
For $w \in S_\ZZ$,  define
\be\label{standard-eq}
[w]_E = \psi_{w(E)} \circ w \circ \phi_E \in S_n\subset S_\ZZ.
\ee
We call $[w]_E$ the \emph{standardization} of $w$ with respect to $E$.
This notation is intended to distinguish $[w]_E $ from the restriction of $w$ to $E$,
which we instead denote as
$
w|_E : E \to \ZZ.
$
The following is a consequence of \cite[Theorem 1.3]{HMP3}
and
the
results in \cite[Section 3]{HMP3}.

\begin{thmdef}[See \cite{HMP3}]
Let $y \in \I_\ZZ$ and choose integers $i<j$.
\ben
\item[(a)]
There exists at most one $z \in \I_\ZZ$
such that  $ \{ w \in \cA(y) : w \lessdot w(i,j) \in \cA(z)\} $ is nonempty.

\item[(b)] If $\{i,j,y(i),y(j)\}=[n]$ for an integer $n \in\{2,3,4\}$, then 
   define $\tau_{ij}(y)$ to be the involution $z$ in part (a) or set $\tau_{ij}(y) =y$ if no such $z$ exists.
   
\item[(c)]
In all other cases, define $\tau_{ij}(y) \in \I_\ZZ$ to be the unique permutation with
$  [\tau_{ij}(y)]_A =  \tau_{ab}([y]_A) $ and $  \tau_{ij}(y)|_B = y|_B$,
where 
$A = \{i,j,y(i),y(j)\}$, $B=\ZZ\setminus A$,
 $a = \psi_A(i)$, and $b = \psi_A(j)$.
 It still holds that if an involution $z$ exists as in (a), then $\tau_{ij}(y) = z$.
\een
\end{thmdef}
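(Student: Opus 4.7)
The plan is to derive all three parts by reducing to the detailed transition machinery already developed in \cite[Section 3]{HMP3} and the main covering formula \cite[Theorem 1.3]{HMP3}.

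For part (a), I would argue uniqueness as follows. Suppose $w_1, w_2 \in \cA(y)$ admit covers $w_k \lessdot w_k(i,j) \in \cA(z_k)$ for $k \in \{1,2\}$. Any two atoms of a common involution are linked by a chain of elementary moves (the ``atom equivalence'' arising from the characterization of $\cA(y)$), and one checks that each such move in the chain is compatible with right multiplication by the transposition $(i,j)$. Tracking how this compatibility propagates $\cA$-membership of the targets along the chain forces $z_1 = z_2$, yielding the uniqueness asserted in (a).

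For part (b), the cases $\{i,j,y(i),y(j)\} = [n]$ with $n \in \{2,3,4\}$ amount to finite casework: one enumerates the involutions in $\I_n$, lists the atoms $\cA(y) \subseteq S_n$, and for each pair $(i,j)$ checks directly whether some atom $w \in \cA(y)$ satisfies $w \lessdot w(i,j) \in \cA(z)$ for some $z \in \I_n$. The output is recorded as $\tau_{ij}(y) \in \I_n$, with $\tau_{ij}(y) = y$ if no such $z$ exists.

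For part (c), the key is the locality of the transition move $w \mapsto w(i,j)$. The claim is that whether $w \lessdot w(i,j)$ holds, and whether $w(i,j)$ lies in $\cA(z)$ for some $z$, depends only on the restriction $w|_A$ where $A = \{i,j,y(i),y(j)\}$, and in fact passes through standardization: $w \lessdot w(i,j)$ in $S_\ZZ$ iff $[w]_A \lessdot [w]_A(a,b)$ in $S_{|A|}$, and atom membership is preserved under standardization. Since $|A| \in \{2,3,4\}$, part (b) applies to $[y]_A$ and determines $\tau_{ab}([y]_A)$; lifting back by the constraint $\tau_{ij}(y)|_B = y|_B$ gives a well-defined element of $\I_\ZZ$. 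The compatibility assertion---that when $z$ exists in the sense of (a), one actually has $\tau_{ij}(y) = z$---then follows because the witnessing atom $w$ localizes to an atom of $[y]_A$ witnessing the analogous cover, and standardization commutes with the operation. The main obstacle is the locality step: one must verify both that the cover $w \lessdot w(i,j)$ descends to a cover after standardization to $A$, and that the $\circ$-product used to define atoms restricts correspondingly, so that $\cA(y)$-membership of $w(i,j)$ transfers to $\cA([y]_A)$-membership of $[w(i,j)]_A$. These facts, which encapsulate the heart of the argument, are precisely the content of \cite[Section 3]{HMP3}.
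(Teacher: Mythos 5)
The paper offers no proof of this statement; it cites \cite[Theorem 1.3]{HMP3} and the results in \cite[Section 3]{HMP3}, so there is nothing in the paper itself to compare your sketch against directly. That said, two of your intermediate claims are wrong or under-specified. In (c) you assert that $w \lessdot w(i,j)$ in $S_\ZZ$ holds if and only if $[w]_A \lessdot [w]_A(a,b)$ in $S_{|A|}$. This equivalence is false: the cover $w \lessdot w(i,j)$ requires that no integer $k$ with $i<k<j$ satisfies $w(i)<w(k)<w(j)$, and this quantifier ranges over all of $\ZZ$, not merely over $A$, so standardization can create covers that were not there before. For instance take $y=1$, $(i,j)=(1,3)$, so $A=\{1,3\}$; the unique atom $w=1$ fails $w\lessdot w(1,3)$ (the length jumps by $3$), yet $[w]_A=1\in S_2$ does cover $[w]_A(1,2)=(1,2)$. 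Only the forward implication holds (it is in fact the direction your compatibility argument needs), and this asymmetry is precisely why $\tau_{ij}(y)$ is defined by the explicit local formula even in cases where no $z$ as in (a) exists.

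Your argument for (a) is also under-specified: the chain of elementary moves connecting $w_1$ to $w_2$ inside $\cA(y)$ may pass through atoms $v$ with $v\not\lessdot v(i,j)$, and you give no mechanism for ``tracking'' membership of $v(i,j)$ in some $\cA(z)$ across such steps. It is cleaner to deduce (a) from (c): once the compatibility assertion in (c) is established, any $z$ as in (a) must equal $\tau_{ij}(y)$, which is determined by the explicit local formula, so at most one such $z$ can exist---no chain argument is needed.
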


\begin{remark}
It always holds that $y \leq \tau_{ij}(y)$. If $\{i,j,y(i),y(j)\}$ is a consecutive set
and $y < \tau_{ij}(y)$, then $y \lessdot \tau_{ij}(y)$. In general, however, it can happen
that $y < \tau_{ij}(y)$ but $\ellhat(\tau_{ij}(y)) - \ellhat(y) > 1$.
\end{remark}

To compute $\tau_{ij}(y)$ in general,
we only need to know a formula for $\tau_{ij}(y)$ when $y \in \I_n$ and
$[n] = \{i,j,y(i),y(j)\}$.
This information is  specified in Table~\ref{ct-fig}.

\begin{table}[h]
\[
\barr{| c | c | c | c | c | l}
\hline&&&&\\
A=\{i,j,y(i),y(j)\} & [y]_A & (i,j) & [\tau_{ij}(y)]_A
& \sigma\text{ such that }\tau_{ij}(y)=y\sigma
\\&&&&\\
\hline
&&&&\\
\{a<b\}
&
\arcstart
{
*{.}     & *{.}
}
\arcstop
&(a,b)&
\arcstart
{
*{.}   \arc{.6}{r}  & *{.}
}
\arcstop
&
(a,b)
\\&&&&\\
\hline
&&&&\\
\{a<b<c\}
&
 \arcstart
{
*{.}    \arc{.6}{r}  & *{.} & *{.}
}
\arcstop
&
(b,c), (a,c)
&
 \arcstart
{
*{.}    \arc{.8}{rr}  & *{.} & *{.}
}
\arcstop
&
(a,c,b)
\\ &&&& \\
&
\arcstart
{
*{.}     & *{.}  \arc{.6}{r} & *{.}
}
\arcstop
&
(a,b),(a,c)
&
 \arcstart
{
*{.}    \arc{.8}{rr}  & *{.} & *{.}
}
\arcstop
&
(a,b,c)
\\&&&&\\
\hline&&&&\\
\{a<b<c<d\}
&
  \arcstart
{
*{.}  \arc{.6}{r}   & *{.}    & *{.} \arc{.6}{r}& *{.}
}
\arcstop
&
(b,c)
&
\arcstart
{
*{.}  \arc{.8}{rr}   & *{.} \arc{.8}{rr}   & *{.} & *{.}
}
\arcstop
&
(a,d)(b,c)
\\ &&&& \\
&
  \arcstart
{
*{.}  \arc{.6}{r}   & *{.}    & *{.} \arc{.6}{r}& *{.} 
}
\arcstop
& (a,c),(b,d),(a,d)
&
\arcstart
{
*{.}  \arc{.8}{rrr}   & *{.}    & *{.} & *{.} 
}
\arcstop
&
(a,c,d,b)
\\ &&&& \\
&
  \arcstart
{
*{.}  \arc{.8}{rr}   & *{.} \arc{.8}{rr}   & *{.} & *{.} 
}
\arcstop
& (a,b),(c,d) , (a,d)
&
\arcstart
{
*{.}  \arc{.8}{rrr}   & *{.}  \arc{.4}{r}  & *{.} & *{.} 
}
\arcstop
&
(a,b)(c,d)
\\
&&&&\\\hline
\earr
\]
\caption{Nontrivial values of $\tau_{ij}(y)$.
Fix $y \in \I_\ZZ$ and $i<j$ in $\ZZ$, and define $A = \{i,j,y(i),y(j)\}$.
The first column labels the elements of $A$.
The third column rewrites $(i,j)$ in this labeling.
The second and fourth columns identify the matchings which represent $[y]_A$ and $[\tau_{ij}(y)]_A$. 
For  values of $y$ and $i<j$  that do not correspond to  any rows in this table,
we have defined $\tau_{ij}(y) = y$.
}\label{ct-fig}
\end{table}

\begin{example}
If $y = (1,9)(2,4)(3,7)(5,10) \in \I_{10}$ then
$\tau_{2,10}(y) = (1,9)(2,10)(3,7)$,
 that is:
 \smallskip
\[
\tau_{2,11}\Bigr(
\arcstart
{
*{.}  \arc{1.2}{rrrrrrrr}   & *{.}  \arc{.6}{rr}
 & *{.} \arc{.8}{rrrr}
&
*{.}    & *{.} \arc{1.0}{rrrrr}
&
*{.}   & *{.} 
&
*{.}     & *{.} 
&
*{.}
}
\arcstop\Bigl)
 =
\arcstart
{
*{.}  \arc{1.2}{rrrrrrrr}   & *{.}  \arc{1.2}{rrrrrrrr}
   & *{.} \arc{.8}{rrrr}
&
*{.}    & *{.}
&
*{.}   & *{.}
&
*{.}     & *{.}
&
*{.}
}
\arcstop
\]
\end{example}

Apart from some differences in notation,
the map $\tau_{ij} : \I_\ZZ \to \I_\ZZ$ is essentially the same as the map $\ct_{ij} $ which Incitti
defines in
\cite{Incitti1}; see the discussion in \cite[Section 3.1]{HMP3}.
Incitti's work implies the following theorem, which we also stated as \cite[Theorem 3.16]{HMP3}.

\begin{theorem}[Incitti \cite{Incitti1}] \label{taubruhat-thm}
Let $y,z \in \I_\ZZ$. The following are then equivalent:
\ben
\item[(a)] $y\lessdot_\I z$.
\item[(b)] $z = \tau_{ij}(y)$ for some $i<j$ in $\ZZ$ and  $\ellhat(z) = \ellhat(y)+1$.
\item[(c)] $z = \tau_{ij}(y)$ for some $i<j$ in $\ZZ$ with $y(i)\leq i$ and $y\lessdot y(i,j)$.
\item[(d)] $z = \tau_{ij}(y)$ for some $i<j$ in $\ZZ$ with $j\leq y(j)$ and $y\lessdot y(i,j)$.
\een
\end{theorem}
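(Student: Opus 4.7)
The plan is to establish (a) $\Leftrightarrow$ (b) first, and then use the structural information in Table~\ref{ct-fig} to derive the stronger statements (c) and (d) by showing one can always refine the witnessing pair $(i,j)$ to satisfy the extra inequality.

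Since $(\I_\ZZ,<)$ is graded with rank function $\ellhat$ by Theorem~\ref{invbruhat-thm}(a), the covering relation $y \lessdot_\I z$ amounts to $y < z$ together with $\ellhat(z) = \ellhat(y)+1$. Hence (b) $\Rightarrow$ (a) reduces to verifying that $y < \tau_{ij}(y)$ whenever $\tau_{ij}(y) \neq y$, which can be read off Table~\ref{ct-fig} row by row. The reduction to the "small" cases $\{i,j,y(i),y(j)\}=[n]$ with $n \in \{2,3,4\}$ is where the definition via the standardization $[y]_A$ and the complementary restriction $y|_B$ is essential: one must check that both the Bruhat order and the length function $\ellhat$ decompose compatibly with this decomposition, so that the cover question for $(y,\tau_{ij}(y))$ is equivalent to the cover question for $([y]_A,[\tau_{ij}(y)]_A)$ in $\I_n$.

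With the reduction in hand, I would verify the small cases by direct inspection. For each row of Table~\ref{ct-fig} one can explicitly compute $\cA(y)$ and $\cA(\tau_{ij}(y))$ and, using Theorem~\ref{invbruhat-thm}(b), read off that $\tau_{ij}(y)$ covers $y$ in $(\I_\ZZ,<)$. For the converse direction (a) $\Rightarrow$ (b), begin with $y \lessdot_\I z$ and apply Theorem~\ref{invbruhat-thm}(b) to obtain $v \in \cA(y)$ and $w \in \cA(z)$ with $v \leq w$. Choose $(v,w)$ to minimize $\ell(w)-\ell(v)$; a saturated chain from $v$ to $w$ then realizes $w = v(i,j)\cdots$ for some pair of indices, and the standardized pattern of $y$ on the four-element set $\{i,j,y(i),y(j)\}$ matches one of the configurations in Table~\ref{ct-fig}, giving $z = \tau_{ij}(y)$.

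For the equivalences (b) $\Leftrightarrow$ (c) and (b) $\Leftrightarrow$ (d), the key point is that when a pair $(i,j)$ witnesses (b), Table~\ref{ct-fig} actually lists all such pairs (second and third columns), and in every row there is at least one choice with $y(i) \leq i$ and at least one with $j \leq y(j)$. One then checks the auxiliary condition $y \lessdot y(i,j)$ in $S_\ZZ$ — that is, that no intermediate index obstructs the transposition — by examining the $\sigma$ column of the table; in each of the cases meeting the inequality, the relevant $\sigma$ is a single transposition and the needed Bruhat cover in $S_\ZZ$ holds.

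The main obstacle will be proving the compatibility of the involution Bruhat order with standardization in full generality, since this underlies the whole reduction and is not automatic from the graded-poset statement; after that, the case analysis for $n=4$ is the most tedious step, because the matching types (fully nested, fully crossing, and the three degenerate types where one or two of $i,j,y(i),y(j)$ coincide) must each be handled separately with its own list of allowable $(i,j)$ pairs.
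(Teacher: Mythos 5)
The paper does not actually prove this theorem: it attributes the result to Incitti \cite{Incitti1}, notes that $\tau_{ij}$ agrees up to notation with Incitti's $\ct_{ij}$, and records that the statement also appears as Theorem 3.16 of \cite{HMP3}. So there is no internal proof to compare yours against. Nonetheless, your proposed argument has a real gap.

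The central problem is your claim that ``the cover question for $(y,\tau_{ij}(y))$ is equivalent to the cover question for $([y]_A,[\tau_{ij}(y)]_A)$ in $\I_n$.'' This is false, and the Remark immediately following the theorem-definition of $\tau_{ij}$ warns against exactly this: it can happen that $y<\tau_{ij}(y)$ while $\ellhat(\tau_{ij}(y))-\ellhat(y)>1$. For a concrete instance, take $y=(1,2)\in\I_4$ and $(i,j)=(1,4)$, so $A=\{1,2,4\}$. Then $[y]_A=(1,2)\in\I_3$, and reading Table~\ref{ct-fig} gives $[\tau_{14}(y)]_A=(1,3)$, hence $\tau_{14}(y)=(1,4)\in\I_4$. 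But $\ellhat((1,4))-\ellhat((1,2))=3-1=2$, so $\tau_{14}(y)$ does \emph{not} cover $y$, even though $(1,3)$ does cover $(1,2)$ in $\I_3$. The obstruction is the fixed point $3\notin A$ lying strictly between $y(1)=2$ and $y(4)=4$. Whether $\tau_{ij}(y)$ increases $\ellhat$ by exactly one is a global question that depends on the elements of $\ZZ$ outside $\{i,j,y(i),y(j)\}$, so your entire strategy of reducing to the four-element standardization cannot establish (b)$\Rightarrow$(a), nor can it serve as the engine for the other implications. The same globality undoes your treatment of (c) and (d): the condition $y\lessdot y(i,j)$ in $S_\ZZ$ asserts that no $k$ with $i<k<j$ satisfies $y(i)<y(k)<y(j)$, and such a $k$ can sit anywhere in $(i,j)$, so it is not something one ``checks by examining the $\sigma$ column of the table.'' Producing a witness $(i,j)$ that simultaneously satisfies the sign constraint ($y(i)\leq i$, resp.\ $j\leq y(j)$) and the $S_\ZZ$-cover constraint is exactly the content of Incitti's classification, and it cannot be localized to a bounded pattern.
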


Now,
given $y \in \I_\ZZ$ and $r \in \ZZ$, we define
\[
\ba
\hat\Phi^+(y,r) &=
\left\{
z  \in \I_\ZZ : \ellhat(z) = \ellhat(y)+1\text{ and }z = \tau_{rj}(y) \text{ for an integer }j>r
\right\}
\\
\hat\Phi^-(y,r) &=
\left\{
z  \in \I_\ZZ : \ellhat(z) = \ellhat(y)+1\text{ and }z = \tau_{ir}(y) \text{ for an integer }i<r
\right\}
.
\ea
\]
These sets are both nonempty  \cite[Proposition 3.26]{HMP3},
and  if $z \in \hat\Phi^\pm(y,r)$ then $y \lessdot_\I z$.
Moreover, Theorem~\ref{taubruhat-thm} implies that if $(p,q) \in \Cyc_\ZZ(y)$ then the following holds:
\begin{itemize}
\item[1.] If $ q<j $  and $z = \tau_{qj}(y)$,
then
$z \in \hat\Phi^+(y,q)$ if and only if $y \lessdot y(q,j)$.

\item[2.] If  $i<p$ and $z=\tau_{ip}(y)$,
then
$z \in \hat\Phi^-(y,p)$ if and only if  $y\lessdot y(i,p)$.
\end{itemize}
For $(p,q) \in \PP\times \PP$,  let $x_{(p,q)} = x_p=x_q$ if $p=q$
and otherwise set $x_{(p,q)} = x_p+x_q$.
The following transition formula for involution Schubert polynomials is \cite[Theorem 3.28]{HMP3}.

\begin{theorem}[See \cite{HMP3}]\label{invmonk-thm}
If $y \in \I_\infty$ and $(p,q) \in \Cyc_\PP(y)$
then $x_{(p,q)} \iS_y = \sum_{ z \in \hat\Phi^+(y,q)} \iS_{z} -  \sum_{z \in \hat\Phi^-(y,p)} \iS_{z}$
where we set  $\iS_z = 0$ for all $z \in \I_\ZZ-\I_\infty$.
\end{theorem}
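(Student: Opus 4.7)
The plan is to reduce to the classical Monk transition formula (Theorem~\ref{transition-thm}) via the atom decomposition $\iS_y = \sum_{w \in \cA(y)} \fkS_w$. Multiplying both sides through gives
\[
x_{(p,q)} \iS_y = \sum_{w \in \cA(y)} x_{(p,q)} \fkS_w,
\]
and each factor $x_{(p,q)} \fkS_w$ unpacks via Monk's rule as a signed sum of $\fkS_{w(r,s)}$ with $\ell(w(r,s)) = \ell(w) + 1$ and $\{r, s\} \cap \{p, q\} \neq \varnothing$. Every resulting permutation $w(r,s)$ is an atom of a unique involution $z = (w(r,s))^{-1} \circ w(r,s) \in \I_\infty$, so regrouping yields $\sum_z c_z \iS_z$ for some integers $c_z$, and the theorem becomes the claim that $c_z = +1$ for $z \in \hat\Phi^+(y,q)$, $c_z = -1$ for $z \in \hat\Phi^-(y,p)$, and $c_z = 0$ otherwise.

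The next step is to pin down which $z$ can have $c_z \neq 0$. By Theorem~\ref{invbruhat-thm}(b) combined with the length constraint $\ellhat(z) = \ellhat(y) + 1$ forced by the Monk expansion, any such $z$ satisfies $y \lessdot_\I z$, so Theorem~\ref{taubruhat-thm} identifies $z = \tau_{ij}(y)$ for suitable $i < j$. For each such $z$, I would enumerate the pairs $(w, (r,s))$ with $w \in \cA(y)$, $w(r,s) \in \cA(z)$, and $\{r, s\} \cap \{p, q\} \neq \varnothing$, and compare this enumeration against the local configuration of $y$ on the window $A = \{i, j, y(i), y(j)\}$. The comparison is a bounded calculation organized by the rows of Table~\ref{ct-fig}: for each standardization $[y]_A$ one lists the atoms of $y$ and of $\tau_{ij}(y)$ explicitly, tracks which covering transpositions involve $p$ or $q$, and reads off the signed contribution dictated by Monk's rule. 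The bookkeeping then shows $c_z = +1$ exactly when $(p,q)$ interacts with the window through its right endpoint $q$ (i.e., $z = \tau_{qj}(y) \in \hat\Phi^+(y, q)$), and $c_z = -1$ exactly when it interacts through its left endpoint $p$ (i.e., $z \in \hat\Phi^-(y, p)$).

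The principal obstacle is verifying $c_z = 0$ for all remaining $z$, which requires a sign-reversing pairing on the stray pool of contributions. When $p < q$, a triple $(w, (p, s), w(p, s))$ with $s > p$ contributing $+\fkS_{w(p,s)}$ from the $x_p$ expansion should be paired with a triple $(w', (i, q), w'(i, q))$ with $i < q$ contributing $-\fkS_{w'(i,q)}$ from the $x_q$ expansion, where $w(p, s) = w'(i, q)$ represents the same atom of $z$; the pairing exploits the $y$-cycle $(p, q)$ to shuttle a descent between the $p$-th and $q$-th positions of the relevant atom. When $p = q$, only one Monk expansion is in play and the pairing degenerates to an analogous symmetry within $\Phi^+(w, p) \cup \Phi^-(w, p)$. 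Fixed-point freeness of the pairing is another finite verification across the configurations of Table~\ref{ct-fig}, after which the transition identity follows.
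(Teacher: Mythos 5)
This theorem is not proved in the present paper; it is quoted from \cite{HMP3} and used as a black box, so there is no ``paper's own proof'' here to compare against. Taking your sketch on its own terms, there is a genuine gap at the regrouping step. You assert that every permutation $w(r,s)$ produced by Monk's rule, with $w \in \cA(y)$ and $\ell(w(r,s)) = \ell(w) + 1$, is an atom of $z := (w(r,s))^{-1} \circ w(r,s)$. This is false. Take $y = (1,3) = 321$ and the pair $(p,q) = (1,3) \in \Cyc_\PP(y)$, with $\cA(y) = \{231, 312\}$. Then $231 \lessdot 231\,(1,2) = 321$ appears in $\Phi^+(231,1)$, and $v = 321$ satisfies $v^{-1} \circ v = 321 \circ 321 = 321 = y$; but $\ell(v) = 3 > 2 = \ellhat(y)$, so $v \notin \cA(y)$ and indeed $v$ is not an atom of any involution. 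In general, whenever $s_i \in \DesR(y) \setminus \DesR(w)$ for some atom $w$, the corresponding cover $ws_i$ still expresses $y$ (since $s_i \circ y \circ s_i = y$) and therefore cannot be an atom. As a result, the signed sum of Schubert polynomials you obtain is \emph{not} a priori expressible as $\sum_z c_z \iS_z$, because the stray $\fkS_{w(r,s)}$ with $w(r,s)$ a non-atom are not summands of any $\iS_z$.

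In the $y = (1,3)$ example the offending term $\fkS_{321}$ does cancel---it shows up with $+$ from $\Phi^+(231,1)$ and with $-$ from $\Phi^-(312,3)$---so the computation ultimately gives $x_{(1,3)}\iS_{321} = \iS_{(1,4)}$, consistent with the theorem. This suggests your sign-reversing pairing is morally doing the right work, but as written the argument is circular: you invoke the coefficients $c_z$ as if the regrouping is already valid, and only then introduce the pairing. The logically sound version is to work entirely at the level of the signed multiset of pairs $(w, (r,s))$, show that for each $z$ with $y \lessdot_\I z$ the surviving terms are exactly one copy of each $\fkS_u$ for $u \in \cA(z)$ (with the correct sign), and show that every other term---including every non-atom term with $(w(r,s))^{-1} \circ w(r,s) = y$---appears with matching $+$ and $-$ and cancels. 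You would also need to verify the implicit second assumption that, when terms do survive, every atom of $z$ appears with the \emph{same} signed multiplicity; this is not automatic and is where most of the actual combinatorial work in a transition-formula proof lies. Fixing these two points would require either a considerably more detailed case analysis keyed to the atom structure of covers $y \lessdot_\I z$, or a different strategy altogether (e.g., the inductive approach via divided differences and the characterization in Theorem~\ref{ithm}).
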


\begin{example}\label{phihat-ex}
If $y=(2,3)(4,7) \in \I_7$ then
\[
\ba
\hat\Phi^+(y,3)
&= \{ \tau_{3,4}(y),\ \tau_{3,5}(y),\ \tau_{3,7}(y)\}
= \{ (2,4)(3,7),\ (2,5)(4,7),\ (2,7)\}
\\
\hat\Phi^-(y,2) &= \{\tau_{1,2}(y)\} = \{ (1,3)(4,7)\}
\ea
\]
so
$(x_2+x_3)\iS_{(2,3)(4,7)} = \iS_{(2,4)(3,7)}+\iS_{(2,5)(4,7)}+\iS_{(2,7)}-\iS_{(1,3)(4,7)}$.
\end{example}

Our new results will depend on the following identity.

\begin{theorem}\label{invmonk-thm2}
If $y \in \I_\ZZ$ and $(p,q) \in \Cyc_\ZZ(y)$ then
$ \sum_{z \in \hat\Phi^-(y,p)} \iF_{z}  = \sum_{ z \in \hat\Phi^+(y,q)} \iF_{z}.$
\end{theorem}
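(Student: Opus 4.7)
The strategy is to obtain this identity from the polynomial transition formula in Theorem~\ref{invmonk-thm} by passing to the stable limit via the shift operation $y\mapsto y\gg N$ of Definition~\ref{shift-def}, in direct analogy with how the classical Theorem~\ref{transition-thm2} follows from Theorem~\ref{transition-thm}.

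The first step is to verify that the shift $w\mapsto w\gg N$ is equivariant for every structure appearing in Theorem~\ref{invmonk-thm}. Concretely, I would show that $\cA(y\gg N)=\{w\gg N:w\in\cA(y)\}$, that $\ellhat(y\gg N)=\ellhat(y)$, that $(p,q)\in\Cyc_\ZZ(y)$ if and only if $(p+N,q+N)\in\Cyc_\ZZ(y\gg N)$, and most importantly that
\[ \tau_{i+N,\, j+N}(y\gg N)=\tau_{ij}(y)\gg N. \]
The last identity is essentially immediate from the definition of $\tau_{ij}$ via standardization $[\cdot]_E$, since $[y\gg N]_{E+N}=[y]_E$ for every finite $E\subset\ZZ$. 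Combining these observations yields
\[ \hat\Phi^\pm(y\gg N,\, r+N)=\{z\gg N : z\in\hat\Phi^\pm(y,r)\}. \]

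Now choose $N$ large enough that $y\gg N\in\I_\infty$ and $p+N,q+N\in\PP$. Applying Theorem~\ref{invmonk-thm} to $y\gg N$ at the cycle $(p+N,q+N)\in\Cyc_\PP(y\gg N)$ and using the equivariance just established gives
\[ x_{(p+N,\,q+N)}\,\iS_{y\gg N}\;=\;\sum_{z\in\hat\Phi^+(y,q)}\iS_{z\gg N}\;-\;\sum_{z\in\hat\Phi^-(y,p)}\iS_{z\gg N}. \]
By Definition~\ref{iF-def}, letting $N\to\infty$ turns the right-hand side into $\sum_{z\in\hat\Phi^+(y,q)}\iF_z-\sum_{z\in\hat\Phi^-(y,p)}\iF_z$, which is exactly what we want to equal zero.

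The remaining step is to show the left-hand side converges to $0$ in $\ZZ[[x_1,x_2,\dots]]$. Here $x_{(p+N,q+N)}$ is either $x_{p+N}$ (if $p=q$) or $x_{p+N}+x_{q+N}$, and $\iS_{y\gg N}$ is homogeneous of fixed degree $\ellhat(y)$. Fix any monomial $x^\alpha$; it has finite support, so once $N$ is large enough that both $p+N$ and $q+N$ exceed $\max\supp(\alpha)$, the coefficient of $x^\alpha$ in $x_{(p+N,q+N)}\iS_{y\gg N}$ must vanish, because dividing $x^\alpha$ by $x_{p+N}$ or $x_{q+N}$ produces a monomial with a negative exponent and hence cannot appear in the polynomial $\iS_{y\gg N}\in\cP$. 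Thus every coefficient is eventually $0$, and the identity follows.

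The main obstacle is really the bookkeeping in the first step, namely checking that $\tau_{ij}$ intertwines with the shift; but this reduces to a one-line computation once one notes that standardization is translation-invariant. All the analytic content is then handled uniformly by the degree-and-support argument in the final paragraph.
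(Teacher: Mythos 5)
Your proof is correct and follows the same route as the paper: establish the shift-equivariance $\hat\Phi^\pm(y\gg N,\, r+N)=\{z\gg N:z\in\hat\Phi^\pm(y,r)\}$, apply Theorem~\ref{invmonk-thm} to $y\gg N$, and observe that $\lim_{N\to\infty}x_{(p+N,q+N)}\iS_{y\gg N}=0$ since $\iS_{y\gg N}$ is a polynomial of fixed degree and the distinguished variables eventually leave the support of any fixed monomial. You simply spell out the equivariance (via translation-invariance of standardization) and the vanishing of the limit in more detail than the paper, which asserts both in a single line.
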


\begin{proof}
It holds that $\hat \Phi^\pm(y \gg N, r+N) = \{ w \gg N : w \in \hat \Phi^\pm(y,r)\}$
for all $y \in \I_\ZZ$ and $r,N \in \ZZ$.
By Theorem~\ref{invmonk-thm},
it follows that 
$\sum_{ z \in \hat\Phi^+(y,q)} \iF_{z} -  \sum_{z \in \hat\Phi^-(y,p)} \iF_{z}
= \lim_{N\to \infty} x_{(p+N,q+N)} \iS_{y\gg N} = 0$.
\end{proof}

\section{Positivity for involution Stanley symmetric functions}\label{schurp-sect}

%
In this section we prove our main results about the positive expansion of   $\iF_y$ into Schur $P$-functions.

\subsection{I-Grassmannian involutions}\label{igrass-sect}

Recall from \cite{Knutson,Manivel}
that 
the \emph{diagram} of $w\in S_\infty$ is the set $D(w) = \{ (i,w(j)): (i,j)\in \inv(w)\}$.
We orient the elements of $D(w)$ like the positions in a matrix.
The \emph{code} of $w \in S_\infty$ is the sequence
$c(w) = (c_1,c_2,c_3,\dots)$ in which $c_i$ is the number of positions in the $i$th row of $D(w)$.
Of course, $D(w)$ is a finite set and $c(w)$ has only finitely many nonzero terms. We make no
distinction between $(a_1,a_2,\dots,a_k)$ and the infinite sequence
$(a_1,a_2,\dots,a_k,0,0,\dots)$.
The \emph{essential set}  of  $D\subset \PP\times \PP$ is  the set $\Ess(D)$ of
 positions $(i,j) \in D$ such that $(i+1,j) \notin D $
and $(i,j+1) \notin D$.

\begin{example}
 If $w = 4231$ then $\Ess(D(w)) = \{ (3,1),(1,3)\}$ and $ c(w) = (3,1,1)$.
 \end{example}

\begin{definition}\label{grass-def}
A permutation $w \in S_\ZZ$ is \emph{Grassmannian} if $|\DesR(w)|\leq 1$.
\end{definition}

The proof of the next statement is an instructive exercise; see, e.g., \cite[Chapter 2]{Manivel}.

\begin{propdef}\label{propdef-1}
For $w \in S_\infty$ and $n \in \PP$, the following are equivalent:
\ben
\item[(a)] $\DesR(w) = \{ s_n\}$, i.e.,  $w(1)<w(2)<\dots <w(n) > w(n+1)< w(n+2)<\cdots $.

\item[(b)] $c(w)=(c_1,c_2,\dots,c_n,0,0,\dots)$ where $c_1\leq c_2 \leq \cdots \leq c_n\neq 0$.

\item[(c)] $\Ess(D(w))$ is nonempty and contained in $\{ (n,j) : j\in \PP\}$.
\een
A permutation $w \in S_\infty$ with these equivalent properties is called \emph{$n$-Grassmannian}.
The identity $1 \in S_\infty$ is by convention the unique \emph{0-Grassmannian} permutation.
\end{propdef}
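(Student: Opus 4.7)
My plan is to prove (a) $\Leftrightarrow$ (b) $\Rightarrow$ (c) $\Rightarrow$ (a), built around the explicit description of row $i$ of $D(w)$ as
\[ R_i := \{k \in \PP : (i,k) \in D(w)\} = \{k < w(i) : k \notin \{w(1),\dots,w(i)\}\}, \]
so that $c_i = |R_i|$. The elementary observation that powers the entire argument is that $w(i) < w(i+1)$ forces $R_i \subseteq R_{i+1}$, while $w(i) > w(i+1)$ forces $w(i+1) \in R_i \setminus R_{i+1}$ and in fact $R_{i+1} \subsetneq R_i$. Both are immediate from the formula above, so $s_i \in \DesR(w)$ if and only if $c_i > c_{i+1}$.

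Given this, (a) $\Leftrightarrow$ (b) is automatic: the condition $\DesR(w) = \{s_n\}$ means exactly that $c_n > c_{n+1}$ while $c_i \leq c_{i+1}$ for every $i \neq n$, and since $c(w)$ has only finitely many nonzero entries the weakly increasing tail $c_{n+1} \leq c_{n+2} \leq \cdots$ must be identically zero, leaving the form in (b). For (b) $\Rightarrow$ (c), the nested chain $R_1 \subseteq \cdots \subseteq R_n$ with $R_i = \emptyset$ for $i > n$ means any $(i,k) \in D(w)$ with $i < n$ has $k \in R_i \subseteq R_{i+1}$, so $(i+1,k) \in D(w)$ and $(i,k)$ is not essential; the rightmost cell of the nonempty row $n$ is essential, so $\Ess(D(w))$ is nonempty and contained in row $n$.

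The main work is (c) $\Rightarrow$ (a). First I show $D(w) \subseteq [n] \times \PP$ by a ``flow to a sink'' argument: any non-essential $(i,j) \in D(w)$ has $(i+1,j)$ or $(i,j+1)$ again in $D(w)$, so iterating (which terminates because $D(w)$ is finite and the row plus column strictly increases at each step) reaches an essential cell without ever decreasing the row index; a cell in row $> n$ would thus yield an essential cell in row $> n$, contradicting (c). Together with the essential cell in row $n$ guaranteed by (c), this gives $c_n \geq 1 > 0 = c_{n+1}$, so $s_n \in \DesR(w)$, while $c_i = c_{i+1} = 0$ for $i > n$ rules out descents beyond $s_n$. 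The delicate step is excluding a descent at $s_i$ for $i < n$. If such a descent existed, the preliminary lemma would place $k_0 := w(i+1)$ in $R_i \setminus R_{i+1}$, and since $(i,k_0)$ is non-essential (as $i < n$) while $(i+1,k_0) \notin D(w)$, we would have $(i,k_0+1) \in D(w)$. But every element of $R_{i+1}$ is strictly less than $k_0 = w(i+1)$ by the defining formula, so $(i+1,k_0+m) \notin D(w)$ for every $m \geq 0$, and the non-essential argument then forces $(i,k_0+m) \in D(w)$ for all $m \geq 0$ by induction on $m$, contradicting the finiteness of $R_i$. This infinite-descent contradiction is the principal obstacle of the proof; everything else reduces to routine bookkeeping around the key lemma.
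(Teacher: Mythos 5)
The paper does not prove Proposition-Definition~\ref{propdef-1}: it says only that ``the proof of the next statement is an instructive exercise'' and refers the reader to Manivel's book, so there is no in-paper argument to compare yours against. Your proof itself is correct. The opening observation---that $w(i)<w(i+1)$ gives $R_i\subseteq R_{i+1}$ while $w(i)>w(i+1)$ forces $w(i+1)\in R_i\setminus R_{i+1}$ and hence $R_{i+1}\subsetneq R_i$---is exactly the right mechanism, and it makes $(a)\Leftrightarrow(b)$ and $(b)\Rightarrow(c)$ essentially bookkeeping. The $(c)\Rightarrow(a)$ direction is where one has to actually do something, and both of your ingredients there are sound: the ``flow southeast to an essential cell'' argument (which cannot decrease the row index and must terminate by finiteness) correctly rules out cells in rows beyond $n$, and the infinite-descent argument for ruling out a descent $s_i$ with $i<n$ is valid because $R_{i+1}\subseteq\{1,\dots,w(i+1)-1\}$ contains no $k_0+m$ with $m\geq 0$, so non-essentiality repeatedly pushes you right along row $i$, contradicting $|R_i|<\infty$. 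This is the standard self-contained treatment one would find in a textbook; nothing is missing.
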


Let $\lambda(w) = (w(n)-n,\dots,w(2)-2, w(1)-1)
=(c_n,\dots,c_2,c_1)$ for an $n$-Grassmannian permutation $w \in S_\infty$ with code
$c(w) = (c_1,c_2,\dots)$. Also define $\lambda(1) = \emptyset=(0,0,\dots)$.
The map $w \mapsto \lambda(w)$ is a bijection from $n$-Grassmannian permutations in $S_\infty$ to
partitions with at most $n$ parts.
Recall the definition of the map $\rho_n : \ZZ[[x_1,x_2,\dots]] \to \ZZ[x_1,x_2,\dots,x_n]$
from Section~\ref{stab-sect}.
The main object of this section is to prove an involution analogue of the following theorem.

\begin{theorem}[See \cite{Manivel}]
\label{classical-grass-thm1}
If $w \in S_\infty$ is $n$-Grassmannian, then $\fkS_w = \rho_n s_{\lambda(w)}$ and
$F_w = s_{\lambda(w)}$.
\end{theorem}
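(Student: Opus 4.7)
The plan is to prove the polynomial identity $\fkS_w = \rho_n s_{\lambda(w)}$ first; the symmetric-function identity $F_w = s_{\lambda(w)}$ will then follow by a shift-and-limit argument. For the polynomial identity, I would pick $N \ge n$ with $w \in S_N$ and start from the definition $\fkS_w = \partial_{w^{-1} w_N}(x^{\delta_N})$. Using Lemma \ref{man-lem} to expand $\partial_{w_N}$ as an alternating sum, together with the length-additive factorization $\partial_{w_N} = \partial_w \partial_{w^{-1} w_N}$, one rewrites $\fkS_w$ as a ratio of determinants. The $n$-Grassmannian structure of $w$ --- namely that $w(i) = \lambda(w)_{n+1-i} + i$ for $i \le n$ and that $w$ is increasing on $\{n+1, \dots, N\}$ --- causes the alternating sum to factor: the ``trailing'' bialternant $\Delta_{N-n}(x_{n+1}, \dots, x_N)$ coming from permutations of the final $N-n$ variables cancels against the corresponding piece of $\Delta_N$, leaving the Jacobi--Trudi-style determinant $\det(x_i^{\lambda(w)_j + n - j})_{1 \le i, j \le n}/\Delta_n$, which is exactly $\rho_n s_{\lambda(w)}$.

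For the stable identity, I would observe that the shifted permutation $w \gg N$ of Definition \ref{shift-def} is $(n+N)$-Grassmannian with $\lambda(w \gg N) = \lambda(w)$: a direct inspection shows that its Lehmer code is just $c(w)$ prefixed by $N$ zeros, so reversing through the first $n+N$ positions recovers the same partition. Applying the first identity to $w \gg N$ then yields $\fkS_{w \gg N} = \rho_{n+N} s_{\lambda(w)}$, and passing to the limit through Theorem-Definition \ref{F-def} gives $F_w = \lim_{N \to \infty} \rho_{n+N} s_{\lambda(w)} = s_{\lambda(w)}$ in $\Lambda$, since each individual monomial of $s_{\lambda(w)}$ is supported inside the first $\ell(\lambda(w)) \le n$ variables and so survives $\rho_{n+N}$ for every $N \ge 0$.

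The main obstacle is the determinantal bookkeeping in the first step: one must carefully track how $w^{-1} w_N$ permutes the exponents of $x^{\delta_N}$, partition $S_N$ into cosets adapted to this action, and verify that the alternating sum collapses cleanly into the bialternant for $s_{\lambda(w)}$ in the first $n$ variables. A conceptually cleaner alternative would be to verify by induction on $|\lambda(w)|$ that $\rho_n s_{\lambda(w)}$ satisfies the defining divided-difference recursion \eqref{maineq} for Schubert polynomials, but this requires a Pieri-type analysis of how $\partial_i$ acts on Schur polynomials and does not close up neatly among Grassmannian permutations when $i \ne n$, so I expect the bialternant route to be the most direct.
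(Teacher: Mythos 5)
The paper itself does not prove this statement — it cites \cite{Manivel} — so there is no internal proof to compare against. Your reduction of the stable identity $F_w = s_{\lambda(w)}$ to the polynomial identity via the shift $w\gg N$ is correct. The sketch for $\fkS_w = \rho_n s_{\lambda(w)}$, however, has a gap. The factorization $\partial_{w_N} = \partial_w\partial_{w^{-1}w_N}$ applied to $x^{\delta_N}$ gives only the tautology $1 = \partial_w\fkS_w$ and cannot be inverted to produce $\fkS_w$; and Lemma~\ref{man-lem} writes $\partial_{w_N}$, not $\partial_{w^{-1}w_N}$, as an antisymmetrized sum, so the $S_N$-alternating-sum machinery never engages with the operator you actually need to evaluate. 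The ``trailing Vandermonde cancellation'' you describe is therefore not available in the form you propose, and the obstacle you flag at the end is a genuine missing step rather than mere bookkeeping.

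The factorization you need splits $w^{-1}w_N$ itself. Since $s_i\notin\DesR(w)$ for $i<n$, one has $\ell(ww_n)=\ell(w)+\ell(w_n)$, so $v:=w_nw^{-1}w_N$ satisfies $w^{-1}w_N=w_n v$ with $\ell(w^{-1}w_N)=\ell(w_n)+\ell(v)$ and hence $\fkS_w=\partial_{w_n}\bigl(\partial_v(x^{\delta_N})\bigr)$. The missing key lemma is that $\partial_v(x^{\delta_N})=x^{\lambda(w)+\delta_n}$: indeed $\partial_v(x^{\delta_N})=\partial_v\fkS_{w_N}=\fkS_{w_Nv^{-1}}=\fkS_{ww_n}$ by iterating \eqref{maineq}, and $ww_n$ (strictly decreasing on $[n]$, increasing afterward) is $132$-avoiding with code $\lambda(w)+\delta_n$, so its Schubert polynomial is exactly that monomial. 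Only at this point does Lemma~\ref{man-lem}, applied to the single factor $\partial_{w_n}$, produce the bialternant $\Delta_n^{-1}\sum_{\sigma\in S_n}(-1)^{\ell(\sigma)}\sigma\bigl(x^{\lambda(w)+\delta_n}\bigr)=\det\bigl(x_i^{\lambda(w)_j+n-j}\bigr)/\Delta_n=\rho_n s_{\lambda(w)}$; no $(N-n)$-variable Vandermonde is ever formed or cancelled.
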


The goal of this section to identify a class of involutions for which a similar result holds.
To this end, consider the following variations of  $D(w)$ and $c(w)$, introduced in
\cite[Section 3.2]{HMP1}:

\begin{definition}
The \emph{(involution) diagram} of $y\in \I_\infty$ is the set
$\hat D(y) = \{ (i,j) \in  D(y) : j \leq i\}$. Equivalently, $(i,j ) \in \PP\times \PP$
belongs to $\hat D(y)$ if and only if $ j \leq i < y(j)$ and $j < y(i)$.
\end{definition}

\begin{definition}
The \emph{(involution) code} of $y\in \I_\infty $ is the sequence
$\hat c(y) = (c_1,c_2,\dots)$ in which $c_i$ is the number of positions
in the $i$th row of $\hat D(y)$.
\end{definition}

\begin{example} If $y=(1,4)$ then $\hat D(y) = \{ (1,1),(2,1),(3,1) \}$ and $\hat c(y) = (1,1,1)$.
\end{example}

Note that $|\hat D(y)|=\ellhat(y)$ \cite[Proposition 3.6]{HMP1}.
An involution $y\in \I_\infty$ is \emph{dominant} if  $\hat D(y)$
is the transpose of the shifted diagram of a strict partition (see Section~\ref{ss:schur-p2}),
which occurs if and only if $y$ is 132-avoiding
\cite[Proposition 3.25]{HMP1}. 
Recall that $x_{(i,j)}$ is either $x_i=x_j$ (if $i=j$) or $x_i+x_j$ (if $i\neq j$).
The following is \cite[Theorem 3.26]{HMP1}.

\begin{theorem}[See \cite{HMP1}]
\label{dominant-thm}
If $y \in \I_\infty$
is dominant then
$
\iS_y =  \prod_{(i,j) \in \hat D(y)} x_{(i,j)}
.$
\end{theorem}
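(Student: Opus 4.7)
My plan is to induct on $\ellhat(y) = |\hat D(y)|$ via the transition formula in Theorem \ref{invmonk-thm}. The base case $y = 1$ is immediate: $\iS_1 = 1$ equals the empty product.

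For the inductive step, dominance means $\hat D(y)$ is the transpose of the shifted Young diagram of a strict partition. I would choose an outer corner $(p, q)$ of $\hat D(y)$ and let $y^\ast$ be the unique (still dominant) involution with $\hat D(y^\ast) = \hat D(y) \setminus \{(p, q)\}$. The key claim is the existence of a cycle $(a, b) \in \Cyc_\PP(y^\ast)$ with $x_{(a, b)} = x_{(p, q)}$ for which the transition formula collapses to
$$x_{(a, b)} \iS_{y^\ast} = \iS_y,$$
that is, $\hat \Phi^+(y^\ast, b) = \{y\}$ and $\hat \Phi^-(y^\ast, a) = \emptyset$. The choice of $(a, b)$ depends on whether $(p, q)$ lies on the diagonal (so $a = b = p$ is a fixed point of $y^\ast$ that becomes half of a $2$-cycle of $y$) or strictly below it (so $(a, b) = (q, y^\ast(q))$ is an existing $2$-cycle whose larger endpoint extends to give the new cell). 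Combined with the inductive hypothesis $\iS_{y^\ast} = \prod_{(i,j) \in \hat D(y^\ast)} x_{(i,j)}$, the transition identity then yields $\iS_y = x_{(p,q)} \iS_{y^\ast} = \prod_{(i,j) \in \hat D(y)} x_{(i,j)}$.

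The main obstacle is verifying that $\hat \Phi^{\pm}$ collapse as claimed. For each candidate corner $(p, q)$, one must use Theorem \ref{taubruhat-thm} and the explicit moves in Table \ref{ct-fig} to rule out every other $\tau_{ij}(y^\ast)$: any such candidate either fails the length condition $\ellhat(\tau_{ij}(y^\ast)) = \ellhat(y^\ast) + 1$, or else produces an involution different from $y$. Dominance of $y^\ast$ is essential, as the shape of $\hat D(y^\ast)$ tightly constrains which covering relations can originate from the index $a$ or terminate at the index $b$. A naive divided-difference argument using Theorem \ref{ithm} is tempting but runs into the issue that for dominant $y$, the involution $s_k \circ y \circ s_k$ need not remain dominant (for instance, $\partial_2 \iS_{3412} = \iS_{2143}$ and $2143$ contains a $132$-pattern), so the induction does not close within the dominant family. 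The transition approach sidesteps this by peeling cells off the boundary of the shifted shape while remaining in the dominant class throughout.
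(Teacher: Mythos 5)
The paper does not prove this statement: it is cited directly from \cite[Theorem 3.26]{HMP1}, which predates the transition formula of \cite{HMP3} that you invoke. So your route is genuinely different from the cited source (which would have worked with the atomic decomposition $\iS_y = \sum_{w \in \cA(y)}\fkS_w$ and divided-difference tools), and your observation that the naive divided-difference induction does not close within the dominant class is correct and well-taken. However, the crux of your argument---that there is a cycle $(a,b) \in \Cyc_\PP(y^*)$ with $x_{(a,b)} = x_{(p,q)}$, $\hat\Phi^+(y^*,b) = \{y\}$, and all elements of $\hat\Phi^-(y^*,a)$ lying outside $\I_\infty$---is asserted but not proved, and you yourself call it ``the main obstacle.'' This is a genuine gap: one must show, using the shifted-shape constraints on $\hat D(y^*)$ together with Theorem~\ref{taubruhat-thm} and Table~\ref{ct-fig}, that (i) for the removed corner $(p,q)$ the appropriate cycle is $(a,b) = (q, y^*(q))$ with $y^*(q) = p$ (resp.\ $(a,b) = (p,p)$ on the diagonal), (ii) the only $j > b$ with $y^* \lessdot y^*(b,j)$ gives $\tau_{bj}(y^*) = y$, and (iii) no $i$ with $1 \le i < a$ gives a Bruhat cover $y^* \lessdot y^*(i,a)$. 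None of these steps is obvious, and as a sketch the proposal is incomplete until they are established.

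A few further points of precision. The claim $\hat\Phi^-(y^*,a) = \varnothing$ is literally false: the paper states explicitly (citing \cite[Proposition 3.26]{HMP3}) that both $\hat\Phi^{\pm}$ are always nonempty; what you need is that $\hat\Phi^-(y^*,a) \cap \I_\infty = \varnothing$, so that every summand $\iS_z$ in the transition formula with $z \in \hat\Phi^-(y^*,a)$ is set to zero. Finally, since you are proving a result from \cite{HMP1} using a theorem from \cite{HMP3}, you should verify that the proof of the transition formula in \cite{HMP3} does not itself rely on the dominant formula---otherwise the argument is circular at the level of the references. If the collapse can be established cleanly and the circularity concern ruled out, this would be a valid (and arguably more conceptual) alternative derivation.
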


The \emph{lexicographic order} on $S_\infty$ is the total order
induced by identifying $w \in S_\infty$ with its one-line representation $w(1)w(2)w(3)\cdots$.
Denote the lexicographically minimal element of $\cA(y)$ as $\alpha_{\min}(y)$.
The following statement is part of \cite[Theorem 6.10]{HMP2}.

\begin{lemma}[See \cite{HMP2}] \label{minatom-lem}
Suppose $y \in \I_\infty$ and $\Cyc_\PP(y) = \{ (a_i,b_i) : i \in \PP\}$ where $a_1<a_2<\cdots$.
The lexicographically  minimal element  $\alpha_{\min}(y) \in \cA(y)$  is the inverse of the permutation
whose one-line representation is given by  the sequence
 $b_1a_1b_2a_2b_3a_3\cdots$ with $a_i$ omitted whenever $a_i=y(a_i) =b_i$.
\end{lemma}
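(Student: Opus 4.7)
Let $u \in S_\infty$ denote the permutation with one-line representation $b_1 a_1 b_2 a_2 \cdots$ (omitting $a_i$ whenever $a_i = b_i$). The plan is to verify this formula in two stages: first, show $u^{-1} \in \cA(y)$; second, show $u^{-1}$ precedes every other atom in lex order.

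For the first stage, I would induct on $\kappa(y)$, the number of nontrivial 2-cycles of $y$. The base case $\kappa(y) = 0$ is immediate, since then $y = 1$ and $u = 1$. For the inductive step, let $(a_k, b_k)$ be the 2-cycle of $y$ with the largest index $a_k$, let $y' \in \I_\infty$ be the involution obtained from $y$ by splitting this 2-cycle into the two fixed points $\{a_k\}$ and $\{b_k\}$, and let $u'$ denote the permutation built from $y'$ by the same formula. The inductive hypothesis gives $(u')^{-1} \in \cA(y')$. A direct comparison of one-line forms shows $u = u' \cdot c$, where $c$ is a reduced product of adjacent transpositions that slides the fixed-point entry $b_k$ of $u'$ leftward over the intervening entries to form the block $b_k a_k$ in $u$. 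A direct inversion count then confirms $\ell(u) = \ell(u') + \ell(c) = \ellhat(y') + (\ellhat(y) - \ellhat(y')) = \ellhat(y)$. Tracking the Demazure product $u \circ u^{-1}$ through the sequence of simple transpositions comprising $c$ shows that the product acquires exactly the extra 2-cycle $(a_k, b_k)$, so $u \circ u^{-1} = y$ and therefore $u^{-1} \in \cA(y)$.

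For the second stage, I would invoke the characterization of atoms developed in the companion paper \cite{HMP2}: every $w \in \cA(y)$ has an inverse one-line word arising as an interleaving of the two-letter ``cycle blocks'' $b_i a_i$ (together with singletons corresponding to fixed points) in an order compatible with the atom condition $w^{-1} \circ w = y$ and the length condition $\ell(w) = \ellhat(y)$. Among all such interleavings, the lex minimum of $w$ is attained by listing the cycle blocks contiguously in the natural left-to-right order $b_1 a_1, b_2 a_2, \ldots$: placing block $b_1 a_1$ first in $w^{-1}$ forces $w(b_1) = 1$ and $w(a_1) = 2$, which are the smallest values attainable at positions $b_1$ and $a_1$ of any atom, and iterating this greedy choice produces exactly the prescribed $u$.

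The main obstacle will be making the greedy argument in the second stage fully rigorous via exchange: one must show that any deviation from the contiguous interleaving produces a lex-larger $w$, rather than a lex-smaller one that happens to benefit at a later position. This requires a careful combinatorial analysis of which block interleavings correspond to valid atoms, together with a local-swap argument demonstrating that splitting or reordering cycle blocks in $w^{-1}$ can only increase the corresponding $w$ at the first position of disagreement. The structural machinery for atoms from \cite{HMP2} should provide exactly the tools needed to carry out this exchange.
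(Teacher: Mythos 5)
The paper does not prove this lemma: it is quoted verbatim as ``part of [HMP2, Theorem 6.10]'' and used as a black box. So there is no internal argument to compare against; you are effectively being asked to reprove a result from the companion paper.

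Your proposal is a sketch, not a proof, and the gaps are in exactly the places where the real work lies. In Stage~1, the claim that ``tracking the Demazure product $u \circ u^{-1}$ through the sequence of simple transpositions comprising $c$ shows that the product acquires exactly the extra $2$-cycle $(a_k, b_k)$'' is where the whole inductive step lives, and you give no argument for it. It is not a routine observation: the factor $c$ can be long (for $y=(1,4)(2,6)$ one finds $c = s_5 s_4 s_3$), and one must show both that $u'c$ is a reduced factorization and that conjugating by $c$ under the Demazure product acts on $y' = u' \circ (u')^{-1}$ in the claimed way. Neither of these follows from the one-line bookkeeping you describe. In Stage~2 you explicitly acknowledge the argument is incomplete, and moreover you propose to close it by ``the characterization of atoms developed in the companion paper [HMP2]'' and ``the structural machinery for atoms from [HMP2].'' Since the statement you are trying to prove \emph{is} part of [HMP2, Theorem~6.10], invoking that machinery without specifying which pieces you use and why they do not already subsume the statement is circular. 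The greedy claim that placing the block $b_1 a_1$ first forces $w(a_1)=2$ to be ``the smallest value attainable'' is plausible but is exactly the sort of assertion about the set $\cA(y)$ that needs a proof; it is not a consequence of lex-minimality of the one-line word of $w^{-1}$, because lex order on $w$ and on $w^{-1}$ are unrelated a priori. As written, neither stage is a complete argument, and the citation you would need to fill Stage~2 is the one the paper itself gives.
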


\begin{example}
If $y = (1,4)$ then $b_1a_1b_2a_2b_3a_3= 412233$ and
$\alpha_{\min}(y)=4123^{-1} = 2341$.
\end{example}

We say that a pair $(i,j) \in \ZZ$ is a \emph{visible inversion} of $y \in \I_\ZZ$ if $i<j$ and
$y(j) \leq \min\{ i, y(i)\}$.

\begin{lemma}\label{desi-lem0}
The set of visible inversions of $y \in \I_\infty$ is equal to $\inv(\alpha_{\min}(y))$.
\end{lemma}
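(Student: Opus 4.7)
The plan is to compute $\alpha_{\min}(y)$ explicitly from Lemma~\ref{minatom-lem}, then classify both $\inv(\alpha_{\min}(y))$ and the visible inversions of $y$ in terms of the cycle data of $y$. Write $\Cyc_\PP(y) = \{(a_k,b_k) : k \in \PP\}$ with $a_1 < a_2 < \cdots$ and $b_k = y(a_k) \geq a_k$, and set $w = \alpha_{\min}(y)$. Lemma~\ref{minatom-lem} tells us the one-line word of $w^{-1}$ is the concatenation, for $k = 1,2,\dots$, of the block $b_k a_k$ (or just $b_k$ when $a_k = b_k$); letting $p_k$ denote the position of $b_k$ in this concatenation, we obtain $p_1 < p_2 < \cdots$, and inverting gives $w(b_k) = p_k$ for all $k$ and $w(a_k) = p_k + 1$ whenever $a_k < b_k$.

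I would then enumerate $\inv(w) = \{(i,j) : i<j,\ w(i) > w(j)\}$ by a case analysis on whether each of $i,j$ has the form $a_k$ or $b_k$. Using the equivalence $p_k < p_l \iff k < l$, the requirement $i < j$ eliminates most combinations, leaving three families: (A) $(a_k, b_k)$ for each nontrivial cycle $k$; (B) $(a_k, b_l)$ with $k$ nontrivial, $l < k$, and $a_k < b_l$; and (C) $(b_k, b_l)$ with $l < k$ and $b_k < b_l$ (where $k$ may be trivial, in which case $b_k = a_k$). Next I would describe the visible inversions of $y$ in the same language: since $y(j) \leq i < j$ forces $y(j) < j$, the coordinate $j$ must be the larger element of its cycle, say $j = b_l$ with $y(j) = a_l$; and since $\min(i, y(i))$ equals the smaller element $a_k$ of the cycle containing $i$, the condition $y(j) \leq \min(i, y(i))$ reduces to $a_l \leq a_k$. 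The case $k = l$ forces $i = a_l$ and yields (A); the case $l < k$ combined with $i \in \{a_k, b_k\}$ yields (B) and (C), matching the enumeration of $\inv(w)$ term by term.

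The main obstacle will be the bookkeeping in the case analysis, particularly the uniform treatment of fixed points alongside nontrivial cycles and keeping track of the strict inequality $i<j$ in each sub-case (e.g., in (C) one must observe that $l$ cannot be trivial, lest $b_l = a_l < a_k \leq b_k$ contradict $b_k < b_l$). Once the two classifications are laid out side by side, the equality of sets is immediate.
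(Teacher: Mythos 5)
Your proposal is correct and follows essentially the same strategy as the paper: both arguments start from the explicit description of $\alpha_{\min}(y)^{-1}$ given by Lemma~\ref{minatom-lem}, translate membership in $\inv(\alpha_{\min}(y))$ into a comparison of cycle indices, and then show that the visible-inversion condition $y(n) \leq \min\{m,y(m)\}$ amounts to the same comparison. The paper's proof is somewhat more compact because it avoids the three-family enumeration entirely: for $m<n$ with $m \in \{a_k,b_k\}$ and $n \in \{a_j,b_j\}$, it notes at once that $(m,n) \in \inv(\alpha_{\min}(y))$ iff $j \leq k$ iff $a_j \leq a_k$, and then verifies in two short implications that this inequality is equivalent to $(m,n)$ being a visible inversion, with the single observation that the conditions force $n = b_j$ so $y(n) = a_j$. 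Your breakdown into families (A), (B), (C) is a valid way to organize the same bookkeeping, and your handling of the degenerate cases (fixed points, the impossibility of trivial $l$ in family (C)) is correct; it is just a bit more laborious than necessary, since the uniform criterion $j \leq k$ subsumes all three families at once.
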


\begin{proof}
Fix $y \in \I_\infty$ and let $\Cyc_\PP(y) = \{ (a_i,b_i) : i \in \PP\}$ where $a_1<a_2<\cdots$. 
All visible inversions of $y$ are contained in $\PP\times \PP$.
Let $m<n$ be positive integers and
let $j,k \in \PP$ be such that $m \in \{a_k, b_k\}$ and $n \in \{a_j,b_j\}$.
By Lemma \ref{minatom-lem}, we have $(m,n) \in \inv(\alpha_{\min}(y))$
if and only if $j \leq k$, which holds if and only if $a_j \leq a_k$.
Note that $a_j = \min\{n,y(n)\}$ and $a_k = \min\{m,y(m)\}$.

If $(m,n)$ is a visible inversion of $y$ then $y(m)>y(n)$ and $n = b_j>m\geq a_j = y(n)$, so
$a_j\leq  \min \{m,y(m)\}= a_k$ as desired.
Conversely, if $a_j \leq a_k$, then $n\neq a_j$ since $a_k \leq m$, so
 we must have $y(n) = a_j \leq a_k = \min\{m,y(m)\}$ which means that $(m,n)$
 is a visible inversion of $y$.
\end{proof}

The preceding lemma implies the following result, which is also  \cite[Lemma 3.8]{HMP1}.

\begin{lemma}[See \cite{HMP1}]
\label{codei-lem}
If $y \in \I_\infty$ then
$\hat c(y) = c(\alpha_{\min}(y))$.
\end{lemma}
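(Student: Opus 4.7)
The plan is to prove the equality componentwise: for each $i\in\PP$, show that the number of boxes in row $i$ of $\hat D(y)$ equals the number of positions in row $i$ of $D(\alpha_{\min}(y))$. The key tool is Lemma~\ref{desi-lem0}, which identifies $\inv(\alpha_{\min}(y))$ with the set of visible inversions of $y$; this reduces everything to a combinatorial identity about pairs of integers attached to $y$.

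First I would unpack both sides. By definition, the $i$th entry of $c(\alpha_{\min}(y))$ counts the number of $n>i$ with $\alpha_{\min}(y)(n)<\alpha_{\min}(y)(i)$, which by Lemma~\ref{desi-lem0} equals
\[
A_i \;=\; \bigl|\{\, n>i : y(n)\le \min\{i,y(i)\} \,\}\bigr|.
\]
On the other hand, the $i$th entry of $\hat c(y)$ is
\[
B_i \;=\; \bigl|\{\, j : j\le i< y(j)\text{ and }j< y(i) \,\}\bigr|,
\]
directly from the definition of $\hat D(y)$.

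The main step is to exhibit the bijection $j\mapsto n:=y(j)$ between the set defining $B_i$ and the set defining $A_i$. If $j$ satisfies $j\le i<y(j)$ and $j<y(i)$, then $n=y(j)>i$, while $y(n)=j\le i$ and $y(n)=j<y(i)$, so $y(n)\le\min\{i,y(i)\}$. Conversely, if $n>i$ with $y(n)\le\min\{i,y(i)\}$, set $j=y(n)\le i$; then $y(j)=n>i$, and one checks $j<y(i)$, since $j=y(i)$ would force $n=y(j)=i$. This map is clearly its own inverse (as $y$ is an involution), hence a bijection, proving $A_i=B_i$.

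I do not anticipate a real obstacle: once Lemma~\ref{desi-lem0} is in hand, the whole argument is the observation that the map $j\mapsto y(j)$ interchanges the two defining conditions. The only thing to be careful about is the strict versus non-strict inequality $j<y(i)$ in the definition of $\hat D(y)$ and the fact that the visible-inversion condition on $y(n)$ relative to $y(i)$ is automatically strict because $y$ is an involution; this is the one place where one must invoke $y^2=1$.
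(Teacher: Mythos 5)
Your proof is correct and follows the same route the paper indicates: both derive the identity from Lemma~\ref{desi-lem0} (which the paper states ``implies'' the result, deferring the details to \cite{HMP1}). Your contribution is to make the implication explicit by exhibiting the bijection $j\mapsto y(j)$ between row~$i$ of $\hat D(y)$ and the visible inversions of the form $(i,n)$; the check that $j=y(i)$ is impossible (since it would force $n=i$) is exactly the point where $y^2=1$ is needed, and you handle it correctly.
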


We say that $i \in \ZZ$ is a \emph{visible descent} of $y\in \I_\ZZ$ if $(i,i+1)$
is a visible inversion, and
define  $\DesI(y) = \{ s_i : i \in \ZZ \text{ is a visible descent of $y$}\}$.
We note two facts about this set.

\begin{lemma}\label{desi-lem}
If $y \in \I_\infty$ then $\DesI(y) = \DesR(\alpha_{\min}(y))$.
\end{lemma}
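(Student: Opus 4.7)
The plan is to deduce this lemma almost immediately from the preceding Lemma~\ref{desi-lem0}, which already does the substantive work by identifying the set of visible inversions of $y$ with $\inv(\alpha_{\min}(y))$. Since both $\DesI(y)$ and $\DesR(\alpha_{\min}(y))$ are defined by declaring certain adjacent pairs $(i,i+1)$ to be inversions of the appropriate sort, the lemma should follow simply by specializing the identification of inversion sets from Lemma~\ref{desi-lem0} to such adjacent pairs.

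Concretely, I would proceed by a short chain of equivalences. By definition, $s_i \in \DesI(y)$ precisely when $(i,i+1)$ is a visible inversion of $y$. Applying Lemma~\ref{desi-lem0}, this holds if and only if $(i,i+1) \in \inv(\alpha_{\min}(y))$, which in turn is equivalent to $\alpha_{\min}(y)(i) > \alpha_{\min}(y)(i+1)$, i.e., to $s_i \in \DesR(\alpha_{\min}(y))$. Reading the chain in both directions gives both inclusions $\DesI(y) \subseteq \DesR(\alpha_{\min}(y))$ and $\DesR(\alpha_{\min}(y)) \subseteq \DesI(y)$ simultaneously.

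The only subtlety worth checking is that the identification of inversion sets in Lemma~\ref{desi-lem0} holds literally as subsets of $\PP \times \PP$, so that restriction to pairs of the form $(i,i+1)$ is unambiguous and requires no reindexing. Since $y \in \I_\infty$ and $\alpha_{\min}(y) \in S_\infty$, both inversion sets live naturally in $\PP \times \PP$, and Lemma~\ref{desi-lem0} asserts their equality as such. I do not anticipate any real obstacle in writing this up; essentially all the content has already been compressed into Lemma~\ref{desi-lem0}, and the present statement is merely a translation of that result from the language of inversions into the language of descent sets.
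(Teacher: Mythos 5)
Your proof is correct and follows exactly the same route as the paper's: deduce the statement directly from Lemma~\ref{desi-lem0} by observing that membership in a right descent set (resp.\ $\DesI$) is just the condition that $(i,i+1)$ lies in the corresponding inversion set. The paper states this in a single sentence; your write-up merely spells out the same chain of equivalences.
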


\begin{proof}
This follows from Lemma \ref{desi-lem0} since $s_i \in \DesR(w)$ if and only if
$(i,i+1) \in \inv(w)$.
\end{proof}

\begin{lemma}\label{desi-lem2}
If $y \in \I_\infty$
then the $i$th row of  $\Ess(\hat D(y))$ is nonempty if and only if $s_i \in \DesI(y)$.
\end{lemma}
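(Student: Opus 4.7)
The plan is to attack both directions by unwinding the definition $(i,j)\in\hat D(y) \iff j\leq i<y(j)$ and $j<y(i)$, and analyzing the maximum of a row. The main technical point is keeping straight which of the three inequalities defining membership in $\hat D(y)$ can fail for neighboring positions; there is no deep obstacle, but one must be careful because $i+1\geq y(j)$ combined with $i<y(j)$ forces the special equality $y(j)=i+1$, which is how the ``closing-up'' of a cycle at $i,i+1$ enters.

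For the forward direction, suppose $(i,j)\in\Ess(\hat D(y))$. From $(i,j)\in\hat D(y)$ we have $j\leq i<y(j)$ and $j<y(i)$. From $(i+1,j)\notin\hat D(y)$ (and $j\leq i\leq i+1$), I split into two cases: either $i+1\geq y(j)$ or $j\geq y(i+1)$. In the first case $i<y(j)$ forces $y(j)=i+1$, whence $y(i+1)=j\leq i$ and $y(i+1)=j<y(i)$. In the second case $y(i+1)\leq j\leq i$ and $y(i+1)\leq j<y(i)$. Either way $y(i+1)\leq\min\{i,y(i)\}$, so $(i,i+1)$ is a visible inversion and $s_i\in\DesI(y)$.

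For the converse, assume $s_i\in\DesI(y)$, so that $a:=y(i+1)\leq\min\{i,y(i)\}$. First I observe that $(i,a)\in\hat D(y)$: we have $a\leq i$, $y(a)=i+1>i$, and $a<y(i)$ (the inequality is strict because $y(i)=a$ would violate $y$ being an involution on $\{i,i+1\}$). In particular row $i$ of $\hat D(y)$ is nonempty, so the maximum $j:=\max\{k:(i,k)\in\hat D(y)\}$ exists and satisfies $j\geq a$. By maximality, $(i,j+1)\notin\hat D(y)$. If also $(i+1,j)\in\hat D(y)$, then in particular $j<y(i+1)=a$, contradicting $j\geq a$. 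Hence $(i+1,j)\notin\hat D(y)$ as well, and $(i,j)\in\Ess(\hat D(y))$, producing a nonempty $i$th row.

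Combining the two implications gives the claimed equivalence. The only nuance is the strict inequality $a<y(i)$ in the converse, which uses the involution property, and the elementary observation that $y(j)\geq i+1$ together with $y(j)\leq i+1$ forces $y(j)=i+1$ in the forward case analysis. Everything else follows by direct inspection of the inequalities defining $\hat D$, so the argument should fit in a short paragraph in the final write-up.
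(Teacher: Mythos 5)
Your proof is correct and follows essentially the same route as the paper's: both directions reduce to the dichotomy $(i+1,j)\notin\hat D(y)\iff y(j)\leq i+1\text{ or }y(i+1)\leq j$, and in the converse the key observation is that $(i,y(i+1))\in\hat D(y)$ while no $(i+1,j)$ with $j\geq y(i+1)$ can lie in $\hat D(y)$. Your write-up is somewhat more explicit than the paper's (the paper compresses the forward case analysis into a single condition on $j=y(k)$, and leaves the maximality-of-$j$ step implicit in the converse), but the underlying argument is the same.
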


\begin{proof}
If $s_i \in \DesI(y)$
then $(i,y(i+1)) \in \hat D(y)$ but all positions of the form $(i+1,j) \in \hat D(y)$
have $j<y(i+1)$, so the   $i$th row of $ \Ess(\hat D(y))$ is nonempty.
Conversely, if the $i$th row of $ \Ess(\hat D(y))$ is nonempty,
then there exists $(i,j) \in \hat D(y)$ with $(i+1,j) \notin \hat D(y)$.
This occurs if and only if $j=y(k)$ for some $k >i$ with $y(i)>y(k)$ and
$i\geq y(k)\geq y(i+1)$, in which case evidently
$s_i \in \DesI(y)$.
\end{proof}

We may now give analogues of
Definition \ref{grass-def} and
Proposition-Definition \ref{propdef-1}.

\begin{definition} \label{igrass-def}
An involution $y \in \I_\ZZ$ is \emph{I-Grassmannian} if $|\DesI(y)|\leq 1$.
\end{definition}

For  $y \in \I_\infty$, this definition is 
equivalent to the one 
 in the introduction by the following.

\begin{propdef}\label{propdef-2} For   $y \in \I_\infty$ and $n \in \PP$,
the following are equivalent:
\ben
\item[(a)] $\DesI(y) = \{ s_n\}$.

\item[(b)] $y = (\phi_1, n+1)(\phi_2,n+2)\cdots (\phi_r,n+r)$
for integers $r \in \PP$ and  $1\leq \phi_1 < \phi_2<\dots < \phi_r \leq n$.

\item[(c)] $\hat c(y)=(c_1,c_2,\dots,c_n,0,0,\dots)$ where
$c_1\leq c_2 \leq \cdots \leq c_n\neq 0 $.

\item[(d)] $ \Ess(\hat D(y))$ is nonempty and contained in $\{ (n,j) : j\in \PP\}$.

\item[(e)] The lexicographically minimal atom $\alpha_{\min}(y) \in \cA(y)$ is $n$-Grassmannian.
\een
We refer to involutions $y \in \I_\infty$ with these equivalent properties  as \emph{$n$-I-Grassmannian},
and consider  $1 \in \I_\infty$ to be the unique \emph{0-I-Grassmannian} involution.
\end{propdef}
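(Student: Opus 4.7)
The plan is to prove all the equivalences by reducing each condition to an assertion about the lexicographically minimal atom $\alpha_{\min}(y)$ and then invoking the classical characterization of Grassmannian permutations in Proposition-Definition \ref{propdef-1}. Once this reduction is in place, conditions (a), (c), (d), and (e) are each an immediate translation of a single lemma, and only (b) requires a genuinely combinatorial argument.

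First I would handle the easy equivalences. For (a) $\Leftrightarrow$ (e), Lemma \ref{desi-lem} gives $\DesI(y) = \DesR(\alpha_{\min}(y))$, so $\DesI(y) = \{s_n\}$ precisely when $\DesR(\alpha_{\min}(y)) = \{s_n\}$, which by Proposition-Definition \ref{propdef-1}(a) is the definition of $n$-Grassmannian. For (c) $\Leftrightarrow$ (e), Lemma \ref{codei-lem} gives $\hat c(y) = c(\alpha_{\min}(y))$, and the shape condition in (c) coincides verbatim with Proposition-Definition \ref{propdef-1}(b) applied to $\alpha_{\min}(y)$. For (d) $\Leftrightarrow$ (a), Lemma \ref{desi-lem2} states that the $i$th row of $\Ess(\hat D(y))$ is nonempty iff $s_i \in \DesI(y)$; so $\Ess(\hat D(y))$ is nonempty and contained in row $n$ iff $\DesI(y) = \{s_n\}$.

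The substantive step is (a) $\Leftrightarrow$ (b). For the direction (b) $\Rightarrow$ (a), I would take $y = (\phi_1, n+1)(\phi_2, n+2)\cdots (\phi_r, n+r)$ and verify $\DesI(y) = \{s_n\}$ directly from the definition $y(i+1) \leq \min\{i, y(i)\}$. First one checks that $s_n \in \DesI(y)$: since $y(n+1) = \phi_1 \leq n$ and $y(n) \geq n$ (because $y$ either fixes $n$ or sends it to some $n+k$), we get $y(n+1) = \phi_1 \leq n = \min\{n, y(n)\}$. To rule out $s_i$ for $i \neq n$, I would run a short case analysis splitting on whether each of $i, i+1$ is fixed, belongs to $\{\phi_1, \dots, \phi_r\}$, or belongs to $\{n+1, \dots, n+r\}$; in every case the inequality defining a visible descent fails because the $\phi_k$ appear in increasing order and fixed points are pinned to themselves.

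For the converse (a) $\Rightarrow$ (b), the cleanest route is to use the already-established (a) $\Rightarrow$ (e) together with Lemma \ref{minatom-lem}. Assume $\alpha_{\min}(y)$ is $n$-Grassmannian. Writing $\Cyc_\PP(y) = \{(a_i, b_i) : i \in \PP\}$ with $a_1 < a_2 < \cdots$ and ``$a_i$ omitted when $a_i = b_i$'' as in Lemma \ref{minatom-lem}, the inverse $\alpha_{\min}(y)^{-1}$ has one-line form obtained by interleaving $b_i$ with $a_i$. The $n$-Grassmannian condition forces $\alpha_{\min}(y)(1) < \cdots < \alpha_{\min}(y)(n)$ and $\alpha_{\min}(y)(n+1) < \alpha_{\min}(y)(n+2) < \cdots$, and reading this back through the interleaving description pins down the cycle structure: every nontrivial 2-cycle $(a,b)$ of $y$ must satisfy $a \leq n < b$, and the large ends $b$ of these cycles must occupy the positions $n+1, n+2, \dots, n+r$ in order. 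Writing $\phi_k$ for the small end paired with $n+k$, the monotonicity of the descending block yields $\phi_1 < \phi_2 < \cdots < \phi_r \leq n$, which is exactly (b).

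The main obstacle I expect is the bookkeeping in (a) $\Rightarrow$ (b): one must carefully translate the interleaved sequence $b_1 a_1 b_2 a_2 \cdots$ of Lemma \ref{minatom-lem}, including the fixed-point omissions, into the cycle structure of $y$, and verify that the sole-descent condition on $\alpha_{\min}(y)$ forces all large cycle ends to land precisely in positions $n+1, \dots, n+r$. Once that is sorted out, the remaining equivalences are formal consequences of the lemmas already in place.
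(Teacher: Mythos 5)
Your handling of (a) $\Leftrightarrow$ (c) $\Leftrightarrow$ (d) $\Leftrightarrow$ (e) is precisely the paper's argument: the same three lemmas (\ref{codei-lem}, \ref{desi-lem}, \ref{desi-lem2}) are invoked to translate each condition into the corresponding part of Proposition-Definition \ref{propdef-1} applied to $\alpha_{\min}(y)$. The paper then dismisses (a) $\Leftrightarrow$ (b) as ``a straightforward exercise from the definitions,'' so only there does your proposal diverge from what is spelled out.

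For (a) $\Rightarrow$ (b), you detour through (e) and Lemma \ref{minatom-lem} rather than arguing directly from the definition of a visible descent, as the paper's phrasing suggests. The route works, but the step you flag as bookkeeping is the genuine content and deserves to be pinned down. From the $n$-Grassmannian property of $w=\alpha_{\min}(y)$ one gets that in the word $b_1a_1b_2a_2\cdots$ (reduced) the subsequence of values $\leq n$ is increasing and so is the subsequence of values $>n$. For a nontrivial cycle $(a_i,b_i)$ the two letters $b_i,a_i$ are consecutive and decreasing, so they cannot lie in the same block; hence $a_i\leq n<b_i$. That handles the pattern constraint, but it does not yet give that the set of large ends is exactly $\{n+1,\dots,n+r\}$: you must also rule out fixed points between $n+1$ and $\max\supp(y)$, which follows because such a fixed point would appear in the reading word after every nontrivial $b_i$ (its $a$-index being larger) while still being smaller than $\max\supp(y)$, violating the monotonicity of the $>n$ block. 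With that point added the argument closes, and your stated conclusion about $\phi_1<\cdots<\phi_r\leq n$ follows. Your (b) $\Rightarrow$ (a) verification is routine and correct as described. In short: same approach as the paper where the paper gives one, with a correct but unfinished alternative argument for the part the paper leaves to the reader.
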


\begin{proof}
The equivalences (a) $\Leftrightarrow$ (c) $\Leftrightarrow$ (d) $\Leftrightarrow$ (e)
follow from Proposition-Definition \ref{propdef-1} and Lemmas \ref{codei-lem}, \ref{desi-lem},
and \ref{desi-lem2}.
Proving that (a) $\Leftrightarrow$ (b) is a straightforward exercise from the definitions.
\end{proof}

\begin{remark}
The number 
 $g_n$ of I-Grassmannian elements of $\I_n$
 satisfies  
$g_n = g_{n-1} + g_{n-2} + n-2$ for $n \geq 3$. 
The sequence $(g_n)_{n\geq 1} = (1, 2, 4, 8, 15, 27, 47, 80,\dots)$ 
appears as \cite[A000126]{OEIS}.
\end{remark}

Any involution in $S_\ZZ$ which is Grassmannian in the ordinary sense is also I-Grassmannian.
Moreover, $y \in \I_\ZZ$ is I-Grassmannian if and only if $y\gg N$ is I-Grassmannian for all $N \in \ZZ$.

\begin{corollary}\label{igrass-cor}
If $y \in \I_\ZZ$ is I-Grassmannian and $E \subset \ZZ$ is a finite set with $y(E)=E$,
then the standardized involution $[y]_E$ is also I-Grassmannian.
\end{corollary}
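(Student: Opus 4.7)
The plan is to transport the explicit structural characterization of I-Grassmannian involutions from Proposition-Definition~\ref{propdef-2} through the standardization map $y \mapsto [y]_E$. The case $y = 1$ is immediate since $[1]_E = 1$, so I will assume $y \neq 1$. Replacing $y$ by $y \gg N$ for $N$ large (and correspondingly translating $E$) preserves both I-Grassmannianness and the standardization $[y]_E$, so I may assume $y \in \I_\infty$ and write the nontrivial cycles of $y$ as $(\phi_j, n+j)$ for $j = 1, 2, \ldots, r$ with $\phi_1 < \phi_2 < \cdots < \phi_r \leq n$.

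Next I will unpack $[y]_E$. Because $y(E) = E$, the set $E$ is a union of $y$-cycles, and in particular $\phi_j \in E$ if and only if $n+j \in E$ for every $j$. Let $j_1 < \cdots < j_s$ index those $j$ with $\phi_j \in E$, and set $a_t = \psi_E(\phi_{j_t})$ and $b_t = \psi_E(n + j_t)$; then the nontrivial cycles of $[y]_E$ are exactly $(a_1, b_1), \ldots, (a_s, b_s)$. Since $\psi_E$ is order-preserving and $\phi_{j_t} \leq n < n + j_u$ for all $t, u$, this yields $a_1 < \cdots < a_s < b_1 < \cdots < b_s$. If I can show that the $b_t$'s are consecutive integers, then setting $m = b_1 - 1$ gives $[y]_E = (a_1, m+1)(a_2, m+2) \cdots (a_s, m+s)$ with all $a_t \leq m$, which is the I-Grassmannian form of Proposition-Definition~\ref{propdef-2}(b).

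The main obstacle, and really the only point with content, is the consecutiveness of the $b_t$'s, which amounts to showing that no element of $E$ lies strictly between $n + j_t$ and $n + j_{t+1}$ for any $t < s$. Any such hypothetical $i \in E$ would satisfy $n < i < n + r$; either $i = n + j'$ for some $j_t < j' < j_{t+1}$, in which case $y(i) = \phi_{j'} \notin E$ contradicts $y(E) = E$; or $i$ is a fixed point of $y$, which is impossible since the interval $(n, n+r]$ is contained in the support of $y$. Either way we derive a contradiction, and the corollary follows. The degenerate case $s = 0$ simply reduces to $[y]_E = 1$, which is trivially I-Grassmannian.
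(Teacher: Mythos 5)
Your proof is correct and takes essentially the same approach as the paper, which simply declares the result ``evident from Proposition-Definition~\ref{propdef-2}(b)'' after reducing via $y \gg N$ to the case $y \in \I_\infty$. You have carefully spelled out the verification---in particular the consecutiveness of the $b_t$'s---that the paper leaves to the reader.
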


\begin{proof}
The result is evident from Proposition-Definition \ref{propdef-2}(b) and the observation just noted.
\end{proof}

If $y \in \I_\ZZ-\{1\}$ is I-Grassmannian then
$y = (\phi_1, n+1)(\phi_2,n+2)\cdots (\phi_r,n+r)$ for some integers  $r \in \PP$ and
$\phi_1 < \phi_2<\dots < \phi_r \leq n $.
In this case, define the \emph{shape} of  $y$ to be the strict partition
\[ \ilambda(y) = (n+1-\phi_1,n+1-\phi_2,\dots,n+1-\phi_r).\]
Define the shape of $1 \in \I_\ZZ$ to be the empty partition $\ilambda(1)= \emptyset=(0,0,\dots)$.
One can check that if $y \in \I_\infty$ then $\ilambda(y)$ is
the transpose of the partition given by reversing $\hat c(y)$.
Moreover, the map $y \mapsto \ilambda(y)$ restricts to a bijection from $n$-I-Grassmannian involutions to strict
partitions whose parts all have size at most $n$.
Recall the definition of the operators $\pi_{b,a}$  from Section~\ref{divided-sect}.

\begin{lemma}\label{igrass-lem4}
Assume $y \in \I_\infty-\{1\}$ is I-Grassmannian
so that $y = (\phi_1, n+1)(\phi_2,n+2)\cdots (\phi_r,n+r)$ for some
integers
 $1\leq \phi_1 <\phi_2<\dots < \phi_r \leq n$.
Then
$ \iS_y = \pi_{\phi_1,1} \pi_{\phi_2,2} \cdots \pi_{\phi_r,r} \( x^{\ilambda(y)} G_{r,n}\).$
\end{lemma}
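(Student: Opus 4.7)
The plan is to prove the formula by induction on the statistic $d(y) := \sum_{k=1}^{r}(\phi_k-k) \geq 0$, which measures how far $y$ is from the dominant I-Grassmannian involution $y_0 := (1,n+1)(2,n+2)\cdots(r,n+r)$.

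In the base case $d(y)=0$, one has $\phi_k=k$ for all $k$, so $y=y_0$. The one-line form of $y_0$ places the smallest $r$ values last, so $y_0$ is $132$-avoiding, hence dominant. Theorem~\ref{dominant-thm} combined with the direct computation $\hat D(y_0)=\{(i,j):1\le j\le r,\ j\le i\le n\}$ gives
\[
\iS_{y_0}=\prod_{j=1}^{r}\Bigl(x_j\prod_{i=j+1}^{n}(x_j+x_i)\Bigr)=\prod_{j=1}^{r}x_j^{n+1-j}\prod_{i=j+1}^{n}(1+x_j^{-1}x_i)=x^{\ilambda(y_0)}G_{r,n},
\]
which matches the claimed right-hand side since every $\pi_{\phi_k,k}=\pi_{k,k}=\mathrm{id}$.

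For the inductive step $d(y)>0$, I pick the smallest index $k$ with $\phi_k>k$; then $\phi_j=j$ for $j<k$, and $\phi':=(\phi_1,\ldots,\phi_{k-1},\phi_k-1,\phi_{k+1},\ldots,\phi_r)$ defines another I-Grassmannian involution $y':=y_{\phi'}$ with $d(y')=d(y)-1$. I first verify the reduction $\partial_{\phi_k-1}\iS_{y'}=\iS_y$ via Theorem~\ref{ithm}: in $y'$, the cycle $(\phi_k-1,n+k)$ together with the fact that $\phi_k$ is a fixed point give $y'(\phi_k-1)=n+k>\phi_k=y'(\phi_k)$, so $s_{\phi_k-1}\in\DesR(y')$, and conjugation nontrivially swaps these positions to produce $y$. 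Writing $h:=\pi_{\phi_{k+1},k+1}\cdots\pi_{\phi_r,r}(x^{\ilambda(y)}G_{r,n})$ and using that $\pi_{\phi_j,j}=\mathrm{id}$ for $j<k$, that $x^{\ilambda(y')}=x_k\cdot x^{\ilambda(y)}$, and that multiplication by $x_k$ commutes with each $\pi_i$ for $i\ge k+1$, the inductive formula for $y'$ becomes $\iS_{y'}=\pi_{\phi_k-1,k}(x_kh)$. Meanwhile, the lemma's formula for $y$ reads $\iS_y=\pi_{\phi_k,k}(h)=\pi_{\phi_k-1}\pi_{\phi_k-1,k}(h)=\partial_{\phi_k-1}\bigl(x_{\phi_k-1}\pi_{\phi_k-1,k}(h)\bigr)$, using $\pi_i(f)=\partial_i(x_if)$. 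Combining with $\iS_y=\partial_{\phi_k-1}\iS_{y'}=\partial_{\phi_k-1}\pi_{\phi_k-1,k}(x_kh)$, the lemma reduces to the operator identity
\[
\partial_{\phi_k-1}\Bigl[x_{\phi_k-1}\pi_{\phi_k-1,k}(h)-\pi_{\phi_k-1,k}(x_kh)\Bigr]=0,
\]
that is, the $s_{\phi_k-1}$-invariance of the bracketed expression.

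The main obstacle is verifying this operator identity. I would tackle it by pushing $x_k$ through the operators in $\pi_{\phi_k-1,k}=\pi_{\phi_k-2}\cdots\pi_k$ using the commutation relations $\pi_a x_a=x_a\pi_a+x_{a+1}s_a$, $\pi_ax_{a+1}=x_{a+1}\pi_a-x_{a+1}s_a$, and $\pi_ax_b=x_b\pi_a$ for $b\notin\{a,a+1\}$. A careful analysis, most cleanly organized as a secondary induction on $\phi_k-k$, then shows that the bracketed expression lies in the kernel of $\partial_{\phi_k-1}$: the key input is the $s_{\phi_k-1}$-invariance of $h$, inherited from the symmetry of $G_{r,n}$ in $x_{r+1},\ldots,x_n$ (since $x^{\ilambda(y)}$ involves only $x_1,\ldots,x_r$) and preserved by the inner operators $\pi_{\phi_j,j}$ for $j>k$ via the relations $[\pi_i,s_j]=0$ when $|i-j|>1$.
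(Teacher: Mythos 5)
Your overall plan coincides with the paper's: you induct on $\Sigma(\phi)=\sum_k(\phi_k-k)$, handle the base case via Theorem~\ref{dominant-thm}, and in the inductive step realize $\iS_y$ as a divided difference of the $\iS$-polynomial of a smaller I-Grassmannian involution, then try to transmute the resulting $\partial$-expression into the claimed $\pi$-expression.  (You step $\phi_k\mapsto\phi_k-1$ while the paper steps $\phi_k\mapsto k$ all at once, but that is cosmetic.)  The trouble is in closing the operator identity, where your sketch has two genuine errors.

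First, $s_{\phi_k-1}$-invariance of $h$ is \emph{not} a sufficient input.  Writing $a=k$, $b=\phi_k-1$, the identity you need is $\partial_b\bigl[x_b\pi_{b,a}(h)-\pi_{b,a}(x_ah)\bigr]=0$, and this fails if $h$ is only $s_b$-invariant.  For instance with $a=1$, $b=3$, $h=x_2^2$ (so $\partial_3 h=0$ but $\partial_2 h\neq 0$) one computes $x_3\pi_2\pi_1(h)-\pi_2\pi_1(x_1h)=-x_1x_2x_3-x_1x_3^2$, whose image under $\partial_3$ is $-x_1x_2-x_1x_3-x_1x_4\neq 0$.  What is actually needed is $\partial_jh=0$ for \emph{every} $j$ with $k<j<\phi_k$; this is precisely the hypothesis of the paper's Lemma~\ref{pi-lem}, which then gives $\pi_{\phi_k,k}(h)=\partial_{\phi_k,k}(x_k^{\phi_k-k}h)$ and $\pi_{\phi_k-1,k}(x_kh)=\partial_{\phi_k-1,k}(x_k^{\phi_k-k}h)$, from which the identity follows in one line.

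Second, your proposed justification of the invariance of $h$ does not hold up.  You argue that $x^{\ilambda(y)}G_{r,n}$ is symmetric in $x_{r+1},\dots,x_n$ and that this symmetry is preserved because $[\pi_i,s_j]=0$ when $|i-j|>1$.  But the inner operators $\pi_{\phi_j,j}$ for $j>k$ involve $\pi_m$ for $m$ ranging up to $\phi_j-1\geq\phi_k$, so they include $\pi_{\phi_k-2}$, $\pi_{\phi_k-1}$ and $\pi_{\phi_k}$, none of which commute with $s_{\phi_k-1}$.  Moreover $x^{\ilambda(y)}G_{r,n}$ need not itself be $s_{\phi_k-1}$-invariant: when $\phi_k-1\leq r$ the monomial $x^{\ilambda(y)}$ has distinct exponents on $x_{\phi_k-1}$ and $x_{\phi_k}$.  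The required vanishings $\partial_jh=0$ for $k<j<\phi_k$ are true, but the clean way to see them is the paper's: apply the inductive hypothesis to $v=(1,n+1)\cdots(k,n+k)(\phi_{k+1},n+k+1)\cdots(\phi_r,n+r)$, obtaining $\iS_v=x_k^{\phi_k-k}h$, and note that $s_j\notin\DesR(v)$ for $k<j<\phi_k$, so $\partial_j\iS_v=0$ and hence $\partial_jh=0$.  To repair your proof you would need to replace the commutation argument with this observation (or something equivalent) and strengthen the ``key input'' to the full range of vanishings.
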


\begin{proof}
Let $\Sigma(\phi) = \sum_{i=1}^r (\phi_i-i)$.
If $\Sigma(\phi)=0$, then 
$y= (1,n+1)(2,n+2)\cdots (r,n+r) $ and the lemma asserts that
$\iS_y = x_1^{n} x_2^{n-1}\cdots x_r^{n-r+1} G_{r,n}$, which holds by Theorem \ref{dominant-thm}
since
$\hat D(y) = \{ (i+j,i ): 1\leq i \leq r\text{ and } 0 \leq j \leq n-i\} $.
Suppose $\Sigma(\phi)>0$ and let $i \in [r]$ be the smallest index such that $i<\phi_i$.
It suffices to show that
$ \iS_y = \pi_{\phi_i,i} \pi_{\phi_{i+1},i+1} \cdots \pi_{\phi_r,r} \( x^{\ilambda(y)} G_{r,n}\)$.
Let
\[
v =(1,n+1)(2,n+2)\cdots (i,n+i)(\phi_{i+1},n+i+1)(\phi_{i+2},n+i+2)\cdots(\phi_r,n+r) \in \I_\infty.
\]
Equation \eqref{i-eq} implies  $\iS_y = \partial_{\phi_i,i} \iS_v$,
and
by induction
$ \iS_v = \pi_{\phi_{i+1},i+1} \pi_{\phi_{i+2},i+2}\cdots\pi_{\phi_r,r}\(x^{\ilambda(v)} G_{r,n}\)$.
Since $x^{\ilambda(v)} = x_i^{\phi_i-i} x^{\ilambda(y)}$ and since
multiplication by $x_i$ commutes with $\pi_{\phi_j,j}$ when $i<j$,
we have
\be\label{thiseq}
\iS_y
=\partial_{\phi_i,i} \iS_v
=\partial_{\phi_i,i}\( x_i^{\phi_i-i} \pi_{\phi_{i+1},i+1} \pi_{\phi_{i+2},i+2}\cdots\pi_{\phi_r,r}
\(x^{\ilambda(y)} G_{r,n}\)\).
\ee
Since 
$\partial_j\( x_i^{i-\phi_i} \iS_v\) = x_i^{i-\phi_i} \partial_j \iS_v = 0$
for $i+1\leq j < \phi_i$ as $s_j \notin \DesR(v)$,  the desired identity
$\iS_y=
\pi_{\phi_{i},i}  \pi_{\phi_{i+1},i+1} \cdots \pi_{\phi_r,r} \(x^{\ilambda(y)} G_{r,n}\)
$
follows from \eqref{thiseq}
by  Lemma \ref{pi-lem}.
\end{proof}

\begin{theorem}\label{igrass-thm}
If $y \in \I_\ZZ$ is I-Grassmannian, then
$\iF_y = P_{\ilambda(y)}$.
\end{theorem}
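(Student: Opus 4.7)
The plan is to reduce the theorem to $y \in \I_\infty$, exploiting the invariance $\iF_y = \iF_{y \gg N}$ together with the easily checked fact that $\ilambda(y \gg N) = \ilambda(y)$ and that shifting preserves the I-Grassmannian property. The trivial case $y = 1$ is immediate, since $\iF_1 = 1 = P_\emptyset$. For a nontrivial I-Grassmannian $y = (\phi_1, m+1)(\phi_2, m+2)\cdots(\phi_r, m+r)$ in $\I_\infty$, Lemma~\ref{igrass-lem4} provides the key factorization
\[
\iS_y = \pi_u(x^{\ilambda(y)}G_{r,m}), \qquad \pi_u := \pi_{\phi_1,1}\pi_{\phi_2,2}\cdots\pi_{\phi_r,r},
\]
where $\pi_u$ corresponds via the braid relations to some element $u \in S_{\phi_r} \subseteq S_m$.

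Next I would invoke the identity $\iF_y = \lim_{n\to\infty}\pi_{w_n}\iS_y$ recorded at the end of Section~\ref{inv-schub-sect}. Since $\pi_a\pi_b = \pi_{a\circ b}$ under the Demazure product and $w_n\circ v = w_n$ for every $v\in S_n$ (as $w_n$ is the longest element of $S_n$), whenever $n\geq m$ we have $\pi_{w_n}\iS_y = \pi_{w_n}\pi_u(x^{\ilambda(y)}G_{r,m}) = \pi_{w_n}(x^{\ilambda(y)}G_{r,m})$, and hence
\[
\iF_y = \lim_{n\to\infty}\pi_{w_n}(x^{\ilambda(y)}G_{r,m}).
\]
Comparing with Definition~\ref{schurp-def}, which gives $P_{\ilambda(y)} = \lim_{n\to\infty}\pi_{w_n}(x^{\ilambda(y)}G_{r,n})$, the theorem reduces to proving the equality $\pi_{w_n}(x^{\ilambda(y)}G_{r,m}) = \pi_{w_n}(x^{\ilambda(y)}G_{r,n})$ for all $n \geq m$.

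The main obstacle is this last identity, which I would settle by a direct monomial analysis. Writing $G_{r,n} = G_{r,m}\cdot H_{m,n}$ with $H_{m,n} = \prod_{i=1}^{r}\prod_{k=m+1}^{n}(1+x_i^{-1}x_k)$, the difference $x^{\ilambda(y)}(G_{r,n}-G_{r,m})$ expands as a sum of Laurent monomials $x^\mu$, each arising from a nonempty choice of "extra" factors $1+x_i^{-1}x_k$ with $k>m$ together with an arbitrary choice of "normal" factors from $G_{r,m}$; in particular $\mu_k \geq 1$ for some $k > m$. Combining Lemmas~\ref{pi-lem2} and~\ref{man-lem}, we have $\pi_{w_n}(x^\mu) = 0$ whenever the shifted weight $\delta_n+\mu$ has any repeated coordinate, so it suffices to show every such "extra" monomial produces such a repetition. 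Using the defining identity $\ilambda(y)_i + \phi_i = m+1$ for the shape, I would examine the largest column index $k^*>m$ appearing among the chosen extra factors: if no extra factor targets $k^*-1$, the positions $k^*-1$ and $k^*$ collide in $\delta_n+\mu$; otherwise iterating this argument on the configuration of remaining extra factors, using the strict inequalities $\phi_1<\phi_2<\cdots<\phi_r\leq m$ to prevent "row" values from being distinct, produces the required coincidence. This combinatorial analysis is straightforward case-by-case but represents the technical crux of the proof.
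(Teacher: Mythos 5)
Your opening steps are sound and match the paper's: reducing to $y \in \I_\infty$ via $\iF_y = \iF_{y \gg N}$, handling $y=1$, invoking Lemma~\ref{igrass-lem4}, passing to $\iF_y = \lim_{n}\pi_{w_n}\iS_y$, and using $\pi_{w_n}\pi_u = \pi_{w_n}$ for $u \in S_n$ are all correct and efficient. The divergence is in how you close. The paper shifts the involution so that the ``$n$'' in $G_{r,n}$ tracks the ``$n$'' in $\pi_{w_n}$: for each $n \geq r$ it replaces $y$ by the $n$-I-Grassmannian involution $y \gg N$ of the same shape, so Lemma~\ref{igrass-lem4} yields $\pi_{w_n}\iS_{y\gg N} = \pi_{w_n}(x^{\ilambda(y)}G_{r,n}) = \rho_n P_{\ilambda(y)}$ directly, with no need to compare different values of $G$. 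You instead fix $m$ and reduce to the identity $\pi_{w_n}(x^{\ilambda(y)}G_{r,m}) = \pi_{w_n}(x^{\ilambda(y)}G_{r,n})$ for all $n\geq m$, which you then try to prove by a term-by-term monomial analysis.

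That last step contains a genuine gap: the claim that every monomial $x^\mu$ arising from $x^{\ilambda(y)}G_{r,m}(H_{m,n}-1)$ has $\delta_n+\mu$ with a repeated coordinate is false. Take $r=2$, $\phi=(1,2)$, $m=2$, $n=3$, so $\ilambda(y)=(2,1)$. Choosing the constant factor $1$ from $G_{2,2} = 1+x_1^{-1}x_2$ and the single extra factor $x_1^{-1}x_3$ from $H_{2,3}-1$ produces the monomial $x_1^2x_2 \cdot x_1^{-1}x_3 = x_1x_2x_3$, for which $\delta_3 + (1,1,1) = (3,2,1)$ has all distinct entries, and indeed $\pi_{w_3}(x_1x_2x_3) = x_1x_2x_3 \neq 0$. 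The identity $\pi_{w_3}(x^{\ilambda(y)}G_{2,2}) = \pi_{w_3}(x^{\ilambda(y)}G_{2,3})$ still holds, but only because this surviving term cancels against other surviving terms (e.g.\ $x_1x_3^2$ and $x_2^2x_3$, which contribute $-x_1x_2x_3$ each under $\pi_{w_3}$), not because each monomial vanishes individually. The specific failure mode of your argument is the case $k^*-1 \leq m$: there $\mu_{k^*-1}$ can be nonzero from $x^{\ilambda(y)}G_{r,m}$, so the coordinates $k^*-1$ and $k^*$ need not collide. Since the ``extra'' column indices form the consecutive interval $\{m+1,\dots,n\}$, the smallest one is always $m+1$ and this bad case is unavoidable. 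You would need to track cancellation across terms, which is considerably harder than a per-monomial argument; the paper's re-indexing sidesteps the issue entirely.
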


\begin{proof}
Since $\iF_y = \iF_{y \gg N}$ for all $N \in \ZZ$,
 we may assume that 
  $y \in \I_\infty$ is $n$-I-Grassmannian. If $\ilambda(y)$ has $r$ parts, then
 Lemma~\ref{igrass-lem4}
implies
that
$ \pi_{w_n} \iS_y = \pi_{w_n} \(x^{\ilambda(y)} G_{r,n}\)$  for all $n \geq r$,
and the theorem follows by taking the limit  as $n\to \infty$.
\end{proof}

\begin{remark}
It may happen that $\iS_y \neq \rho_n P_{\ilambda(y)}$ when $y \in \I_\infty$
is $n$-I-Grassmannian.
\end{remark}

\subsection{Schur $P$-positivity}\label{ppos-sect}

In this section we describe an algorithm to expand $\iF_y$ 
into a nonnegative linear combination of Schur $P$-functions. Our approach
is inspired by Lascoux and Sch\"utzenberger's original proof of
Theorem~\ref{intro-ls-thm} from \cite{LS}, which we sketch as follows.
Order $\ZZ\times \ZZ$ lexicographically.
Recall the definition of $\Phi^\pm(w,r)$ from Section~\ref{invtrans-sect}.
For $w \in S_\ZZ$, define
$\fk T_1(w)$ to be the empty set if $w$ is Grassmannian, and otherwise
let $\fk T_1(w) = \Phi^-(w(r,s),r)$ where 
 $(r,s)$ is the (lexicographically) maximal element of $\inv(w)$.
One can check that if $(r,s)$ is the maximal inversion of $w \in S_\ZZ$,
then $w(r,s)\lessdot w$ and  $\Phi^+(w(r,s),r) = \{w\}$
and  $r \in \ZZ$ is the largest integer such that $w(r)>w(r+1)$.

\begin{definition}
The \emph{Lascoux-Sch\"utzenberger tree} $\fk T(w)$ of $w \in S_\ZZ$
is the 
tree with root $w$, in which the children of any vertex $v \in S_\ZZ$
are the elements of $\fk T_1(v)$.
\end{definition}

A given permutation
may correspond to more than one vertex in $\fk T(w)$. One can show that  $\fk T(w)$ is always finite \cite{LS}.
Since  $F_w = \sum_{v \in \fk T_1(w)} F_v$ for any non-Grassmannian permutation
 by Theorem \ref{transition-thm2},
 it follows that
 $F_w = \sum_{v } F_v$
where
the sum is over the finite set of  leaf vertices $v$ in $\fk T(w)$.
The leaves of $\fk T(w)$ are Grassmannian permutations by construction,
so Theorem~\ref{intro-ls-thm} follows.

\begin{example}
The Lascoux-Sch\"utzenberger tree $\fk T(w)$ of $w = 1254376 \in S_7$ is shown below.
The maximal inversion of each vertex is underlined.
\[
\begin{tikzpicture}
  \node (max) at (0,1) {
12543\underline 7\underline 6
};
  \node (a) at (-2,0) {
1254\underline6\underline37
};
  \node (b) at (0,0) {
125\underline63\underline47
};
  \node (c) at (2,0) {
12\underline 643\underline 57
};
  \node (d) at (-2,-1) {
135\underline4\underline267
};
  \node (e) at (2,-1) {
13\underline 624\underline57
};
  \node (f) at (-2,-2) {
23\underline51\underline467
};
  \draw  [-]
(max)   edge  (a)
(max)   edge  (b)
(max)   edge  (c)
(e)   edge  (c)
(a) edge (d)
(d)   edge  (f)
;
\end{tikzpicture}
\]
It follows from Theorem \ref{classical-grass-thm1} that
$ F_{1254376} =  s_{(3,2,2)} + s_{(3,3,1,1)} + s_{(4,2,1)}$.
\end{example}

Fix $z \in \I_\ZZ$.
Recall that an inversion $(i,j) \in \inv(z)$ is {visible} if $z(j) \leq \min\{ i, z(i)\}$,
and that 
 $i \in \ZZ$ is a {visible descent} of $z$ if $(i,i+1)$
is a visible inversion.
It follows by Lemma \ref{desi-lem} that if $z$
has no visible descents then $\alpha_{\min}(z)=1$ so $z=1$.

\begin{lemma}\label{minvisdes-lem}
Let $z \in \I_\ZZ-\{1\}$ and suppose $j \in \ZZ$ is the smallest integer such that $z(j)<j$.
Then $j-1$ is the minimal visible descent of $z$.
\end{lemma}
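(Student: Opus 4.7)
The plan is to unpack the definition of visible descent and use the minimality of $j$ to verify both that $j-1$ is a visible descent and that no smaller index is. Recall that $i$ is a visible descent of $z$ precisely when $z(i+1) \leq \min\{i, z(i)\}$, so the claim amounts to the two inequalities $z(j) \leq \min\{j-1, z(j-1)\}$, together with the failure of $z(i+1) \leq \min\{i, z(i)\}$ for every $i < j-1$.

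First I would exploit the minimality of $j$: by hypothesis $z(i) \geq i$ for all $i < j$, and $z(j) < j$, so in fact $z(j) \leq j-1$. In particular, applying this with $i = j-1$ gives $z(j-1) \geq j-1$. Combining, $z(j) \leq j-1 = \min\{j-1, z(j-1)\}$, which is exactly the visible descent condition at $i = j-1$.

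Next I would argue that no $i < j-1$ is a visible descent. If $i < j-1$ then $i+1 < j$, so by the minimality of $j$ one has $z(i+1) \geq i+1 > i$, which prevents the inequality $z(i+1) \leq \min\{i, z(i)\} \leq i$. This rules out all smaller visible descents and completes the proof.

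I expect no real obstacle: the only thing to be careful about is that such a $j$ actually exists whenever $z \neq 1$, which follows because $z$ has at least one nontrivial $2$-cycle $(a,b)$ with $a < b$, yielding $z(b) = a < b$. The argument is essentially a one-line consequence of the definitions once the minimality of $j$ is applied twice.
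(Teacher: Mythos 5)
Your proof is correct and matches the paper's argument exactly: both verify that $z(j) \leq j-1 \leq z(j-1)$ gives the visible descent at $j-1$, and both rule out $i < j-1$ by noting $z(i+1) \geq i+1 > i$ from minimality of $j$.
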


\begin{proof}
By hypothesis $z(j) \leq j-1 \leq z(j-1)$ so $j-1$ is a visible descent of $z$, and if $i<j-1$ then
$i+1 \leq z(i+1)$ so $i$ is not a visible descent as $z(i+1) \not \leq i$.
\end{proof}

\begin{lemma}\label{maxvisinv-lem}
Suppose $(q,r) \in \ZZ\times \ZZ$ is the maximal visible inversion of $z \in \I_\ZZ-\{1\}$.
Let $m $ be the largest element of $\supp(z)$.
Then $q$ is the maximal visible descent of $z$ while $r$
is the maximal integer with $z(r) \leq \min\{q,z(q)\}$,
and  we have 
$z(q+1) < z(q+2) < \dots < z(m) \leq q.$
In addition,
 either (a) $z(q) <q <r \leq m$, (b) $z(q)=q<r =m$, or (c) $q<z(q) =r=m$.
\end{lemma}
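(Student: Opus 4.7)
The plan is to establish the four claims in order, writing $(q,r)$ for the lex-maximal visible inversion of $z$ and noting that $z(m) < m$, since $m$ is the largest element of $\supp(z)$ and $z$ is an involution. Visible inversions exist because $(z(m), m)$ is trivially one: $z(m) < m$ and $z(m) \leq z(m) = \min\{z(m), z(z(m))\}$.

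First I would prove $q$ is the maximal visible descent. The maximum visible descent $d$ yields a visible inversion $(d, d+1)$, so lex-maximality forces $q \geq d$. For $q \leq d$, it suffices to show $(q, q+1)$ itself is visible: if $r = q+1$ this is immediate, and if $r > q+1$, then assuming $z(q+1) > \min\{q, z(q)\}$, the inequalities $z(r) \leq \min\{q, z(q)\} \leq q < q+1$ and $z(r) \leq \min\{q, z(q)\} < z(q+1)$ would make $(q+1, r)$ a visible inversion with strictly larger first coordinate, contradicting maximality. The description of $r$ then follows from lex-maximality together with the trivial fact that any $r' \leq q$ is $\leq r$.

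The main work is showing $z(q+1) < z(q+2) < \dots < z(m) \leq q$. I would first establish $z(i) \leq q$ for every $i \in (q, m]$. Assume for contradiction a minimal such $i_0$ with $z(i_0) > q$; the case $i_0 = q+1$ is excluded by $(q, q+1)$ being visible. For $i_0 \geq q+2$, if $z(i_0) > i_0$ then $(i_0, z(i_0))$ is itself a visible inversion with first coordinate exceeding $q$, a contradiction; if $q < z(i_0) < i_0$ then $z(i_0) \in (q, i_0)$ violates minimality of $i_0$ since $z(z(i_0)) = i_0 > q$; the remaining subcase $z(i_0) = i_0$ forces $i_0$ to be a fixed point, and then iterating (using that $i_0$ is not a visible descent, so $z(i_0+1) > \min\{i_0, z(i_0)\} = i_0 > q$, then applying the same trichotomy to $i_0+1$) shows $i_0+1$ is fixed, and by induction so are all of $i_0, i_0+1, \dots, m$, contradicting $z(m) < m$. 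Once $z(i) \leq q$ is known for every $i \in (q, m]$, each such $i < m$ fails to be a visible descent, giving $z(i+1) > \min\{i, z(i)\} = z(i)$, which yields the strict chain.

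Finally, a case split on the relative order of $z(q)$ and $q$ separates (a)--(c). If $z(q) < q$, claim (a) holds with $r \leq m$, since any $r' > m$ would satisfy $z(r') = r' > q$. If $z(q) = q$, then for every $i \in (q, m]$ the pair $(q, i)$ is a visible inversion, forcing $r = m$, and $q < m$ because $m$ is not fixed, giving (b). If $z(q) > q$, then $(q, z(q))$ is a visible inversion, so $r \geq z(q) > q$, and again the preceding paragraph forces $r = m$; in the strict chain $z(q+1) < \dots < z(m) \leq q$, the value $q$ can occur only at the top, so $z(m) = q$, hence $z(q) = m = r$, giving (c). The main obstacle is the fixed-point subcase in the third paragraph: there one cannot directly produce a visible inversion with first coordinate $> q$, and the contradiction has to be propagated up to the position $m$ using $z(m) < m$.
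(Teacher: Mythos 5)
Your proof is correct and follows the same overall structure as the paper's: establishing that $q$ is the maximal visible descent by considering $(q+1,r)$ and the descent pairs $(d,d+1)$, deriving the chain $z(q+1) < \cdots < z(m) \leq q$ from lex-maximality of $(q,r)$, and finally case-splitting on $z(q)$ versus $q$ to get (a)--(c). Where the paper simply asserts the chain ``since otherwise $z$ would have a visible inversion greater than $(q,r)$,'' you spell out the minimal-counterexample argument carefully, including the fixed-point propagation up to $m$ needed when $z(i_0)=i_0$ --- a genuine subtlety the paper leaves implicit, since a fixed point $i_0>q$ does not itself produce a visible inversion beyond $(q,r)$.
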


\begin{proof}
Since $(q+1,r)$ is not a visible inversion of $z$,
we have $z(q+1) \leq \min \{q,z(q)\}$ so $q$ is a visible descent.
If $d$ is another visible descent  of $z$ then $(d,d+1)$ is a visible inversion,
so $d\leq i$. It is clear by definition that $r$ is maximal such that $z(r) \leq \min\{q,z(q)\}$.
We must have $z(q+1) < z(q+2)  <\dots < z(m) \leq q$ since otherwise 
$z$ would have a visible inversion greater than $(q,r)$.
It follows that $r=m$ if $z(q) = q$, and that $z(q)=r=m$ if $q < z(q)$.
\end{proof}

To each nontrivial element of $\I_\ZZ$, we associate a Bruhat covering relation in the following way.

\begin{propdef}\label{ipsi-propdef}
Suppose $(q,r)$ is the maximal visible inversion of $z \in \I_\ZZ-\{1\}$.
There exists a unique involution $\ipsi(z) \in \I_\ZZ$
such that $\ipsi(z) \lessdot z$ and $z=\tau_{qr}(\ipsi(z))$.
The involution $\ipsi(z)$ is as specified in Table \ref{it-fig},
and it holds that $\ipsi(z)(q)\leq q$ and $\ipsi(z)(q)< z(q) \leq \ipsi(z)(r)$.
\end{propdef}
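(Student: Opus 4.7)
My plan is a case analysis driven by Lemma~\ref{maxvisinv-lem}, which partitions the possibilities for the maximal visible inversion $(q,r)$ of $z$ into three configurations: (a) $z(q)<q<r\leq m$, (b) $z(q)=q<r=m$, or (c) $q<z(q)=r=m$, where $m=\max\supp(z)$. In each case I will construct $\ipsi(z)$ explicitly by inverting the local effect of $\tau_{qr}$ recorded in Table~\ref{ct-fig}, then verify the claimed properties.

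Setting $a=z(q)$ and $b=z(r)$ whenever these are distinct, the local configuration of $z$ on $A=\{q,r,z(q),z(r)\}$ is: in case (a), $A=\{b<a<q<r\}$ with the nested matching $(b,r)(a,q)$; in case (b), $A=\{a<q<r\}$ with arc $(a,r)$ and $q$ isolated; in case (c), $A=\{q,r\}$ with the arc $(q,r)$. Under the standardization of $A$, the transposition $(q,r)$ becomes $(c,d)$, $(b,c)$, and $(a,b)$ respectively. Consulting Table~\ref{ct-fig}, exactly one row has the prescribed standardized $(i,j)$ and produces $[z]_A$ as the output of $\tau$: the third four-element row in case (a), the first three-element row in case (b), and the unique two-element row in case (c). This forces $\ipsi(z)$ (defined to agree with $z$ on $\ZZ\setminus A$) to restrict on $A$ to the crossing matching $(b,q)(a,r)$, the arc $(a,q)$ with $r$ fixed, or both $q$ and $r$ fixed, respectively. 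By construction $\tau_{qr}(\ipsi(z))=z$, and uniqueness is automatic from the table lookup, since any competing $y$ with $\tau_{qr}(y)=z$ must agree with $z$ off $A$ and match one of the listed rows.

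To establish the covering relation (interpreted as $\ipsi(z)\lessdot_\I z$), I would invoke Theorem~\ref{taubruhat-thm}(b), which reduces the claim to verifying $\ellhat(z)=\ellhat(\ipsi(z))+1$. Using $\ellhat=(\ell+\kappa)/2$, a short case-by-case count confirms this: in cases (a) and (c) the cycle count $\kappa$ changes by one and $\ell$ changes in compensating fashion, while in case (b) $\kappa$ is preserved and $\ell$ grows by two. The three inequalities $\ipsi(z)(q)\leq q$, $\ipsi(z)(q)<z(q)$, and $z(q)\leq\ipsi(z)(r)$ read off the explicit formulas: they become $b\leq q$, $b<a$, $a\leq a$ in case (a); $a\leq q$, $a<q$, $q\leq r$ in case (b); and $q\leq q$, $q<r$, $r\leq r$ in case (c), all of which are immediate.

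The main obstacle is the bookkeeping of Table~\ref{ct-fig}: identifying the correct row requires matching $(q,r)$ to its standardized image and reading the input/output columns in the correct direction. Case (a) is the subtlest, since the preimage is the \emph{crossing} matching rather than the visually simpler nested one, reflecting the fact that the cover $\ipsi(z)\lessdot_\I z$ is realized by the product transposition $(a,b)(c,d)$ and not by any single transposition in $S_\ZZ$. The monotonicity $z(q+1)<\cdots<z(m)\leq q$ from Lemma~\ref{maxvisinv-lem} is the key input ensuring that no other positions of $z$ between $q$ and $r$ can create alternative local configurations, thereby pinning down the three cases.
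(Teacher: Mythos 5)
Your overall outline mirrors the paper's proof: partition into the three configurations from Lemma~\ref{maxvisinv-lem}, read off from Table~\ref{ct-fig} the unique candidate that agrees with $z$ off $A=\{q,r,z(q),z(r)\}$, and certify the cover via Theorem~\ref{taubruhat-thm}. Reading $\lessdot$ as $\lessdot_\I$ is the right interpretation; the claim would actually fail in case~(a) if $\lessdot$ meant the Bruhat cover in $S_\ZZ$, since there $\ell(z)-\ell(\ipsi(z))=2$.

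There is, however, a bookkeeping error in your rank count for case~(a). Passing from the crossing matching $(b,q)(a,r)$ to the nested matching $(b,r)(a,q)$ does \emph{not} change the number of nontrivial cycles; in case~(a), $\kappa$ is preserved and $\ell$ increases by $2$, just as in case~(b). Only in case~(c) does $\kappa$ drop by one with $\ell$ changing by one. So your claim that ``in cases (a) and (c) the cycle count $\kappa$ changes by one'' is wrong for (a). The conclusion $\ellhat(z)=\ellhat(\ipsi(z))+1$ is still true, but the stated justification is incorrect, and the assertion that $\ell$ changes by exactly two in cases (a) and (b) is not self-evident without an argument that accounts for the inversions involving elements outside $A$.

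The paper's route (which its terse phrase ``by the previous lemma and Theorem~\ref{taubruhat-thm}'' signals) is cleaner and dodges all rank counting: use Theorem~\ref{taubruhat-thm}(c) instead of (b). You have already read off $\ipsi(z)(q)\leq q$ and $\ipsi(z)(q)<z(q)\leq\ipsi(z)(r)$ from Table~\ref{it-fig}, hence $\ipsi(z)(q)<\ipsi(z)(r)$. It remains only to check $\ipsi(z)\lessdot \ipsi(z)(q,r)$ in $S_\ZZ$, i.e., that no $i$ with $q<i<r$ has $\ipsi(z)(q)<\ipsi(z)(i)<\ipsi(z)(r)$. For such $i$ one has $i\notin A$ (because $z(q)\leq q<i$ and $z(r)\leq q<i$), so $\ipsi(z)(i)=z(i)$, and the chain $z(q+1)<\cdots<z(m)\leq q$ from Lemma~\ref{maxvisinv-lem} forces $z(i)<z(r)=\ipsi(z)(q)$, ruling out the forbidden range. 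Theorem~\ref{taubruhat-thm}(c) then yields $\ipsi(z)\lessdot_\I z$ directly, with no case-by-case counting of $\ell$ or $\kappa$ needed.
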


\begin{proof}
If  $y\in \I_\ZZ$ exists such that $y < z$
and $z = \tau_{qr}(y)$, then $y$ is unique and belongs to the set of 
permutations with the same restriction as $z$ to the complement of
$ \{ q, z(q), r, z(r)\}$ in $\ZZ$.
Since $(q,r)$ is the maximal visible inversion of $z$, we have either
$z(r) = q< z(q)=r$ or $z(r)<q=z(q)<r$ or $z(r) < z(q)<q < r$.
Consulting Table \ref{ct-fig},
we deduce that $\ipsi(z)$ exists and is given by the element 
specified in Table \ref{it-fig}. Moreover, we have $\ipsi(z) \lessdot z$ by the previous lemma and Theorem~\ref{taubruhat-thm}.
\end{proof}

\begin{table}[h]
\[
\barr{| c | c | c | c | c | l}
\hline&&&&\\
A=\{q,r,z(q),z(r)\} & [z]_A & (q,r) & [\ipsi(z)]_A & \sigma\text{ such that }\ipsi(z)=z\sigma
\\&&&&\\
\hline
&&&&\\
\{a<b\}
&
\arcstart
{
*{\bullet}   \arc{.6}{r}  & *{\bullet}
}
\arcstop
&(a,b)&
\arcstart
{
*{\bullet}     & *{\bullet}
}
\arcstop
&
(a,b)
\\&&&&\\
\hline
&&&&\\
\{a<b<c\}
&
 \arcstart
{
*{.}    \arc{.8}{rr}  & *{\bullet} & *{\bullet}
}
\arcstop
&
(b,c)
&
 \arcstart
{
*{.}    \arc{.6}{r}  & *{\bullet} & *{\bullet}
}
\arcstop
&
(a,b,c)
\\&&&&\\
\hline&&&&\\
\{a<b<c<d\}
&
\arcstart
{
*{.}  \arc{.8}{rrr}   & *{.}  \arc{.4}{r}  & *{\bullet} & *{\bullet} 
}
\arcstop
& (c,d)
&
  \arcstart
{
*{.}  \arc{.8}{rr}   & *{.} \arc{.8}{rr}   & *{\bullet} & *{\bullet} 
}
\arcstop
&
(a,b)(c,d)
\\
&&&&\\\hline
\earr
\]
\caption{Values of $\ipsi(z)$.
Fix $z \in \I_\ZZ$ with maximal visible inversion $(q,r)$.
 Let $A = \{q,r,z(q),z(r)\}$.
The first column labels the elements of $A$.
The third column rewrites $(q,r)$ in this labeling. The last two columns
determine $\ipsi(z)$ as characterized in Proposition-Definition \ref{ipsi-propdef}.
In the second and fourth columns,
we use $\bullet$ symbols to mark the vertices corresponding to $q$ and $r$.
}\label{it-fig}
\end{table}

As $\ipsi(z)$ is only defined if $z$ has a visible inversion, we
view $\ipsi$ as a map $\I_\ZZ-\{1\} \to \I_\ZZ$.

\begin{remark}\label{eta-rmk}
Suppose $z \in \I_\ZZ-\{1\}$
has maximal visible inversion $(q,r)$.
Let $p = z(r)$, $y = \ipsi(z)$, and $m = \max \supp(z)$.
Lemma~\ref{maxvisinv-lem} completely
 determines the values of $y(i)$ and $z(i)$ for all $i \geq q$,
 and there are three qualitatively distinct cases for what can happen.

\ben
\item[(a)]  If $z(q) <q <r \leq m$ then $y = (q,r)z(q,r)$ and $z$ correspond to the  pictures
\[
\ba
\\
\\
z &\ =\  
\arcstartc{.2}
{
*{\dots}&
*{.} \arc{2}{rrrrrrrrrrr}&
*{\dots}&
*{.} \arc{3}{rrrrrrrrrr}&
*{\dots}&
*{\bullet} \arc{3}{rrrrrrrrrr} &
*{\dots} &
*{\bullet} \arc{1}{rrrr} &
*{\dots} &
*{.} \arc{2}{rrrrrrrr}&
*{\dots} &
*{\bullet} &
*{.} &
*{.} &
*{\dots} &
*{\bullet} &
*{\dots} &
*{.} &
*{.} &
*{.} &
*{\dots}
\\
&
&
&
&
&
p
&
&
&
&
&
&
q
&
&
&
&
r
&
&
m
}
\arcstop
\\
\\
\\
y&\ =\  
\arcstartc{.2}
{
*{\dots}&
*{.} \arc{2}{rrrrrrrrrrr}&
*{\dots}&
*{.} \arc{3}{rrrrrrrrrr}&
*{\dots}&
*{\bullet} \arc{1}{rrrrrr} &
*{\dots} &
*{\bullet} \arc{3}{rrrrrrrr} &
*{\dots} &
*{.} \arc{2}{rrrrrrrr}&
*{\dots} &
*{\bullet} &
*{.} &
*{.} &
*{\dots} &
*{\bullet} &
*{\dots} &
*{.} &
*{.} &
*{.} &
*{\dots}
\\
&
&
&
&
&
p
&
&
&
&
&
&
q
&
&
&
&
r
&
&
m
}
\arcstop
\ea
\]
In our diagrams of this kind, each ellipsis ``$\dots$'' stands for zero or more unspecified vertices.
 Lemma~\ref{maxvisinv-lem} implies that
 $z(q+1) < z(q+2) < \dots <z(r) < z(q)$, and that if $r<m$ then $z(q)<z(r+1) < z(r+2) < \dots < z(m)  < q$.

\item[(b)]  If $z(q) = q < r=m$ then  $y = (q,r)z(q,r)$ and $z$ may be represented as 
\[
\ba
\\
z &\ =\  
\arcstartc{.2}
{
*{\dots}&
*{.} \arc{2}{rrrrrrr}&
*{\dots}&
*{.} \arc{2}{rrrrrr}&
*{\dots}&
*{\bullet} \arc{1.6}{rrrrrr} &
*{\dots} &
*{\bullet} &
*{.} &
*{.} &
*{\dots} &
*{\bullet} &
*{.} &
*{.} &
*{\dots}
\\
&
&
&
&
&
p
&
&
q
&
&
&
&
r
}
\arcstop
\\
\\
y &\ =\  
\arcstartc{.2}
{
*{\dots}&
*{.} \arc{2}{rrrrrrr}&
*{\dots}&
*{.} \arc{2}{rrrrrr}&
*{\dots}&
*{\bullet} \arc{1}{rr} &
*{\dots} &
*{\bullet} &
*{.} &
*{.} &
*{\dots} &
*{\bullet} &
*{.} &
*{.} &
*{\dots}
\\
&
&
&
&
&
p
&
&
q
&
&
&
&
r
}
\arcstop
\ea
\]
In this case,   $z(q+1) < z(q+2) < \dots <z(r) < q$, so $z(i) < q$ if $p<i<q$.

\item[(c)] If $q<z(q) = r=m$ so that $p=q$, then $y = z(q,r)$ and $z$ may be represented as
\[
\ba
\\
z &\ =\  
\arcstartc{.3}
{
*{\dots}&
*{.} \arc{2}{rrrrr}&
*{\dots}&
*{.} \arc{2}{rrrr}&
*{\dots}&
*{\bullet} \arc{1.6}{rrrr} &
*{.} &
*{.} &
*{\dots} &
*{\bullet} &
*{.} &
*{.} &
*{\dots}
\\
&
&
&
&
&
q
&
&
&
&
r
}
\arcstop
\\
\\
y &\ =\  
\arcstartc{.3}
{
*{\dots}&
*{.} \arc{2}{rrrrr}&
*{\dots}&
*{.} \arc{2}{rrrr}&
*{\dots}&
*{\bullet} &
*{.} &
*{.} &
*{\dots} &
*{\bullet} &
*{.} &
*{.} &
*{\dots}
\\
&
&
&
&
&
q
&
&
&
&
r
}
\arcstop
\ea
\]
In this case  $z(q+1) < z(q+2) < \dots <z(r-1) < q$.
\een
\end{remark}

\begin{lemma} \label{it-atom-lem}
If $(q,r)$ is the maximal visible inversion of $z \in \I_\infty-\{1\}$ and
$w = \alpha_{\min}(z)$ is the minimal atom of $z$,
then $w(q,r) = \alpha_{\min}(\ipsi(z))$ is the minimal atom of $\ipsi(z)$.
\end{lemma}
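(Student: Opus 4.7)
The plan is to apply Lemma \ref{minatom-lem} directly, computing the one-line representations of $\alpha_{\min}(z)^{-1}$ and $\alpha_{\min}(\ipsi(z))^{-1}$ from the cycle data of $z$ and $\ipsi(z)$, and to verify that these two sequences agree except that the values $q$ and $r$ occur at swapped positions. Inverting will then give $\alpha_{\min}(\ipsi(z)) = \alpha_{\min}(z) \cdot (q,r) = w(q,r)$, which is the desired identity.

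To carry this out, I would first use Remark \ref{eta-rmk} and Table \ref{it-fig} to enumerate the cycles of $z$ and $\ipsi(z)$ in each of the three cases (a), (b), (c). In every case the cycles of $\ipsi(z)$ are obtained from those of $z$ by a strictly local change that leaves every cycle not containing $q$ or $r$ unchanged. Lemma \ref{maxvisinv-lem} provides the crucial input that no element of $\{q+1, q+2, \dots, r-1\}$ is the smaller coordinate of a cycle of $z$ (or of $\ipsi(z)$), so the sorted lists of cycles $\{(a_i, b_i)\}$ for the two involutions occupy the same set of $a$-values outside of $\{q, r\}$. Reading off the sequences $b_1 a_1 b_2 a_2 \cdots$ from Lemma \ref{minatom-lem}, one then checks case by case that these sequences agree term-by-term except that the appearances of $q$ and $r$ among the $b_i$'s have been interchanged. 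In case (a), for example, $z$ contributes $r$ at the slot for the cycle $(z(r), r)$ and $q$ at the slot for $(z(q), q)$, while in $\ipsi(z)$ the modified cycles $(z(r), q)$ and $(z(q), r)$ put $q$ and $r$ into the same two slots with their roles reversed; cases (b) and (c) are similar but involve fixed-point contributions of the form $a_i = b_i \in \{q,r\}$ that appear as single entries in the sequence.

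Inverting this comparison yields $\alpha_{\min}(\ipsi(z)) = \alpha_{\min}(z) \cdot (q,r) = w(q,r)$, while the length count $\ell(w(q,r)) = \ellhat(z) - 1 = \ellhat(\ipsi(z)) = \ell(\alpha_{\min}(\ipsi(z)))$ is automatic from the covering relation $\ipsi(z) \lessdot z$ established in Proposition-Definition \ref{ipsi-propdef}. The main obstacle will be case (a), where both modified cycles of $z$ have smaller elements strictly less than $q$, so their slots in the $b_1 a_1 b_2 a_2 \cdots$ sequence need not be adjacent; one must verify that every intermediate contribution is identical in the two sequences. This reduces to the observation that any cycle of $z$ whose smaller element lies in the open interval between $z(r)$ and $z(q)$ has both endpoints outside $\{q, r\}$, and is therefore unchanged by the transformation $z \mapsto \ipsi(z)$.
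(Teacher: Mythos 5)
Your proof is correct and follows essentially the same route as the paper: invoke Lemma~\ref{minatom-lem} to reduce the claim to a comparison of the sorted cycle sequences $b_1a_1b_2a_2\cdots$ for $z$ and $\ipsi(z)$, and then use Remark~\ref{eta-rmk} (together with Lemma~\ref{maxvisinv-lem}) to verify case by case that these sequences differ only by swapping the single occurrences of $q$ and $r$. The paper's own proof is considerably terser, dismissing the case check as "straightforward from Remark~\ref{eta-rmk}" with only one illustrative case; your write-up supplies the missing detail but is not a different argument.
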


\begin{proof}
Let $\Cyc_\PP(z) = \{ (a_i,b_i) : i \in \PP\}$ and $\Cyc_\PP(\ipsi(z)) = \{ (c_i,d_i): i \in \PP\}$
where $a_1<a_2<\dots$ and $c_1<c_2<\dots$.
By Lemma \ref{minatom-lem}, it suffices to show that interchanging $q$ and $r$ and removing
all repeated letters after their first appearance in 
$b_1a_1b_2a_2\cdots$ gives the same word as removing the repeated letters in $d_1c_1d_2c_2\cdots$.
This is   straightforward  from  Remark~\ref{eta-rmk}.
For example,
if $p=z(r) <  q = z(q) < r$, then for some $n \in \PP$
we have $b_na_nb_{n+1}a_{n+1} = rpqq$, $d_nc_nd_{n+1}c_{n+1} = qprr$,
 and $(a_i,b_i)=(c_i,d_i)$ for all $i\neq n$, in which case the desired property is clear.
\end{proof}

Recall the definition of the sets $\hat \Phi^+(y,r)$ and $\hat \Phi^-(y,r)$
from Section \ref{invtrans-sect}.

\begin{lemma}\label{it-lem00}
If  $z \in \I_\ZZ-\{1\}$ has  maximal visible inversion $(q,r)$ then
 $\hat \Phi^+(\ipsi(z),q)= \{z\}$.
\end{lemma}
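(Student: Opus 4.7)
The plan is to fix $y = \ipsi(z)$ with $p = y(q) \leq q$, so that by Proposition-Definition~\ref{ipsi-propdef} the pair $(p,q)$ lies in $\Cyc_\ZZ(y)$ and one has $z = \tau_{qr}(y) \in \hat\Phi^+(y,q)$ directly, since $y \lessdot z$ in Bruhat order (so $\ellhat(z) = \ellhat(y) + 1$) and $r > q$. The substantive task is uniqueness.

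By the characterization stated just after the definition of $\hat\Phi^\pm$, since $(p,q) \in \Cyc_\ZZ(y)$, an involution of the form $z' = \tau_{qj}(y)$ with $j > q$ belongs to $\hat\Phi^+(y,q)$ if and only if $y \lessdot y(q,j)$ in the Bruhat order of $S_\ZZ$. I would then apply the covering criterion for transpositions recalled at the end of Section~\ref{prelim-sect}, which here demands $y(q) < y(j)$ and the absence of any $k$ with $q < k < j$ satisfying $y(q) < y(k) < y(j)$. Thus it suffices to show that $j = r$ is the unique integer $j > q$ for which these conditions hold; once established, $\tau_{qj}(y) = \tau_{qr}(y) = z$ follows.

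The verification splits into the three cases (a), (b), (c) of Remark~\ref{eta-rmk}. In each, combining the explicit description of $y$ with the monotonicity statements of Lemma~\ref{maxvisinv-lem} yields the same picture: $y(q) = p$; the values $y(q+1), \dots, y(r-1)$ agree with $z(q+1) < \cdots < z(r-1)$ and are therefore all strictly less than $z(r) \leq p = y(q)$; and $y(r) \geq z(q) > p$. The first two points verify $y \lessdot y(q,r)$ directly from the covering criterion. For $q < j < r$, the inequality $y(j) < y(q)$ rules $j$ out. For $j > r$, the value $y(j)$ lies in the increasing sequence $y(r) < y(r+1) < \cdots < y(m)$ when $r < m$, or equals $j$ when $j > m = r$; in either case $y(j) > y(r)$, and so $k = r$ itself satisfies $y(q) < y(k) < y(j)$, obstructing the covering relation. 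Hence $j = r$ is the unique valid index.

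The principal obstacle is managing the three sub-cases of Remark~\ref{eta-rmk} coherently and separating the possibilities $r = m$ versus $r < m$ when analyzing $j > r$, but no new ingredient beyond Remark~\ref{eta-rmk}, Lemma~\ref{maxvisinv-lem}, and the elementary covering criterion is required: the structural description of $\ipsi(z)$ is precisely what forces $y(r)$ to be the smallest value to the right of $q$ exceeding $y(q)$, and simultaneously ensures that $k = r$ disqualifies every larger candidate $j$.
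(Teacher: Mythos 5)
Your proof is correct and follows essentially the same route the paper takes: the paper's proof is a one-line citation of Theorem~\ref{taubruhat-thm}, Remark~\ref{eta-rmk}, and the definitions, and you have simply unpacked that citation — reducing membership in $\hat\Phi^+(\ipsi(z),q)$ to the Bruhat covering criterion $y\lessdot y(q,j)$ via the bullet points following the definition of $\hat\Phi^\pm$, and then using the explicit structural description in Remark~\ref{eta-rmk} together with Lemma~\ref{maxvisinv-lem} to see that $j=r$ is the unique admissible index. The case analysis and the observation that $y(r)$ is the least value of $y$ to the right of $q$ exceeding $y(q)$ are exactly what the cited remark is meant to supply.
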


\begin{proof}
This follows  from Theorem~\ref{taubruhat-thm},
Remark~\ref{eta-rmk}, and the definitions of $\ipsi(z)$ and $\hat \Phi^+(y,q)$.
\end{proof}

We may now define an involution analogue of the set $\fk T_1(w)$.
For $z \in \I_\ZZ$,  let
\[   \iT_1(z) = \begin{cases}
\varnothing &\text{if  $z$ is I-Grassmannian} \\
 \hat\Phi^-(y, p) &\text{otherwise}
 \end{cases}
 \]
 where in the second case, we set $y=\ipsi(z)$ and $p =y(q)$
 with  $q$ the maximal visible descent of $z$.
 If $z$ is not I-Grassmannian
 then $\iT_1(z)\neq \varnothing$.

\begin{definition}\label{itree-def}
The \emph{involution Lascoux-Sch\"utzenberger tree} $\iT(z)$ of $z \in \I_\ZZ$
is the
tree with root $z$, in which the children of any vertex $v \in \I_\ZZ$
are the elements of $\iT_1(v)$.
\end{definition}

\begin{figure}[h]
\[
\begin{tikzpicture}
  \node (max) at (0,1.5) {
\boxed{\arcstart
{
  { }
  \\
  *{.} &
*{.}  \arc{.6}{rr} &
*{\circ} &
*{.} &
*{.}  \arc{.6}{rr} & 
*{\bullet} &
*{\bullet} 
}
\arcstop}
};
  \node (a) at (-4,0) {
\boxed{\arcstart
{
  { }
  \\
  *{.} &
*{.}  \arc{.8}{rrr} &
*{.} &
*{\bullet}  \arc{.6}{rr} &
*{.}  & 
*{\bullet} &
*{.} 
}
\arcstop}
};
  \node (b) at (0,0) {
\boxed{\arcstart
{
  { }
  \\
  *{.} &
*{.}  \arc{.6}{rr} &
*{\circ}  \arc{.8}{rrr}&
*{.}  &
*{\bullet}  & 
*{\bullet} &
*{.} 
}
\arcstop}
};
  \node (c) at (4,0) {
\boxed{\arcstart
{
  { }
  \\
  *{.} &
*{.}  \arc{.8}{rrrr} &
*{.} &
*{.} &
*{\bullet}  & 
*{\bullet} &
*{.} 
}
\arcstop}
};
  \node (d) at (0,-1.5) {
\boxed{\arcstart
{
  { }
  \\
  *{.} &
*{.}  \arc{.8}{rrr} &
*{\circ}  \arc{.4}{r}&
*{\bullet}  &
*{\bullet}  & 
*{.} &
*{.} 
}
\arcstop}
};
  \node (e) at (0,-3) {
\boxed{\arcstart
{
  { }
  \\
*{.}  \arc{.8}{rrr} &
*{.}  &
*{\bullet}  \arc{.6}{rr}&
*{.}  & 
*{\bullet} &
*{.} &
*{.}
}
\arcstop}
};
  \draw  [-]
(max)   edge  (a)
(max)   edge  (b)
(max)   edge  (c)
(e)   edge  (d)
(d)   edge  (b)
;
\end{tikzpicture}
\]
\caption{The involution Lascoux-Sch\"utzenberger tree $\iT(z)$ for $z = (2,4)(5,7) \in \I_7$.
The maximal visible inversion of each vertex is marked with $\bullet$,
and the minimal visible descent in each non-leaf is marked with $\circ$.
From Theorem \ref{igrass-thm} and Corollary \ref{it-cor1},
one computes that $\iF_{z} = 2 P_{(3,1)} + P_{(4)}$.
}
\label{itree-fig}
\end{figure}

As with $\fk T(w)$,  an involution is allowed to correspond to more than one vertex in $\iT(z)$.
All vertices $v$ in $\iT(z)$ satisfy $\ellhat(v) = \ellhat(z)$ by construction,
 so $1$ is not a vertex unless $z=1$.
An example tree $\iT(z)$ is shown in Figure~\ref{itree-fig}.
Recall that $x_{(p,q)}$ is $x_p+x_q$ if $p\neq q$ and $x_p=x_q$ otherwise. 

\begin{corollary}\label{it-cor1}
Suppose $z \in \I_\ZZ$ is an involution which is not I-Grassmannian,
whose maximal visible descent is $q \in \ZZ$. The following identities then hold:
\ben
\item[(a)] $\iS_z =x_{(p,q)} \iS_{y} +  \sum_{v \in \iT_1(z)} \iS_v$ where $y = \ipsi(z)$ and $p = y(q)$.

\item[(b)] $\iF_z = \sum_{v \in \iT_1(z)} \iF_v$.
\een
\end{corollary}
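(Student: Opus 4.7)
The plan is to deduce both identities directly from the transition formulas of Section~\ref{invtrans-sect} applied to the involution $y = \ipsi(z)$, together with the characterization $\hat\Phi^+(y,q)=\{z\}$ established in Lemma~\ref{it-lem00}.

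First, I would check that $(p,q)$ is genuinely a cycle of $y$. By Proposition-Definition~\ref{ipsi-propdef}, $\ipsi(z)(q)\leq q$, so $p = y(q) \leq q$, and therefore $(p,q)\in\Cyc_\ZZ(y)$ (and $(p,q)\in\Cyc_\PP(y)$ when $y \in \I_\infty$, as is the case for the setting of part~(a)). Note also that by the very definition of $\iT_1(z)$ in the paragraph preceding Definition~\ref{itree-def}, we have $\iT_1(z) = \hat\Phi^-(y,p)$.

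For part (a), I would invoke Theorem~\ref{invmonk-thm} with the cycle $(p,q)\in\Cyc_\PP(y)$ to obtain
\[
x_{(p,q)}\iS_y \;=\; \sum_{v\in\hat\Phi^+(y,q)} \iS_v \;-\; \sum_{v\in\hat\Phi^-(y,p)} \iS_v.
\]
By Lemma~\ref{it-lem00}, the first sum collapses to $\iS_z$, and by the definition above the second sum is indexed by $\iT_1(z)$. Rearranging yields
\[
\iS_z \;=\; x_{(p,q)}\iS_y \;+\; \sum_{v\in\iT_1(z)} \iS_v,
\]
which is exactly (a). For part (b), the same input $(p,q)\in\Cyc_\ZZ(y)$ is fed into Theorem~\ref{invmonk-thm2}, giving
\[
\sum_{v\in\iT_1(z)}\iF_v \;=\; \sum_{v\in\hat\Phi^-(y,p)}\iF_v \;=\; \sum_{v\in\hat\Phi^+(y,q)}\iF_v \;=\; \iF_z,
\]
where the last equality uses Lemma~\ref{it-lem00} once more.

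There is no serious obstacle: all the combinatorial work (identifying the correct pair $(p,q)$, verifying that $\ipsi(z)\lessdot z$, and showing $\hat\Phi^+(y,q)=\{z\}$) has already been packaged into Proposition-Definition~\ref{ipsi-propdef} and Lemma~\ref{it-lem00}. The only delicate points to keep track of are (i) that the maximal visible descent $q$ of $z$ coincides with the first coordinate of the maximal visible inversion $(q,r)$ used to define $\ipsi(z)$, which is immediate from Lemma~\ref{maxvisinv-lem}, and (ii) that part (a) implicitly requires $z\in\I_\infty$ so that the involution Schubert polynomials $\iS_z$ and $\iS_y$ are defined.
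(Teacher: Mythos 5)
Your proof is correct and follows exactly the route the paper takes: both parts are direct applications of the transition formulas (Theorems~\ref{invmonk-thm} and \ref{invmonk-thm2}) to the cycle $(p,q)$ of $y=\ipsi(z)$, combined with the identification $\hat\Phi^+(y,q)=\{z\}$ from Lemma~\ref{it-lem00}. Your extra care in verifying that $(p,q)\in\Cyc_\ZZ(y)$ and in flagging the implicit $z\in\I_\infty$ hypothesis in part~(a) are reasonable elaborations of what the paper leaves as ``immediate.''
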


\begin{proof}
The result is immediate from Theorems \ref{invmonk-thm} and \ref{invmonk-thm2} and Lemma \ref{it-lem00}.
\end{proof}

To show that $\iT(z)$ is a finite tree, we depend on a sequence of technical lemmas.
 Note that $\ipsi(z) =1$ if and only if $z$ is a transposition, in which case $z$ is I-Grassmannian.

\begin{lemma}\label{it-lem0}
Suppose $z \in \I_\ZZ$ is not I-Grassmannian, so that $\ipsi(z)\neq 1$.
\ben
\item[(a)] The maximal visible descent of $\ipsi(z)$ is less than or equal to that of $z$.
\item[(b)] The minimal visible descent of $\ipsi(z)$ is equal to that of $z$.
\een
\end{lemma}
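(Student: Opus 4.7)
The plan is to exploit the explicit description of $y=\ipsi(z)$ provided by Remark~\ref{eta-rmk} and Table~\ref{it-fig}, together with Lemmas~\ref{minvisdes-lem} and \ref{maxvisinv-lem}. Throughout, let $q$ be the maximal visible descent of $z$, let $(q,r)$ be its maximal visible inversion, and let $j$ be the smallest integer with $z(j)<j$, so by Lemma~\ref{minvisdes-lem} the minimal visible descent of $z$ equals $j-1$. Since $z$ is not I-Grassmannian, $z$ has at least two distinct visible descents, forcing $j-1<q$ and hence $j\leq q<r$. Recall also that $y$ and $z$ agree outside a set $A\subseteq\{z(r),z(q),q,r\}$ whose elements and $y$-values are prescribed by the three cases of Remark~\ref{eta-rmk}.

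For part (a), I would suppose for contradiction that some $d>q$ is a visible descent of $y$. When $\{d,d+1\}\cap A=\varnothing$, we have $y(d)=z(d)$ and $y(d+1)=z(d+1)$, so $d$ would also be a visible descent of $z$, contradicting the maximality of $q$. The remaining subcases (when $d$ or $d+1$ coincides with a vertex of $A$) are each eliminated by direct computation, using the formulas of Remark~\ref{eta-rmk} and the monotonicity $z(q+1)<z(q+2)<\dots<z(m)\leq q$ from Lemma~\ref{maxvisinv-lem}.

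For part (b), by Lemma~\ref{minvisdes-lem} applied to $y$ it suffices to verify that the smallest integer $j'$ with $y(j')<j'$ is exactly $j$. First I would show $y(i)\geq i$ for every $i<j$, giving $j'\geq j$: positions $i<j$ outside $A$ satisfy $y(i)=z(i)\geq i$ by the minimality of $j$, while positions $i<j$ inside $A$ must lie in $\{z(r),z(q)\}$ (since $q,r\geq j>i$), and the formulas in Remark~\ref{eta-rmk} yield $y(i)\geq q>i$ in every case. Next, to establish $y(j)<j$ the crucial point is that $j\neq r$: were $j=r$, Lemma~\ref{minvisdes-lem} would identify $r-1$ as the minimal visible descent of $z$, and combined with $q\leq r-1$ one obtains either $q<r-1$ (impossible, as $q$ is the maximum) or $q=r-1$ (making $z$ I-Grassmannian, contradicting the hypothesis). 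One then rules out $j\in\{z(r),z(q)\}$ because these would give $z(j)>j$, and a case check of the surviving possibilities (namely $j=q$ in case~(a) of Remark~\ref{eta-rmk}, or $j\notin A$) yields $y(j)<j$ directly.

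The main obstacle is the bookkeeping in part (a): several subcases must be handled, and in the trickiest one, $d=r$ with $r<m$ in case~(a) of Remark~\ref{eta-rmk}, one needs the maximality of $(q,r)$ as a \emph{visible inversion} (not merely of $q$ as a visible descent) to deduce $z(r+1)>z(q)$ and hence $y(r+1)>y(r)$. Part (b) is easier once one observes that the hypothesis ``$z$ is not I-Grassmannian'' is used exactly to eliminate the pathological case $j=r$.
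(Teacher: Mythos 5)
Your proof is correct, but it takes a genuinely different route from the paper's. The paper reduces the lemma to an analogous statement about \emph{ordinary} permutations: using Lemmas~\ref{desi-lem0} and~\ref{it-atom-lem}, visible descents of $z$ and of $\ipsi(z)$ correspond respectively to right descents of the minimal atom $w=\alpha_{\min}(z)$ and of $w(q,r)=\alpha_{\min}(\ipsi(z))$, where $(q,r)$ is the maximal inversion of $w$; the lemma then becomes the claim that removing a non-Grassmannian permutation's maximal inversion cannot raise its maximal descent and preserves its minimal descent, which the paper leaves as an exercise. You instead argue directly on the involution side, using the explicit case pictures for $y=\ipsi(z)$ from Remark~\ref{eta-rmk} together with Lemmas~\ref{minvisdes-lem} and~\ref{maxvisinv-lem}, and do a case split on whether the relevant positions lie in $A=\{z(r),z(q),q,r\}$. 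Both arguments leave some routine case checking implicit; the paper's route is slicker since descents of a one-line word are simpler to track than visible descents of an involution, while yours is more self-contained, bypasses the minimal-atom correspondence entirely, and reuses the structure already built into Remark~\ref{eta-rmk}. Your handling of the key points is sound: in part (b) the observation that non-I-Grassmannian-ness rules out $j=r$ is exactly the needed input, and in part (a) the two nontrivial subcases $d=r-1$ and $d=r$ (in case (a) of Remark~\ref{eta-rmk}) are correctly disposed of using the inequalities $z(r-1)<z(r)<z(q)<z(r+1)$ that follow from the maximality of $(q,r)$ as a visible inversion.
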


\begin{proof}
We may assume that $z \in \I_\infty$.
In view of Proposition-Definition \ref{propdef-2} and Lemmas \ref{desi-lem0} and \ref{it-atom-lem},
it suffices to show that if $(i,j)$ is the maximal inversion of a permutation $w \in S_\infty$
which is not Grassmannian, then the maximal (respectively, minimal) descent of $w(i,j)$
is at most (respectively, equal to) that of $w$.
This is a straightforward exercise which is left to the reader.
%
  \end{proof}

\begin{lemma}\label{it-tau-lem}
If $y \in \I_\ZZ$ and $n < p \leq q = y(p)<r$ then $\tau_{np}(y)(q) \leq y(q)$ and
$\tau_{np}(y)(r) = y(r)$.
\end{lemma}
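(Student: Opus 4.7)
The plan is to derive both assertions by a case analysis driven by Table~\ref{ct-fig}, which fully determines $\tau_{np}(y)$. By construction $\tau_{np}(y)$ and $y$ agree on the complement of $A = \{n, p, y(n), y(p)\} = \{n, p, y(n), q\}$, and $y(q) = p$ since $(p,q) \in \Cyc_\ZZ(y)$, so it suffices to analyze what happens at the two distinguished points $q$ and $r$.

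For the identity $\tau_{np}(y)(r) = y(r)$, the hypothesis $n < p \leq q < r$ already forces $r \notin \{n, p, q\}$, so $r \in A$ can happen only when $r = y(n)$. In that case $y(n) > q$, and when $p = q$ the involution $[y]_A$ is a three-vertex matching with $a$ matched to $c$ and $b$ unmatched, while when $p < q$ the involution $[y]_A$ is the four-vertex nested matching with $(a,d)$ and $(b,c)$ matched. The transposition $(n, p)$ corresponds to $(a, b)$ in both configurations, and an inspection of the rows of Table~\ref{ct-fig} confirms that neither of these configurations is listed there; hence $\tau_{np}(y) = y$ and the identity holds. If instead $r \neq y(n)$, then $r \notin A$ and the identity is immediate.

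For the inequality $\tau_{np}(y)(q) \leq y(q) = p$, whenever $\tau_{np}(y) = y$ the inequality is a trivial equality, so we need only consider the configurations that actually appear in Table~\ref{ct-fig}. These break into five subcases according to whether $y(n)$ equals $n$, whether $p$ equals $q$, and the relative position of $y(n)$ among $n, p, q$: namely $y(n) = n$ with $p = q$; $y(n) = n$ with $p < q$; $y(n) < p = q$; $y(n) < p < q$; and $p < y(n) < q$. In each such subcase Table~\ref{ct-fig} produces an explicit $\sigma$ with $\tau_{np}(y) = y\sigma$ (a single transposition, a $3$-cycle, a product of two disjoint transpositions, or a $4$-cycle respectively), and a one-line computation shows $\sigma(q) \in \{n, y(n)\}$; applying $y$ then yields $\tau_{np}(y)(q) = n$, which is strictly less than $p = y(q)$.

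The main obstacle is just enumerating these cases correctly and matching each to the appropriate row of Table~\ref{ct-fig}; once that is done, the verification in every individual case is mechanical, and no ideas beyond the definitions of $\tau_{ij}$ and the transposition data in Table~\ref{ct-fig} are required.
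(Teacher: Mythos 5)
Your proof is correct and follows the same route as the paper, which dispatches this lemma with a one-line citation of the definition and Table~\ref{ct-fig}; you have simply made the implicit case analysis explicit. One small inaccuracy: in the subcase $y(n) < n < p = q$, Table~\ref{ct-fig} gives $\sigma = (a,c,b)$ with $a = y(n)$, $b = n$, $c = p$, so $\sigma(q) = \sigma(c) = n$ and hence $\tau_{np}(y)(q) = y(n)$, not $n$. Your blanket assertion that ``applying $y$ then yields $\tau_{np}(y)(q) = n$'' is therefore wrong in that subcase, but since there $y(n) < n < p$, the desired bound $\tau_{np}(y)(q) \leq p = y(q)$ still holds, so the argument goes through; you just need the slightly weaker observation $\tau_{np}(y)(q) \in \{n, y(n)\}$ together with the check that both $n$ and $y(n)$ are strictly less than $p$ in every nontrivial subcase.
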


\begin{proof}
The result follows from the definition of $\tau_{np}$; see Table \ref{ct-fig}.
\end{proof}

\begin{lemma}\label{it-lem1}
Suppose $z \in \I_\ZZ$ is not I-Grassmannian.
Let $i$ and $j$ be the minimal and maximal visible descents of $z$, and suppose $v \in \iT_1(z)$.
If $d$ is a visible descent of $v$, then $i\leq d \leq j$.
\end{lemma}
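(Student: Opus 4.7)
The plan is to analyze $v = \tau_{np}(y) \in \hat\Phi^-(y,p)$ concretely, where $y = \ipsi(z)$, $q = j$ is the maximum visible descent of $z$, and $p = y(q)$. Writing $m = y(n)$, the Bruhat covering condition $y \lessdot y(n,p)$ forces $m = y(n) < y(p) = q$, so the set $A := \{n, p, m, q\}$ is contained in $(-\infty, q]$ and $v(k) = y(k)$ for every $k \notin A$.

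For the upper bound $d \leq j$: since $v$ agrees with $y$ on every position greater than $q$, a visible inversion $(d, d+1)$ of $v$ with $d > q$ would also be a visible inversion of $y$ at the same position, contradicting Lemma \ref{it-lem0}(a).

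For the lower bound $d \geq i$: I would reduce to showing $v(k) \geq k$ for all $k \leq i$, since then $v(d+1) \geq d+1 > d$ for $d < i$ rules out $(d, d+1)$ being a visible inversion. By Lemma \ref{it-lem0}(b) the minimum visible descent of $y$ is $i$, and Lemma \ref{minvisdes-lem} then gives $y(k) \geq k$ for $k \leq i$. Because $v$ and $y$ agree off of $A$ and $q > i$, it is enough to check $v(k) \geq k$ for $k \in \{n, p, m\} \cap [1, i]$. The verification is done by going through the configurations in Table \ref{ct-fig} producing a nontrivial $\tau_{np}(y)$ subject to $y(p) = q$ and $n < p$, computing $v(n), v(p), v(m)$ in each case, and invoking $y(k) \geq k$ for $k \leq i$ to eliminate the two potentially offending four-element configurations. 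For example, when $m < n < p < q$ with $[y]_A$ consisting of edges $(m,n), (p,q)$, direct computation yields $v(p) = m < p$; however $y(n) = m < n$ forces $n > i$, hence $p > i$ as well, so the value $v(p)$ lies outside the range we must control. An analogous argument handles the configuration $n < p < m < q$ at $k = m$, where $y(m) = n < m$ forces $m > i$.

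The main obstacle is the finite case analysis underlying the lower bound: though each case is mechanical, one must carefully enumerate the configurations in Table \ref{ct-fig} compatible with $y(p) = q$, $n < p$, and the cover relation $y \lessdot y(n,p)$, and confirm that in each one the constraints imposed by the minimum visible descent of $y$ together with the cover condition prevent any $v(k) < k$ from occurring in the required range.
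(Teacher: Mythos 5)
Your proposal is correct and follows essentially the same route as the paper. For the upper bound you observe that $v$ and $y$ agree at all positions above $q = j$ (since $A \subset (-\infty,q]$) and invoke Lemma \ref{it-lem0}(a); the paper uses the nominally weaker Lemma \ref{it-tau-lem} to reach the same conclusion. For the lower bound you reduce to showing $v(k) \geq k$ for $k \leq i$, which is exactly the paper's reduction to $a \leq b$ (since $a = i+1$), and then you track the finitely many configurations of $\tau_{np}$ in Table \ref{ct-fig}. The paper compresses this case check by declaring it ``clear from the definition of $\tau_{np}$'' in all cases except when $n$ and $p$ are both fixed points of $y$, where $p = q = j > i$ rescues the bound; in your framework that case is automatically harmless since $p = q \notin (-\infty,i]$, and you instead spend your explicit case work on the two four-element configurations where a new inequality $v(k) < k$ is created. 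Both lines of reasoning close the same way, so the only difference is the level of detail you devote to the case analysis.
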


\begin{proof}
Let $p=\ipsi(z)(j) \leq j$, let $d$ be a visible descent of $v$, and let $n<p$ be such that
 $v = \tau_{np}(\ipsi(z))$.
By Lemma \ref{it-tau-lem}, we have $v(k)=\ipsi(z)(k)$ for all $k>j$.
As the maximal visible descent of $\ipsi(z)$ is at most $j$ by Lemma \ref{it-lem0}(a), 
we deduce that $d \leq j$.

Define $a,b \in \ZZ$ as the smallest integers such that
$\ipsi(z)(a)<a$ and $v(b)<b$. It follows from Lemmas \ref{minvisdes-lem} and \ref{it-lem0}(b)
that $i=a-1$ and that $b-1$ is the minimal visible descent of $v$,
so to prove that $i \leq d$ it suffices to show that $a\leq b$.
This is clear from the definition of $\tau_{np}$
except when $n$ and $p$ are both fixed points of $\ipsi(z)$,
in which case it could occur that $b=p$.
In this situation, however,
we would have $p=\ipsi(z)(j)=j$,
 so $a\leq b$ would hold anyway since $a=i+1\leq j$.
\end{proof}

For any $z \in \I_\ZZ$, let $\iT_0(z) = \{ z\}$
and define $\iT_n(z) = \bigcup_{v \in \iT_{n-1}(z)} \iT_1(v)$ for $n\geq 1$.

\begin{lemma}\label{it-lem2}
Suppose $z \in \I_\ZZ$ and $v \in \iT_1(z)$.
Let $(q,r)$ be the maximal visible inversion of $z$,
and let $(q_1,r_1)$ be any visible inversion of $v$. Then $q_1<q$ or $r_1<r$.
Hence, if $n\geq r-q$ then the maximal visible descent of every element of $\iT_n(z)$ is
strictly less than $q$.
\end{lemma}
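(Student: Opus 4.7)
My plan is to establish the first assertion by a direct case analysis of how $v = \tau_{np}(\ipsi(z))$ behaves on positions $\geq q$, and then deduce the ``hence'' clause by induction on $n$. Write $y = \ipsi(z)$ and $p = y(q)$, so that $v = \tau_{np}(y)$ for some $n < p$ with $\ellhat(v) = \ellhat(y)+1$. Three structural facts drive the argument: (i) since $y(p) = q$, Lemma~\ref{it-tau-lem} gives $v(q) \leq y(q) = p$, which combined with the formula for $\tau_{np}$ in Table~\ref{ct-fig} yields $v(q) < q$ in each of the three cases of Remark~\ref{eta-rmk} (strict already from $p < q$ in cases (a) and (b), by inspection of the table in case (c)); (ii) since $\tau_{np}$ alters $y$ only on $\{n, y(n), p, q\}$ and the Bruhat cover condition $y \lessdot y(n,p)$ forces $y(n) < y(p) = q$, all four positions lie at most $q$, so $v(i) = y(i)$ for every $i > q$; and (iii) by Remark~\ref{eta-rmk}, $y(i) = z(i)$ for every $i > q$ except possibly $i = r$, where $y(r) = z(q)$ in case (a) and $y(r) = r$ is a fixed point in cases (b) and (c).

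Using these facts, I suppose for contradiction that $(q_1, r_1)$ is a visible inversion of $v$ with $q_1 \geq q$ and $r_1 \geq r$ and argue by case analysis on the position of $q_1$ relative to $\{q, r\}$. In cases (b) and (c) of Remark~\ref{eta-rmk} one has $m = r$ and $v(i) = i$ for every $i \geq r$, immediately ruling out any such visible inversion. In case (a) the substantive analysis compares the values $v(q) \leq p$, $v(r) = z(q)$, and $v(i) = z(i)$ for $i > q$ with $i \neq r$ against the monotonicity $z(q+1) < z(q+2) < \cdots < z(m)$ from Lemma~\ref{maxvisinv-lem}. The guiding observation is that any alleged visible inversion $(q_1, r_1)$ of $v$ satisfying $q_1 \geq q$ and $r_1 \geq r$ either violates this monotonicity directly, or forces the pair $(q, r_1)$ (or an analogous pair) to be a visible inversion of $z$ strictly exceeding $(q, r)$ in lexicographic order, contradicting the maximality of $(q, r)$. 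The main bookkeeping obstacle is that the four subcases of how $\tau_{np}$ acts (determined by the position of $y(n)$ relative to $n$ and $p$) must be intertwined with the three cases of Remark~\ref{eta-rmk}, but the same two estimates close the argument in every scenario.

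For the second assertion, I proceed by induction on $n$, proving the strengthened claim that every $v \in \iT_n(z)$ either has maximal visible descent strictly less than $q$, or else has maximal visible descent equal to $q$ with maximal visible inversion of the form $(q, r_v)$ satisfying $r_v \leq r - n$. The base case $n = 0$ is immediate. For the inductive step, Lemma~\ref{it-lem1} ensures that if the maximal visible descent has already dropped below $q$ it cannot rise again in passing to a child, while the first part of this lemma, applied to any $v \in \iT_n(z)$ whose maximal visible descent is still $q$, shows that the maximal visible inversion of any child $v' \in \iT_1(v)$ with first coordinate still equal to $q$ must have strictly smaller second coordinate than that of $v$. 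Specializing to $n = r - q$, the alternative $r_v \leq r - n = q$ contradicts $r_v > q_v = q$, so only the first alternative can hold, completing the proof.
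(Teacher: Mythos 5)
Your proof is correct and follows essentially the same route as the paper's. For the first assertion, both arguments reduce to the observation (via Lemma~\ref{it-lem1} together with Lemma~\ref{maxvisinv-lem}) that $q_1\leq q$, and then compare $v(q)\leq y(q)\leq z(q)$ against $v(r)=y(r)$ and $v(s)=y(s)=z(s)$ for $s>r$, using the chain of inequalities from Proposition-Definition~\ref{ipsi-propdef}, Lemma~\ref{it-tau-lem}, and the monotonicity in Lemma~\ref{maxvisinv-lem}; your case split into (a), (b), (c) of Remark~\ref{eta-rmk} is more verbose but amounts to the same calculation, which the paper handles uniformly. For the ``hence'' clause your induction on $n$ (tracking that the second coordinate of the maximal visible inversion drops strictly whenever the first coordinate stays equal to $q$, so after $r-q$ steps it would fall to $\leq q$, a contradiction) is precisely the argument the paper leaves implicit; supplying it is worthwhile but not a genuinely different approach.
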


\begin{proof}
Lemmas \ref{maxvisinv-lem} and \ref{it-lem1} imply that $q_1\leq q$, so suppose $q_1=q$.
Since $v(q) \leq \ipsi(z)(q) < \ipsi(z)(r) =v(r) $
by Proposition-Definition \ref{ipsi-propdef} and Lemma \ref{it-tau-lem}, $(q,r)$
is not a visible inversion of $v$.
If $s>r$, then $z(q) < z(s)$ by
Lemma \ref{maxvisinv-lem}, while $v(q) \leq \ipsi(z)(q) \leq z(q)$ and $v(s) = \ipsi(z)(s) = z(s)$
by Lemma \ref{it-tau-lem} and the definition of $\ipsi(z)$,
so $(q,s)$ is also not a visible inversion of $v$.
Thus $r_1<r$.
 \end{proof}

 \begin{theorem}\label{it0-thm}
 The involution Lascoux-Sch\"utzenberger tree $\iT(z)$ is finite for all $z \in \I_\ZZ$,
 and  it holds
 that $\iF_z = \sum_v \iF_v$ where the sum
  is over the finite set of leaf vertices $v$ in $\iT(z)$.
 \end{theorem}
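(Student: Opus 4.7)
The plan is to prove Theorem~\ref{it0-thm} in two stages: first establish that the tree $\iT(z)$ is finite, then derive the symmetric function identity by induction using Corollary~\ref{it-cor1}(b). Both steps rest on the structural results about $\ipsi$ and the sets $\hat\Phi^-$ developed above.

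For finiteness, I would first verify that the tree is locally finite, i.e., that each non-I-Grassmannian vertex $v$ has only finitely many children $\iT_1(v) = \hat\Phi^-(\ipsi(v),p)$. This holds because $\ipsi(v)$ has finite support: for any fixed $p$, the Bruhat cover condition $\ipsi(v) \lessdot \ipsi(v)(i,p)$ required by Theorem~\ref{taubruhat-thm} fails for all $i$ sufficiently far below $\min\supp(\ipsi(v))$, since a fixed point $k$ of $\ipsi(v)$ in the interval $(i,p)$ with value strictly between $\ipsi(v)(i)$ and $\ipsi(v)(p)$ obstructs the cover, and such a $k$ must exist when $i$ is small enough.

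For termination I would combine Lemmas~\ref{it-lem1} and \ref{it-lem2}. Iterating Lemma~\ref{it-lem1}, the visible descents of every vertex of $\iT(z)$ lie in the finite integer interval bounded by the minimal and maximal visible descents of $z$. Lemma~\ref{it-lem2} implies that along any descending path the maximal visible descent strictly decreases after a bounded number of steps, so no path can be infinite. Local finiteness together with absence of infinite paths then forces $\iT(z)$ to be finite, via K\"onig's lemma.

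Finally, for the identity, I would induct on the depth of $\iT(z)$. If $z$ is a leaf, i.e.\ I-Grassmannian, there is nothing to prove. Otherwise Corollary~\ref{it-cor1}(b) gives $\iF_z = \sum_{v \in \iT_1(z)} \iF_v$, and the inductive hypothesis, applied to each proper subtree, expresses each $\iF_v$ as a sum over the leaves of $\iT(v)$; assembling these produces $\iF_z$ as a sum over the leaves of $\iT(z)$. I expect the main obstacle to be the local finiteness step: a generic element of $\I_\ZZ$ has infinitely many Bruhat covers, so one must exploit both the fixed second index $p$ and the exact $\ellhat$-increment condition to bound the admissible $i$ in terms of $\supp(\ipsi(v))$. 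Termination via Lemmas~\ref{it-lem1} and \ref{it-lem2}, together with the recursive identity, are then clean consequences of the setup.
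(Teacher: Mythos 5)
Your proof takes essentially the same route as the paper: Lemmas~\ref{it-lem1} and \ref{it-lem2} bound the visible descents and force eventual termination, and the identity then follows by induction from Corollary~\ref{it-cor1}(b). Your explicit verification of local finiteness of the branching sets $\hat\Phi^-(\ipsi(v),p)$ is a reasonable addition that the paper leaves implicit, but the argument is otherwise the same.
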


\begin{proof}
It follows by induction from Lemmas \ref{it-lem1} and \ref{it-lem2}
that for some sufficiently large $n$
either $\iT_n(z) = \varnothing$ or all elements of $\iT_n(z)$ are I-Grassmannian,
and in the latter case $\iT_{n+1}(z) = \varnothing$.
The tree $\iT(z)$ is therefore finite,  so the identity $\iF_z = \sum_v \iF_v$ follows from
Corollary \ref{it-cor1}.
\end{proof}

The theorem
implies this corollary, which we stated in the introduction as Theorem~\ref{our-thm2}.

  \begin{corollary}\label{it-thm-cor0}
If $z \in \I_\ZZ$ then
$\iF_z \in \NN\spanning\left\{ \iF_y : y \in \I_\ZZ\text{ is I-Grassmannian}\right\}$ and this symmetric function
is consequently Schur $P$-positive.
 \end{corollary}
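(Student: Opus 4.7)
The plan is to simply combine Theorem \ref{it0-thm} with Theorem \ref{igrass-thm}, since the hard work has already been done. First I would invoke Theorem \ref{it0-thm} to write $\iF_z = \sum_v \iF_v$, where $v$ ranges over the (finite) set of leaves of the involution Lascoux-Sch\"utzenberger tree $\iT(z)$. This already realizes $\iF_z$ as a nonnegative integer sum of certain $\iF_v$'s.

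Next I would identify the leaves. By Definition \ref{itree-def}, a vertex $v$ in $\iT(z)$ is a leaf precisely when $\iT_1(v) = \varnothing$, and by the construction of $\iT_1$ immediately preceding that definition, this happens if and only if $v$ is I-Grassmannian. Thus every leaf of $\iT(z)$ is an I-Grassmannian involution, which establishes the inclusion
\[
\iF_z \in \NN\spanning\bigl\{\iF_y : y \in \I_\ZZ\text{ is I-Grassmannian}\bigr\}.
\]

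Finally, to upgrade this to Schur $P$-positivity, I would apply Theorem \ref{igrass-thm}, which asserts that $\iF_y = P_{\ilambda(y)}$ whenever $y$ is I-Grassmannian. Substituting this identification for each leaf $v$ of $\iT(z)$ rewrites the expansion as a nonnegative integer linear combination of Schur $P$-functions indexed by strict partitions, yielding the claimed Schur $P$-positivity. There is no real obstacle here: the content is entirely packaged in the finiteness of $\iT(z)$ and the I-Grassmannian identification, both of which are available by hypothesis, so the corollary follows essentially by unwinding definitions.
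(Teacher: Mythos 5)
Your proposal is correct and follows exactly the paper's intended argument: Theorem~\ref{it0-thm} gives the decomposition over leaves, the construction of $\iT_1$ shows every leaf is I-Grassmannian, and Theorem~\ref{igrass-thm} converts each leaf contribution into a Schur $P$-function. The paper states this corollary as an immediate consequence of Theorem~\ref{it0-thm} without spelling out the details, and you have filled them in accurately.
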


\subsection{Triangularity}\label{tri-sect}

Recall the definitions of $c(w)$ for $w \in S_\infty$ and $\ic(y)$ for $y \in \I_\infty$
from Section~\ref{igrass-sect}.
The \emph{shape} of $w \in S_\infty$ is the partition $\lambda(w)$ given by sorting $c(w)$.
For involutions, we have this alternative:

\begin{definition}\label{ishape-def}
Let $\ilambda(y)$ for
 $y \in \I_\infty$ be the transpose of the partition
 given by sorting $\ic(y)$.
\end{definition}

These constructions are consistent with our  definitions of $\lambda(w)$ and $\ilambda(y)$
when $w$ is Grassmannian and $y$ is I-Grassmannian.
Let $<$ be the dominance order on partitions,
and write $\mu^T$ for the transpose of a partition $\mu$.
Recall that $\lambda \leq \mu$ if and only if $\mu^T \leq \lambda^T$ \cite[Eq.\ (1.11), {\S}I.1]{Macdonald2}.

\begin{theorem}[Stanley \cite{Stan}] \label{ut-thm1}
Let $w \in S_\infty$ and define $\lambda'(w) =\lambda(w^{-1})^T$.
Then $\lambda(w) \leq \lambda'(w)$,
and if equality holds then $F_w = s_{\lambda(w)}$ while otherwise
$
F_w \in s_{\lambda(w)} + s_{\lambda'(w)} +
\NN\spanning \left\{ s_\nu : \lambda(w) < \nu < \lambda'(w)\right\}.
$
\end{theorem}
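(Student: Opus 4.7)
The plan is to sandwich the Schur expansion of $F_w$ between $\lambda(w)$ and $\lambda'(w)$ in dominance order by combining Theorem~\ref{intro-a-thm} with its dual under the standard involution $\omega : \Lambda \to \Lambda$ sending $s_\mu \mapsto s_{\mu^T}$. Observing that the partition appearing in Theorem~\ref{intro-a-thm} is exactly $\lambda(w^{-1})^T = \lambda'(w)$, I read that theorem as the upper bound
\[
F_w \in s_{\lambda'(w)} + \NN\spanning\{s_\mu : \mu < \lambda'(w)\},
\]
with the coefficient of $s_{\lambda'(w)}$ equal to $1$.

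To derive a symmetric lower bound, I would apply Theorem~\ref{intro-a-thm} to $w^{-1}$ in place of $w$, giving
\[
F_{w^{-1}} \in s_{\lambda(w)^T} + \NN\spanning\{s_\mu : \mu < \lambda(w)^T\},
\]
and then apply $\omega$ together with the classical identity $\omega(F_w) = F_{w^{-1}}$. Since transposition reverses dominance order, this yields
\[
F_w \in s_{\lambda(w)} + \NN\spanning\{s_\nu : \nu > \lambda(w)\}
\]
with $s_{\lambda(w)}$ having coefficient $1$. Intersecting the two supports places the Schur expansion of $F_w$ inside the dominance interval $[\lambda(w), \lambda'(w)]$, which in particular forces $\lambda(w) \leq \lambda'(w)$. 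When the endpoints coincide, both inclusions collapse to $F_w = s_{\lambda(w)}$; when they differ, $s_{\lambda(w)}$ and $s_{\lambda'(w)}$ remain distinct and each has coefficient exactly $1$, while any middle terms $s_\nu$ carry nonnegative integer coefficients by Schur positivity, giving the stated formula.

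The main obstacle is the identity $\omega(F_w) = F_{w^{-1}}$, which is classical but not recorded in our preliminaries. One convenient way to see it is via the fundamental quasisymmetric expansion of $F_w$ combined with the bijection $\cR(w) \to \cR(w^{-1})$ given by reversal of reduced words, which complements descent sets. If one prefers to sidestep $\omega$-duality entirely, an alternative is induction along the Lascoux--Sch\"utzenberger tree $\fk T(w)$: verify that under each covering $w \mapsto v \in \fk T_1(w)$ the invariants $\lambda$ and $\lambda'$ change monotonically, with equality in exactly one child of each type, mirroring the argument used for $\iT(z)$ in Section~\ref{ppos-sect}.
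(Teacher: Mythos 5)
Your derivation is correct, and it differs from the paper's treatment in an essential respect: the paper does not prove Theorem~\ref{ut-thm1} at all. It simply cites Stanley for the shape of the expansion and Edelman--Greene for the positivity of its coefficients (see the sentence immediately following the theorem statement). You, by contrast, give a genuine derivation from Theorem~\ref{intro-a-thm}, and the key steps all check out. In particular, the multiset $\{a_1,\dots,a_n\}$ in Theorem~\ref{intro-a-thm} is exactly the multiset of column sizes of $D(w)$, i.e.\ the multiset of row sizes of $D(w^{-1})$, so sorting it produces $\lambda(w^{-1})$ and the partition named $\lambda$ in that theorem is $\lambda(w^{-1})^T = \lambda'(w)$, as you claim. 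The two-sided sandwich via $\omega$ then gives both the dominance inequality $\lambda(w)\le\lambda'(w)$ (by nontriviality of $F_w$ and transitivity of dominance) and the coefficient-$1$ statements at the two endpoints, with intermediate coefficients nonnegative because Theorem~\ref{intro-a-thm} already carries $\NN$-coefficients.

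One small wrinkle in your sketch of the identity $\omega(F_w)=F_{w^{-1}}$: reversing a reduced word does not literally complement the ascent set of the word --- it produces the reverse of the complement, i.e.\ $A(\a^{\mathrm{rev}}) = \{\ell-j : j\in[\ell-1]\setminus A(\a)\}$ where $\ell=\ell(w)$ and $A(\a)=\{j : a_j<a_{j+1}\}$. However, the fundamental quasisymmetric function $f_{\ell,S}$ is invariant under $S\mapsto\{\ell-j:j\in S\}$ (reverse the index set and re-index the variables), and with that extra observation the reversal bijection $\cR(w)\to\cR(w^{-1})$ does indeed exhibit $F_{w^{-1}}=\omega(F_w)$. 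So your route is sound; it is just worth stating the invariance of $f_{\ell,S}$ under reversal explicitly, since the bare phrase ``complements descent sets'' elides this. The alternative LS-tree induction you mention is plausible but more delicate (one must track how $\lambda$ and $\lambda'$ evolve under transitions), and the $\omega$-duality argument is both shorter and closer in spirit to how the paper itself later exploits $\omega$ in the proof of Theorem~\ref{ut-thm2}.
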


Stanley \cite[Theorem 4.10]{Stan} only established the form of this expansion;
the positivity of its coefficients follows from
 results of Edelman and Greene \cite{EG}.
In this section, we prove an analogous result for the decomposition of $\iF_y$ into Schur $P$-functions.

Define $<_\cA$ on $S_\infty$
as the transitive relation generated by setting $v <_\cA w$
when the one-line representation of $v^{-1}$ can be transformed to that of $w^{-1}$ by replacing a consecutive subsequence 
of the form $cab$ with $a<b<c$ by $bca$,
or equivalently when $v < s_{i+1} v =s_{i}w> w$ for some $i \in \PP$.
For example, $ 3412 =(3412)^{-1}  <_\cA  (3241)^{-1} =4213$.
 Recall the definition of $\alpha_{\min}(y)$
 from Lemma~\ref{minatom-lem}.
In prior work, we showed
\cite[Theorem 6.10]{HMP2} that $<_\cA$ is a partial order and
that $\cA(y) = \{ w \in S_\infty : \alpha_{\min}(y) \leq_\cA w\}$ for all $y \in \I_\infty$.

\begin{lemma}\label{ut-lem1}
Let $y \in \I_\infty$. If $v,w \in \cA(y)$ and $v <_\cA w$, then
$\lambda(v) < \lambda(w)$.
\end{lemma}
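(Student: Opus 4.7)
The plan is to establish the inequality for a single generating step of $<_\cA$ and then invoke transitivity of the dominance order. A generating step $v <_\cA w$ means $v < s_{i+1}v = s_iw > w$ for some $i \in \PP$; unpacking this gives three positions $\beta < \gamma < \alpha$ in $\PP$ with
\[
v(\alpha)=i,\quad v(\beta)=i+1,\quad v(\gamma)=i+2,\qquad w(\gamma)=i,\quad w(\alpha)=i+1,\quad w(\beta)=i+2,
\]
and $v, w$ agreeing at every position outside $\{\alpha,\beta,\gamma\}$. Equivalently, $v^{-1}$ has the pattern $cab$ (with $a=\beta$, $b=\gamma$, $c=\alpha$) at consecutive positions $i,i+1,i+2$, and $w^{-1}$ has $bca$ there.

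Next I would compute the codes directly. Using $c_r(\sigma)=|\{j>r:\sigma(r)>\sigma(j)\}|$, a case analysis on $r$ relative to the three positions $\beta,\gamma,\alpha$ shows that $c_r(v)=c_r(w)$ for every $r\notin\{\beta,\gamma\}$, while
\[
c_\beta(v)=c_\beta(w)-1 \qquad\text{and}\qquad c_\gamma(v)=c_\gamma(w)+1.
\]
Moreover, any $j>\gamma$ with $w(j)<i$ automatically satisfies $j>\beta$ and $w(j)<i+2$, and in addition both $j=\gamma$ (with $w(\gamma)=i$) and $j=\alpha$ (with $w(\alpha)=i+1$) contribute to $c_\beta(w)$ but not to $c_\gamma(w)$. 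Hence $c_\beta(w)\geq c_\gamma(w)+2$, and consequently $c_\beta(v)\geq c_\gamma(v)$ as well.

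Setting $a=c_\beta(w)$ and $b=c_\gamma(w)$, the multiset of entries of $c(v)$ is obtained from that of $c(w)$ by replacing the pair $\{a,b\}$ with $\{a-1,b+1\}$, where $a\geq b+2$. I would finish by invoking the standard fact that such a ``pinch'' strictly lowers the resulting sorted partition in dominance order. Concretely, for each $k$ choose a set $S$ of $k$ indices attaining the sum of the $k$ largest entries of $c(v)$, picked so that $\gamma\in S$ implies $\beta\in S$; this is possible because $c_\beta(v)\geq c_\gamma(v)$. Summing $c(w)$ over the same $S$ gives at least as large a value, so the $k$th partial sum of $\lambda(v)$ is at most that of $\lambda(w)$. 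The two multisets are distinct because $a\geq b+2$, so the sorted partitions are unequal, and combining this with the majorization gives $\lambda(v)<\lambda(w)$ strictly. The only real care is in the code bookkeeping around the three cycled positions; the majorization conclusion at the end is routine.
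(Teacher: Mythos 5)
Your proof is correct and takes essentially the same route as the paper's. Both arguments reduce to a single generating step $v < s_{i+1}v = s_iw > w$, identify the same three positions, show that the code of $v$ is obtained from that of $w$ by decrementing one entry ($c_\beta$) and incrementing a smaller entry ($c_\gamma$), verify the gap $c_\beta(w)\geq c_\gamma(w)+2$, and conclude by the standard fact that such a pinch strictly lowers the sorted partition in dominance order. The only stylistic difference is that the paper cites a known fact about how the Rothe diagram $D(us_j)$ is obtained from $D(u)$ by a row swap and a single box move, then passes through $D(u^{-1})=D(u)^T$, whereas you verify the same code changes directly from the formula $c_r(\sigma)=|\{j>r:\sigma(r)>\sigma(j)\}|$; the bookkeeping you do by hand is precisely what the paper's cited diagram identity encapsulates.
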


\begin{proof}
Fix $v,w \in \cA(y)$ with $v <_\cA w$. It suffices to consider the case when
$w$ covers $v$, so assume $v < s_{i+1} v =s_{i}w> w$  for some $i \in \PP$.
Let $a = w^{-1}(i+2)$, $b=w^{-1}(i)$, and $c=w^{-1}(i+1)$, so that $a<b<c$.
If $u \in S_\infty$ and $u < us_j$ for some $j \in \PP$, then
 the diagram $D(us_j)$ is given
 by transposing rows $j$ and $j+1$ of the union $D(u)\cup\{(j+1,u(j))\}$.
 It follows that $D(v^{-1})$
 is given by permuting rows $i$, $i+1$, and $i+2$ of $D(w^{-1}) \cup \{(i+1,b)\} - \{(i,a)\}$.
There are evidently at least two more positions in column $a$ than $b$ of $D(w^{-1})$,
 so as $D(u^{-1}) = D(u)^T$ for any $u \in S_\infty$,
we deduce that $\lambda(v)=\lambda(w) - e_j + e_k$ for some indices $j<k$,
and hence that $\lambda(v) < \lambda(w)$.
\end{proof}

\begin{theorem}\label{ut-thm2}
Let $y \in \I_\infty$ and $\mu = \ilambda(y)$.
Then $\mu^T \leq \mu$. If $\mu^T = \mu$ then
$\iF_y = s_\mu$ and otherwise
$\iF_y \in s_{\mu^T} + s_{\mu} +
\NN\spanning \left\{ s_\lambda : \mu^T<\lambda < \mu\right\}$.
\end{theorem}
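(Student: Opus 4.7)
The plan is to start from $\iF_y = \sum_{w \in \cA(y)} F_w$ (Definition~\ref{iF-def}) and expand each summand via Theorem~\ref{ut-thm1}: every atom $w$ contributes Schur functions $s_\nu$ with $\lambda(w) \leq \nu \leq \lambda'(w)$, with the endpoints appearing with coefficient one unless $\lambda(w) = \lambda'(w)$. To reach the claimed expansion, I aim to show the minimum of $\lambda(w)$ over $\cA(y)$ equals $\mu^T$ and the maximum of $\lambda'(w)$ equals $\mu$, each attained by a unique atom.

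First, for the minimum: Lemma~\ref{codei-lem} gives $c(\alpha_{\min}(y)) = \hat c(y)$, so $\lambda(\alpha_{\min}(y))$ is the sorted rearrangement of $\hat c(y)$, which equals $\mu^T$ by Definition~\ref{ishape-def}. Since $\cA(y) = \{w : \alpha_{\min}(y) \leq_\cA w\}$, Lemma~\ref{ut-lem1} yields $\lambda(w) > \mu^T$ strictly for every atom other than $\alpha_{\min}(y)$, so $F_{\alpha_{\min}(y)}$ is the unique contributor of $s_{\mu^T}$.

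Next, I would establish a dual of Lemma~\ref{ut-lem1}: if $v <_\cA w$ in $\cA(y)$, then $\lambda(v^{-1}) > \lambda(w^{-1})$ strictly, equivalently $\lambda'(v) < \lambda'(w)$. The argument reuses the identity $D(v^{-1}) = D(w^{-1}) \cup \{(i+1,b)\} \setminus \{(i,a)\}$ with rows $i, i+1, i+2$ permuted from the proof of Lemma~\ref{ut-lem1}, but tracks row-lengths of $D(w^{-1})$ directly rather than passing to columns: since $w^{-1}(i) = b < c = w^{-1}(i+1)$, one checks $c(w^{-1})_i \leq c(w^{-1})_{i+1}$, and the replacement of the pair $(c(w^{-1})_i, c(w^{-1})_{i+1})$ by $(c(w^{-1})_i - 1, c(w^{-1})_{i+1}+1)$ strictly increases the sorted partition in dominance order. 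Transitively, this shows $\lambda'(w)$ is maximized on $\cA(y)$ uniquely at the $<_\cA$-top atom $\alpha_{\max}(y)$.

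Finally, to identify $\lambda'(\alpha_{\max}(y)) = \lambda(\alpha_{\max}(y)^{-1})^T$ with $\mu$, I would produce an explicit one-line description of $\alpha_{\max}(y)^{-1}$ analogous to Lemma~\ref{minatom-lem}, obtained by ordering the cycle data of $y$ in the opposite convention, from which $c(\alpha_{\max}(y)^{-1})$ is seen to be a rearrangement of $\hat c(y)$ with sorted shape $\mu^T$. Combined with $\mu^T \leq \mu$ -- which holds because any strict $\mu$ dominates the self-conjugate staircase of length $\ell(\mu)$, which itself dominates $\mu^T$ -- this completes the expansion. The degenerate case $\mu^T = \mu$ is then automatic: both monotonicities squeeze $\lambda(w) = \lambda'(w) = \mu$ for every atom, which by Lemma~\ref{ut-lem1} forces $|\cA(y)| = 1$ and hence $\iF_y = s_\mu$. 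The main obstacle is the explicit analysis of $\alpha_{\max}(y)$ and its inverse; the rest follows mechanically from the two monotonicity statements.
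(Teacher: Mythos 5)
Your plan is a genuinely different route from the paper's. The paper's proof establishes only the lower-triangularity bound (your first paragraph, identical to the paper's use of Lemmas~\ref{ut-lem1} and \ref{codei-lem}), obtaining $\iF_y \in s_{\mu^T} + \NN\spanning\{s_\lambda : \lambda > \mu^T\}$, and then invokes the ring involution $\omega : s_\lambda \mapsto s_{\lambda^T}$. Since $\iF_y$ is Schur $P$-positive by Corollary~\ref{our-cor1} and $\omega$ fixes every $P_\lambda$, we get $\omega(\iF_y) = \iF_y$; applying $\omega$ to the lower-triangular expansion then simultaneously yields the upper bound, the unit coefficient of $s_\mu$, and the inequality $\mu^T \leq \mu$, all at once. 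You instead propose proving both triangularity bounds directly, bypassing Schur $P$-positivity entirely, and this leaves substantive gaps.

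Your ``dual'' of Lemma~\ref{ut-lem1} --- that $\lambda'(v) < \lambda'(w)$ when $v <_\cA w$ --- does seem correct, and the key step you identify (that $c(w^{-1})_i \leq c(w^{-1})_{i+1}$ because $w^{-1}(i) = b < c = w^{-1}(i+1)$, so the multiset of row lengths of $D(w^{-1})$ is altered by moving a box from a shorter row to a longer one) checks out. But the rest is only a sketch of hard work. First, your claim that $\lambda'$ is maximized ``uniquely at the $<_\cA$-top atom $\alpha_{\max}(y)$'' presupposes that $(\cA(y), <_\cA)$ has a unique maximal element. The paper cites only the existence of a unique minimum from \cite{HMP2}, and you neither cite nor prove uniqueness of a maximum; without it, the dual monotonicity says nothing about the overall max of $\lambda'$ over $\cA(y)$. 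Second, even given a unique $\alpha_{\max}(y)$, you need an explicit one-line description of it (a maximal analogue of Lemma~\ref{minatom-lem}) together with a verification that $c(\alpha_{\max}(y)^{-1})$ is a rearrangement of $\hat c(y)$; you explicitly defer this, and it is the crux of your approach. Third, your argument for $\mu^T \leq \mu$ is not correct: dominance compares partitions of equal size, so ``$\mu$ dominates the staircase $\delta_{\ell(\mu)+1}$, which dominates $\mu^T$'' is ill-posed whenever $\mu$ is not itself that staircase. (A correct direct argument does exist: for strict $\mu$ and $j < i$ with $(i,j) \in D_\mu$, strictness gives $\mu_j \geq \mu_i + (i - j) \geq j + (i-j) = i$, so $(j,i) \in D_\mu$; then $(i,j) \mapsto (j,i)$ injects the cells in rows $> k$ and columns $\leq k$ into those in rows $\leq k$ and columns $> k$, proving $\mu \geq \mu^T$.) Until the first two points are resolved, your route does not constitute a proof; the paper's $\omega$-symmetry argument sidesteps them entirely, which is exactly its advantage.
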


\begin{proof}
Since $\iF_y = \sum_{w \in \cA(y)} F_w$,
 Theorem~\ref{ut-thm1} and Lemma~\ref{ut-lem1} imply
that
 $\iF_y \in s_\nu  + \NN\spanning \{ s_\lambda : \nu < \lambda \}$
 for $\nu = \lambda(\alpha_{\min}(y))$.
 Write $\omega : \Lambda \to \Lambda$ for the linear map with $\omega(s_\lambda) = s_{\lambda^T}$
 for partitions $\lambda$.
 If $\lambda $ is strict then $\omega(P_\lambda) = P_\lambda$
 \cite[Example 3(a), {\S}III.8]{Macdonald2}, so $\omega (\iF_y) = \iF_y$
 by Corollary~\ref{our-cor1}.
The result follows since 
the definition of $\ilambda(y)$ and Lemma~\ref{codei-lem} imply that
$\mu^T = \nu$.
\end{proof}

The following is equivalent to Theorem~\ref{intro-tri-thm} in the introduction.

\begin{corollary}\label{ut-cor}
If $y \in \I_\infty$ then $\ilambda(y)$ is strict and
$\iF_y \in P_{\ilambda(y)} + \NN\spanning \left\{ P_\lambda : \lambda < \ilambda(y)\right\}$.
\end{corollary}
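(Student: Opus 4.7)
My plan is to combine Corollary~\ref{our-cor1}, which yields an expansion $\iF_y = \sum_\nu c_\nu P_\nu$ with each $c_\nu \in \NN$ and the sum taken over strict partitions $\nu$, with the dominance-triangular Schur expansion of $\iF_y$ provided by Theorem~\ref{ut-thm2}: the coefficient of $s_\mu$ is exactly $1$ and every other Schur index $\lambda$ appearing satisfies $\lambda < \mu$, where $\mu = \ilambda(y)$. Reading off the coefficient of $s_\mu$ from the two expansions will force $\mu$ to be strict with $c_\mu = 1$; reading off the remaining Schur indices will force every $\nu$ with $c_\nu > 0$ to satisfy $\nu \leq \mu$, hence $\nu < \mu$ when $\nu \neq \mu$, which is exactly the claim.

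The key auxiliary fact I need is the classical dominance-triangularity of the transition matrix from Schur $P$-functions to Schur functions: for every strict partition $\nu$,
\[
P_\nu \in s_\nu + \NN\spanning\{ s_\lambda : \lambda < \nu\}.
\]
This can be extracted from the generating-function description of $P_\nu$ as a sum over shifted semistandard tableaux of shape $\nu$, whose leading monomial in dominance order is $x^\nu$ with multiplicity $1$; combining this with the standard unitriangularity $s_\lambda = m_\lambda + \sum_{\mu < \lambda} K_{\lambda\mu} m_\mu$ and with the already-cited Schur-positivity of $P_\nu$ (\cite[Eq.\ (8.17), {\S}III.8]{Macdonald2}) yields the displayed formula. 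I would cite \cite{Stem} or \cite[{\S}III.8]{Macdonald2} for this statement.

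Substituting into the Schur $P$-expansion gives $\iF_y = \sum_\nu c_\nu s_\nu + \sum_{\nu}\sum_{\lambda < \nu} c_\nu a_{\nu\lambda} s_\lambda$ for some $a_{\nu\lambda} \in \NN$. By Theorem~\ref{ut-thm2}, every Schur index appearing in $\iF_y$ is bounded above in dominance by $\mu$; in particular, since the coefficient of $s_\nu$ in $\iF_y$ is at least $c_\nu$, each $\nu$ with $c_\nu > 0$ satisfies $\nu \leq \mu$. Consequently the coefficient of $s_\mu$ in $\iF_y$ reduces to $c_\mu$ if $\mu$ is strict, and to $0$ otherwise (since no $\nu \geq \mu$ with $c_\nu > 0$ can be strict unless $\nu = \mu$). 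Theorem~\ref{ut-thm2} pins this coefficient to $1$, so $\mu$ must be strict and $c_\mu = 1$, and every other $\nu$ with $c_\nu > 0$ lies strictly below $\mu$, giving the claimed identity $\iF_y = P_\mu + \sum_{\nu < \mu} c_\nu P_\nu$. The only input beyond what is already established in the paper is the cited dominance-triangularity of the $P$-to-$s$ expansion, and this is the one step where I would need to pause and give a precise reference; the rest of the argument is elementary bookkeeping on dominance order and I foresee no real obstacle.
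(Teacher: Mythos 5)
Your argument is correct and follows exactly the paper's route: the paper's proof likewise invokes Schur $P$-positivity (Corollary~\ref{our-cor1}), the dominance-unitriangularity $P_\lambda \in s_\lambda + \NN\spanning\{s_\nu : \nu < \lambda\}$ (cited to \cite[Eq.\ (8.17)(ii), {\S}III.8]{Macdonald2}), and Theorem~\ref{ut-thm2}, then reads off the conclusion. You have simply unpacked the compressed coefficient-matching step that the paper leaves implicit.
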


\begin{proof}
Since $P_\lambda \in s_\lambda + \NN\spanning\{ s_\nu  : \nu < \lambda\}$
for any strict partition $\lambda$ \cite[Eq.\ (8.17)(ii), {\S}III.8]{Macdonald2} and since 
$\iF_y$ is Schur $P$-positive,
the result holds by Theorem~\ref{ut-thm2}.
\end{proof}

\begin{remark}
This is the easiest way we know of showing that $\ilambda(y)$
is a strict partition. There should exist a more direct, combinatorial proof
of this fact,
using just the definition of $\ilambda(y)$.
\end{remark}

We mention some applications  to skew Schur functions.
As is standard,
we write  $\mu \subset \lambda$ and say that
  $\lambda$ \emph{contains} $\mu$
   if $\lambda$ and $\mu$ are partitions
 with $\mu_i \leq \lambda_i$ for all $i \in \PP$.
When $\mu\subset \lambda$, we let $\lambda \setminus \mu$ and $s_{\lambda\setminus \mu}$
 denote the corresponding
 skew shape and skew Schur function.
 We say that $\lambda$ \emph{strictly contains} $\mu$ if
 $0= \mu_i = \lambda_i$ or $0\leq \mu_i < \lambda_i$ for each $i \in \PP$.
 For a partition $\mu$ which is strictly contained in $\delta_{n+1} = (n,n-1,\dots,2,1)$,
we  define
 \be
 \label{ymu-eq}
y_{\mu,n} = (a_1,b_1)(a_2,b_2)\cdots (a_n,b_n) \in \I_{2n}
\ee
 where $b_i = n+i-\mu^T_i$ for $i \in [n]$ and $a_1<a_2<\dots<a_n$ are the numbers in
 $[2n]\setminus \{b_1,b_2,\dots,b_n\}$
 labeled in increasing order.
 Note that $b_1<b_2<\dots<b_n=2n$, and that
 $\mu^T_i < n+1-i$ and $2i -1< b_i $ for each $i \in [n]$.
Thus $a_i < b_i$ for each $i$, so $y_{\mu,n}$ is well-defined and fixed-point-free.

 \begin{example}
 If $\mu = (5,3,2,2) \subset \delta_7$, then $\mu^T = (4,4,2,1,1)$
 and $y_{\mu,6} = (a_1,b_1)(a_2,b_2)\cdots (a_6,b_6)$
for $(b_1,b_2,\dots,b_6) = (3, 4, 7, 9, 10, 12)$
 and $(a_1,a_2,\dots,a_6) = (1, 2, 5, 6, 8, 11)$.
 \end{example}

Two subsets of $\PP\times \PP$ are \emph{equivalent} if one can be transformed to the other
by permuting its rows and columns.
Equivalent skew shapes index equal
skew Schur functions \cite[Proposition 2.4]{BJS}.

 \begin{proposition}\label{ardila-prop1}
Let $n \in \NN$,
 suppose $\mu$ is a partition strictly contained in $\delta_{n+1}$,
 and set $y=y_{\mu,n}$. 
 Then $y$ is  321-avoiding,
  the sets $\hat D(y)$
and $\delta_{n+1}\setminus \mu$ are equivalent,
and $\iF_{y} = s_{\delta_{n+1}\setminus \mu}$.
 \end{proposition}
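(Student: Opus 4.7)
The plan is to verify the three assertions in turn, with the identity $\iF_y = s_{\delta_{n+1}\setminus\mu}$ being the central difficulty.

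For $321$-avoidance, observe that the support $[2n]$ of $y$ is partitioned into $A := \{a_1 < \cdots < a_n\}$ and $B := \{b_1 < \cdots < b_n\}$, with $y(a_i) = b_i$ and $y(b_i) = a_i$. Since both sequences are increasing, the restrictions $y|_A$ and $y|_B$ are each order-preserving, so $y$ is a union of two increasing subsequences and hence contains no decreasing subsequence of length three.

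For the diagram equivalence, a short case analysis on the pairs in $A\times A$, $A\times B$, $B\times A$, $B\times B$ shows that the inversions of $y$ are precisely the pairs $(a_i, b_j)$ with $a_i < b_j$ and $b_i > a_j$, yielding Rothe-diagram entries $(a_i, a_j)$. Intersecting with $\{(p,q): q \leq p\}$ and observing that $j\leq i$ forces $a_j \leq a_i < b_i$ automatically gives
\[
\hat D(y) = \{(a_i, a_j) : j \leq i \text{ and } a_i < b_j\}.
\]
A counting step identifies $a_i - i$ as $|[a_i] \cap B|$, which by the formula $b_k = n+k-\mu^T_k$ and partition conjugation equals $\mu_{n+1-i}$; hence $a_i = i + \mu_{n+1-i}$. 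A case split on whether $\mu_{n+1-i} \geq j$ then yields, for $j\leq i$, the equivalence $a_i < b_j \iff \mu_{n+1-i} < j$, so the relabeling $(a_i, a_j) \mapsto (n+1-i, j)$ is a row-column bijection $\hat D(y) \to \delta_{n+1}\setminus\mu$.

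For $\iF_y = s_{\delta_{n+1}\setminus\mu}$, I propose an induction on $|\mu|$ using the transition identity of Theorem~\ref{invmonk-thm2}. The base case $\mu = \emptyset$ gives the I-Grassmannian involution $y_{\emptyset,n} = (1,n+1)(2,n+2)\cdots(n,2n)$ with shape $\ilambda(y) = (n, n-1, \ldots, 1)$, so Theorem~\ref{igrass-thm} yields $\iF_y = P_{(n,\ldots,1)}$, and the classical staircase identity $P_{(n,\ldots,1)} = s_{\delta_{n+1}}$ (see \cite[Section III.8]{Macdonald2} or \cite{Stem}) closes the base case. For the inductive step, I choose an inner corner of $\mu$ and apply Theorem~\ref{invmonk-thm2} at a cycle $(p,q) \in \Cyc_\PP(y)$ whose image under the row-column relabeling corresponds to that corner, arranged so that $\hat\Phi^+(y,q) \cup \hat\Phi^-(y,p)$ consists only of involutions of the form $y_{\nu,n}$ with $\nu$ obtained from $\mu$ by removing a single box; the recursion then matches the Pieri-type decomposition of $s_{\delta_{n+1}\setminus\mu}$ obtained by adding one box.

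The main obstacle will be ensuring that the transition indeed stays within the family $\{y_{\nu,n}\}$ and that the signs and multiplicities of the terms match the skew Schur Pieri rule exactly. An alternative, perhaps cleaner, approach is to first show that every atom $w \in \cA(y)$ is itself $321$-avoiding (using the $<_\cA$-description of $\cA(y)$ from \cite[Theorem 6.10]{HMP2}), giving $F_w = s_{D(w)}$ by Stanley's theorem, and then derive the Jacobi--Trudi-type identity $\sum_{w \in \cA(y)} s_{D(w)} = s_{\hat D(y)}$ via a Lascoux--Pragacz / Hamel--Goulden rim-hook expansion of the skew Schur function.
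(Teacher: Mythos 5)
Your arguments for the first two claims (321-avoidance and the diagram equivalence $\hat D(y) \simeq \delta_{n+1}\setminus\mu$) are correct and match the paper's proof closely; the paper reads off $321$-avoidance from the fact that both $(a_i)$ and $(b_i)$ are increasing and then constructs essentially the same bijection $(a_j,a_i)\mapsto(n+1-j,i)$.

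For the identity $\iF_y = s_{\delta_{n+1}\setminus\mu}$, the paper does something much more direct than either of your proposals: having shown $y$ is $321$-avoiding and $\hat D(y)$ is equivalent to $\delta_{n+1}\setminus\mu$, it simply invokes \cite[Proposition 3.31]{HMP1} together with the discussion in \cite[Section 2]{BJS}. That proposition says that for $321$-avoiding $y$ the involution Stanley symmetric function $\iF_y$ equals the (generalized) Schur function $s_{\hat D(y)}$ of the involution diagram, and the BJS discussion is what lets one replace $\hat D(y)$ by the equivalent skew shape $\delta_{n+1}\setminus\mu$. So the whole third claim follows in one line from prior work. Your second proposed route (atoms are $321$-avoiding, so $F_w = s_{D(w)}$ by Stanley, and then $\sum_{w\in\cA(y)} s_{D(w)} = s_{\hat D(y)}$) is essentially the content of that cited proposition, though the way you propose to finish it, via a Lascoux--Pragacz/Hamel--Goulden rim-hook expansion, is speculative and would need to be worked out.

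Your primary proposal, an induction via Theorem~\ref{invmonk-thm2}, has a structural problem worth flagging. That transition identity reads $\sum_{z\in\hat\Phi^-(y,p)}\iF_z = \sum_{z\in\hat\Phi^+(y,q)}\iF_z$ for a \emph{fixed} $y$ and a cycle $(p,q)$ of $y$; both sides range over involutions $z$ \emph{covering} $y$, and $\iF_y$ itself appears on neither side. So applying it with $y = y_{\mu,n}$ gives a relation among the $\iF_{y_{\nu,n}}$ for $\nu$ with $|\nu|=|\mu|-1$, not a recursion expressing $\iF_{y_{\mu,n}}$ in terms of them. To actually extract a recursion for $\iF_z$ one has to first descend to $y = \ipsi(z)$ (so that $\hat\Phi^+(y,q)=\{z\}$ is a singleton) and then use the complementary sum, which is exactly the involution Lascoux--Sch\"utzenberger tree machinery of Section~\ref{ppos-sect}. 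As written, your induction does not close.

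Finally, a small point: when $\mu$ is strictly contained in $\delta_{n+1}$ but $\mu\ne\delta_{n+1}$, the base case of your induction is $\mu=\emptyset$, where $y_{\emptyset,n}$ is indeed I-Grassmannian and the staircase identity $P_{\delta_{n+1}} = s_{\delta_{n+1}}$ holds; that part is fine.
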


 \begin{proof}
 It is evident that
$y=y_{\mu,n}$ is 321-avoiding since $a_i<b_i$ and $a_i<a_{i+1}$ and $b_i<b_{i+1}$ for all $i$.
For the same reason, we have
$(i,j) \in \hat D(y)$ only if
$\{i,j\} \subset A$ where $A =\{a_1,a_2,\dots,a_n\}$,
and the positions in column $a_i$ of $\hat D(y)$ are the pairs
$(a_j,a_i)$ with $i \leq j$ and $a_j < b_i$.
Since exactly $n+1-i-\mu_i^T$ elements $a \in A$ satisfy $a_i \leq a < b_i$,
 we deduce that the map
$(a_j,a_i) \mapsto (n+1-j,i)$ is a bijection $\hat D(y) \to \delta_{n+1}\setminus \mu$.
It follows that  $\hat D(y)$
and $\delta_{n+1}\setminus \mu$ are equivalent subsets of $\PP\times \PP$,
so $\iF_{y} = s_{\delta_{n+1}\setminus \mu}$
by
\cite[Proposition 3.31]{HMP1} and the discussion in \cite[Section 2]{BJS}.
 \end{proof}

\begin{lemma}\label{ardila-lem2}
Let $m \in \PP$ and suppose
$\mu\subset \delta_{m}$ is a partition with $\mu \neq \delta_m$.
There exists $n \in \PP$ and a partition $\nu$ strictly contained in $\delta_n$
such that $\delta_m\setminus \mu$ and $\delta_n\setminus \nu$ are equivalent shapes.
\end{lemma}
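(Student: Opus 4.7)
My plan is to construct $n$ and $\nu$ by deleting all empty rows and empty columns of $\delta_m\setminus\mu$ and then relabeling the surviving indices consecutively. Call row $i\in [m-1]$ \emph{empty} if $\mu_i=m-i$, so the $i$-th row of $\delta_m\setminus \mu$ contains no boxes, and similarly call column $j\in [m-1]$ empty if $\mu^T_j=m-j$. The first and most conceptual step is to show that the involution $i\mapsto m-i$ on $[m-1]$ interchanges empty rows with empty columns: for any $\mu\subset\delta_m$ one always has $\mu^T_{m-i}\leq i$ (since $\mu_{i+1}\leq m-i-1$), so $\mu_i=m-i$ is equivalent to $\mu^T_{m-i}\geq i$, and hence to $\mu^T_{m-i}=i$. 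Consequently the numbers of nonempty rows and of nonempty columns agree; call this common value $k$, and note $k\geq 1$ because $\mu\neq \delta_m$.

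With $n := k+1$ I will build $\nu$ explicitly. Writing the nonempty rows as $i_1<i_2<\cdots<i_k$, the bijection just established forces the nonempty columns to be $j_s:=m-i_{k+1-s}$ for $s\in [k]$. Let $\phi$ be the relabeling of $\PP\times\PP$ sending $i_t\mapsto t$, $j_s\mapsto s$, and each empty row or column to some distinct index outside $[k]$. A box $(i_t,j_s)$ lies in $\delta_m\setminus \mu$ iff $\mu_{i_t}<j_s\leq m-i_t$; substituting $j_s=m-i_{k+1-s}$ turns the upper bound into $t+s\leq k+1=n$ and the lower bound into $s\geq \nu_t+1$, where
\[
\nu_t := (k+1-t)-(m-i_t-\mu_{i_t}).
\]
Thus $\phi(\delta_m\setminus\mu)=\delta_n\setminus \nu$ as subsets of $\PP\times\PP$, which is exactly the required equivalence.

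It remains to show that $\nu=(\nu_1,\ldots,\nu_k)$ is a partition strictly contained in $\delta_n$. Nonnegativity $\nu_t\geq 0$ comes from observing that $R_t:=[\mu_{i_t}+1,m-i_t]$ consists of nonempty columns $\leq m-i_t$, of which there are exactly $k+1-t$ by the bijection above; and the strict containment $\nu_t<n-t$ follows from $R_t$ being nonempty. The step I expect to require the most care is the monotonicity $\nu_t\geq \nu_{t+1}$, which after simplification becomes
\[
(i_{t+1}-i_t)-(\mu_{i_t}-\mu_{i_{t+1}})\leq 1.
\]
When $i_{t+1}=i_t+1$ this is immediate from $\mu_{i_t}\geq \mu_{i_{t+1}}$. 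Otherwise every row $r$ strictly between $i_t$ and $i_{t+1}$ is empty, so $\mu_r=m-r$; combining these equalities with the weak decrease of $\mu$ pins down $\mu_{i_t}=m-i_t-1$ and forces $\mu_{i_{t+1}}\leq m-i_{t+1}-1$, after which the inequality falls out. This ``gap analysis'' is the only real technical point in the proof; everything else is bookkeeping over the two indexings.
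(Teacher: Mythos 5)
Your proof is correct. It is essentially the same idea as the paper's—remove the empty rows and columns of $\delta_m\setminus\mu$—but where the paper performs the reduction one row-and-column pair at a time (observing that if $\mu_i=m-i$ then $\delta_m\setminus\mu$ is equivalent to $\delta_{m-1}\setminus\nu$ with $\nu=(\mu_1-1,\dots,\mu_{i-1}-1,\mu_{i+1},\dots,\mu_{m-1})$, and iterating), you delete all empty rows and columns in a single step and verify directly that the result is $\delta_n\setminus\nu$ with $\nu$ a partition strictly contained in $\delta_n$. Your version makes explicit the key combinatorial fact that the involution $i\mapsto m-i$ interchanges empty rows with empty columns (which is implicit in the paper's one-step formula) and produces a closed formula for $n$ and $\nu$, at the cost of the additional bookkeeping needed to check monotonicity of $\nu$; the paper's iterative version avoids that check entirely.
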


\begin{proof}
If $\mu_i = m-i$ for some $i \in [m-1]$ then $\delta_m\setminus \mu$ is equivalent to
 $\delta_{m-1}\setminus \nu$ for $\nu = (\mu_1-1,\dots,\mu_{i-1}-1,\mu_{i+1},\dots,\mu_{m-1})$.
 The lemma follows by repeatedly applying  this observation.
\end{proof}

\begin{proposition}
For each $n \in \PP$ and partition $\mu\subset  \delta_n$, there exists $y \in \I_\ZZ$ with
$s_{\delta_n\setminus \mu}=\iF_y$.
\end{proposition}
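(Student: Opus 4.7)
The plan is to reduce this statement immediately to the two previous results of the section. We have two main building blocks: Proposition~\ref{ardila-prop1}, which handles the case where $\mu$ is \emph{strictly} contained in some staircase $\delta_{N+1}$ by exhibiting the explicit involution $y_{\mu,N}$, and Lemma~\ref{ardila-lem2}, which lets us replace an arbitrary skew staircase shape with one arising from a strict containment (after possibly shrinking the ambient staircase).

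First I would dispose of the boundary case $\mu=\delta_n$: in this case $\delta_n\setminus\mu$ is the empty shape, so $s_{\delta_n\setminus\mu}=1=\iF_1$, and we take $y=1\in\I_\ZZ$. Now suppose $\mu\neq\delta_n$. Apply Lemma~\ref{ardila-lem2} with $m=n$ to obtain some $n'\in\PP$ and a partition $\nu$ \emph{strictly} contained in $\delta_{n'}$ such that $\delta_n\setminus\mu$ and $\delta_{n'}\setminus\nu$ are equivalent subsets of $\PP\times\PP$. Because equivalent skew shapes give the same skew Schur function, we have $s_{\delta_n\setminus\mu}=s_{\delta_{n'}\setminus\nu}$. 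Since $\nu$ is strictly contained in $\delta_{n'}=\delta_{(n'-1)+1}$, Proposition~\ref{ardila-prop1} applies with the parameter $n$ in that proposition taken to be $n'-1$, yielding
\[
\iF_{y_{\nu,\,n'-1}}=s_{\delta_{n'}\setminus\nu}=s_{\delta_n\setminus\mu},
\]
so we may take $y=y_{\nu,\,n'-1}\in\I_{2(n'-1)}\subset\I_\ZZ$.

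The only point that requires a moment of care is what happens when the reduction in Lemma~\ref{ardila-lem2} bottoms out at $n'=1$: then $\nu$ strictly contained in $\delta_1=(0)$ forces $\nu=\emptyset$, and Proposition~\ref{ardila-prop1} with $n=0$ still makes sense, producing $y_{\emptyset,0}=1\in\I_\ZZ$ and the identity $\iF_1=1=s_\emptyset$, which is consistent with $\delta_n\setminus\mu$ being equivalent to the empty shape in that situation. Thus no step is a real obstacle; the entire argument is a two-line assembly of Proposition~\ref{ardila-prop1} and Lemma~\ref{ardila-lem2} together with invariance of skew Schur functions under the equivalence relation on skew shapes recalled just before Proposition~\ref{ardila-prop1}.
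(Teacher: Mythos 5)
Your proof is correct and takes essentially the same route as the paper: dispose of the trivial case $\mu=\delta_n$ via $s_\varnothing=\iF_1=1$, then use Lemma~\ref{ardila-lem2} to reduce to a strictly contained partition, and finish with Proposition~\ref{ardila-prop1}. The only difference is that you spell out the off-by-one bookkeeping and the degenerate $n'=1$ case explicitly, which the paper's one-line proof leaves implicit.
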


 \begin{proof}
Since $s_{\varnothing} = \iF_1 = 1$,
it suffices
by Lemma~\ref{ardila-lem2} to prove that $ s_{\delta_n\setminus \mu}=\iF_y$ for some $y \in \I_\ZZ$
when $\mu$ is strictly contained in $\delta_{n+1}$.
This holds for $y=y_{\mu,n}$
by Proposition~\ref{ardila-prop1}.
 \end{proof}

 For a finite set $D\subset \PP\times \PP$, let $\gamma(D)$ be the transpose
 of the partition given by sorting the numbers of positions in each row of $D$.
 For example, if $\mu = (3,3,1)$ then $\gamma(\delta_6\setminus \mu) = (2,2,2,1,1)^T=(5,3)$.
If $\mu \subset \delta_n$ then $\gamma(\delta_n\setminus \mu)$
is the same as what DeWitt calls the \emph{$n$-complement} of $\mu$ \cite[Definition IV.11]{DeWitt},
and is always a strict partition, since its parts count the positions of $\delta_n\setminus \mu$
on each
southwest-to-northeast diagonal.
The following is a weaker version of \cite[Theorem V.5]{DeWitt},
and also closely related to the main result of Ardila and Serrano's paper \cite{AS}.

  \begin{corollary}[DeWitt \cite{DeWitt}]
  \label{ardila-cor1}
  If $\mu\subset \delta_n$ and $\gamma = \gamma(\delta_n\setminus \mu)$, 
  then $s_{\delta_n\setminus \mu} \in P_\gamma + \NN\spanning\{ P_\nu : \nu < \gamma\}$.
\end{corollary}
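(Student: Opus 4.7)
The plan is to realize $s_{\delta_n\setminus\mu}$ as an involution Stanley symmetric function and then extract the Schur $P$-expansion from the triangularity statement of Corollary~\ref{ut-cor}. If $\mu = \delta_n$ both sides reduce to $1 = P_\varnothing$, so I may assume $\mu \neq \delta_n$.

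First I would apply Lemma~\ref{ardila-lem2} to produce an integer $m \in \PP$ and a partition $\nu$ strictly contained in $\delta_m$ such that $\delta_n\setminus\mu$ and $\delta_m\setminus\nu$ are equivalent subsets of $\PP\times\PP$. Since equivalence merely permutes rows and columns, it preserves both the multiset of row-lengths (and therefore $\gamma$) and the associated skew Schur function, so $s_{\delta_n\setminus\mu} = s_{\delta_m\setminus\nu}$ and $\gamma(\delta_n\setminus\mu) = \gamma(\delta_m\setminus\nu)$. This reduction puts the problem in the situation handled by Proposition~\ref{ardila-prop1}.

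Next I would set $y := y_{\nu,\,m-1}$ (so that the parameter ``$n+1$'' of Proposition~\ref{ardila-prop1} equals $m$). That proposition then supplies two pieces of information: $\hat F_y = s_{\delta_m\setminus\nu}$, and $\hat D(y)$ is equivalent to $\delta_m\setminus\nu$. Comparing Definition~\ref{ishape-def} with the definition of $\gamma(D)$, we have $\hat\lambda(y) = \gamma(\hat D(y))$, and equivalence-invariance of $\gamma$ then forces
\[
\hat\lambda(y) \;=\; \gamma(\hat D(y)) \;=\; \gamma(\delta_m\setminus\nu) \;=\; \gamma.
\]

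Finally, I would feed this into Corollary~\ref{ut-cor} to conclude
\[
s_{\delta_n\setminus\mu} \;=\; \hat F_y \;\in\; P_{\hat\lambda(y)} + \NN\spanning\{P_\lambda : \lambda < \hat\lambda(y)\} \;=\; P_\gamma + \NN\spanning\{P_\lambda : \lambda < \gamma\},
\]
which is the desired containment. All substantive input has already been provided by Proposition~\ref{ardila-prop1} (realizing the skew Schur function as $\hat F_y$ with a controlled diagram) and Corollary~\ref{ut-cor} (triangularity in dominance order), so the main obstacle is really the bookkeeping: one must (i) keep the shift between $\delta_n$ in the statement and $\delta_{n+1}$ in Proposition~\ref{ardila-prop1} straight, and (ii) verify carefully that the equivalence relation on subsets of $\PP\times\PP$ preserves row-length multisets, so that $\gamma$ and $\hat\lambda$ can be matched across the diagram identifications.
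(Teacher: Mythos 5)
Your proposal is correct and follows the same route as the paper's proof: reduce via Lemma~\ref{ardila-lem2} to a strictly contained partition, realize the skew Schur function as $\iF_y$ for a 321-avoiding involution $y$ using Proposition~\ref{ardila-prop1}, identify $\hat\lambda(y)$ with $\gamma$ via the equivalence $\hat D(y) \sim \delta_{n+1}\setminus\mu$, and apply the triangularity of Corollary~\ref{ut-cor}. The only difference is that you spell out the bookkeeping (the $m$ vs.\ $n+1$ shift and the equivalence-invariance of $\gamma$) which the paper leaves implicit.
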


\begin{proof}
Both $\gamma(D)$ and $s_D$ (when $D$ is a skew shape) are invariant under
equivalences between subsets of $\PP\times \PP$,
so this follows from
Corollary~\ref{ut-cor},
Proposition~\ref{ardila-prop1}, 
and Lemma~\ref{ardila-lem2}.
\end{proof}

\subsection{Vexillary involutions}\label{i-vex-sect}

By \cite[Theorem 3.36]{HMP1}, the involutions $z \in \I_\ZZ$ for which $\iF_z$ is a
Schur function are precisely those
which are Grassmannian in the ordinary sense of having at most one right descent.
This condition is quite restrictive, as $z \in \I_\ZZ$ is Grassmannian if and only if
$z$ is I-Grassmannian with shape $\ilambda(z) = \delta_k = (k-1,\dots,2,1,0)$ for some
$k \in \PP$ \cite[Proposition 3.34]{HMP1}.
In this section we consider the more general problem of classifying the involutions
$z \in \I_\ZZ$ for which $\iF_z = P_\mu$ for some strict partition $\mu$.
As in the introduction, we refer to involutions with this property as \emph{$P$-vexillary}.

\begin{remark}
All I-Grassmannian involutions are $P$-vexillary by Theorem \ref{igrass-thm}.
The sequence $(v_n)_{n\geq 1} = (1, 2, 4, 10, 24, 63, 159, 423, 1099, 2962,7868,\dots)$,
with $v_n$ counting the  $P$-vexillary elements of $\I_n$,
does not appear to be related to any existing entry in \cite{OEIS}.
\end{remark}

Recall that if $E\subset \ZZ$ is a finite set of size $n$ then $\psi_E $
is the unique order-preserving bijection
$ E \to [n]$.
In the next three lemmas, we maintain the following notation:
let  $z \in \I_\ZZ-\{1\}$ be a nontrivial involution with maximal visible inversion $(q,r)$,
set $y = \ipsi(z)$, and write $p=y(q)$
so
that $\iT_1(z) = \hat \Phi^-(y,p)$ if $z$ is not I-Grassmannian.
Recall that $p\leq q$ by
Proposition-Definition \ref{ipsi-propdef}.

\begin{lemma} \label{vex-lem1}
Let $E\subset \ZZ$
be a finite set with $\{q,r\} \subset E$ and $z(E) = E$. 
Then $(\psi_E(q), \psi_E(r))$ is the maximal visible inversion of $[z]_E$
and  it holds that $[\ipsi(z)]_E = \ipsi([z]_E)$.
\end{lemma}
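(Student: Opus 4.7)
The plan is to exploit that the standardization $[\cdot]_E$ is order-preserving, so it transports visible inversions to visible inversions in both directions, and then to note that $\ipsi$ is defined purely in terms of the local order type around the maximal visible inversion.

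First I would verify that $(\psi_E(q),\psi_E(r))$ is a visible inversion of $[z]_E$. Writing $q' = \psi_E(q)$ and $r' = \psi_E(r)$, one has $q'<r'$ since $\psi_E$ is order-preserving, and the identity $[z]_E(\psi_E(i)) = \psi_E(z(i))$ for $i \in E$ (which uses $z(E)=E$ so that $\psi_{z(E)}=\psi_E$) combined with the fact that $\psi_E$ commutes with $\min$ on $E$ gives
\[
[z]_E(r') \;=\; \psi_E(z(r)) \;\leq\; \psi_E(\min\{q,z(q)\}) \;=\; \min\{q',\, [z]_E(q')\},
\]
so $(q',r')$ is a visible inversion of $[z]_E$.

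Next, for maximality, I would establish the converse: every visible inversion $(i',j')$ of $[z]_E$ pulls back via $\phi_E$ to a visible inversion of $z$ contained in $E\times E$, by the analogous calculation with $\phi_E$ in place of $\psi_E$. This yields an order-preserving bijection between visible inversions of $[z]_E$ and visible inversions of $z$ that lie in $E\times E$. Since $(q,r)$ is the lexicographically maximal visible inversion of $z$ and lies in $E\times E$, its image $(q',r')$ must be maximal among the visible inversions of $[z]_E$.

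Finally, for $[\ipsi(z)]_E = \ipsi([z]_E)$, observe that the four-element set $A=\{q,r,z(q),z(r)\}$ is contained in $E$ because $E$ is $z$-stable and $\{q,r\}\subset E$. By Proposition-Definition~\ref{ipsi-propdef} and Table~\ref{it-fig}, $\ipsi(z)$ agrees with $z$ off $A$, and $[\ipsi(z)]_A$ is determined by $[z]_A$ together with the relative positions of $q$ and $r$ within $A$. The analogous statement holds for $\ipsi([z]_E)$ with $A$ replaced by $\psi_E(A)=\{q',r',[z]_E(q'),[z]_E(r')\}$. Since $\psi_E$ is order-preserving, $[z]_A$ and $[z]_E|_{\psi_E(A)}$ have the same matching type, and the position of $(q,r)$ inside $A$ matches the position of $(q',r')$ inside $\psi_E(A)$. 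Applying the Table~\ref{it-fig} prescription in both settings therefore produces the same rearrangement, and the two sides of the desired identity agree both on $\psi_E(A)$ and on its complement in $[n]$.

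The only delicate point is confirming that the case-by-case recipe in Table~\ref{it-fig} really depends only on the local matching type of the four positions $q$, $r$, $z(q)$, $z(r)$, but this is essentially built into the definition of $\tau_{ij}$ through the standardization~\eqref{standard-eq} used to reduce the general case to the small cases $n\in\{2,3,4\}$. So once the bijection on visible inversions is in place, this final step reduces to a routine consistency check.
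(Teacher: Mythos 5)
Your proof is correct and follows essentially the same route as the paper's: establish that $\psi_E \times \psi_E$ is an order-preserving bijection between the visible inversions of $z$ contained in $E\times E$ and all visible inversions of $[z]_E$ (so the maximum corresponds to the maximum, using $\{q,r\}\subset E$), and then note that since $\{q,r,z(q),z(r)\}\subset E$ the map $\ipsi$ is local in a way that commutes with standardization. The paper compresses these two observations into two sentences; your version spells them out, including the $\phi_E$-pullback direction of the bijection and the check that Table~\ref{it-fig} depends only on the local matching type, but there is no genuine difference in approach.
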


\begin{proof}
The first assertion holds since
the set of visible inversions of $z$ contained in $E\times E$
and the set of all visible inversions of $[z]_E$
are in bijection via the map $\psi_E \times \psi_E$, which preserves lexicographic order.
Since $\{ q,r, z(q), z(r)\} \subset E$,
we have $[\ipsi(z)]_E = \ipsi([z]_E)$
 by the definition of $\ipsi$.
\end{proof}

Write $L(z)$ for the set of integers $i<p$ with $\tau_{ip}(y) \in \hat \Phi^-(y,p)$
and, given a set $E\subset \ZZ$, define
$
\fk C(z, E) = \{  \tau_{ip}(y) : i \in E\cap L(z)\}.
$
Also let $\fk C(z) = \fk C(z, \ZZ)$.
Note that $\fk C(z) = \iT_1(z)$ if $z$ is not I-Grassmannian.
The following shows that $\fk C(z)$ is always nonempty:

\begin{lemma} \label{vex-grass-lem}
If $z \in \I_\ZZ-\{1\}$ is I-Grassmannian, then $|\fk C(z)| = 1$ and $\kappa(v) = \kappa(z)$ if $\fk C(z) = \{v\}$.
\end{lemma}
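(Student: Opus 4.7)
The plan is to write $z$ in I-Grassmannian form, compute $\ipsi(z)$ and $p$ explicitly via Remark~\ref{eta-rmk} and Table~\ref{it-fig}, and then use the Bruhat-cover description of $\hat\Phi^-(y,p)$ to pin down $L(z)$. By shift-invariance we may assume $z \in \I_\infty$, so by Proposition-Definition~\ref{propdef-2} we can write
\[
z = (\phi_1, m+1)(\phi_2, m+2)\cdots(\phi_k, m+k), \qquad 1\leq \phi_1 < \cdots < \phi_k \leq m, \qquad k \geq 1,
\]
with unique visible descent at $m$. Lemma~\ref{maxvisinv-lem} identifies the maximal visible inversion as $(q,r) = (m,\,m+k)$, since $m+k$ is the largest integer whose $z$-value (namely $\phi_k$) is at most $m$. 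The analysis of Remark~\ref{eta-rmk} splits into two cases: case~(c) when $\phi_k = m$, with $p = q = m$ and $y := \ipsi(z) = (\phi_1, m+1)\cdots(\phi_{k-1}, m+k-1)$ having $m$ as a fixed point; and case~(b) when $\phi_k < m$, with $p = \phi_k < q$ and $y = (\phi_1, m+1)\cdots(\phi_{k-1}, m+k-1)(\phi_k, m)$. In both cases $y(p) = m$.

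Next I will show $|L(z)| = 1$ using the criterion that $\tau_{ip}(y) \in \hat\Phi^-(y,p)$ if and only if $y \lessdot y(i,p)$ in $S_\ZZ$ (item~2 in the enumerated list preceding Theorem~\ref{invmonk-thm}), which requires $y(i) < y(p) = m$ and no $j$ with $i < j < p$ and $y(i) < y(j) < m$. Any $i < p$ that is not a fixed point of $y$ must equal some $\phi_j$ with $j < k$, giving $y(i) = m+j > m$ and violating the first requirement; so $i$ must be a fixed point of $y$, and the second requirement forbids any fixed point of $y$ strictly between $i$ and $p$. Since $\supp(y)$ is finite, the integer $i_0 := \max\{j < p : y(j) = j\}$ exists and is the unique valid $i$, whence $\fk C(z) = \{v\}$ for $v = \tau_{i_0,p}(y)$.

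The cycle count then follows from Table~\ref{ct-fig}: in case~(c) the first row gives $v = y(i_0,m)$, adjoining a new cycle $(i_0,m)$ to the $k-1$ cycles of $y$ and so $\kappa(v) = k = \kappa(z)$; in case~(b) the second three-element row applies with $\sigma = (i_0, \phi_k, m)$, and a direct computation of $v = y\sigma$ shows that the cycle $(\phi_k,m)$ of $y$ is replaced by $(i_0,m)$, again giving $\kappa(v) = k = \kappa(z)$. The main obstacle is the bookkeeping in the two-case split; the key insight is that the Bruhat-cover criterion, combined with the finiteness of $\supp(y)$, forces the single valid value of $i$ to be the largest fixed point of $y$ below $p$.
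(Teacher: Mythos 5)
Your proof is correct and follows essentially the same route as the paper's own (very terse) argument: write $z$ in I-Grassmannian form, identify $(q,r) = (m, m+k)$, compute $y = \ipsi(z)$ via the case split of Remark~\ref{eta-rmk}, and use the Bruhat-cover criterion to show $L(z)$ is the singleton $\{i_0\}$ where $i_0$ is the largest fixed point below $p$; the resulting $v = (\phi_1, m+1)\cdots(\phi_{k-1}, m+k-1)(i_0,m)$ visibly has $\kappa(v) = k = \kappa(z)$. The paper's proof skips the (b)/(c) case split and simply states this final $v$, so your version is a fleshed-out rendering of the same computation rather than a different argument.
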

 
Recall that $\kappa(y)$ is the number of nontrivial cycles of $y \in \I_\ZZ$.
 
\begin{proof}
Suppose $z \in\I_\ZZ-\{1\}$ is I-Grassmannian, so that
$z = (\phi_1, n+1)(\phi_2,n+2)\cdots (\phi_k,n+k)$ for some integers  $k \in \PP$ and
$ \phi_1 < \phi_2<\dots < \phi_k \leq n $
by Proposition-Definition \ref{propdef-2}.
Then $(q,r) = (n, n+k)$ and $z(r)=\phi_k$,
and
if $i \in \ZZ$ is maximal such that $i=z(i)<\phi_k$, then  $\fk C(z) = \{v\}$
where $v=(\phi_1, n+1)(\phi_2,n+2)\cdots (\phi_{k-1},n+k-1)(i,n)$.
\end{proof}

\begin{lemma} \label{vex-lem2}
Let $E\subset \ZZ$ be a finite set
such that $\{q,r\} \subset E$ and $z(E) = E$.
\ben
\item[(a)] The operation $v \mapsto [v]_E$ restricts to an injective map $\fk C(z,E) \to \fk C([z]_E)$.

\item[(b)] If $E$ contains $L(z)$, then the injective map in  (a) is a bijection.
\een
\end{lemma}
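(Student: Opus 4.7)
The plan is to work entirely through the characterization coming from Theorem~\ref{taubruhat-thm}: $i \in L(z)$ precisely when $i<p$, $y(i)<q$, and no $k\in(i,p)$ satisfies $y(i)<y(k)<q$. With this in hand, both parts reduce to transporting inequalities along the order-preserving bijection $\psi_E$. The key obstacle is surjectivity in (b): the analogous $k$-condition after standardization only quantifies over $k\in E$, so a monotone ``bad element'' chasing argument is needed to rule out witnesses outside of $E$.

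For (a), I would first check that $y(E)=E$. Inspecting the three cases in Remark~\ref{eta-rmk}, $y$ differs from $z$ only on $A=\{q,r,z(q),z(r)\}$ and permutes $A$ to itself; since $A\subseteq E$ and $z(E)=E$, this gives $y(E)=E$. Combined with Lemma~\ref{vex-lem1}, the standardized data for $[z]_E$ is $(y',q',r',p')=([y]_E,\psi_E(q),\psi_E(r),\psi_E(p))$. For $i\in L(z)\cap E$, monotonicity of $\psi_E$ together with the identity $y'(\psi_E(i))=\psi_E(y(i))$ transfers all three defining inequalities of $L(z)$ to the corresponding inequalities for $\psi_E(i)$ in $L([z]_E)$, the $k$-condition only weakening because it now quantifies over $(\psi_E(i),p')$ rather than over all of $(i,p)$. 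Since $\{i,p,y(i),q\}\subseteq E$, Table~\ref{ct-fig} makes the operation $\tau_{ip}$ commute with standardization, so $[\tau_{ip}(y)]_E = \tau_{\psi_E(i),p'}(y')\in\fk C([z]_E)$, and the map is well-defined. Injectivity is then immediate, since any $v=\tau_{ip}(y)\in\fk C(z,E)$ satisfies $v|_{\ZZ\setminus E}=y|_{\ZZ\setminus E}$, so $v$ is recovered from $[v]_E$ together with this fixed external restriction.

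For (b), assume $L(z)\subseteq E$, so that $\fk C(z,E)=\fk C(z)$, and fix $j\in L([z]_E)$; I aim to show that $i:=\phi_E(j)$ lies in $L(z)$, whence $\tau_{ip}(y)$ is a preimage of $\tau_{j,p'}(y')$. The inequalities $i<p$ and $y(i)<q$ pull back from $j<p'$ and $y'(j)<q'$ via monotonicity of $\psi_E$, so only the $k$-condition requires work. Suppose for contradiction that some $k\in(i,p)$ satisfies $y(i)<y(k)<q$. Put $k_0=k$, and while $k_n\notin L(z)$, use the failure of the $k$-condition for $k_n$ to choose $k_{n+1}\in(k_n,p)$ with $y(k_n)<y(k_{n+1})<q$. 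The values $y(k_0)<y(k_1)<\cdots<q$ form a strictly increasing sequence of integers bounded above by $q$, so the construction must terminate at some $k_m\in L(z)\subseteq E$. Then $k_m\in(i,p)\cap E$ with $y(i)<y(k_m)<q$, and applying $\psi_E$ produces an element of $(j,p')$ violating the defining $k$-condition for $j\in L([z]_E)$, a contradiction. Hence $i\in L(z)$, completing the proof.
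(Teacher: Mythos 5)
Your proof is correct and follows essentially the same route as the paper's. The paper also reduces (b) to locating a ``bad'' intermediate element that must belong to $L(z)$ but lie outside $E$ (it proves the contrapositive, taking the position-maximal $j\in(i,p)$ with $y(i)<y(j)<y(p)$ and observing that maximality forces $j\in L(z)$); your chain $k_0,k_1,\dots$ of strictly increasing $y$-values is just a slightly more iterative way of producing such a witness, and the two are logically interchangeable. Your write-up also fills in the details of part (a)---the verification that $y(E)=E$, that $\{i,p,y(i),y(p)\}\subseteq E$, and hence that $\tau_{ip}$ commutes with standardization---which the paper dismisses as ``straightforward,'' so your version is more explicit but not substantively different.
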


\begin{proof}
Part (a) is straightforward from
 Lemma~\ref{vex-lem1} and the definitions of  $\tau_{ip}$ and $\ipsi$ and $E$.
 We prove the contrapositive of part (b). Suppose $a< b=\psi_E(p)$ and
$\tau_{ab}([y]_E) \in \fk C([z]_E)$
but 
$\tau_{ab}([y]_E)$ is not in the image of $\fk C(z,E)$ under the map $v \mapsto [v]_E$.
Let $i \in E$ be such that $\psi_E(i) = a$.
We have $\tau_{ab}([y]_E) = [\tau_{ip}(y)]_E$, and by
Theorem~\ref{taubruhat-thm} it holds that $[y]_E\lessdot [y]_E(a,b)$ and therefore
$[y]_E(a) < [y]_E(b)$ and $y(i) < y(p)$.
Since $y \lessdot y(i,p)$ would imply that $\tau_{ip}(y)\in \fk C(z,E)$
by Theorem~\ref{taubruhat-thm},
there must exist an integer $j$
with $i<j<p$ and $y(i) < y(j) < y(p)$. Let $j$ be the maximal integer with these properties;
then $y \lessdot y(j,p)$ and so $j \in L(z)$ by Theorem~\ref{taubruhat-thm}.
However, it cannot hold that $j \in E$ since this would contradict the fact that
$[y]_E\lessdot [y]_E(a,b)$, so  $L(z) \not\subset E$.
\end{proof}

We say that $z \in \I_\ZZ$ \emph{contains a bad $P$-pattern} if there exists a finite set $E\subset \ZZ$
which is $z$-invariant and which contains at most four $z$-orbits,
such that $[z]_E$ is not $P$-vexillary. In this situation we refer to the set $E$
as a \emph{bad $P$-pattern} for $z$.
We state two technical lemmas about this definition.

\begin{lemma}\label{vex-lem3}
If  $z \in \I_\ZZ$ is such that $|\iT_1(z)| \geq 2$, then $z$ contains a bad $P$-pattern.
\end{lemma}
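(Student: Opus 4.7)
The plan is to exhibit a $z$-invariant set $E\subset \ZZ$ with at most four $z$-orbits whose standardization $[z]_E$ inherits the branching of $z$ at the root of $\iT$, and then argue via Schur $P$-positivity that $[z]_E$ cannot be $P$-vexillary. Since $|\iT_1(z)|\geq 2$, the involution $z$ is not I-Grassmannian, and by definition $\iT_1(z)=\fk C(z)=\hat\Phi^-(y,p)=\{\tau_{i,p}(y):i\in L(z)\}$ with $y=\ipsi(z)$, $p=y(q)$, and $q$ the maximal visible descent of $z$. Different choices of $i$ produce different values of $\tau_{i,p}(y)$ at $p$, so $|L(z)|\geq 2$, and I would pick $i_1<i_2$ in $L(z)$ and let $E$ be the union of the $z$-orbits of $\{i_1,i_2,p,z(q),q,r\}$.

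The key bookkeeping step, which I expect to be the main obstacle, is to verify that $E$ has at most four $z$-orbits. Going through the three cases of Remark~\ref{eta-rmk}, the set $\{p,z(q),q,r\}$ contains at most two $z$-orbits in every case; moreover $p=\min\{p,z(q),q,r\}$, so $i_1<i_2<p$ immediately gives $\{i_1,i_2\}\cap\{p,z(q),q,r\}=\varnothing$. The definition of $L(z)$ forces $z(i_t)=y(i_t)<y(p)=q$ for $t\in\{1,2\}$, and a short case check rules out $z(i_t)\in\{p,z(q),q,r\}$ (for instance $z(i_t)=p$ would give $i_t=z(p)=r$, and $z(i_t)=z(q)$ would give $i_t=q$, each contradicting $i_t<p$). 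Therefore $\{i_1,z(i_1)\}$ and $\{i_2,z(i_2)\}$ contribute at most two further orbits, for four in total; by construction $z(E)=E$ and $\{q,r\}\subset E$.

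From here the conclusion follows formally. Lemma~\ref{vex-lem1} shows that $(\psi_E(q),\psi_E(r))$ is the maximal visible inversion of $[z]_E$ and that $\ipsi([z]_E)=[y]_E$. Lemma~\ref{vex-lem2}(a) then produces an injection $\fk C(z,E)\hookrightarrow \fk C([z]_E)$ via $v\mapsto [v]_E$. Because $i_1,i_2\in L(z)\cap E$, both $\tau_{i_1,p}(y)$ and $\tau_{i_2,p}(y)$ lie in $\fk C(z,E)$ and have distinct images, so $|\fk C([z]_E)|\geq 2$. Lemma~\ref{vex-grass-lem} then precludes $[z]_E$ from being I-Grassmannian, and hence $\iT_1([z]_E)=\fk C([z]_E)$ has at least two elements.

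To finish, Corollary~\ref{it-cor1}(b) gives $\iF_{[z]_E}=\sum_{v\in\iT_1([z]_E)}\iF_v$, and by Corollary~\ref{it-thm-cor0} each summand is a nonzero Schur $P$-positive symmetric function, which forces its total Schur $P$-coefficient sum to be at least $1$. Consequently the total Schur $P$-coefficient sum of $\iF_{[z]_E}$ is at least $|\iT_1([z]_E)|\geq 2$. A single Schur $P$-function has total coefficient sum equal to $1$, so $\iF_{[z]_E}$ is not a Schur $P$-function. This means $[z]_E$ is not $P$-vexillary, and $E$ is a bad $P$-pattern for $z$.
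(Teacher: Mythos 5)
Your proposal is correct and follows essentially the same approach as the paper's own proof: choose two distinct elements of $\iT_1(z)=\fk C(z)$, form a small $z$-invariant set $E$ containing their defining indices together with the cycles of $z$ meeting $\{p,q,r\}$, apply Lemmas~\ref{vex-lem1}, \ref{vex-lem2}(a), and~\ref{vex-grass-lem} to conclude $|\iT_1([z]_E)|\geq 2$, and deduce that $[z]_E$ is not $P$-vexillary.

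One discrepancy is worth flagging, and it is a point where your version is actually more careful than the paper's. The paper takes $E=\{i,z(i),j,z(j),p,q,r,z(r)\}$, whereas you take the union of the $z$-orbits of $\{i_1,i_2,p,z(q),q,r\}$, i.e.\ you include $z(q)$ rather than $z(r)$. Since $z(r)$ always equals $p$ or $q$ (it is $p$ in cases (a) and (b) of Remark~\ref{eta-rmk} and equals $q=p$ in case (c)), the element $z(r)$ is redundant in the paper's set; on the other hand, in case (a) the element $z(q)$ lies strictly between $p$ and $q$ and one checks, exactly as you do, that it cannot coincide with any of $i,z(i),j,z(j),p,q,r$, so the paper's stated set fails to be $z$-invariant there. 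Your choice of $E$ repairs this. Your orbit count (at most four $z$-orbits) and the concluding positivity argument — that $\iF_{[z]_E}=\sum_{v\in\iT_1([z]_E)}\iF_v$ is a sum of at least two nonzero Schur $P$-positive pieces and hence cannot be a single $P_\lambda$ — are also spelled out more explicitly than in the paper, which simply asserts that $E$ is a bad $P$-pattern once $|\iT_1([z]_E)|\geq 2$ is established. Both are sound; yours just makes the implicit reasoning visible.
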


\begin{proof}
Let $(q,r)$ be the maximal visible inversion of $z$, let $y = \ipsi(z)$, and let $p=y(q)\leq q$
so that $\iT_1(z) = \hat \Phi^-(y,p)$.
By hypothesis, there exist integers $i <j < p$ such that $\tau_{ip}(y)$ and $\tau_{jp}(y)$
are distinct elements of $\fk C(z) =\iT_1(z)$.
The set $E = \{ i,z(i), j, z(j), p,q,r, z(r) \}$ is  $z$-invariant and
 it holds by Lemma \ref{vex-lem2}(a) that $2 \leq | \fk C(z,E)| \leq |\fk C([z]_E)|$.
 Lemma \ref{vex-grass-lem} implies that $[z]_E$ is not I-Grassmannian,
 so $\iT_1([z]_E) = \fk C([z]_E)$
 and therefore $E$ is a bad $P$-pattern for $z$.
 \end{proof}

\begin{lemma}\label{vex-lem4}
Suppose  $z \in \I_\ZZ$ is such that $\iT_1(z) = \{v\}$ is a singleton set.
Then $z$ contains no bad $P$-patterns if and only if  $v$ contains no bad $P$-patterns.
\end{lemma}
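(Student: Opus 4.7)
The plan is to use the compatibility of standardization with the tree operation $\iT_1$ (encoded in Lemmas~\ref{vex-lem1} and \ref{vex-lem2}) to transfer bad $P$-patterns between $z$ and $v$. Write $y = \ipsi(z)$, let $(q,r)$ be the maximal visible inversion of $z$, and set $p = y(q)$. Since $\iT_1(z) = \hat\Phi^-(y,p) = \{v\}$ is a singleton, $L(z) = \{i\}$ for a unique integer $i<p$, and $v = \tau_{ip}(y)$. Inspection of Tables~\ref{ct-fig} and \ref{it-fig} further shows that $p \in \{q, z(q), z(r)\}$ and that the product $zv^{-1}$ is supported in the set $C = \{q, r, z(q), z(r), i, y(i)\}$; in particular, $z$ and $v$ agree on $\ZZ \setminus C$.

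The central technical ingredient is the following \emph{transfer principle}: for any finite $z$-invariant set $E \subset \ZZ$ with $C \subseteq E$, the set $E$ is automatically $v$-invariant and $y$-invariant. Lemma~\ref{vex-lem1} then gives $\ipsi([z]_E) = [y]_E$, with maximal visible inversion $(\psi_E(q), \psi_E(r))$, and Lemma~\ref{vex-lem2}(b) bijects $\fk C(z, E) = \{v\}$ with $\fk C([z]_E) = \{[v]_E\}$. Whenever $[z]_E$ is not I-Grassmannian (which holds for any bad $P$-pattern, since I-Grassmannian involutions are $P$-vexillary by Theorem~\ref{igrass-thm}), one has $\iT_1([z]_E) = \{[v]_E\}$, and Corollary~\ref{it-cor1}(b) forces $\iF_{[z]_E} = \iF_{[v]_E}$. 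In particular, $[z]_E$ is $P$-vexillary if and only if $[v]_E$ is. At the opposite extreme, if $E \subseteq \ZZ \setminus C$ is $z$-invariant, then $[z]_E = [v]_E$ directly, yielding the same equivalence.

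For each direction of the biconditional, I would start with a bad $P$-pattern $E_0$ for one of $z$ or $v$ and produce a bad $P$-pattern for the other. When $E_0 \supseteq C$ or $E_0 \cap C = \varnothing$, the two extremes of the transfer principle apply immediately. The substantive case is when $E_0$ meets $C$ properly, and the main obstacle is to handle this case while keeping the orbit count at most four. I expect this to require a case analysis over the rows of Table~\ref{it-fig}, exploiting the structural constraint of Lemma~\ref{maxvisinv-lem} that $E_0$ must contain the maximal visible inversion of its standardization; this controls how $E_0$ can sit relative to $C$. In each case, one should be able either to enlarge $E_0$ to contain $C$ without exceeding four orbits, or to replace the portion of $E_0$ inside $C$ with an equivalent configuration outside $C$, producing the desired bad $P$-pattern for the other involution.
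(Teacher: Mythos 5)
Your setup is sound and aligned with the paper's: you correctly identify the set $C$ (the paper calls it $A$) of size at most $6$ where $z$ and $v$ can differ, verify that it is $z$-invariant and $v$-invariant, and use Lemmas~\ref{vex-lem1} and \ref{vex-lem2} to transfer bad $P$-patterns that either contain $C$ or are disjoint from $C$. These two extremes are handled correctly.

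However, you explicitly flag the substantive case --- a bad $P$-pattern $E_0$ that meets $C$ but neither contains it nor avoids it --- and then defer it to an unspecified ``case analysis over the rows of Table~\ref{it-fig}'' that you ``expect'' to work. This is the actual content of the lemma, and the proposal does not supply it. The obstruction you name (keeping the orbit count at most four when enlarging $E_0$ to $C \cup E_0$) is real: $C$ can have up to three $z$-orbits and $E_0$ up to four, so $C \cup E_0$ can have up to six $z$-orbits, and neither of your two strategies (enlargement or replacement) is shown to terminate below the required bound. The paper sidesteps exactly this difficulty by a finite computer check: it verifies the lemma by brute force for all $z \in \I_{12}$ with $\fk C(z)$ a singleton, and then reduces the general case to $\I_{12}$ by observing that $E = A \cup B$ has size at most $12$, that $[z]_E$ inherits a bad $P$-pattern from $B$, and that Lemma~\ref{vex-lem2}(b) gives $\fk C([z]_E) = \{[v]_E\}$, so the base case lets one conclude $[v]_E$ and hence $v$ has a bad $P$-pattern. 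Without some replacement for that computational base case (or a genuine abstract argument for the overlapping case), your proof has a gap precisely where the difficulty lies.
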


\begin{proof}
It is a reasonable  computer calculation to check the following claim by brute force:
 if $z \in \I_{12}-\{1\}$ and $\fk C(z) = \{v\}$ is a singleton set, then 
$z$ contains no bad $P$-patterns if and only if  $v$ contains no bad $P$-patterns. (There are  73,843
such involutions $z$ to check.)

Now assume $z \in \I_\ZZ$ is such that $\iT_1(z) = \{v\}$ is a singleton set.
By construction, $v$ and $z$ have the same action on all integers outside 
a set $A\subset \ZZ$ of size at most 6. 
If $z$ (respectively, $v$) contains a bad $P$-pattern which is disjoint from $A$
then $v$ (respectively, $z$), clearly does as well.
If $z$ contains a bad $P$-pattern $B$ which is not disjoint from $A$,
then since $|B|\leq 8$ and since both $A$ and $B$ are $z$-invariant,
the set $E=A\cup B$ can have size at most $12$.
In this case, it follows from Lemma~\ref{vex-lem2}(b)
  that $\fk C([z]_E) = \{ [v]_E\}$
and that $[z]_E$ contains a bad $P$-pattern, so 
we deduce from the first paragraph that $[v]_E$  and therefore also $v$  contain bad $P$-patterns.
If instead $v$ contains a bad $P$-pattern disjoint from $A$,
then it follows by a similar argument that $z$ contains a bad $P$-pattern.
\end{proof}

We arrive at the main result of this section.

\begin{theorem}\label{vex-thm}
 An involution $z \in \I_\ZZ$
is $P$-vexillary if and only if $[z]_E$ is $P$-vexillary for all sets $E\subset \ZZ$ with $z(E) = E$
and $|E| = 8$.
\end{theorem}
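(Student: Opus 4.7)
The plan is to prove by induction on the (finite) tree $\iT(z)$ the auxiliary statement $(\star)$ that an involution $z\in\I_\ZZ$ is $P$-vexillary if and only if it contains no bad $P$-pattern. Granting $(\star)$, the theorem itself follows from a brief bookkeeping argument reconciling the condition ``at most four $z$-orbits'' in the definition of bad $P$-pattern with the condition $|E|=8$ in the theorem.

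For that reconciliation, I will use two observations. First, since $\supp(z)$ is finite, $z$ has infinitely many fixed points larger than any prescribed integer, so any $z$-invariant $E$ with $|E|\leq 8$ can be enlarged to a $z$-invariant $E'$ of size exactly $8$ by adjoining fixed points of $z$ larger than $\max E$; such padding turns $[z]_E$ into $[z]_{E'}$ by appending trailing fixed points, so the two standardizations represent the same element of $\I_\infty$ and hence have the same involution Stanley symmetric function and the same $P$-vexillarity status. Second, for any $z$-invariant $E$ with $|E|=8$, the order-preserving bijection $\phi_E\colon[8]\to E$ sends $[z]_E$-invariant subsets $F\subset[8]$ to $z$-invariant subsets $\phi_E(F)\subset E$ preserving orbit counts, with $[[z]_E]_F=[z]_{\phi_E(F)}$. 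Combined, these observations show that $z$ contains a bad $P$-pattern if and only if some $z$-invariant $E$ with $|E|=8$ yields a standardization $[z]_E$ that itself contains a bad $P$-pattern. Applying $(\star)$ to $z$ and to each such $[z]_E$ then gives the theorem.

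For $(\star)$, I induct on the number of vertices of $\iT(z)$, which is finite by Theorem~\ref{it0-thm}. In the base case $\iT(z)$ consists of a single vertex and $z$ is I-Grassmannian: Theorem~\ref{igrass-thm} gives $\iF_z=P_{\ilambda(z)}$ so $z$ is $P$-vexillary, while Corollary~\ref{igrass-cor} ensures every standardization $[z]_E$ is also I-Grassmannian, hence $P$-vexillary, so $z$ has no bad $P$-pattern. For the inductive step $z$ is not I-Grassmannian. If $|\iT_1(z)|\geq 2$, then Corollary~\ref{it-cor1}(b) writes $\iF_z=\sum_{v\in\iT_1(z)}\iF_v$ as a sum of at least two nonzero Schur $P$-positive terms (using Corollary~\ref{it-thm-cor0}), each of which by Corollary~\ref{ut-cor} has total Schur $P$-coefficient at least one; the sum therefore has total Schur $P$-coefficient at least two, cannot equal a single $P_\lambda$, and so $z$ is not $P$-vexillary, while Lemma~\ref{vex-lem3} simultaneously guarantees that $z$ contains a bad $P$-pattern. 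If instead $\iT_1(z)=\{v\}$, then $\iF_z=\iF_v$ so $z$ is $P$-vexillary iff $v$ is, Lemma~\ref{vex-lem4} gives that $z$ contains a bad $P$-pattern iff $v$ does, and since $\iT(v)$ is a proper subtree of $\iT(z)$ with strictly fewer vertices the inductive hypothesis applied to $v$ closes the argument.

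The substantive technical obstacles are already absorbed into Lemmas~\ref{vex-lem3} and \ref{vex-lem4} and the tree formalism of Section~\ref{ppos-sect}; Lemma~\ref{vex-lem4}, with its brute-force verification in $\I_{12}$, is by far the most delicate input and is what forces the threshold $|E|=8$ (four orbits) rather than a smaller one. With those lemmas in hand, the proof of Theorem~\ref{vex-thm} is essentially the assembly of a finite induction on $\iT(z)$ together with the padding argument outlined above.
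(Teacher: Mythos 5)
Your proposal is correct and follows essentially the same strategy as the paper: an induction along the (finite) involution Lascoux--Sch\"utzenberger tree $\iT(z)$ establishing that $z$ is $P$-vexillary if and only if it contains no bad $P$-pattern, using Corollaries~\ref{igrass-cor} and \ref{it-cor1}(b) together with Lemmas~\ref{vex-lem3} and \ref{vex-lem4}, followed by the padding-with-large-fixed-points observation to replace ``at most four $z$-orbits'' by ``$|E|=8$.'' The only differences are expository: you spell out why $|\iT_1(z)|\geq 2$ forces $z$ not $P$-vexillary (via total Schur $P$-coefficients and Corollary~\ref{ut-cor}), and you give the reconciliation between orbit counts and the cardinality $8$ in more detail than the paper does.
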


\begin{proof}
Assume $z \in \I_\ZZ$ is not I-Grassmannian.
Corollary~\ref{it-cor1}(b) shows that $z$ is $P$-vexillary if and only if 
$\iT_1(z) = \{v\}$ and $v$ is $P$-vexillary.
Lemmas~\ref{vex-lem3} and \ref{vex-lem4} imply $z$ has no bad $P$-patterns if and only if
$\iT_1(z) = \{v\}$ and $v$ has no bad $P$-patterns. I-Grassmannian involutions have no bad $P$-patterns by Corollary~\ref{igrass-cor}.
Thus, by induction on the finite height of $\iT(z)$, an involution $z$ is $P$-vexillary if and only if it has no bad $P$-patterns,
which holds
if and only if 
$[z]_E$ is $P$-vexillary for all sets $E\subset \ZZ$ which are unions of at most four $z$-orbits. Since adding any number of sufficiently large fixed points of $z$ to $E$
will not change the symmetric function $\iF_{[z]_E}$, the last property holds if and only if it holds
for all sets $E\subset \ZZ$ with $E=z(E)$ and $|E|=8$.
%
%
\end{proof}

\begin{corollary}\label{ivex-cor}
An involution $z \in \I_\ZZ$ is $P$-vexillary if and only if for
all finite sets $E\subset \ZZ$ with $z(E) = E$
the standardization $[z]_E$ is not any of the following eleven permutations:
\[
\ba
(1, 2)(3 ,5), 	 &&&&& (1, 4)(3, 6), 		&&&&	(1 ,5)(2, 4)(3, 7),	&&&&	(1 ,6)(2 ,5)(3, 8)(4 ,7), \\
(1, 3)(4, 5), 	 &&&&& (1 ,4)(2 ,3)(5, 6), 	&&&&	(1, 5)(3, 7)(4, 6),	&&&&	(1, 6)(2, 4)(3 ,8)(5, 7), \\
			     &&&&& (1, 2)(3, 6)(4 ,5), 	&&&&					    &&&&	(1 ,3)(2, 5)(4 ,7)(6 ,8). \\
			     &&&&& (1, 2)(3 ,4)(5 ,6), 	&&&&
\ea
\]
\end{corollary}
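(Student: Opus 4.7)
\textbf{Proof plan for Corollary~\ref{ivex-cor}.}
The strategy is to invoke Theorem~\ref{vex-thm} to reduce the corollary to a finite statement, and then verify the list of eleven patterns by exhaustive computer enumeration.

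First, I would reduce the problem as follows. By Theorem~\ref{vex-thm}, $z \in \I_\ZZ$ is $P$-vexillary if and only if $[z]_E$ is $P$-vexillary for every $z$-invariant $E \subset \ZZ$ with $|E| = 8$. Since adjoining fixed points to an involution changes neither its involution Stanley symmetric function nor (therefore) whether it is $P$-vexillary, the condition $|E|=8$ can be relaxed to $|E| \leq 8$: for $|E|<8$, simply adjoin enough fixed points of $z$ (which exist since $\supp(z)$ is finite) to enlarge $E$ to a $z$-invariant set of size $8$. Standardization is transitive, so if $F \subset E$ with $z(F)=z(E) \cap F = F$ then $[z]_F$ coincides with a standardization of $[z]_E$. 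Consequently, the corollary is equivalent to the finite claim that an involution $y \in \bigcup_{n \leq 8} \I_n$ is $P$-vexillary if and only if none of its standardizations $[y]_F$ over $y$-invariant $F \subset [n]$ equals one of the eleven listed involutions.

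Second, I would verify this finite claim by direct computation. For each involution $y$ in the finite set $\bigcup_{n\leq 8} \I_n$, the Schur $P$-expansion of $\iF_y$ is computed by recursively building the involution Lascoux--Sch\"utzenberger tree $\iT(y)$ of Definition~\ref{itree-def} and summing $P_{\ilambda(v)}$ over its I-Grassmannian leaves, using Corollary~\ref{it-cor1}(b), Theorem~\ref{it0-thm}, and Theorem~\ref{igrass-thm}. Then $y$ is $P$-vexillary precisely when this sum collapses to a single Schur $P$-function. One then checks: (a) each of the eleven listed involutions has $\iF_y$ expanding as a sum of at least two distinct Schur $P$-functions, hence is not $P$-vexillary; and (b) every other non-$P$-vexillary involution in $\bigcup_{n \leq 8} \I_n$ admits some $y$-invariant subset $F$ with $[y]_F$ equal to one of the eleven.

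The main obstacle is not any conceptual difficulty but the sheer size of the case analysis: there are finitely many but numerous involutions in $\bigcup_{n\leq 8}\I_n$, and for each one must construct $\iT(y)$, identify the I-Grassmannian leaves and their shapes, and test the pattern-containment condition against the eleven candidates. This is easily performed by computer but impractical by hand. Once the enumeration is complete and the list of eleven minimal non-$P$-vexillary involutions is confirmed to be correct and exhaustive, the corollary follows at once from the reduction above.
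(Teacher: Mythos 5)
Your proof takes essentially the same approach as the paper: reduce to a finite check over $\I_8 = \bigcup_{n\leq 8}\I_n$ via Theorem~\ref{vex-thm}, then verify by computer (using the involution Lascoux--Sch\"utzenberger tree and Theorems~\ref{igrass-thm}, \ref{it0-thm}) that an involution in $\I_8$ fails to be $P$-vexillary precisely when it contains one of the eleven listed patterns. The only cosmetic caveat is that when enlarging a small invariant set $E$ to size $8$ you must adjoin fixed points of $z$ lying outside the interval spanned by $E$ (i.e.\ ``sufficiently large'' fixed points), since inserting a fixed point between two existing positions can change the standardization and hence $\iF_{[z]_E}$; this is clearly your intent and matches the paper's phrasing.
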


\begin{proof}
Using Theorems \ref{igrass-thm} and \ref{it0-thm}, we have checked
by a computer calculation that  $z \in \I_8$ is not $P$-vexillary if and only if
there exists a $z$-invariant subset $E\subset \ZZ$ such that $[z]_E$
is one of the given involutions. The corollary therefore follows by Theorem \ref{vex-thm}.
\end{proof}

\begin{corollary}\label{ivex-cor2}
Suppose $z \in \I_\ZZ$ is 321-avoiding.
Then $z$ is $P$-vexillary if and only if 
for all finite $z$-invariant sets $E\subset \ZZ$,
it holds that $[z]_E$ is neither $(1,2)(3,4)(5,6)$ nor $(1,3)(2,5)(4,7)(6,8)$.
\end{corollary}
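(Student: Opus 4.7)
The strategy is to invoke Corollary~\ref{ivex-cor} directly and then eliminate nine of the eleven bad patterns on the grounds that they cannot arise as $[z]_E$ when $z$ is 321-avoiding. The plan rests on the elementary observation that standardization preserves classical pattern avoidance: if $E\subset\ZZ$ is finite with $z(E)=E$ and $E=\{e_1<e_2<\dots<e_n\}$, then $[z]_E(i)=\psi_E(z(e_i))$, so any 321-pattern in the one-line representation of $[z]_E$ pulls back to a 321-pattern in $z$. Hence, if $z \in \I_\ZZ$ is 321-avoiding, then $[z]_E$ is 321-avoiding for every $z$-invariant finite $E\subset\ZZ$.

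Given this, the proof reduces to a finite inspection. The first step is to write out the one-line representation of each of the eleven involutions listed in Corollary~\ref{ivex-cor} and determine which of them contain the pattern 321. One then checks that nine of the eleven involutions---namely $(1,2)(3,5)$, $(1,3)(4,5)$, $(1,4)(3,6)$, $(1,4)(2,3)(5,6)$, $(1,2)(3,6)(4,5)$, $(1,5)(2,4)(3,7)$, $(1,5)(3,7)(4,6)$, $(1,6)(2,5)(3,8)(4,7)$, and $(1,6)(2,4)(3,8)(5,7)$---do contain 321, while the remaining two, $(1,2)(3,4)(5,6)$ with one-line form $214365$ and $(1,3)(2,5)(4,7)(6,8)$ with one-line form $35172846$, are 321-avoiding. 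This verification is purely routine.

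Combining the two ingredients yields the corollary: by Corollary~\ref{ivex-cor}, a 321-avoiding $z$ is $P$-vexillary if and only if $[z]_E$ avoids all eleven patterns as $E$ ranges over finite $z$-invariant subsets of $\ZZ$; but because every $[z]_E$ is 321-avoiding, the nine conditions corresponding to 321-containing patterns are automatic, leaving only the two conditions stated in Corollary~\ref{ivex-cor2}.

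No genuine obstacle arises---the only work is the eleven-case pattern check described above, which is a short computation and is moreover compatible with how Corollary~\ref{ivex-cor} was itself proved (namely, by finite verification in $\I_8$).
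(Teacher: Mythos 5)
Your proof is correct and takes the same approach as the paper's one-line argument, which simply notes that the other nine patterns in Corollary~\ref{ivex-cor} are not 321-avoiding and hence cannot occur as a standardization $[z]_E$ of a 321-avoiding $z$. You have merely spelled out the routine fact that standardization preserves 321-avoidance and verified which of the eleven involutions contain a 321 pattern, both of which the paper leaves implicit.
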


\begin{proof}
The other nine permutations in Corollary~\ref{ivex-cor} are not 321-avoiding, so the result follows.
\end{proof}

%
%
%
%
%
%
%

As an application, we give an alternate proof of a theorem of DeWitt \cite{DeWitt}.
A partition is a \emph{rectangle} if its nonzero parts are all equal.
The next statement is equivalent to \cite[Theorem V.3]{DeWitt}.

\begin{theorem}[DeWitt \cite{DeWitt}]\label{t:dewitt}
Fix a partition $\mu\subset \delta_m$. 
The skew Schur function
$s_{\delta_{m}\setminus \mu}$ is a Schur $P$-function
if and only if $\delta_m\setminus \mu$ is equivalent to $\delta_n\setminus \rho$
for a rectangle $\rho \subset \delta_n$ for some $n \in \PP$.
\end{theorem}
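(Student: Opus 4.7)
My plan is to recast the theorem as a statement about $321$-avoiding involutions via Proposition~\ref{ardila-prop1}, then apply the pattern-avoidance criterion for $P$-vexillarity from Corollary~\ref{ivex-cor2}. By Lemma~\ref{ardila-lem2}, every $\delta_m \setminus \mu$ is equivalent to some $\delta_{m'} \setminus \mu'$ with $\mu'$ strictly contained in $\delta_{m'}$, and this reduction preserves rectangles: the only obstruction to strict containment for $\rho = (k^j) \subset \delta_n$ is $k+j = n$, and then Lemma~\ref{ardila-lem2} applied at $i = j$ rewrites $\rho$ as the strictly contained rectangle $((k{-}1)^{j-1}) \subset \delta_{n-1}$, iterating as needed. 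The theorem therefore reduces to the following claim: for $\mu$ strictly contained in $\delta_m$, $s_{\delta_m \setminus \mu} = \iF_{y_{\mu, m-1}}$ is a Schur $P$-function if and only if $\mu$ is a rectangle. Since $y_{\mu, m-1}$ is $321$-avoiding by Proposition~\ref{ardila-prop1}, Corollary~\ref{ivex-cor2} reduces this further to: $y_{\mu, m-1}$ admits a standardization equal to $(1,2)(3,4)(5,6)$ or $(1,3)(2,5)(4,7)(6,8)$ if and only if $\mu$ is not a rectangle.

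For the ``rectangle implies no bad pattern'' direction, write $n = m-1$ and assume $\mu = (k^j)$ with $j+k \leq n$. The formula $b_i = n + i - \mu^T_i$ from \eqref{ymu-eq} shows that the arcs $(a_i, b_i)$ of $y_{\mu, n}$ split into three types: a left block $(i, n+i-j)$ for $i \in [k]$, a middle block $(i, n+i)$ for $k < i \leq n-j$, and a right block $(k+i, n+i)$ for $n-j < i \leq n$. Direct computation using $k + j \leq n$ shows that arcs within a single block mutually cross, arcs in adjacent blocks (left--middle or middle--right) cross, and left--right arcs are pairwise disjoint. The pattern $(1,2)(3,4)(5,6)$ requires three pairwise disjoint arcs, which is impossible since the only cross-type disjoint pair is left--right, forcing any pairwise disjoint family to have size at most two. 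The pattern $(1,3)(2,5)(4,7)(6,8)$ requires four arcs with exactly consecutive pairs crossing; I would verify by a short case analysis on the type distribution of the four arcs (types $4$, $3{+}1$, $2{+}2$, or $2{+}1{+}1$) that every distribution forces either a forbidden non-consecutive crossing or a forbidden consecutive disjointness.

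The main obstacle is the reverse direction: exhibit a bad pattern in $y_{\mu, n}$ whenever $\mu$ is not a rectangle. Non-rectangularity means $\mu^T$ has a descent strictly inside its support, i.e., some $i_0$ with $\mu^T_{i_0} > \mu^T_{i_0+1} \geq 1$. Such a descent creates a gap $b_{i_0+1} \geq b_{i_0} + 2$ in the $b$-sequence, so some $a_j$ with $j > i_0+1$ sits strictly between $b_{i_0}$ and $b_{i_0+1}$. Combining this ``descent arc'' with the final run of adjacent short arcs at the right end of $y_{\mu, n}$ (which appears because $\mu^T$ eventually drops to $0$), I expect to extract either three pairwise disjoint arcs standardizing to $(1,2)(3,4)(5,6)$ or four chain-crossing arcs standardizing to $(1,3)(2,5)(4,7)(6,8)$. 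Making this construction uniform across all non-rectangular $\mu$ will require a careful case analysis based on the position of the first descent and on how many arcs lie on each side of it. A backup strategy, should the explicit construction prove unwieldy, is to argue by induction on $|\mu|$: remove a corner box that preserves non-rectangularity and lift a bad pattern back.
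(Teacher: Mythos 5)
Your reduction and forward direction are sound and take essentially the same route as the paper: reduce via Lemma~\ref{ardila-lem2} to the case of $\mu$ strictly contained, pass to the $321$-avoiding involution $y_{\mu,n}$ via Proposition~\ref{ardila-prop1}, and apply the two-pattern criterion of Corollary~\ref{ivex-cor2}. Your observation that the strict-containment reduction preserves rectangularity is a detail worth recording (the paper leaves it implicit). The block decomposition of $y_{\mu,n}$ into left/middle/right arcs with the "all pairs cross except left--right" incidence is correct, and the case analysis you outline for ruling out $(1,3)(2,5)(4,7)(6,8)$ does close: labelling the four arcs $A_1,\dots,A_4$ by blocks, the disjointness of $A_1A_3$ and $A_1A_4$ forces $A_3,A_4$ into the block opposite $A_1$; disjointness of $A_2A_4$ forces $A_2$ into the block of $A_1$; but then $A_2A_3$ would be a left--right pair and hence disjoint, contradicting the required crossing.

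The genuine gap is the converse: you never actually construct a bad pattern when $\mu$ is not a rectangle. You correctly note that non-rectangularity produces a descent of $\mu^T$ interior to its support and hence a gap in the $b$-sequence, but the content stops at "I expect to extract\ldots" and "will require a careful case analysis," with an unfleshed-out induction as backup. The paper's proof is concrete here, and the key ingredient is a different and more actionable characterization than yours: $\mu$ fails to be a rectangle if and only if the set $\{a_1,\dots,a_n\}$, viewed inside the path graph on $[2n]$, has at least three connected components. With $i$ maximal such that $a_i=i$ and $j$ minimal such that $b_j = n+j$, one then takes $E$ to be an explicit $z$-invariant set built from the extremal arcs $(a_1,b_1)$, $(a_n,b_n)$ together with arcs at the "break points" $a_{i+1}$ and $a_{j-1}$; the trichotomy $i=1$, $j=n$, or ($i>1$ and $j<n$) yields standardizations equal to one of the two forbidden patterns. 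Without some construction of this kind, the direction of the theorem that says a Schur $P$-skew Schur function forces a rectangle is not established, and this is where the real work lies.
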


\begin{proof}
Let $\mu$ be a partition strictly contained in $\delta_{n+1}$
for some $n \in \NN$, and define $y=y_{\mu,n}$ as in \eqref{ymu-eq}.
By Proposition~\ref{ardila-prop1} and Lemma~\ref{ardila-lem2}, it suffices to show that 
 $y$ is $P$-vexillary if and only if $\mu$ is a rectangle.
If $\mu$ is a rectangle with $k$ parts of size $j$, then the numbers $b_i$ in \eqref{ymu-eq}
have the form $\{b_1,b_2,\dots,b_n\} = (n-k+ [j]) \cup ( n +j + [n-j] )$,
and it is an easy exercise to check that the 321-avoiding involution $y$ satisfies the conditions
in Corollary~\ref{ivex-cor2} so is $P$-vexillary.

Suppose that $\mu$ is not a rectangle.
Let $a_i$ and $b_i$ be as in \eqref{ymu-eq}
so that $a_1=1$ and $b_i = 2n$.
It is helpful to note that if $\mathcal{G}$ 
is the graph on  $[2n]$ with an edge from $i$ to $i+1$ for 
each $i \in [2n-1]$,
then  $\mu$ is not a rectangle if and only if
the induced subgraph of $\mathcal{G}$ on $\{a_1,a_2,\dots,a_n\}$
has at least three connected components.
Let $i \in [n]$ be maximal such that $a_i=i$ and let $j \in [n]$ be minimal such that
$b_j=n+j$.
If $i=1$, then $[y]_E = (1,2)(3,4)(5,6)$ for $E = \{a_1,b_1,a_2,b_2,a_n,b_n\}$.
If $j=n$ then 
$[y]_E = (1,2)(3,4)(5,6)$ for $E = \{a_1,b_1,a_{n-1},b_{n-1},a_n,b_n\}$.
If $i>1$ and $j<n$, then 
one checks that $[y]_E$ is  $(1,2)(3,4)(5,6)$ or $(1,3)(2,5)(4,7)(6,8)$ when $E$
is one of $\{ a_1,b_1,a_{i+1},b_{i+1},a_n,b_n\}$,
 $ \{ a_1,b_1,a_{j-1},b_{j-1},a_n,b_n\}$,
or $ \{a_1,b_1,a_{i+1},b_{i+1}, a_{j-1},b_{j-1},a_n,b_n\}$.
In either case, we conclude by Corollary~\ref{ivex-cor2} that $y$ is not $P$-vexillary, as required.
\end{proof}

\subsection{Schur $Q$-positivity}\label{schur-q-sect}

As in the introduction, define
$Q_\lambda = 2^{\ell(\lambda)} P_\lambda$ and $\iG_y = 2^{\kappa(y)} \iF_y$
for strict partitions $\lambda$ and involutions $y \in \I_\ZZ$,
where  $\kappa(y)$ is the number of nontrivial cycles of $y$.
One calls $Q_\lambda$ the \emph{Schur $Q$-function} of $\lambda$.
Our main results about the expansion of $\iF_y$ into Schur $P$-functions
may be rephrased as statements about the expansion of $\iG_y$ into Schur $Q$-functions.
We may restate Theorem~\ref{igrass-thm} as follows:

\begin{corollary}\label{q-grass-cor}
If $y \in \I_\ZZ$ is I-Grassmannian, then
$\iG_y = Q_{\ilambda(y)}$.
\end{corollary}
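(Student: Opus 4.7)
The plan is to combine Theorem~\ref{igrass-thm} with a direct count of the nontrivial cycles of an I-Grassmannian involution. By definition $\iG_y = 2^{\kappa(y)}\iF_y$ and $Q_\lambda = 2^{\ell(\lambda)}P_\lambda$, so since Theorem~\ref{igrass-thm} already gives $\iF_y = P_{\ilambda(y)}$, the only thing to check is the equality of exponents $\kappa(y) = \ell(\ilambda(y))$.

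First I would dispose of the trivial case $y = 1$: then $\kappa(y) = 0$ and $\ilambda(y) = \emptyset$ so $\ell(\ilambda(y)) = 0$, and the identity $\iG_1 = 1 = Q_\emptyset$ is immediate.

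For nontrivial $y \in \I_\ZZ - \{1\}$, apply Proposition-Definition~\ref{propdef-2} (or, equivalently, the defining description reproduced just before Theorem~\ref{igrass-thm}) to write
\[
y = (\phi_1, n+1)(\phi_2, n+2)\cdots (\phi_r, n+r)
\]
for some integers $r \in \PP$ and $\phi_1 < \phi_2 < \dots < \phi_r \leq n$. Since $\phi_i \leq n < n+i$ for every $i \in [r]$, each pair $(\phi_i, n+i)$ is a genuine nontrivial 2-cycle of $y$, and these $r$ cycles are pairwise disjoint and exhaust the support of $y$. Hence $\kappa(y) = r$. On the other hand, by the definition of $\ilambda(y)$ given in Section~\ref{igrass-sect},
\[
\ilambda(y) = (n+1-\phi_1,\ n+1-\phi_2,\ \dots,\ n+1-\phi_r),
\]
which is a strict partition with exactly $r$ nonzero parts (the strictness follows from $\phi_1 < \phi_2 < \dots < \phi_r$), so $\ell(\ilambda(y)) = r$ as well.

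Combining these observations, $\iG_y = 2^{\kappa(y)}\iF_y = 2^r P_{\ilambda(y)} = Q_{\ilambda(y)}$. There is no real obstacle here; the corollary is essentially a bookkeeping translation of Theorem~\ref{igrass-thm} through the rescaling $P_\lambda \mapsto Q_\lambda = 2^{\ell(\lambda)}P_\lambda$, once one observes that the normalization factor $2^{\kappa(y)}$ built into $\iG_y$ matches $2^{\ell(\ilambda(y))}$ on the nose for I-Grassmannian involutions.
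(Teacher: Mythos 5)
Your proof is correct and matches the paper's intent: the paper states the corollary as an immediate restatement of Theorem~\ref{igrass-thm} without further argument, and your verification that $\kappa(y) = r = \ell(\ilambda(y))$ for $y = (\phi_1,n+1)\cdots(\phi_r,n+r)$ is exactly the bookkeeping that makes the restatement go through.
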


Recall the definition of $\iT_1(z)$ from Section~\ref{ppos-sect}
and $\fk C(z)$ from Section~\ref{i-vex-sect}.

\begin{lemma}
If $z \in \I_\ZZ$ and $v \in \iT_1(z)$ then $\kappa(v) \leq \kappa(z)$.
\end{lemma}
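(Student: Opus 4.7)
The plan is to factor the transition from $z$ to $v$ through $y = \ipsi(z)$ and bound the change in $\kappa$ at each of the two steps separately. Write $(q,r)$ for the maximal visible inversion of $z$ and $p = y(q)$, so that $p \leq q$ by Proposition-Definition \ref{ipsi-propdef} and $v = \tau_{ip}(y)$ for some $i < p$ with $y \lessdot y(i,p)$. The argument then reduces to two clean claims, each established by direct inspection of the tables.

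Claim 1: $\kappa(y) \leq \kappa(z)$, with $\kappa(y) = \kappa(z) - 1$ when $p = q$ and $\kappa(y) = \kappa(z)$ otherwise. This is read off from Table \ref{it-fig}: in the $\{a<b\}$ row (case (c) of Remark \ref{eta-rmk}, forcing $p = q$), the matching $[z]_A$ loses one arc in passing to $[y]_A$; in the $\{a<b<c\}$ and $\{a<b<c<d\}$ rows (cases (b) and (a), where $p < q$), $[z]_A$ and $[y]_A$ contain the same number of arcs. Since $z$ and $y$ agree on the complement of $A$, this pinpoints the change in $\kappa$.

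Claim 2: If $v = \tau_{ij}(y) \neq y$, then $\kappa(v) \leq \kappa(y) + 1$, and equality holds only if both $i$ and $j$ are fixed points of $y$. This is again read off, this time from Table \ref{ct-fig}: comparing $[y]_A$ with $[\tau_{ij}(y)]_A$ row by row shows the arc count increases by $1$ only in the first row (where $[y]_A$ is empty), is unchanged in the $|A|=3$ rows and in rows 1 and 3 of $|A| = 4$, and decreases by $1$ in row 2 of $|A|=4$. The $+1$ case occurs precisely when both indices are $y$-fixed.

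Combining the two claims finishes the argument. If $p < q$ then $y(p) = q \neq p$, so Claim 2 gives $\kappa(v) \leq \kappa(y)$, which combines with Claim 1 to yield $\kappa(v) \leq \kappa(z)$. If $p = q$ then $y(p) = p$, but Claim 1 has already given up one unit of $\kappa$ in going from $z$ to $y$, so any gain at the second step is absorbed: $\kappa(v) \leq \kappa(y) + 1 = \kappa(z)$. The only mildly tricky point is making sure the $+1$ case of Claim 2 is correctly aligned with case (c) of Claim 1; this is exactly the matching condition ``$p$ is fixed by $y$ iff $p = q$'' coming from Remark \ref{eta-rmk}, and it is what makes the two inequalities compatible. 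Everything else is bookkeeping against the two tables.
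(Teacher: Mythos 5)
Correct, and this is essentially the paper's argument spelled out in full: the paper's proof asserts in one sentence that Tables~\ref{ct-fig} and \ref{it-fig} make it evident that $\kappa(y) \leq \kappa(z)$ and $\kappa(v) \leq \kappa(y)+1$ with at least one inequality strict, and your Claims 1 and 2 together with the observation that $p$ is $y$-fixed precisely when $p=q$ supply the verification the paper leaves implicit.
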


\begin{proof}
If $z \in \I_\ZZ$ is not I-Grassmannian and $y = \ipsi(z)$ and $v \in \iT_1(z) $,
then it is evident from
 Tables~\ref{ct-fig} and \ref{it-fig}
 that $\kappa(y) \leq \kappa(z)$ and  $\kappa(v) \leq \kappa(y)+1$
and  one of these inequalities must be strict.
\end{proof}

\begin{corollary}
If $z \in \I_\ZZ$ then
$\iG_z \in \NN\spanning\left\{ \iG_y : y \in \I_\ZZ\text{ is I-Grassmannian}\right\}$.
\end{corollary}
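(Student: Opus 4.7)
The plan is to combine Theorem~\ref{it0-thm} with the immediately preceding lemma and iterate. By Theorem~\ref{it0-thm}, we have the identity $\iF_z = \sum_v \iF_v$, where the sum runs over the finite set of leaves of the involution Lascoux-Sch\"utzenberger tree $\iT(z)$; by the definition of $\iT_1$, every leaf of $\iT(z)$ is an I-Grassmannian involution. Multiplying both sides by $2^{\kappa(z)}$ gives
\[
\iG_z \;=\; \sum_v 2^{\kappa(z)-\kappa(v)}\,\iG_v,
\]
where the sum is over the leaves $v$ of $\iT(z)$, all of which are I-Grassmannian.

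It therefore suffices to verify that the exponent $\kappa(z)-\kappa(v)$ is a nonnegative integer for every leaf $v$ of $\iT(z)$. More generally, I will show by induction on $n \ge 0$ that $\kappa(v) \le \kappa(z)$ for every $v \in \iT_n(z)$. The base case $n=0$ is the tautology $\iT_0(z) = \{z\}$, and the inductive step is immediate from the previous lemma, which asserts precisely that $\kappa(v') \le \kappa(v)$ whenever $v' \in \iT_1(v)$. Since each leaf of $\iT(z)$ lies in $\iT_n(z)$ for some $n$, this gives $\kappa(v) \le \kappa(z)$ at every leaf.

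Combining the two displayed observations expresses $\iG_z$ as an $\NN$-linear combination of symmetric functions of the form $\iG_y$ with $y$ I-Grassmannian, which is exactly the claim. There is no genuine obstacle here: the finiteness of $\iT(z)$ and the Schur $P$-positive expansion are already packaged in Theorem~\ref{it0-thm}, and the only new ingredient needed to upgrade Corollary~\ref{it-thm-cor0} from Schur $P$ to Schur $Q$ is the monotonicity of $\kappa$ along the edges of $\iT(z)$, which is exactly the content of the preceding lemma.
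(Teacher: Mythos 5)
Your proof is correct and follows the same route as the paper: invoke Theorem~\ref{it0-thm} to write $\iG_z = \sum_v 2^{\kappa(z)-\kappa(v)}\iG_v$ over the I-Grassmannian leaves of $\iT(z)$, then conclude the exponents are nonnegative from the preceding lemma ($\kappa(v) \le \kappa(z)$ for $v \in \iT_1(z)$). The only difference is that you spell out the easy induction along the tree to propagate that inequality from children to all descendants, whereas the paper leaves this step implicit.
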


\begin{proof}
By Theorem~\ref{it0-thm}, $\iG_z = \sum_v 2^{\kappa(z)} \iF_v=\sum_v 2^{\kappa(z)-\kappa(v)} \iG_v$ where the sum is over the finite set of 
leaves in $\iT(z)$. The previous lemma implies that each coefficient is a positive integer.
\end{proof}

Combining the preceding statements gives this variant of Corollary~\ref{our-cor1}:

\begin{corollary}\label{schur-q-cor}
Each $\iG_y$ is Schur $Q$-positive, that is,
$\iG_y \in \NN\spanning \{Q_\lambda : \lambda\text{ is a strict partition}\}$.
\end{corollary}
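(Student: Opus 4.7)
The plan is that this corollary requires essentially no new work, being an immediate composition of the two preceding corollaries in Section~\ref{schur-q-sect}. So the proof proposal is simply to chain them together.

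First I would invoke the unlabeled corollary immediately above, which asserts that $\iG_y \in \NN\spanning\{\iG_w : w \in \I_\ZZ \text{ is I-Grassmannian}\}$. This gives an expansion
\[ \iG_y = \sum_{w} c_w \iG_w \]
with $c_w \in \NN$ and the sum running over I-Grassmannian involutions $w \in \I_\ZZ$. Next I would apply Corollary~\ref{q-grass-cor}, which identifies each summand as $\iG_w = Q_{\ilambda(w)}$ whenever $w$ is I-Grassmannian. Since I-Grassmannian involutions have strict shapes $\ilambda(w)$ by the discussion following Proposition-Definition~\ref{propdef-2}, substituting yields
\[ \iG_y = \sum_{w} c_w\, Q_{\ilambda(w)}, \]
a nonnegative integer combination of Schur $Q$-functions indexed by strict partitions, which is exactly the desired Schur $Q$-positivity statement.

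There is no real obstacle here; the work has already been done in establishing the two previous corollaries (which in turn rest on Theorem~\ref{it0-thm} and the lemma bounding $\kappa(v) \leq \kappa(z)$ for $v \in \iT_1(z)$). The only point worth noting explicitly in the write-up is that $\ilambda(w)$ is strict for I-Grassmannian $w$, which is immediate from Proposition-Definition~\ref{propdef-2}(b), since the parts of $\ilambda(w) = (n+1-\phi_1, n+1-\phi_2, \dots, n+1-\phi_r)$ are strictly decreasing because $\phi_1 < \phi_2 < \dots < \phi_r$.
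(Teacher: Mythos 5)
Your proposal is correct and matches the paper's own (implicit) argument exactly: the paper introduces Corollary~\ref{schur-q-cor} with the single line ``Combining the preceding statements gives this variant of Corollary~\ref{our-cor1},'' which is precisely the chaining of the unlabeled corollary with Corollary~\ref{q-grass-cor} that you describe. Nothing is missing.
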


Say that $z \in\I_\ZZ$ is \emph{$Q$-vexillary} if $\iG_z = Q_\lambda$ for a strict partition $\lambda$.
We can classify such permutations in much the same way as we did for $P$-vexillary involutions.

\begin{proposition}
If $z \in \I_\ZZ$ is $Q$-vexillary then $z$ is also $P$-vexillary.
\end{proposition}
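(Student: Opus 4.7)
The plan is to extract the statement essentially from the triangularity result (Corollary~\ref{ut-cor}) and the normalization identity $Q_\lambda = 2^{\ell(\lambda)} P_\lambda$. The argument should be short because we have already done all the hard work: we know $\iF_z$ is Schur $P$-positive, and its $P$-expansion is triangular with leading coefficient $1$.

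First, I would unpack the definition. Suppose $\iG_z = Q_\lambda$ for some strict partition $\lambda$. Since $\iG_z = 2^{\kappa(z)} \iF_z$ and $Q_\lambda = 2^{\ell(\lambda)} P_\lambda$, we obtain the equality
\[
\iF_z = 2^{\ell(\lambda)-\kappa(z)} P_\lambda
\]
in $\Lambda \otimes \QQ$. Next, I would invoke Corollary~\ref{ut-cor}, which asserts
\[
\iF_z \in P_{\ilambda(z)} + \NN\spanning\left\{ P_\mu : \mu < \ilambda(z)\right\}.
\]
Since the Schur $P$-functions indexed by strict partitions are $\ZZ$-linearly independent, the two displayed expressions for $\iF_z$ must agree term by term. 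Comparing the leading terms forces $\lambda = \ilambda(z)$ and $2^{\ell(\lambda)-\kappa(z)} = 1$, the latter yielding $\ell(\lambda) = \kappa(z)$. Therefore $\iF_z = P_{\ilambda(z)}$, and $z$ is $P$-vexillary by definition.

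There is no real obstacle here: the only thing one needs is that the constant $2^{\ell(\lambda)-\kappa(z)}$ has to be a \emph{nonnegative integer}, which is guaranteed by Schur $P$-positivity (Corollary~\ref{our-cor1}) together with the linear independence of the $P_\mu$'s, and then the triangularity in Corollary~\ref{ut-cor} pins it down to exactly $1$. A noteworthy by-product of the argument is that every $Q$-vexillary involution $z$ satisfies $\ell(\ilambda(z)) = \kappa(z)$; this will no doubt be useful in the subsequent classification of $Q$-vexillary involutions (Theorem~\ref{q-vex-thm}), where the converse direction and the identification with $2143$-avoidance will require more work.
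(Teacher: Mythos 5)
Your proof is correct and follows essentially the same route as the paper's: writing $\iF_z = 2^e P_\lambda$ with $e = \ell(\lambda)-\kappa(z)$ and then invoking the monic triangularity of Corollary~\ref{ut-cor} to force $e = 0$. Your version spells out the linear-independence step and notes the by-product $\ell(\ilambda(z)) = \kappa(z)$, but the underlying argument is identical.
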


\begin{proof}
If $\iG_z = Q_\lambda$ then $\iF_z = 2^e P_{\lambda}$ for some integer $e$, and
Corollary~\ref{ut-cor} implies that $e=0$.
\end{proof}

%

Recall the definition of a bad $P$-pattern from Section~\ref{i-vex-sect}.
Define a subset $E\subset \ZZ$ to be a \emph{bad $Q$-pattern}
for $z \in \I_\ZZ$ if $E$ is a union of at most four $z$-orbits and $[z]_E$ is not $Q$-vexillary.
The preceding proposition implies that
any bad $P$-pattern for $z \in \I_\ZZ$ is also a bad $Q$-pattern.

\begin{lemma}\label{q-vex-lem1}
Let $z \in \I_\ZZ$.
If either $|\iT_1(z)| \geq 2$ or  $\iT_1(z) = \{v\}$  where $\kappa(v) < \kappa(z)$,
then $z$ contains a bad $Q$-pattern.
\end{lemma}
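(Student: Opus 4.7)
The plan is to split on the two disjuncts. The first, $|\iT_1(z)| \geq 2$, is essentially handled by Lemma~\ref{vex-lem3}: that result produces a set $E \subset \ZZ$ which is a union of at most four $z$-orbits with $[z]_E$ not $P$-vexillary. The proposition stated immediately before the current lemma shows that $Q$-vexillary implies $P$-vexillary, so $[z]_E$ is also not $Q$-vexillary and $E$ is a bad $Q$-pattern.

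For the second disjunct, suppose $\iT_1(z) = \{v\}$ and $\kappa(v) < \kappa(z)$. Write $(q,r)$ for the maximal visible inversion of $z$, $y = \ipsi(z)$, and $p = y(q)$, and let $i < p$ be the unique integer with $v = \tau_{ip}(y)$. The key step is to track $\kappa$ along $z \mapsto y \mapsto v$ using Tables~\ref{ct-fig} and \ref{it-fig}. A row-by-row inspection shows that $\kappa(y) - \kappa(z) \in \{-1, 0\}$, with the drop occurring exactly in the first row of Table~\ref{it-fig}, i.e., when $\{q, r, z(q), z(r)\} = \{q, r\}$; and $\kappa(v) - \kappa(y) \in \{0, 1\}$, with the rise occurring exactly in the first row of Table~\ref{ct-fig}, i.e., when $y(i) = i$ and $y(p) = p$. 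Since $\kappa(v) < \kappa(z)$, both $\kappa(z) - \kappa(y) = 1$ and $\kappa(v) = \kappa(y)$ must hold, forcing $z(q) = r$, $z(r) = q$, $p = q$, and $y(i) \neq i$.

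Writing $\{a, b\} = \{i, y(i)\}$ with $a < b$, the inequalities $y(i) < y(p) = q$ and $i < p = q$ (from $y \lessdot y(i,p)$, which characterizes $v \in \hat\Phi^-(y,p)$) yield $a < b < q < r$, so $E = \{a, b, q, r\}$ is a union of the two $z$-cycles $\{a, b\}$ and $\{q, r\}$, and $[z]_E = (1,2)(3,4) \in \I_4$. A short direct calculation gives $\cA((1,2)(3,4)) = \{2143\}$, so by Stanley's formula (Definition~\ref{intro-def}) we obtain
\[
\iF_{(1,2)(3,4)} = F_{2143} = e_2 + h_2 = s_{(1,1)} + s_{(2)} = P_{(2)},
\]
hence $\iG_{(1,2)(3,4)} = 4 P_{(2)} = 2 Q_{(2)}$, which is not a Schur $Q$-function by linear independence of $\{Q_\lambda\}$. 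Thus $(1,2)(3,4)$ is not $Q$-vexillary and $E$ is the bad $Q$-pattern we sought. The main obstacle is the $\kappa$-bookkeeping in the second case: one must confirm from both tables that the strict drop $\kappa(v) < \kappa(z)$ pins down a single, explicit local structure of $z$ near $(q, r)$, rather than distributing itself across several cycles in some less controlled way.
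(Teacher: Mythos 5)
Your proof takes a genuinely different and more explicit route than the paper's. For the second disjunct, the paper invokes Lemmas~\ref{vex-grass-lem} and \ref{vex-lem2} to produce a $z$-invariant set $E$ with at most four orbits for which $\iT_1([z]_E) = \{[v]_E\}$ and $\kappa([z]_E) - \kappa([v]_E) = \kappa(z) - \kappa(v) > 0$, and then argues via $\iG_{[z]_E} = 2^{\kappa(z)-\kappa(v)}\iG_{[v]_E}$ that $[z]_E$ is not $Q$-vexillary. Crucially, that argument tolerates $\kappa(z) - \kappa(v) > 1$. You instead try to pin down the exact local pattern and show $[z]_E = (1,2)(3,4)$.

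There is a genuine gap in your $\kappa$-bookkeeping. You assert, by ``row-by-row inspection,'' that $\kappa(v) - \kappa(y) \in \{0,1\}$, with the increase happening exactly in the first row of Table~\ref{ct-fig}. That is false for $\tau_{ip}$ in general: the fifth row of Table~\ref{ct-fig}, in which $[y]_A$ has two arcs $a$-$b$, $c$-$d$ and $[\tau_{ij}(y)]_A$ has the single arc $a$-$d$, gives $\kappa(\tau_{ip}(y)) = \kappa(y) - 1$. This row applies precisely when $i < y(i) < p$ and $y(p) = q > p$, and it does arise in $\hat\Phi^-(y,p)$ for generic $y$ (e.g.\ $y = (1,2)(3,5)$, $p=3$, $i=1$, giving $\tau_{1,3}(y) = (1,5) \in \hat\Phi^-(y,3)$). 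So a row-by-row inspection does not establish what you claim.

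The gap is fillable, but requires a step you omit: one must show that the fifth row of Table~\ref{ct-fig} cannot occur when $|\hat\Phi^-(y,p)| = 1$. Indeed, the set $S = \{i < p : y \lessdot y(i,p)\}$ (which indexes $\hat\Phi^-(y,p)$) always contains $j_0 = \max\{j < p : y(j) < q\}$, and $|S| = 1$ forces the unique element to be $j_0$ with $y(j_0) = \max\{y(j) : j < p,\, y(j) < q\}$. If $j_0 < y(j_0) < p$, then $y(j_0)$ itself lies in $\{j < p : y(j) < q\}$ (since $y(y(j_0)) = j_0 < q$) and exceeds $j_0$, contradicting maximality. Hence $y(i) \leq i$ or $y(i) > p$ for the unique $i$, which rules out the fifth row. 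Only with this additional argument is your conclusion $\kappa(v) - \kappa(y) \in \{0,1\}$ valid, after which the rest of your derivation ($p = q$, $z(q) = r$, $y(i) \neq i$, $[z]_E = (1,2)(3,4)$, and the computation $\iG_{(1,2)(3,4)} = 2Q_{(2)}$) does go through.
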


\begin{proof}
In the first case, $z$ contains a bad $P$-pattern by Lemma~\ref{vex-lem3}.
In the second case, it follows from 
Lemmas~\ref{vex-grass-lem} and \ref{vex-lem2} that there exists a set $E\subset \ZZ$ composed of at most four $z$-orbits
such that $\iT_1([z]_E) = \fk C([z]_E) = \{ [v]_E\}$ and $\kappa(z) -\kappa(v) = \kappa([z]_E) - \kappa([v]_E) >0$.
This set $E$ is then
a bad $Q$-pattern for $z$ since
$\iG_{[z]_E} = 2^{\kappa(z)-\kappa(v)}\iG_{[v]_E}$ cannot be a Schur $Q$-function.
\end{proof}

\begin{lemma}\label{q-vex-lem2}
Suppose $z \in \I_\ZZ$
is such that $\iT_1(z) = \{v\}$ and $\kappa(v) = \kappa(z)$.
Then $z$ contains no bad $Q$-patterns if and only if $v$ contains no bad $Q$-patterns.
\end{lemma}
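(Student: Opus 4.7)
The plan is to mirror the proof of Lemma \ref{vex-lem4}, adapting it to the $Q$-vexillary setting. The key new ingredient is that the hypothesis $\kappa(v) = \kappa(z)$ is preserved when standardizing to any $z$-invariant set $E$ containing the small set $A \subset \ZZ$ on which $z$ and $v$ disagree: every nontrivial orbit of either $z$ or $v$ is either contained in $A$ or disjoint from $A$, so $\kappa([z]_E) - \kappa([v]_E) = \kappa(z) - \kappa(v) = 0$ whenever $E \supseteq A$ is $z$-invariant.

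First I would establish a base case by a finite computer verification: for each nontrivial $z \in \I_{12}$ with $\iT_1(z) = \{v\}$ and $\kappa(v) = \kappa(z)$, check directly that $z$ has no bad $Q$-patterns if and only if $v$ has none. This is analogous to the brute-force enumeration in the proof of Lemma \ref{vex-lem4}, additionally restricted by the $\kappa$-preservation filter.

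The inductive step then parallels Lemma \ref{vex-lem4}. The permutations $v$ and $z$ agree outside a $z$-invariant set $A$ of size at most $6$ containing $L(z)$, determined by the maximal visible inversion $(q,r)$ of $z$, the value $p = \ipsi(z)(q)$, and the unique index $i \in L(z)$ with $v = \tau_{ip}(\ipsi(z))$. Suppose $z$ has a bad $Q$-pattern $B$. If $B \cap A = \varnothing$, then $[v]_B = [z]_B$ fails to be $Q$-vexillary, so $B$ is already a bad $Q$-pattern for $v$. Otherwise the $z$-invariant set $E := A \cup B$ satisfies $|E| \leq 12$ by the orbit-overlap argument of Lemma \ref{vex-lem4}, and Lemma \ref{vex-lem2}(b) gives $\fk C([z]_E) = \{[v]_E\}$. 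Because $[z]_B$ is not $Q$-vexillary and hence not I-Grassmannian, Corollary \ref{igrass-cor} forces $[z]_E$ to be non-I-Grassmannian as well, so $\iT_1([z]_E) = \{[v]_E\}$. Together with $\kappa([z]_E) = \kappa([v]_E)$, this places $([z]_E, [v]_E)$ in the scope of the base case, producing a bad $Q$-pattern for $[v]_E$ that lifts to one for $v$. The converse direction (bad $Q$-pattern for $v$ implies one for $z$) is symmetric.

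The main obstacle is the base case computation: one must run through all pairs $(z,v)$ in $\I_{12}$ satisfying the hypothesis, check $Q$-vexillarity of every standardization to a union of at most four orbits, and verify $Q$-vexillarity itself via the recursive expansion of $\iG$ through the trees $\iT$. This is more delicate than the corresponding $P$-check in Lemma \ref{vex-lem4}, but remains a bounded finite computation since the $\kappa$-preservation condition meaningfully restricts the relevant involutions.
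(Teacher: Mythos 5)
Your proposal is correct and takes essentially the same approach as the paper: establish a base case for $\I_{12}$ by computer verification, then reduce the general case to the base case by the pattern-gluing argument of Lemma~\ref{vex-lem4}. The paper's own proof is just these two sentences; you have filled in the reduction step explicitly and added the helpful observation that $\kappa([z]_E)-\kappa([v]_E)=\kappa(z)-\kappa(v)=0$ whenever $E$ is a $z$-invariant set containing the disagreement set $A$, which is exactly what is needed to invoke the base case hypothesis on the pair $([z]_E,[v]_E)$. One small point of difference: the paper states its computer-verified base case using the condition $\fk C(z)=\{v\}$ rather than $\iT_1(z)=\{v\}$; since $\fk C$ is always nonempty while $\iT_1$ is empty for I-Grassmannian involutions, the paper's formulation sidesteps the need for your extra step (via Corollary~\ref{igrass-cor}) showing that $[z]_E$ is non-I-Grassmannian, though your version is also valid.
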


\begin{proof}
We have used a computer to check directly that if $z,v \in \I_{12}$
are 
such that 
$ \fk C(z)=\{v\}$
and $\kappa(v) = \kappa(z)$, then 
$z$ contains no bad $Q$-patterns if and only if $v$ contains no bad $Q$-patterns.
From this empirical fact, the result follows by the same 
argument as in the proof of Lemma~\ref{vex-lem4}.
\end{proof}

\begin{theorem}
An involution $z \in \I_\ZZ$ is $Q$-vexillary
if and only if $[z]_E$ is $Q$-vexillary
for all sets $E\subset \ZZ$ with $z(E)=E$ and $|E|=8$.
\end{theorem}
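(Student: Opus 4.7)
My plan is to mirror the proof of Theorem~\ref{vex-thm} almost verbatim, with Lemmas~\ref{q-vex-lem1} and \ref{q-vex-lem2} playing the roles of Lemmas~\ref{vex-lem3} and \ref{vex-lem4}. The remaining ingredients are already in hand: Corollary~\ref{q-grass-cor} supplies the base case that I-Grassmannian involutions are $Q$-vexillary; Corollary~\ref{igrass-cor} shows standardizations of I-Grassmannian involutions are I-Grassmannian, so such involutions contain no bad $Q$-patterns; Theorem~\ref{it0-thm} provides the finite inductive framework; and Corollaries~\ref{it-cor1}(b) and \ref{schur-q-cor} yield the decomposition of $\iG_z$ needed to drive the induction.

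First I would establish a recursive criterion: a non-I-Grassmannian involution $z \in \I_\ZZ$ is $Q$-vexillary if and only if $\iT_1(z) = \{v\}$ is a singleton, $\kappa(v) = \kappa(z)$, and $v$ is $Q$-vexillary. Multiplying the identity of Corollary~\ref{it-cor1}(b) by $2^{\kappa(z)}$ gives
\[
\iG_z = \sum_{v \in \iT_1(z)} 2^{\kappa(z)-\kappa(v)}\, \iG_v.
\]
By Corollary~\ref{schur-q-cor}, each $\iG_v$ is a nonzero $\NN$-linear combination of Schur $Q$-functions, so the sum of the coefficients in the Schur $Q$-expansion of the right-hand side is bounded below by $\sum_{v \in \iT_1(z)} 2^{\kappa(z)-\kappa(v)}$, and equals $1$ precisely when $|\iT_1(z)| = 1$, $\kappa(v) = \kappa(z)$, and $\iG_v$ is itself a single Schur $Q$-function. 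Combining this with Lemmas~\ref{q-vex-lem1} and \ref{q-vex-lem2}, I conclude that a non-I-Grassmannian $z$ contains no bad $Q$-pattern precisely when $\iT_1(z) = \{v\}$, $\kappa(v) = \kappa(z)$, and $v$ contains no bad $Q$-pattern. Induction on the finite height of $\iT(z)$ (Theorem~\ref{it0-thm}), together with the fact that I-Grassmannian involutions are $Q$-vexillary and have no bad $Q$-patterns, then yields that $z$ is $Q$-vexillary if and only if $z$ contains no bad $Q$-pattern.

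It remains to pass from the criterion ``$[z]_E$ is $Q$-vexillary for every $z$-invariant $E$ consisting of at most four $z$-orbits'' to the criterion ``$[z]_E$ is $Q$-vexillary for every $z$-invariant $E$ with $|E|=8$''. One direction is trivial, since any union of at most four $z$-orbits has cardinality at most $8$. For the other, if $E$ is $z$-invariant with $|E|<8$, one may enlarge $E$ by adjoining fixed points of $z$ larger than every element of $E \cup \supp(z)$ until $|E|=8$; such fixed points exist because $\supp(z)$ is finite. Adjoining these fixed points does not alter $\kappa([z]_E)$ and does not change $\iG_{[z]_E}$, so $Q$-vexillarity of the standardization is preserved and the two formulations agree. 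The only genuine obstacle in the argument is the finite but nontrivial computer check underlying Lemma~\ref{q-vex-lem2}; once that lemma is granted, the remainder is a direct structural parallel of the proof of Theorem~\ref{vex-thm}.
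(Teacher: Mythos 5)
Your proof is correct and follows essentially the same route as the paper: it establishes the recursive criterion for $Q$-vexillarity via Corollary~\ref{it-cor1}(b) and Corollary~\ref{schur-q-cor}, matches it against Lemmas~\ref{q-vex-lem1} and~\ref{q-vex-lem2}, and then runs the same induction on the height of $\iT(z)$ as in the proof of Theorem~\ref{vex-thm}. One small caution: the implication from ``$[z]_E$ is $Q$-vexillary for every union $E$ of at most four $z$-orbits'' to ``$[z]_E$ is $Q$-vexillary for every $z$-invariant $E$ with $|E|=8$'' is not literally trivial from the cardinality observation, since an $E$ of size $8$ may be a union of more than four orbits; it instead follows by applying the just-proved equivalence ($Q$-vexillary iff no bad $Q$-pattern) to $[z]_E$ itself, a point the paper's own proof of Theorem~\ref{vex-thm} is equally terse about.
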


\begin{proof}
Assume $z \in \I_\ZZ$ is not I-Grassmannian.
It is clear from Corollary~\ref{it-cor1}(b) that $z$ is $Q$-vexillary
if and only if 
and  $\iT_1(z) = \{v\}$ where $v$ is $Q$-vexillary and $\kappa(v) = \kappa(z)$.
On the other hand, Lemmas~\ref{q-vex-lem1} and \ref{q-vex-lem2}
show that $z$ contains no bad $Q$-patterns if and only if
 $\iT_1(z) = \{v\}$ where $v$ contains no bad $Q$-patterns and $\kappa(v) = \kappa(z)$.
Since all I-Grassmannian involutions are $Q$-vexillary and contain no bad $Q$-patterns
by Corollaries~\ref{igrass-cor} and \ref{q-grass-cor},
the result  follows by induction on the height of the involution Lascoux-Sch\"utzenberger tree,
 as in the proof of Theorem~\ref{vex-thm}.
\end{proof}

The following is Theorem~\ref{q-vex-thm} 
in the introduction.

\begin{theorem}
An element of $\I_\ZZ$ is $Q$-vexillary if and only if it is vexillary, i.e., 2143-avoiding.
\end{theorem}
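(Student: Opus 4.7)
The plan is to combine the characterization of $Q$-vexillary involutions from the immediately preceding theorem (namely, that $z \in \I_\ZZ$ is $Q$-vexillary if and only if every standardization $[z]_E$ with $z(E)=E$ and $|E|=8$ is $Q$-vexillary) with a parallel pattern-avoidance characterization of the vexillary property, and then reduce the whole problem to a finite computer verification.

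First I would show that an involution $z \in \I_\ZZ$ is $2143$-avoiding if and only if the standardization $[z]_E$ is $2143$-avoiding for every finite $z$-invariant subset $E \subset \ZZ$ with $|E| \leq 8$. One direction is immediate from the fact that standardization preserves pattern containment. For the converse, suppose indices $a<b<c<d$ witness a $2143$-pattern in $z$; then the set $E = \{a,b,c,d,z(a),z(b),z(c),z(d)\}$ is $z$-invariant, has at most eight elements, and $[z]_E$ still exhibits the $2143$-pattern on the images of $a,b,c,d$.

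Second, I would carry out a direct computer enumeration: for each $n \leq 8$ and each $y \in \I_n$, compute $\iG_y$ using the involution Lascoux--Sch\"utzenberger tree of Theorem~\ref{it0-thm} (with leaf values furnished by Corollary~\ref{q-grass-cor}, tracked with the scaling factor $2^{\kappa(z) - \kappa(v)}$ coming from the definition $\iG_y = 2^{\kappa(y)} \iF_y$), and check whether the result is a single Schur $Q$-function. The claim to verify empirically is that $y$ is $Q$-vexillary if and only if $y$ is $2143$-avoiding; this is a finite calculation over a manageable list of involutions.

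Granting the two steps, the theorem follows immediately: if $z$ is vexillary, every $[z]_E$ with $|E|=8$ is vexillary and hence $Q$-vexillary by Step~2, so $z$ is $Q$-vexillary by the characterization theorem; conversely, if $z$ contains a $2143$-pattern, the set $E$ produced in Step~1 is a bad $Q$-pattern, so $z$ is not $Q$-vexillary. The main obstacle is the computer enumeration in Step~2, which relies on software that implements the tree $\iT(z)$ faithfully. In principle one could avoid machine computation by explicitly listing the minimal bad $Q$-patterns -- these augment the eleven $P$-bad patterns of Corollary~\ref{ivex-cor} by those $z$ for which $\iT_1(z) = \{v\}$ with $\kappa(v) < \kappa(z)$ and $v$ is itself free of bad $Q$-patterns -- and then verifying by hand that this augmented list coincides with the minimal $2143$-containing involutions in $\I_n$ for $n \leq 8$.
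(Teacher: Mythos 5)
Your proposal is correct and establishes the theorem, but it routes the argument somewhat differently from the paper, and the difference is worth noting. The paper also invokes the immediately preceding characterization theorem and a computer check over $\I_8$, but its computer check outputs an explicit list of five minimal forbidden involution patterns, say $(1,2)(3,4)$, $(1,4)(3,6)$, $(1,5)(3,7)(4,6)$, $(1,5)(2,4)(3,7)$, $(1,6)(2,5)(3,8)(4,7)$, such that $z \in \I_\ZZ$ is $Q$-vexillary iff $z$ avoids all five. It then finishes by hand: one direction observes that each of the five patterns is itself non-vexillary (contains $2143$); the other direction is a careful case analysis showing that whenever $z$ contains a $2143$-pattern at $i<j<k<l$, the restriction of $z$ to $E=\{i,j,k,l,z(i),z(j),z(k),z(l)\}$ (or some other suitable invariant set) standardizes to one of the five, split into subcases according to how many of $i,j,k,l$ are fixed points. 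Your version dispenses with both the explicit pattern list and the case analysis: you reduce $2143$-avoidance to the $\I_8$ level by the trivial observation that the standardization $[z]_E$ of the $z$-closure of a $2143$-witness again contains $2143$, and you ask the computer to verify the equivalence ``$Q$-vexillary $\Leftrightarrow$ $2143$-avoiding'' outright over $\I_n$ for $n\leq 8$. The trade-off is that the paper's computation produces the intrinsically interesting minimal-pattern list (parallel to the eleven $P$-patterns of Corollary~\ref{ivex-cor}), at the cost of a case analysis that must be checked carefully; your version has a more opaque computation but a shorter and less error-prone manual glue. One small housekeeping point: your witness set $E$ may have fewer than eight elements, so to quote the characterization theorem verbatim you should pad $E$ with large fixed points of $z$, which changes neither $2143$-avoidance nor $Q$-vexillarity of the standardization; alternatively, as you note, $E$ is already a union of at most four $z$-orbits and hence qualifies directly as a bad $Q$-pattern.
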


\begin{proof}[Proof of Theorem~\ref{q-vex-thm}]
We have checked by computer that $z \in \I_8$ is $Q$-vexillary if and only if 
for all finite sets $E\subset \ZZ$
with $z(E) = E$, the involution $[z]_E$ 
is
not
$
(1, 2)(3, 4)$ or $ (1, 4)(3, 6)$
or $  (1, 5)(3, 7)(4, 6)$ or $ (1, 5)(2, 4)(3, 7)$
or $  (1, 6)(2, 5)(3, 8)(4, 7)$.
The previous theorem implies that $z \in \I_\ZZ$ is $Q$-vexillary
 if and only if the same pattern avoidance condition holds.
If this condition fails then
 $z$ contains a 2143 pattern since none of the excluded involutions are vexillary.
 Conversely, suppose $z \in \I_\ZZ$ contains a 2143 pattern,
 so that $z(j) < z(i) < z(l) < z(k)$
 for integers $i<j<k<l$. Let    $E =\{i,j,k,l,z(i),z(j),z(k),z(l)\}$.
 One of the following must then occur:
 \begin{itemize}
\item There exists a set $F=z(F)\subset \ZZ$ with $[z]_F=(1,2)(3,4)$.
 
\item Among $i,j,k,l$ only $i$ or $j$ is a fixed point and $[z]_E = (1,5)(3,7)(4,6)$.

\item Among $i,j,k,l$ only $k$ or $l$ is a fixed point and $[z]_E = (1,5)(2,4)(3,7)$.

\item Exactly two of $i,j,k,l$ are fixed points  and  $[z]_E = (1,4)(3,6)$.

\item None of $i,j,k,l$ are fixed points and  $[z]_E = (1,6)(2,5)(3,8)(4,7)$.
       
 \end{itemize}
We conclude that if $z \in \I_\ZZ$ is not vexillary if and only if $z$ is not $Q$-vexillary.
\end{proof}

\subsection{Pfaffian formulas}\label{pfaffian-sect}

Let $y \in \I_\infty$ be I-Grassmannian.
In this section we prove a formula for $\iS_y$ inspired by a determinantal
expression for the Schur $P$-function $\iF_y = P_{\ilambda(y)}$.
Let  $\F_n$ be the set of fixed-point-free involutions in $S_n$.
The \emph{Pfaffian} of a skew-symmetric $n\times n$ matrix $A$
is the expression
\be\label{pf-eq}
 \pf A
= \sum_{z \in \F_n} (-1)^{\ellhat(z)+\frac{n}{2}} \prod_{z(i)<i \in [n] } A_{z(i),i}
.\ee
It is a classical fact that $\det A  = (\pf A)^2$. Since $\det A = 0$ when $A$ is skew-symmetric
but $n$ is odd, the definition \eqref{pf-eq}
is consistent with the fact that $\F_n$ is empty for $n$ odd.

\begin{example}\label{pf-ex}
If $A = (a_{ij})$ is a $2\times 2$ skew-symmetric matrix then $\pf A = a_{12} = -a_{21}$.
If $A = (a_{ij})$ is a $4\times 4$ skew-symmetric matrix then
$\pf A = a_{21} a_{43} - a_{31}a_{42} + a_{41}a_{32}$.
\end{example}

All matrices of interest in this section are skew-symmetric, and
we write $[a_{ij}]_{1\leq i<j\leq n}$ to denote the unique $n\times n$ skew-symmetric matrix with
$a_{ij}$ in entry $(i,j)$ for $i<j$ (and, necessarily, with $-a_{ij}$ in entry $(j,i)$, and 0
in each diagonal entry).
Observe that in this notation  $[1]_{1\leq i<j\leq n}$ is neither the identity matrix
nor the matrix whose entries are all 1's.

\begin{lemma} \label{pf-lem0}
Suppose $n \in \PP$ is even. Then
$\pf [1]_{1\leq i<j\leq n} =\sum_{z \in \F_n} (-1)^{\ellhat(z)+\frac{n}{2}}= 1$.
\end{lemma}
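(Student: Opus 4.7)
The plan is to split the double equality in two. The second equality $\sum_{z \in \F_n} (-1)^{\ellhat(z)+\frac{n}{2}} = \pf[1]_{1\leq i<j\leq n}$ is essentially tautological: every entry of $[1]_{1\leq i<j\leq n}$ above the diagonal equals $1$, so each monomial $\prod_{z(i)<i} A_{z(i),i}$ in the definition \eqref{pf-eq} collapses to $1$ and the sum reduces to the desired signed count over $\F_n$. This takes a single sentence.

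The substantive content is showing $\pf[1]_{1\leq i<j\leq n} = 1$, and I will prove this by induction on the even integer $n$, with base case $n = 2$ handled directly by Example~\ref{pf-ex}. For the inductive step, I will invoke the classical Pfaffian cofactor expansion along the first row,
\[
\pf A = \sum_{j=2}^n (-1)^{j}\, A_{1j}\, \pf A_{\hat 1 \hat j},
\]
where $A_{\hat 1 \hat j}$ denotes the skew-symmetric $(n-2)\times(n-2)$ submatrix obtained from $A$ by deleting rows and columns $1$ and $j$. Two features of $A = [1]_{1\leq i<j\leq n}$ then make the induction immediate: every $A_{1j}$ with $j\geq 2$ equals $1$, and each $A_{\hat 1 \hat j}$ is again a matrix of the form $[1]_{1\leq i<j\leq n-2}$, because deleting a row and the corresponding column preserves the ``all ones above the diagonal'' structure. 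Invoking the inductive hypothesis collapses the right-hand side to $\sum_{j=2}^n (-1)^j$, which equals $1$ because this alternating sum has $n-1$ terms, an odd count starting with $+1$.

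There is no real obstacle here: the entire argument is a handful of lines once the cofactor expansion is in hand. The only point requiring any care is the sign convention in the Pfaffian expansion, but with the pivot row $i = 1$ it reduces unambiguously to $(-1)^j$ in every standard reference. A bijective alternative would be to exhibit a sign-reversing involution on $\F_n$ with exactly one fixed point, but the linear-algebraic recursion is cleaner, directly reflects the triangular structure of the matrix, and suffices for the applications that follow.
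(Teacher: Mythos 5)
Your proof is correct, but it takes a genuinely different route from the paper's. You invoke the classical Pfaffian cofactor expansion
\[
\pf A = \sum_{j=2}^n (-1)^{j}\, A_{1j}\, \pf A_{\hat 1 \hat j},
\]
observe that for the all-ones skew-symmetric matrix every minor $A_{\hat 1 \hat j}$ is again of the same form, and reduce the alternating coefficient sum $\sum_{j=2}^n (-1)^j$ to $1$. The paper, by contrast, works directly with its combinatorial definition \eqref{pf-eq}: it partitions $\F_n$ into $\cX_n = \{z : z(n-1)=n\}$ and its complement $\cY_n$, shows that conjugation by $s_{n-1}$ is a sign-reversing involution on $\cY_n$ (so that piece of the sum vanishes), and that $z \mapsto z\cdot(n-1,n)$ is a sign-preserving bijection $\F_{n-2}\to\cX_n$ once the $(-1)^{n/2}$ normalization is accounted for, giving the same recursion $\pf[1]_n = \pf[1]_{n-2}$. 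The two arguments are really the same induction in different clothing: your step relies on the cofactor expansion as an external fact, whereas the paper's step is self-contained given its chosen definition of the Pfaffian, which is the cleaner choice here since the paper never actually establishes or cites the cofactor expansion. Both are valid; the bijective alternative you allude to at the end is, with minor rephrasing, precisely what the paper does.
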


\begin{proof}
Let $\cX_n = \{ z \in \F_n : z(n-1) =n \}$ and $\cY_n = \F_n - \cX_n$.
Conjugation and multiplication by $s_{n-1}$ define bijections
$\cY_n\to \cY_n$ and $\F_{n-2} \to \cX_n$ reversing the sign of $(-1)^{\ellhat(z)}$.
Hence
$
\pf [1]_{1\leq i<j\leq n} = \sum_{z \in \F_n} (-1)^{\ellhat(z)+\frac{n}{2}}
= \sum_{z \in \cX_n} (-1)^{\ellhat(z)+\frac{n}{2}} = \pf [1]_{1\leq i<j\leq n-2}
$,
and the result follows by induction.
\end{proof}

Let $\phi = (\phi_1,\phi_2,\dots)$ be an integer sequence which has finitely many nonzero terms.
If $\phi$ is of finite length $r$,
then we identify $\phi$ with the infinite sequence with $\phi_i =0$ for all $i>r$.
Define 
\[\ell(\phi) = \max \{ i \in \PP : \phi_i \neq 0\}
\qquand
\ell^+(\phi)=
\begin{cases}
\ell(\phi)+1 &\text{if $\ell(\phi)$ is odd}
\\
\ell(\phi) &\text{otherwise}.
\end{cases}
\]
As a notational convenience we write $P_{\lambda_1\lambda_2\cdots\lambda_r}$
in place of $P_\lambda =P_{(\lambda_1,\lambda_2,\dots,\lambda_r)}$
for a strict partition $\lambda = (\lambda_1,\lambda_2,\dots,\lambda_r)$.
The following identity appears as \cite[Eq.\ (8.11), {\S}III.8]{Macdonald2}.

\begin{theorem}[Macdonald \cite{Macdonald2}]\label{jt-thm}
If $\lambda$ is a strict partition then
$P_\lambda = \pf[P_{\lambda_i\lambda_j}]_{1\leq i <j \leq \ell^+(\lambda)}$.
\end{theorem}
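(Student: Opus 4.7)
The plan is to reduce the identity to the classical Schur Pfaffian identity
$$\pf\!\left[\tfrac{x_j-x_i}{x_j+x_i}\right]_{1\leq i<j\leq r}=\prod_{1\leq i<j\leq r}\tfrac{x_j-x_i}{x_j+x_i}\qquad(r\text{ even}),$$
combined with the antisymmetrization formula for $P_\lambda$ that one extracts from Definition \ref{schurp-def} and Lemma \ref{man-lem}. Set $r:=\ell^+(\lambda)$; I may assume $r$ is even by padding $\lambda$ with a trailing zero, since the identity $P_{(\mu,0)}=P_\mu$ for a strict partition $\mu$ follows directly by comparing the limits $\pi_{w_n}(x^\mu G_{\ell(\mu),n})$ and $\pi_{w_n}(x^{(\mu,0)}G_{\ell(\mu)+1,n})$.

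First, using Lemma \ref{man-lem}, for $n\geq r$,
$$\rho_n P_\lambda=\Delta_n^{-1}\sum_{\sigma\in S_n}(-1)^{\ell(\sigma)}\sigma\bigl(x^{\lambda+\delta_n}G_{r,n}\bigr),$$
and absorbing $\Delta_n$ into $G_{r,n}$ together with the exponent shift $\delta_n$ converts this into the Schur-style antisymmetrization
$$P_\lambda(x_1,\ldots,x_n)=\tfrac{1}{(n-r)!}\sum_{\sigma\in S_n}\sigma\!\left(x_1^{\lambda_1}\cdots x_r^{\lambda_r}\prod_{\substack{1\leq i\leq r\\ i<j\leq n}}\tfrac{x_i+x_j}{x_i-x_j}\right).$$
Splitting the inner product according to whether $j\leq r$ or $j>r$ isolates the factor $\prod_{1\leq i<j\leq r}\tfrac{x_i+x_j}{x_i-x_j}$, which can be rewritten as a Pfaffian (up to an explicit overall sign depending on $r$) using the Schur identity above.

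Second, I expand this Pfaffian via \eqref{pf-eq} and interchange the order of summation. Each term corresponds to an involution $z\in\F_r$ with pairs $\{a_1<b_1\},\ldots,\{a_m<b_m\}$, where $m=r/2$. For each such $z$, the $S_n$-antisymmetrization factors through a Laplace-type expansion: by summing over ways to partition $\{1,\ldots,n\}$ into $m$ unordered pairs plus a block of size $n-r$, the single antisymmetrization over $S_n$ decomposes into $m$ independent two-variable antisymmetrizations. The two-part base case of the theorem (tautological, since $\pf[P_{\lambda_a\lambda_b}]_{1\leq i<j\leq 2}=P_{\lambda_a\lambda_b}$ compared against the analogous two-row antisymmetrization formula) then identifies each resulting factor as $P_{\lambda_{a_k}\lambda_{b_k}}$, so the $z$-summand contributes $\pm\prod_k P_{\lambda_{a_k}\lambda_{b_k}}$.

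The main obstacle is the sign bookkeeping: the Pfaffian weight $(-1)^{\ellhat(z)+r/2}$ must conspire exactly with the signs arising both from the choice of coset representatives in the Laplace expansion and from the overall sign in the Schur identity. This is the Pfaffian analogue of the Laplace expansion for determinants, and the cleanest way to verify it is to track how a reduced factorization of a coset representative interacts with the cycle structure of $z$. Once the signs match, the argument concludes formally: summing over $z\in\F_r$ reassembles $\pf[P_{\lambda_i\lambda_j}]_{1\leq i<j\leq r}$, matching the right-hand side of the theorem.
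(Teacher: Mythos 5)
The paper does not prove this theorem; it cites it as \cite[Eq.\ (8.11), {\S}III.8]{Macdonald2}, so there is no internal proof to compare against. Your proposal is therefore an independent attempt, and unfortunately it has two genuine gaps.

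First, the claim that $\prod_{1\leq i<j\leq r}\tfrac{x_i+x_j}{x_i-x_j}$ ``can be rewritten as a Pfaffian (up to an explicit overall sign depending on $r$) using the Schur identity'' is false. Schur's identity writes $\prod_{1\leq i<j\leq r}\tfrac{x_j-x_i}{x_j+x_i}$ as a Pfaffian, and the factor appearing in your antisymmetrization formula is the \emph{reciprocal} of that product (up to sign): $\prod_{i<j}\tfrac{x_i+x_j}{x_i-x_j}=(-1)^{\binom{r}{2}}\big/\prod_{i<j}\tfrac{x_j-x_i}{x_j+x_i}$. The reciprocal of a Pfaffian is not a Pfaffian, so the step of ``expanding this Pfaffian via \eqref{pf-eq}'' has nothing to act on. (A quick check: for $r=2$ your factor is $\tfrac{x_1+x_2}{x_1-x_2}$, while $\pf\big[\tfrac{x_j-x_i}{x_j+x_i}\big]=\tfrac{x_2-x_1}{x_2+x_1}$; these are reciprocals up to sign, not equal up to sign.)

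Second, even setting aside the first point, the asserted ``Laplace-type expansion'' — that the single $S_n$-antisymmetrization decomposes into $r/2$ independent two-variable antisymmetrizations, so that the $z$-summand equals $\pm\prod_k P_{\lambda_{a_k}\lambda_{b_k}}$ — does not hold as stated. The product $\prod_k \rho_n P_{\lambda_{a_k}\lambda_{b_k}}$, when each factor is expanded via the symmetrization formula, produces cross-coupling factors of the form $\tfrac{x_u+x_v}{x_u-x_v}$ between indices belonging to different pairs $\{a_k,b_k\}$ and $\{a_l,b_l\}$; those factors are absent from the corresponding $z$-summand of the (hypothetical) Pfaffian expansion, where the only pair-coupling terms inside $[r]$ are $\tfrac{x_{a_k}+x_{b_k}}{x_{a_k}-x_{b_k}}$. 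So the factorization requires an identity, not just a partition of $[n]$ into blocks, and that identity is exactly what needs to be proved. A correct argument along your general line is the minor summation formula of Ishikawa--Wakayama (which simultaneously packages Schur's Pfaffian identity and the Laplace expansion for Pfaffians), applied directly to $\pf[P_{\lambda_i\lambda_j}]$; your sketch, as written, neither invokes nor reproduces it. The antisymmetrization formula in your first display and the padding reduction $P_{(\mu,0)}=P_\mu$ are both fine; the obstruction is entirely in the second half.
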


This theorem is an analogue of the Jacobi-Trudi identity for Schur functions,
which may be written succinctly as $s_\lambda = \det[s_{\lambda_i-i+j}]$.
The formula in Theorem \ref{jt-thm}  is what Schur gave as the original definition of $P_\lambda$
in \cite{Schur},
after specifying $P_{\lambda}$ for strict partitions $\lambda$ with $\ell(\lambda)\leq 2$.

\begin{example}
For $\lambda =(3,2,1)$, Theorem \ref{jt-thm} gives
$P_{\lambda} = P_{(3,2)} P_{(1)} - P_{(3,1)}P_{(2)} + P_{(2,1)}P_{(3)}$.
\end{example}

When $y \in \I_\infty$ is I-Grassmannian, Theorem \ref{jt-thm}
expresses $\iF_y$ as a Pfaffian in terms of  involution
Stanley symmetric functions of I-Grassmannian involutions with at most two nontrivial cycles.
We introduce some notation to make this idea more explicit.
Fix
\be
\label{nrphi-eq}
n,r \in \PP \qquand \phi \in \PP^r \text{ with }0 < \phi_1 < \phi_2<\dots <\phi_r\leq n.
\ee
We set $\phi_i=0$ for $i>r$.
Let
$y= (\phi_1, n+1)(\phi_2,n+2)\cdots (\phi_r,n+r) \in \I_\infty$
and define
\[
\iS[\phi_1,\phi_2,\dots,\phi_r;n]  = \fkS_y
\qquand
\iF[\phi_1,\phi_2,\dots,\phi_r;n]= \iF_y.
\]
When $r$ is odd, we also set $\iS[\phi_1,\phi_2,\dots,\phi_r,0;n] = \fkS_y$ and
$\iF[\phi_1,\phi_2,\dots,\phi_r,0;n]= \iF_y$.
Since $\iF_y = P_{ (n+1-\phi_1, \dots,n+1-\phi_r)}$
 by Theorem \ref{igrass-thm},
Theorem \ref{jt-thm} implies the following identity.

\begin{corollary}\label{jt-cor}
In the setup of \eqref{nrphi-eq},
 $\iF[\phi_1,\phi_2,\dots,\phi_r;n]
 = \pf\left[ \iF[\phi_i,\phi_j;n]\right]_{1\leq i < j\leq \ell^+(\phi)}.$
\end{corollary}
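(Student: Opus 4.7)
The plan is essentially to chain together the two results already assembled in this subsection. Set $\lambda = (\lambda_1,\dots,\lambda_r) = (n+1-\phi_1, \dots, n+1-\phi_r)$; since $0<\phi_1<\dots<\phi_r\leq n$, this is a strict partition with $\ell(\lambda)=r=\ell(\phi)$, hence $\ell^+(\lambda)=\ell^+(\phi)$. I would first apply Theorem \ref{igrass-thm} to the I-Grassmannian involution
\[
y = (\phi_1,n+1)(\phi_2,n+2)\cdots(\phi_r,n+r)
\]
to identify
\[
\iF[\phi_1,\phi_2,\dots,\phi_r;n] \;=\; \iF_y \;=\; P_{\ilambda(y)} \;=\; P_\lambda.
\]

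Next I would verify that each entry of the proposed matrix has the form predicted by Theorem \ref{jt-thm}. For $1\leq i<j\leq r$, the involution $(\phi_i,n+1)(\phi_j,n+2)$ is I-Grassmannian (since $\phi_i<\phi_j\leq n$) with shape $(n+1-\phi_i,n+1-\phi_j)=(\lambda_i,\lambda_j)$, so another application of Theorem \ref{igrass-thm} gives $\iF[\phi_i,\phi_j;n]=P_{\lambda_i\lambda_j}$. When $r$ is odd one pads with $\phi_{r+1}=0$; by the convention stated just before the corollary, $\iF[\phi_i,0;n]$ is the involution Stanley symmetric function of the single transposition $(\phi_i,n+1)$, which is I-Grassmannian with shape $(\lambda_i)$, so by Theorem \ref{igrass-thm} it equals $P_{\lambda_i}=P_{\lambda_i\lambda_{r+1}}$. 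Thus in every case $\iF[\phi_i,\phi_j;n] = P_{\lambda_i\lambda_j}$ for $1\leq i<j\leq \ell^+(\phi)$.

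Finally, substituting these two identifications into Theorem \ref{jt-thm} yields
\[
\iF[\phi_1,\dots,\phi_r;n] = P_\lambda = \pf\!\left[P_{\lambda_i\lambda_j}\right]_{1\leq i<j\leq \ell^+(\lambda)} = \pf\!\left[\iF[\phi_i,\phi_j;n]\right]_{1\leq i<j\leq \ell^+(\phi)},
\]
which is exactly the desired identity. There is no real obstacle here; the only point requiring any care is the bookkeeping convention for the padded zero entry when $r$ is odd, which is resolved by the last sentence preceding the corollary statement.
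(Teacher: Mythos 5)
Your proof is correct and follows exactly the same route the paper takes: identify $\iF_y = P_\lambda$ via Theorem~\ref{igrass-thm} and then substitute into the Schur $P$-Pfaffian identity of Theorem~\ref{jt-thm}. The paper compresses this into one sentence, while you additionally spell out the intermediate identification $\iF[\phi_i,\phi_j;n]=P_{\lambda_i\lambda_j}$ (including the odd-$r$ padding), which is exactly the implicit bookkeeping and is handled correctly.
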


Our main result in this section is to show that the preceding formula
 is true even before stabilizing, that is, with $\iF[{\cdots};n]$ replaced by $\iS[{\cdots};n]$.
In the following lemmas, let
$\fk M[\phi;n]=\fk M[\phi_1,\phi_2,\dots,\phi_r;n]$ denote the skew-symmetric matrix
$\left[\iS[\phi_i,\phi_j;n]\right]_{1\leq i <j \leq \ell^+(\phi)}$.

\begin{lemma}\label{pf-lem1}
Maintain the notation of \eqref{nrphi-eq}, and suppose $p \in [n-1]$. Then
\[
\partial_p  \(\pf \fk M[\phi;n] \)=
\begin{cases}
\pf \fk M[\phi+e_i;n] &\text{if }p = \phi_i \notin \{ \phi_2-1, \dots,\phi_r-1\}\text{ for some $i \in [r]$} \\
0&\text{otherwise}
\end{cases}
\]
where $e_i = (0,\dots,0,1,0,0,\dots)$ is the standard basis vector
whose $i$th coordinate is 1.
\end{lemma}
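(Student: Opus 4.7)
The plan is to expand $\pf \fk M[\phi;n]$ via its definition as a signed sum over fixed-point-free involutions $z$ on $[\ell^+(\phi)]$, apply the Leibniz rule $\partial_p(fg)=(\partial_p f)g+(s_pf)(\partial_p g)$ entry-by-entry, and exploit the fact that \emph{very few} entries of $\fk M[\phi;n]$ fail to be $s_p$-invariant.

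First I would compute $\partial_p$ of a single entry $A_{a,b}=\iS[\phi_a,\phi_b;n]$, which equals $\iS_y$ for the $n$-I-Grassmannian involution $y=(\phi_a,n+1)(\phi_b,n+2)$ (or $y=(\phi_a,n+1)$ when $\phi_b=0$). A direct inspection of the one-line form of $y$ shows that, for $p\in[n-1]$, the descent $s_p\in\DesR(y)$ occurs exactly when either $p=\phi_b$, or $p=\phi_a$ with $\phi_a+1\neq\phi_b$. In each such case $s_pys_p\neq y$, so by \eqref{i-eq} the divided difference $\partial_pA_{a,b}$ is the involution Schubert polynomial of the involution obtained by shifting $\phi_a$ or $\phi_b$ to $\phi_a+1$ or $\phi_b+1$; in both cases this is precisely the corresponding $(a,b)$-entry of $\fk M[\phi+e_i;n]$, where $i$ is the unique index with $\phi_i=p$. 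In particular, if $p\notin\{\phi_1,\dots,\phi_r\}$ then every entry of $\fk M[\phi;n]$ is $s_p$-invariant and $\partial_p\pf\fk M[\phi;n]=0$.

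Now fix $p=\phi_i$. For each $z$, the product $\prod_{k:z(k)<k}A_{z(k),k}$ contains a \emph{unique} factor involving index $i$, namely $A_{\{i,z(i)\}}$. In Case A (where $\phi_i+1\notin\{\phi_2,\dots,\phi_r\}$) the previous paragraph shows that this is the only factor not fixed by $s_p$, so the Leibniz expansion collapses to the single term
\[
\partial_p\bigl(\textstyle\prod_k A_{z(k),k}\bigr)\;=\;\bigl(\textstyle\prod_{k:\,z(k)<k,\;k\notin\{i,z(i)\}}A_{z(k),k}\bigr)\cdot\partial_pA_{\{i,z(i)\}}.
\]
The untouched factors agree with the corresponding entries of $\fk M[\phi+e_i;n]$ (they involve no index $i$), and $\partial_pA_{\{i,z(i)\}}$ equals the $(i,z(i))$-entry of $\fk M[\phi+e_i;n]$. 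Summing over $z$ with the Pfaffian signs yields $\pf\fk M[\phi+e_i;n]$, giving Case A.

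The one genuine obstacle is Case B, where $p=\phi_i$ and $\phi_i+1=\phi_{i+1}$, in which we must prove cancellation to zero. Here the entry $A_{i,i+1}=\iS[\phi_i,\phi_i+1;n]$ also becomes $s_p$-invariant (its one-line form shows $s_p\notin\DesR$), while the entries in row/column $i$ indexed by partners $j\neq i+1$ remain non-invariant, and $\partial_p$ maps such an entry to $A_{\{i+1,j\}}$ (an entry of $\fk M[\phi;n]$ itself, since $\phi_i+1=\phi_{i+1}$). I would introduce the sign-reversing involution $\sigma\colon z\mapsto (i,i+1)\,z\,(i,i+1)$ on $\F_{\ell^+(\phi)}$: it fixes precisely those $z$ with $z(i)=i+1$, in which case all factors of the product are $s_p$-invariant and the whole term is annihilated by $\partial_p$; for the remaining $z$ with partners $z(i)=j\neq i+1$ and $z(i+1)=k$, conjugation by $(i,i+1)$ swaps the partners, and a quick Leibniz calculation gives
\[
\partial_p\bigl(\text{$z$-term}\bigr)\;=\;\bigl(\text{other pairs}\bigr)\cdot A_{\{i+1,j\}}\cdot A_{\{i+1,k\}}\;=\;\partial_p\bigl(\text{$\sigma(z)$-term}\bigr).
\]
Since conjugating an FPF involution by a simple reflection changes $\ell$ by $\pm2$ while leaving $\kappa$ unchanged, $\ellhat(z)$ and $\ellhat(\sigma(z))$ have opposite parities, so the two terms carry opposite signs in the Pfaffian and cancel. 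This pairing exhausts all surviving terms and completes Case B.
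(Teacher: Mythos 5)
Your proposal is correct. The computation of $\partial_p$ on individual entries, the Leibniz-collapse argument, and Case A are the same as the paper's: the paper phrases it by noting that $\partial_p\bigl(\pf\fk M\bigr)=\pf\fk N$ where $\fk N$ is obtained from $\fk M$ by applying $\partial_p$ only to row and column $k$ (with $p=\phi_k$), and then identifying $\fk N=\fk M[\phi+e_k;n]$ when $\phi_k\neq\phi_{k+1}-1$.

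Where you genuinely diverge is Case B, the cancellation when $p=\phi_i$ and $\phi_i+1=\phi_{i+1}$. The paper's argument is linear-algebraic and very short: the same observation $\partial_p\bigl(\pf\fk M\bigr)=\pf\fk N$ still applies, and one checks that columns $k$ and $k+1$ of $\fk N$ coincide, so $\pf\fk N=0$ because $(\pf\fk N)^2=\det\fk N=0$. You instead build a sign-reversing involution $\sigma:z\mapsto (i,i{+}1)\,z\,(i,i{+}1)$ on the indexing set $\F_{\ell^+(\phi)}$, checking that (a) the fixed points ($z(i)=i{+}1$) contribute terms killed outright by $\partial_p$ because $A_{i,i+1}$ and every other factor is $s_p$-invariant, and (b) on each pair $\{z,\sigma(z)\}$ the images under $\partial_p$ agree while $\ellhat(z)$ and $\ellhat(\sigma(z))$ differ by $\pm1$, so the Pfaffian signs are opposite. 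The parity claim is correct: for fixed-point-free $z$ with $z(i)\neq i{+}1$ one has $\ell(szs)=\ell(z)\pm2$ (the standard lifting property, after noting $\DesL(z)=\DesR(z)$ for involutions) and $\kappa$ unchanged. Both routes are valid; the paper's is slicker, but yours avoids invoking the identity $\pf^2=\det$ and works entirely within the combinatorial definition of the Pfaffian, which is a self-contained and arguably more robust way to see the cancellation.
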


\begin{proof}
Write $\fk M = \fk M[\phi;n]$. For indices $1\leq i < j \leq \ell^+(\phi)$, it follows from \eqref{i-eq} that
$\partial_p\fk M_{ij} =\partial_p \iS[\phi_i,\phi_j;n]$ is
$\iS[\phi_i +1, \phi_j]$ if $p = \phi_i  \neq \phi_j-1$,
$\iS[\phi_i, \phi_j+1]$ if $p=\phi_j$,
and 0 otherwise.
Thus, if $p \notin \{\phi_1,\phi_2,\dots,\phi_r\}$, then all entries of $\fk M$
are symmetric in $x_p$ and $x_{p+1}$,
so $\partial_p  \(\pf \fk M \) = 0$.
Assume $p = \phi_k$ for some $k \in [r]$.
Then $\partial_p\fk M_{ij}=0$ unless $i=k$ or $j=k$,
so it follows from
 \eqref{pf-eq} that
$\partial_p  \(\pf \fk M \) = \pf \fk N$ where $\fk N$
is the matrix formed by replacing the entries in the $k$th row and the $k$th column
of $\fk M$
by their images under $\partial_p$.
If $k<r$ and $\phi_k= \phi_{k+1}-1$, then columns $k$ and $k+1$ of $\fk N$ are identical,
so   $\pf \fk M= \pf \fk N = 0$ since $(\pf \fk N)^2 = \det \fk N = 0$.
If $k=r$ or if $k<r$ and $\phi_k \neq \phi_{k+1}-1$, then $\fk N = \fk M[\phi+e_k;n]$.
In either case the desired identity holds.
\end{proof}


\begin{lemma} \label{pf-lem2}
Let $n \in \PP$ and $D=x_1(x_1+x_2)(x_1+x_3)\cdots (x_1+x_n)$.
Then
$\pf \fk M[1;n]=D$,
and if $b\in \PP$ is such that $1<b\leq n$, then $\pf \fk M[1,b;n]$
is divisible by $D$.
\end{lemma}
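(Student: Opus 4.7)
The plan is to treat the two assertions separately, relying on Theorem~\ref{dominant-thm} for the first and on Lemma~\ref{igrass-lem4} together with the Leibniz rule for $\pi_i$ for the second.

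For the first claim, observe that $\phi = (1)$ has $\ell^+(\phi) = 2$, so $\fk M[1;n]$ is a $2\times 2$ skew-symmetric matrix whose Pfaffian is simply its single above-diagonal entry $\iS[1, 0; n] = \iS_{(1, n+1)}$. I would then note that the involution $y = (1, n+1)$ is dominant, being 132-avoiding, with involution diagram $\hat D(y) = \{(1,1), (2,1), \dots, (n,1)\}$. Applying Theorem~\ref{dominant-thm} directly gives
\[
\iS_y \ =\ \prod_{(i,j)\in \hat D(y)} x_{(i,j)} \ =\ x_1 \prod_{i=2}^{n}(x_1+x_i) \ =\ D.
\]

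For the second claim, set $y = (1,n+1)(b,n+2)$, so $\ilambda(y) = (n,\, n+1-b)$ and $\pf \fk M[1,b;n]$ is the single entry $\iS_y$. I would apply Lemma~\ref{igrass-lem4} with $(\phi_1,\phi_2)=(1,b)$; since $\pi_{1,1} = \id$, this yields
\[
\iS_y \ =\ \pi_{b,2}\!\left(x_1^{n}\, x_2^{n+1-b}\, G_{2,n}\right).
\]
Next I would factor $G_{2,n} = A \cdot B$ where $A = \prod_{j=2}^{n}(1 + x_1^{-1} x_j)$ collects the factors involving $x_1$ and $B = \prod_{j=3}^{n}(1 + x_2^{-1} x_j)$ does not. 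The explicit calculation $x_1^{n} A = x_1 \prod_{j=2}^{n}(x_1+x_j) = D$ then lets me rewrite the argument as $D \cdot x_2^{n+1-b} B$.

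To finish, I would exploit the fact that $D$ is symmetric in $x_2, x_3, \dots, x_n$, since each factor $(x_1+x_j)$ for $j \geq 2$ is linear in this set of variables. In particular $s_i D = D$, hence $\partial_i D = 0$, for every $i$ with $2 \leq i \leq n-1$, so the basic identity $\pi_i(fg) = f\,\pi_i(g)$ valid whenever $s_i f = f$ (noted in Section~\ref{divided-sect}) allows $D$ to be pulled out past each factor of $\pi_{b,2} = \pi_{b-1}\cdots \pi_2$. This yields
\[
\iS_y \ =\ D \cdot \pi_{b,2}\!\left(x_2^{n+1-b}\, B\right),
\]
which exhibits the required divisibility. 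The only non-routine step is recognizing the identity $x_1^{n} \prod_{j=2}^{n}(1+x_1^{-1}x_j) = D$ and then observing the $S_{\{2,\dots,n\}}$-symmetry of $D$; neither presents a real obstacle.
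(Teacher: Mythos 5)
Your proof is correct, but it takes a genuinely different route from the paper's. For the second claim, the paper does not invoke Lemma~\ref{igrass-lem4} at all: it first computes $\pf\fk M[1,2;n]=x_2(x_2+x_3)\cdots(x_2+x_n)D$ directly from Theorem~\ref{dominant-thm}, then inducts on $b$ using Lemma~\ref{pf-lem1} to write $\pf\fk M[1,b;n]=\partial_{b-1}\bigl(\pf\fk M[1,b-1;n]\bigr)=\partial_{b-1}(qD)=(\partial_{b-1}q)D$, the last step because $D$ is symmetric in $x_{b-1},x_b$. Your approach instead produces a closed-form factorization in one shot via Lemma~\ref{igrass-lem4} and the factorization $x_1^nG_{2,n}=D\cdot B$, then pulls $D$ through $\pi_{b,2}$ using the symmetry of $D$ in $x_2,\dots,x_n$. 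Both work; the paper's induction is slightly more elementary (it only needs what is already developed for $\partial_i$ and a single base case), while yours is more explicit about where the quotient comes from.

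One small detail worth being aware of in your version: after pulling $D$ out you get $\iS_y=D\cdot\pi_{b,2}\bigl(x_2^{n+1-b}B\bigr)$, but the argument $x_2^{n+1-b}B$ is a Laurent polynomial when $b>3$, so $\pi_{b,2}\bigl(x_2^{n+1-b}B\bigr)$ is a priori only in $\cL$, not $\cP$. Divisibility of $\iS_y$ by $D$ in the polynomial ring does follow, because $\iS_y\in\cP$, the factor $\pi_{b,2}\bigl(x_2^{n+1-b}B\bigr)$ involves only $x_2,\dots,x_n$, and $D$ is not divisible by any $x_i$ with $i\ge2$, so no negative powers of $x_i$ can survive in the quotient; but this deserves a sentence. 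The paper's inductive argument sidesteps this entirely, since $\partial_{b-1}$ manifestly carries $\cP$ to $\cP$ and the base case $b=2$ already has a polynomial quotient.
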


Note that $\fk M[1;n]$ and $\fk M[1,b;n]$ are
both $2\times 2$ skew-symmetric matrices; cf. Example \ref{pf-ex}.

\begin{proof}
It follows from Theorem \ref{dominant-thm}
that $\pf \fk M[1;n] = \fkS_{(1,n+1)}=D$
and, when $n\geq 2$, that
 $\pf \fk M[1,2;n] = \fkS_{(1,n+1)(2,n+2)}=x_2(x_2+x_3)\cdots (x_2+x_n)D$.
 Assume $2 < b \leq n$
so that  $\pf \fk M[1,b;n]  =  \partial_{b-1} (\pf \fk M[1,b-1;n] )$
 by Lemma \ref{pf-lem1}.
By induction $\pf \fk M[1,b-1;n]  = qD$ for some polynomial $q$.
Since $D$ is symmetric in $x_{b-1}$ and $x_b$, we have
 $\pf \fk M[1,b;n] =   \partial_{b-1}(qD) = (  \partial_{b-1} q)D$ as desired.
\end{proof}

If $i : \PP\to \NN$ is a map with $i^{-1}(\PP)\subset [n]$ for some finite $n$,
then we define $x^i = x_1^{i(1)} x_2^{i(2)}\cdots x_n^{i(n)}$.
Given a nonzero polynomial
$f = \sum_{i : \PP \to \NN} c_i x^i $,
let $j : \PP \to \NN$
be the lexicographically minimal index
such that $c_j \neq 0$ and define $\minlex (f) = c_j x^j$.
We refer to $\minlex(f)$ as the
\emph{least term} of $f$.
Set $\minlex(0) = 0$,
so that $\minlex(fg) = \minlex(f) \minlex(g)$ for any polynomials $f,g$.
The following is \cite[Proposition 3.14]{HMP1}.

\begin{lemma}[See \cite{HMP1}]\label{minlex-lem}
If $y \in \I_\infty$ then
$\minlex(\iS_y) =  x^{\hat c(y)}= \prod_{(i,j) \in \hat D(y)} x_i $.
\end{lemma}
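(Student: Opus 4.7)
The plan is to reduce the statement to the corresponding fact about ordinary Schubert polynomials, using the atom decomposition $\iS_y = \sum_{w \in \cA(y)} \fkS_w$ from Definition~\ref{iS-def}. A classical result about Schubert polynomials (provable by induction on $\ell(w)$ using \eqref{maineq} starting from $\fkS_{w_n} = x^{\delta_n}$, or by appeal to the monomial expansion of Billey, Jockusch, and Stanley) says that for each $w \in S_\infty$ we have $\minlex(\fkS_w) = x^{c(w)}$, with coefficient $1$. Granting this, and since $\minlex$ applied to a polynomial with nonnegative coefficients picks out the lex-smallest monomial occurring in any summand, we will have $\minlex(\iS_y) = \minlex\{ x^{c(w)} : w \in \cA(y)\}$ provided we can show that the minimum on the right is achieved by a unique atom.

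The key combinatorial step is the assertion that for permutations $v,w \in S_\infty$ of the same length, lexicographic order on one-line representations agrees with lex order on Lehmer codes, i.e.\ $v <_{\lex} w \Rightarrow x^{c(v)} <_{\lex} x^{c(w)}$. To verify this, suppose $v(i)=w(i)$ for $i<k$ and $v(k)<w(k)$. A short check shows $c_i(v) = c_i(w)$ for $i < k$: both count positions $j>i$ with $v(j)=w(j) <v(i)=w(i)$ among the common values in positions $>i$. At position $k$, $c_k(v)$ counts values $x$ less than $v(k)$ that do not appear among $v(1),\dots,v(k-1) = w(1),\dots,w(k-1)$, while $c_k(w)$ counts those less than $w(k)$ from the same complement; since $v(k)<w(k)$ and $v(k)$ itself contributes to the second set but not the first, we get $c_k(v) < c_k(w)$, so $x^{c(v)} <_{\lex} x^{c(w)}$.

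Combining these ingredients with Lemma~\ref{minatom-lem} and Lemma~\ref{codei-lem} completes the argument. Since $\alpha_{\min}(y)$ is the lex-smallest element of $\cA(y)$ in one-line notation, and all atoms share the common length $\ellhat(y)$, the claim of the previous paragraph shows that $x^{c(\alpha_{\min}(y))}$ is strictly less in lex order than $x^{c(w)}$ for every other atom $w \in \cA(y)$. Hence the unique term in $\iS_y$ contributing to $\minlex$ comes from $\fkS_{\alpha_{\min}(y)}$, giving $\minlex(\iS_y) = x^{c(\alpha_{\min}(y))}$. By Lemma~\ref{codei-lem}, $c(\alpha_{\min}(y)) = \hat c(y)$, and the product formula $x^{\hat c(y)} = \prod_{(i,j) \in \hat D(y)} x_i$ is immediate from the definition of $\hat c(y)$ as the row-count sequence of $\hat D(y)$.

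The only substantive obstacle is justifying $\minlex(\fkS_w) = x^{c(w)}$, which I would simply cite from the Schubert polynomial literature (e.g.\ \cite{Manivel}); the rest of the argument is essentially bookkeeping on codes and one-line notation. No cancellation issues arise in the passage from $\minlex(\fkS_w)$ to $\minlex(\iS_y)$ because the monomials $x^{c(w)}$ for distinct atoms are distinct (the map $w \mapsto c(w)$ is injective), so the strict inequalities above guarantee no accidental cancellation of the would-be leading term.
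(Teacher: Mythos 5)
Your argument is correct and essentially self-contained. The paper itself does not prove this lemma but cites it from \cite{HMP1} (Proposition 3.14), so there is no in-paper proof to compare against; but given that Lemma~\ref{codei-lem} ($\hat c(y) = c(\alpha_{\min}(y))$) and Lemma~\ref{minatom-lem} (identifying $\alpha_{\min}(y)$) are exactly the ingredients you invoke, your route is almost certainly the intended one. The three ingredients you assemble are all sound: (1) $\minlex(\fkS_w) = x^{c(w)}$ with coefficient 1 --- this is standard and is perhaps most transparently seen from Kohnert's rule, since each Kohnert move shifts a cell strictly upward and hence strictly increases the weight vector in lexicographic order, leaving $x^{c(w)}$ as the unique lex-minimal term with coefficient 1; (2) the monotonicity $v <_{\lex} w \Rightarrow c(v) <_{\lex} c(w)$ on one-line representations, which you verify correctly (cleanest via $c_i(u) = u(i) - 1 - |\{j<i : u(j)<u(i)\}|$, which depends only on $u(1),\dots,u(i)$, so agreement of $v,w$ through position $k-1$ gives agreement of codes through $k-1$, and then the $k$th-entry comparison); and (3) no cancellation, since $\iS_y$ is a sum of Schubert polynomials with nonnegative coefficients, so $\minlex(\iS_y)$ is the lex-minimum over atoms of $\minlex(\fkS_w) = x^{c(w)}$, which your strict inequality pins to the unique atom $\alpha_{\min}(y)$. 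The ``same length'' hypothesis in your monotonicity claim is harmless but unnecessary. One small imprecision: your justification of $c_i(v)=c_i(w)$ for $i<k$ refers to ``positions $j>i$ with $v(j)=w(j)$,'' but $v$ and $w$ need not agree at positions $\geq k$; the correct point is that $c_i$ is determined by the first $i$ one-line entries, which do agree. This does not affect the conclusion.
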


\begin{lemma}\label{pf-igrass-lem}
Let $i,j,n \in \PP$. The following identities then hold:
\ben
\item[(a)] If $i \leq n$ then $\minlex( \iS[i;n] ) = x_ix_{i+1}\cdots x_n$.
\item[(b)] If $i<j\leq n$ then $\minlex( \iS[i,j;n]) = (x_ix_{i+1}\cdots x_n)(x_jx_{j+1}\cdots x_n)$.
\een
\end{lemma}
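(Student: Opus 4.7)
The plan is to reduce both parts to Lemma \ref{minlex-lem}, which identifies $\minlex(\iS_y)$ with the monomial $\prod_{(a,b)\in \hat D(y)} x_a$. Since $\iS[i;n]$ and $\iS[i,j;n]$ are by definition the involution Schubert polynomials of the explicit involutions $y = (i,n+1)$ and $y = (i,n+1)(j,n+2)$ respectively, everything reduces to directly enumerating $\hat D(y)$ from its definition $\hat D(y) = \{(a,b) \in \PP \times \PP : b \leq a < y(b)\text{ and }b < y(a)\}$.

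For part (a), I would first observe that for any $b$ fixed by $y$, the condition $b \leq a < y(b) = b$ is vacuous, and for $b = n+1$ the inequality $y(b) = i < b$ again rules out every $a$. Thus the only contributing column is $b = i$, and since $y(a) \geq i$ for every $a \in \{i, i+1, \ldots, n\}$, one reads off $\hat D(y) = \{(i,i), (i+1,i), \ldots, (n,i)\}$. Lemma \ref{minlex-lem} then gives $\minlex(\iS[i;n]) = x_i x_{i+1}\cdots x_n$.

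For part (b), the same fixed-point observation restricts the possible columns to $b \in \{i,j\}$; the values $b = n+1$ and $b = n+2$ are excluded because $y(n+1) = i < n+1$ and $y(n+2) = j < n+2$. The column $b = i$ contributes exactly the rows $i, i+1, \ldots, n$ (the row $a = n+1$ is ruled out by $a < y(b) = n+1$), as in (a). For $b = j$, the inequality $a < y(b) = n+2$ allows $j \leq a \leq n+1$, but the row $a = n+1$ is eliminated by the requirement $j < y(a) = i$, leaving rows $j, j+1, \ldots, n$. Combining the two columns yields $\hat D(y) = \{(a,i) : i \leq a \leq n\} \sqcup \{(a,j) : j \leq a \leq n\}$, and applying Lemma \ref{minlex-lem} produces the claimed product.

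The only real bookkeeping concern is ensuring that every candidate pair $(a,b)$ is accounted for and that no extra positions sneak in from the two-cycle structure; this is why I split the analysis by the value of $b$ relative to $\supp(y)$. Since $y$ has at most two nontrivial cycles, this is a short finite case check rather than a genuine obstacle, and no ingredient beyond Lemma \ref{minlex-lem} is needed.
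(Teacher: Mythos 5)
Your proof is correct and follows the same route as the paper: identify $\iS[i;n]$ and $\iS[i,j;n]$ as the involution Schubert polynomials of $y=(i,n+1)$ and $z=(i,n+1)(j,n+2)$, compute the involution diagrams $\hat D(y)$ and $\hat D(z)$ directly from the definition, and invoke Lemma~\ref{minlex-lem}. The paper simply states the diagrams as ``one checks that,'' whereas you supply the case analysis on the column index $b$ (one tiny nit: in part (a) you need $y(a) > i$ rather than $y(a) \geq i$ to satisfy the strict inequality $b < y(a)$, which does hold since $a \neq n+1$ forces $y(a)\neq i$).
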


\begin{proof}
If $i\leq n$ then $\iS[i;n] = \iS_y$ for $y=(i,n+1)$,
and if $i<j\leq n$ then $\iS[i,j;n] = \iS_z$ for $z=(i,n+1)(j,n+2)$.
One checks that
$\hat D(y) = \{ (i,i),(i+1,i),\dots,(n,i)\}$ and
$\hat D(z) = \{ (i,i), (i+1,i),\dots,(n,i)\} \cup \{ (j,j),(j+1,j),\dots,(n,j)\}$,
so the result follows by Lemma~\ref{minlex-lem}.
\end{proof}

The following proves the base case of this section's main result.

\begin{lemma}\label{pf-lem3}
If  $n \in \PP$ and $r \in [n]$ then 
$\iS[1,2,\dots,r;n] = \pf \fk M[1,2,\dots,r;n].$
\end{lemma}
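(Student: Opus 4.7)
\emph{Proof plan.}
I would prove the identity by induction on $r$. The base cases $r=1$ and $r=2$ are immediate, since $\fk M[1,\dots,r;n]$ is then a $2\times 2$ skew-symmetric matrix whose Pfaffian equals its single above-diagonal entry: for $r=1$ this entry is $\iS[1,0;n] = \iS[1;n] = D$ by Lemma~\ref{pf-lem2}, and for $r=2$ it is $\iS[1,2;n]$.

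For the inductive step the strategy is to factor out the common polynomial $D = x_1(x_1+x_2)\cdots(x_1+x_n)$ from both sides and then apply the induction hypothesis to what remains, after shifting the variables by $x_k \mapsto x_{k+1}$ (which I will denote by $\widetilde{(\,\cdot\,)}$). On the left-hand side, the involution $y = (1,n{+}1)(2,n{+}2)\cdots(r,n{+}r)$ is dominant---its one-line word is $(n{+}1)(n{+}2)\cdots(n{+}r)(r{+}1)\cdots n\,1\,2\cdots r$, which one checks avoids the pattern $132$---so Theorem~\ref{dominant-thm} applied to the diagram $\hat D(y) = \{(k,i) : 1 \leq i \leq r,\ i \leq k \leq n\}$ produces the explicit factorization
\[
\iS[1,2,\dots,r;n] \;=\; \prod_{i=1}^{r}\Bigl[x_i\prod_{k=i+1}^{n}(x_i+x_k)\Bigr] \;=\; D \cdot \widetilde{\iS[1,2,\dots,r-1;n-1]}.
\]

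On the right-hand side, Lemma~\ref{pf-lem2} ensures that every entry $\iS[1,\phi_j;n]$ in the first row of $\fk M[1,\dots,r;n]$ is divisible by $D$. In fact, one can verify using Lemma~\ref{igrass-lem4} and the closed form of $G_{2,n}$ that the explicit quotient identity
\[
\iS[1,b;n]\,/\,D \;=\; \widetilde{\iS[b-1;n-1]} \qquad \text{for } b \in \{1,2,\dots,n\}
\]
holds (with the convention $\iS[0;n-1] = 1$ covering the padding case $\phi_j = 0$). Expanding the Pfaffian of $\fk M[1,\dots,r;n]$ along its first row therefore factors out $D$ to yield
\[
\pf \fk M[1,\dots,r;n] \;=\; D \cdot \sum_{j=2}^{\ell^+(r)} (-1)^j\, \widetilde{\iS[\phi_j-1;n-1]}\cdot \pf \fk M^{(1,j)}[1,\dots,r;n].
\]
The plan is to recognize the bracketed sum as the expansion of $\widetilde{\pf \fk M[1,\dots,r-1;n-1]}$ along its first row; combined with the inductive hypothesis applied to the shifted $(r-1,n-1)$ case, this will match the left-hand side and close the induction.

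The hard part will be making the final identification rigorous. The submatrices $\fk M^{(1,j)}[1,\dots,r;n]$ are indexed by the $\phi$-values $\{2,3,\dots,r,[0]\}\setminus\{\phi_j\}$ in the \emph{original} variables, so they are not literally shifted copies of $\fk M[1,\dots,r-1;n-1]$; rearranging the expansion to see the shifted smaller Pfaffian emerge requires a term-by-term Pfaffian bookkeeping argument, combined with the quotient identity $\iS[1,b;n]/D = \widetilde{\iS[b-1;n-1]}$, to match signs and indices. Once this matching is carried out the induction closes and the lemma follows.
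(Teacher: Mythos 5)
Your factorization of the left-hand side is correct ($y$ is dominant and $\iS[1,\dots,r;n]=D_1D_2\cdots D_r$ with $D_i=x_i(x_i+x_{i+1})\cdots(x_i+x_n)$), but the inductive step for the Pfaffian side rests on a false identity. The claimed quotient formula $\iS[1,b;n]/D=\widetilde{\iS[b-1;n-1]}$ already fails for $n=3$, $b=3$: since $\iS[1,2;3]=\iS_{(1,4)(2,5)}=x_1(x_1+x_2)(x_1+x_3)\,x_2(x_2+x_3)$ is dominant and $\partial_2\iS_{(1,4)(2,5)}=\iS_{(1,4)(3,5)}$ by \eqref{i-eq}, one gets $\iS[1,3;3]=x_1(x_1+x_2)(x_1+x_3)(x_2+x_3)$, so $\iS[1,3;3]/D=x_2+x_3$, whereas the shift of $\iS[2;2]=\iS_{(2,3)}=x_2$ is $x_3$. (In general, Lemma~\ref{igrass-lem4} gives $\iS[1,b;n]/D=\pi_{b,2}\bigl(x_2^{3-b}\prod_{k=3}^{n}(x_2+x_k)\bigr)$, which is not an involution Schubert polynomial in shifted variables.) The step you defer as ``bookkeeping'' is broken for the same reason: the complementary minors are not shifted copies of $\fk M[1,\dots,r-1;n-1]$ even entry by entry, e.g.\ $\iS[2,3;3]=\iS_{(2,4)(3,5)}=(x_1+x_2)(x_1+x_3)(x_2+x_3)$, while the corresponding entry of the shifted smaller matrix would be $x_2(x_2+x_3)x_3$. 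So the proposed term-by-term matching of the first-row expansion of $\pf\fk M[1,\dots,r;n]$ with that of $D\cdot\widetilde{\pf\fk M[1,\dots,r-1;n-1]}$ cannot be carried out: the individual summands are genuinely different polynomials, and only their signed totals agree --- which is essentially the identity you are trying to prove, so the induction does not close.

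The paper's proof avoids any entrywise identification. It first shows that $\pf\fk M[1,\dots,r;n]$ is divisible by $D_1D_2\cdots D_r$: by Lemma~\ref{pf-lem1} the Pfaffian is killed by $\partial_1,\dots,\partial_{r-1}$, hence symmetric in $x_1,\dots,x_r$; by Lemma~\ref{pf-lem2} each first-column entry, hence the Pfaffian, is divisible by $D_1$; and since $s_i(D_i)$ is divisible by $D_{i+1}$ and the $D_i$ are pairwise coprime, divisibility by the whole product follows. As both sides are homogeneous of the same degree, equality is then reduced to comparing least terms, which is done with Lemmas~\ref{minlex-lem}, \ref{pf-igrass-lem} and \ref{pf-lem0}. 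If you wish to salvage an inductive argument along your lines, you would need a correct description of the quotients $\iS[1,b;n]/D$ and a mechanism (such as the divisibility-plus-leading-term comparison above) that does not require the two Pfaffian expansions to match summand by summand.
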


\begin{proof}
Let $y = (1,n+1)(2,n+2)\cdots (r,n+r) \in \I_\infty$ and
$D_i = x_i(x_i+x_{i+1})(x_i+x_{i+2})\cdots(x_i+x_n)$ for $i \in [n]$, so that $D_n=x_n$.
As noted in the proof of Theorem \ref{igrass-lem4},
 Theorem \ref{dominant-thm} implies that
$\iS[1,2,\dots,r;n]  = \iS_y
= D_1D_2\cdots D_r$. 
Write $\fk M =  \fk M[1,2,\dots,r;n]$.
 Lemma \ref{pf-lem1} implies that $\partial_i (\pf \fk M) =0$ for each $i \in [r-1]$, so
$\pf \fk M$ is symmetric in the variables $x_1,x_2,\dots, x_r$.
By Lemma \ref{pf-lem2}, every entry in the first column of $\fk M$
is divisible by $D_1$, so $\pf \fk M$
is also divisible by $D_1$.
Since $s_i(D_i)$ is divisible by $D_{i+1}$ for $i \in [n-1]$ and since
$D_1,D_2,\dots,D_r$ are pairwise coprime,
we deduce that $\pf \fk M$ is divisible by $\iS[1,2,\dots,r;n]$.
Since both of these polynomials are homogeneous and one divides the other,
to prove they are equal it suffices to show that they have the same least term.

Let $m \in \PP$ be whichever of $r$ or $r+1$ is even
and choose $z \in \F_{m}$.
If $j \in [m]$ and $i=z(j)<j$, then $\fk M_{ij}$ is either
$\iS[i,j;n]$ (if $j<m$) or $\iS[i;n]$ (if $j=m$).
We compute by Lemma \ref{pf-igrass-lem} that
$
\minlex \(\prod_{z(i)<i \in [m] } \fk M_{z(i),i}\)= \prod_{z(i)<i \in [m] } \minlex \(\fk M_{z(i),i}\)
= (x_1x_2\cdots x_n)(x_2x_3\cdots x_n)\cdots (x_rx_{r+1}\cdots x_n)
$
which is precisely $\minlex( \iS[1,2,\dots,r;n])  = \minlex(D_1)\minlex(D_2)\cdots\minlex(D_r)$.
Since $\sum_{z \in \F_m} (-1)^{\ellhat(z) + \frac{m}{2}} = 1$ by Lemma \ref{pf-lem0},
we deduce that $\minlex( \pf \fk M) = \minlex( \iS[1,2,\dots,r;n])$ as needed.
\end{proof}


\begin{theorem}\label{pfaffian-thm}
In the setup of \eqref{nrphi-eq},
$\iS[\phi_1,\phi_2,\dots,\phi_r;n]
= \pf \left[\iS[\phi_i,\phi_j;n]\right]_{1\leq i <j \leq \ell^+(\phi)}.$
\end{theorem}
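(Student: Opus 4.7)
The plan is to prove both sides satisfy the same recursion under divided difference operators, with Lemma~\ref{pf-lem3} serving as the common base case. Specifically, I would induct on the statistic
\[
\Sigma(\phi) = \sum_{i=1}^r (\phi_i - i) \in \NN,
\]
which equals zero exactly when $\phi = (1,2,\dots,r)$, in which case the identity is Lemma~\ref{pf-lem3}.

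For the inductive step, suppose $\Sigma(\phi) > 0$ and let $i \in [r]$ be the smallest index with $\phi_i > i$, so that $\phi_j = j$ for $j < i$. Define $\phi' = \phi - e_i$. Then $\phi'_{i-1} = i-1 < \phi_i - 1 = \phi'_i$ and $\phi'_i = \phi_i - 1 < \phi_{i+1} = \phi'_{i+1}$, so $\phi'$ still has the form \eqref{nrphi-eq} with $\Sigma(\phi') = \Sigma(\phi) - 1$. Set $p = \phi'_i = \phi_i - 1$. I would then verify that applying $\partial_p$ transforms the $\phi'$-version of both sides into the $\phi$-version.

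On the Pfaffian side, this is immediate from Lemma~\ref{pf-lem1}: the exclusion condition there simplifies (for strict $\phi'$) to $\phi'_i \neq \phi'_{i+1} - 1$, and in our setup $\phi'_{i+1} - 1 = \phi_{i+1} - 1 \geq \phi_i > \phi'_i$. For the involution Schubert side, write $y' = (\phi'_1,n+1)\cdots(\phi'_r,n+r)$ and $y = (\phi_1,n+1)\cdots(\phi_r,n+r)$. The point $p+1 = \phi_i$ lies outside $\{\phi'_1,\dots,\phi'_r,n+1,\dots,n+r\}$ (since $\phi_j \neq \phi_i$ for $j\neq i$ and $\phi_i \leq n$), hence is a fixed point of $y'$, so $y'(p+1) = \phi_i < n+i = y'(p)$, giving $s_p \in \DesR(y')$. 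Direct computation shows $s_p y' s_p = y \neq y'$, so the case $sy \neq ys$ of \eqref{i-eq} applies and $\partial_p \iS_{y'} = \iS_y$. The inductive hypothesis $\iS[\phi';n] = \pf \fk M[\phi';n]$ then yields $\iS[\phi;n] = \pf \fk M[\phi;n]$ after applying $\partial_p$.

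The main subtlety, which I would want to make sure of early, is choosing the ``decrement'' index $i$ correctly: decrementing the wrong coordinate can either break the strict-increase condition on $\phi'$ or invalidate the exclusion hypothesis of Lemma~\ref{pf-lem1}. Taking $i$ to be minimal with $\phi_i > i$ handles both issues simultaneously. Everything else (the descent check and the computation of $s_p y' s_p$) is a short explicit verification based on the fact that the single point $\phi_i$ is fixed by $y'$.
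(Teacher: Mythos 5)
Your proof is correct and takes essentially the same approach as the paper's: both induct on $\Sigma(\phi)=\sum_i(\phi_i-i)$ with Lemma~\ref{pf-lem3} as base case, decrement the smallest coordinate $\phi_i$ with $\phi_i>i$, and match $\partial_p$ on the two sides via Lemma~\ref{pf-lem1} and equation~\eqref{i-eq} (Theorem~\ref{ithm}). You spell out some verifications that the paper leaves implicit (validity of $\phi'$, the descent check, the exclusion hypothesis of Lemma~\ref{pf-lem1}), but the argument is the same.
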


\begin{proof}
Writing $\iS[\phi;n]$ in place of $\iS[\phi_1,\phi_2,\dots,\phi_r;n]$,
we must show that $\iS[\phi;n] = \pf \fk M[\phi;n].$
As in the proof of Lemma \ref{igrass-lem4}, we proceed by induction on
$\Sigma(\phi) = \sum_{i=1}^r (\phi_i-i)$.
If $\Sigma(\phi) = 0$ then $\phi=(1,2,\dots,r)$
so $\iS[\phi;n] = \pf \fk M[\phi;n]$ by Lemma \ref{pf-lem3}.
Suppose instead that $\Sigma(\phi) >0$. Let $i \in [r]$ 
be the smallest index such that $i<\phi_i$ and set $p=\phi_i-1$.
Theorem \ref{ithm} then implies that 
$\iS[\phi;n] = \partial_p \iS[\phi-e_i;n]$,
while Lemma \ref{pf-lem1} implies 
that
$\pf \fk M[\phi;n] = \partial_p (\pf \fk M[\phi-e_i;n]).$
We may assume that $\iS[\phi-e_i;n] = \pf \fk M[\phi-e_i;n]$ by induction,
so $\iS[\phi;n] = \pf \fk M[\phi;n]$ as needed.
\end{proof}

\begin{example}
For $\phi = (1,2,3)$ and $n=3$ the theorem reduces to the identity
\[
\iS_{(1,4)(2,5)(3,6)}
=
\pf \(\barr{rrrr}
0 & \iS_{(1,4)(2,5)} & \iS_{(1,4)(3,5)} & \iS_{(1,4)} \\
-\iS_{(1,4)(2,5)} & 0 & \iS_{(2,4)(3,5)} & \iS_{(2,4)} \\
-\iS_{(1,4)(3,5)} & -\iS_{(2,4)(3,5)} & 0 & \iS_{(3,4)} \\
-\iS_{(1,4)} & -\iS_{(2,4)} & -\iS_{(3,4)} & 0
\earr\)
.\]
By Theorem \ref{dominant-thm}, both of these expressions evaluate to
$x_1x_2x_3(x_1+x_2)(x_1+x_3)(x_2+x_3)$.
\end{example}

It is an open question whether there exists
a simple, general formula for $\iS[i,j;n]$. 
If this were known, then
 the preceding result 
  would give an effective algorithm for computing any $\iS_y$. 

\section{Insertion algorithms}
\label{insertion-sect}

In this section we describe an insertion algorithm for involution words
in order to prove bijectively that  $\iF_y$ is Schur $P$-positive.
Conveniently, 
 the  algorithm we need turns out to be given by restricting the domain of 
a bijection already studied by Patrias and Pylyavskyy~\cite{PP} called  \emph{shifted Hecke insertion}.
Our goal here is more expository than in previous sections, and we have included a significant amount of background material in
Sections~\ref{ss:schur-p2} and \ref{ss:sh} in order to give a readable account of shifted Hecke insertion.
Our new results appear in Section~\ref{ss:inv-insert}.

\subsection{Shifted tableaux}\label{ss:schur-p2}

The \emph{diagram} of a partition $\lambda$ is the set $D_\lambda = \{(i,j) \in \PP\times \PP: j \leq \lambda_i\}.$
If $\lambda$ is a strict partition, then its \emph{shifted diagram} is the set
$D'_\lambda = \{(i, i + j - 1) : (i,j) \in D_\lambda\}.$
We orient the elements of $D_\lambda$ and $D'_\lambda$
in the same way as the positions in a matrix, and refer to the $i$th row or $j$th column of these sets according to this convention.
A \emph{tableau} (respectively, \emph{shifted tableau}) of shape $\lambda$ is a map  $D_\lambda \to \PP$
(respectively, $D'_\lambda \to \ZZ\setminus \{0\}$).
We write $T_{ij}$ for the image of $(i,j)$ under $T$, and refer to this number as the entry of $T$ in position $(i,j)$.

A shifted tableau $T$ is \emph{increasing} if its entries are positive and strictly increasing along each row and column.
Let $\prec $ be the total order on $\ZZ \setminus 0$ with $-1 \prec 1 \prec -2 \prec 2 \prec \dots.$
A shifted tableau $T$ is \emph{semi-standard}  if the following conditions hold:
\begin{itemize}
\item The entries of $T$ are weakly increasing with respect to $\prec$ along each row and column.
\item No two positions in the same column of $T$ contain the same positive number. 

\item No two positions in the same row of $T$ contain the same negative number.

\item Every entry of $T$ on the main diagonal $\{ (i,i) : i \in \PP\}$ is positive.
\end{itemize}
An (unshifted) tableau is defined to be \emph{semi-standard} in the same way,
but with the added constraint that its entries are all positive.
Finally, a (shifted) tableau $T$ of shape $\lambda$ is \emph{standard}
if it is semi-standard and $(i,j) \mapsto |T_{ij}|$ is a bijection  $D_\lambda \to [n]$ or $D'_\lambda \to [n]$ for some $n\in \NN$, as appropriate. 
Let $\SSYT(\lambda)$ and $\SYT(\lambda)$ be the sets of semi-standard and standard  tableaux of shape $\lambda$, respectively.
Similarly, when $\lambda$ is strict, let $\Inc(\lambda)$, $\SSMT(\lambda)$, and $\SMT(\lambda)$ be the sets of increasing, semi-standard, and standard shifted tableaux of shape $\lambda$.
Shifted tableaux as we have defined them are sometimes called \emph{shifted marked tableaux}---hence our use of the letters ``$\SMT$.''

\begin{example}\label{smt-ex}
Every semi-standard shifted tableau of shape $\lambda=(2,1)$ has the form
\[
\ytableausetup{boxsize = .5cm}
\begin{ytableau}
a &  b \\ 
\none & c
\end{ytableau}
\qquord
\begin{ytableau}
a &   -b \\ 
\none & c
\end{ytableau}
\qquord
\begin{ytableau}
a &   a \\ 
\none & b
\end{ytableau}
\qquord
\begin{ytableau}
a &   -b \\ 
\none & b
\end{ytableau}
\]
for some positive integers $a<b<c$.
The set $\SMT(\lambda)$ contains two elements, given by the first two tableaux shown here with $(a,b,c)=(1,2,3)$.
\end{example}

The following power series are the \emph{fundamental quasi-symmetric functions (of degree $n$)}:

\begin{definition}
\label{d:fund}
For $n \in \PP$ and $S\subset [n-1]$ define $f_{n,S} =\sum x_{i_1} x_{i_2} \dots x_{i_n}\in  \ZZ[[x_1,x_2,\dots]]$,
where the sum is over all weakly increasing sequences $(i_1, i_2, \dots, i_n) \in \PP^n$ with  $ i_j < i_{j+1} $ for all $ j \in S$.
\end{definition}

When $n$ is clear from context, we write $f_{S}$ in place of $f_{n,S}$.

\begin{remark}
If $\a = (s_{a_1},s_{a_2},\dots, s_{a_n})$ is a sequence of simple transpositions, then
the power series $f_\a$ defined in the introduction is
 $ f_{n,S}$ for $S = \{ i \in [n-1]: a_i < a_{i+1}\}$.
\end{remark}

For a (shifted) tableau $T$, define $x^T$ as the monomial given by the product $\prod_{(i,j)} x_{|T_{ij}|}$ over all $(i,j)$ in $T$'s domain.
When $T$ is standard, its \emph{descent set} if the set $\Des(T)$ of positive integers $i  $ such that 
either 
(1) $i$ and $i+1$ both appear in $T$ with $i$ in a row strictly above $i+1$,
(2) $-i$ and $-(i+1)$ both appear in $T$ with $-i$ in a column strictly to the left of $-(i+1)$,
or 
(3) $i$ and $-(i+1)$ both appear in $T$.
If $T \in \SYT(\lambda)$ then only condition (1) can occur, and if $n=|\lambda|$ then
  $\Des(T)$ is the complement in $[n-1]$ of the descent set of the transpose of $T$, which 
 is also a standard tableau.
The following identities 
are well-known.

\begin{proposition}
\label{p:schur-fund}
If $\lambda$ is a partition of $n$
then
$s_\lambda = \sum_{T \in \SSYT(\lambda)} x^T = \sum_{T \in \SYT(\lambda)} f_{\Des(T)}
$.
\end{proposition}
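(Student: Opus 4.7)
The plan is to handle the two equalities separately. The identity $s_\lambda = \sum_{T \in \SSYT(\lambda)} x^T$ is a standard combinatorial characterization of the Schur function: I would either take it as the definition or cite a proof via the Jacobi--Trudi identity, the Lindstr\"om--Gessel--Viennot lemma, or the RSK correspondence. The substantive content is therefore the second equality, which I would obtain from a bijection
\[
\SSYT(\lambda) \;\longleftrightarrow\; \bigsqcup_{T \in \SYT(\lambda)} \left\{ (i_1 \leq i_2 \leq \cdots \leq i_n) \in \PP^n : i_j < i_{j+1} \text{ whenever } j \in \Des(T) \right\}.
\]

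The bijection is given by \emph{standardization}. Given $U \in \SSYT(\lambda)$ with multiset of entries $i_1 \leq i_2 \leq \cdots \leq i_n$, I would construct $\std(U) \in \SYT(\lambda)$ by assigning the labels $1,2,\ldots,n$ in blocks of equal values: if the value $v$ occurs in the cells $(r_1,c_1),\ldots,(r_k,c_k)$ with $c_1 < c_2 < \cdots < c_k$ (distinctness of the columns is forced by column-strictness), then these cells receive the appropriate consecutive block of labels from left to right. The inverse map simply reads off the standardization and substitutes $i_j$ back for each label $j$. Once this bijection is in place, summing $x^U$ by first grouping by $T = \std(U)$ and then over all admissible weakly increasing sequences produces $\sum_{T} f_{n,\Des(T)}$ directly from Definition \ref{d:fund}.

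The one place that needs genuine verification, and which I expect to be the main (and only) obstacle, is checking that the image really is the indicated set --- equivalently, that the descent condition matches the allowed ties. The key observation is that if a value $v$ occurs in $U$ at two cells $(r_1,c_1)$ and $(r_2,c_2)$ with $c_1 < c_2$, then $r_2 \leq r_1$: otherwise the cell $(r_1,c_2)$ lies in $U$'s shape, and its entry would be forced to satisfy both $\geq v$ (row-weak from $(r_1,c_1)$) and $< v$ (column-strict above $(r_2,c_2)$), a contradiction. Thus the equal-value cells are labelled left to right in weakly decreasing rows, so for any two consecutive labels $j,j+1$ assigned within a single value block, cell $j+1$ lies weakly above cell $j$, so $j \notin \Des(\std(U))$. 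Contrapositively, $j \in \Des(\std(U))$ forces $i_j < i_{j+1}$; and conversely, any admissible weakly increasing sequence applied to a standard tableau $T$ yields a valid SSYT, since a weak inequality $i_j \leq i_{j+1}$ is problematic only when $j+1$ sits below $j$ in $T$, which is precisely excluded by $j \in \Des(T) \Rightarrow i_j < i_{j+1}$.

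With the bijection and its image characterization established, the proof concludes by the rearrangement
\[
\sum_{U \in \SSYT(\lambda)} x^U \;=\; \sum_{T \in \SYT(\lambda)} \;\sum_{\substack{i_1 \leq \cdots \leq i_n \\ i_j < i_{j+1}\ \forall j \in \Des(T)}} x_{i_1} x_{i_2} \cdots x_{i_n} \;=\; \sum_{T \in \SYT(\lambda)} f_{n,\Des(T)},
\]
which gives the second equality. The whole argument is elementary bookkeeping, but care is required to fix the convention for $\Des(T)$ (``$j+1$ strictly below $j$'') consistently with the direction in which equal entries of $U$ are read during standardization.
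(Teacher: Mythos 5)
The paper states this proposition as well-known and supplies no proof (in contrast to its shifted analogue, Proposition~\ref{p:schur-p-def}, for which the paper does sketch an argument). Your standardization bijection is the classical proof of this identity and is essentially correct. The one spot worth expanding is the column-strictness check when passing from a standard tableau $T$ and an admissible sequence back to an $\SSYT$: your observation is phrased only for consecutive labels $j, j+1$, but column strictness must be verified for the pair of labels $a<b$ in cells $(r,c)$ and $(r+1,c)$, which are generally not consecutive. The repair is the usual chain argument: if $i_a = i_b$ then $i_a = i_{a+1} = \cdots = i_b$, so none of $a,\dots,b-1$ lies in $\Des(T)$, so by your consecutive-label observation each cell $m+1$ lies weakly above cell $m$ for $a\le m<b$, forcing cell $b$ weakly above cell $a$ and contradicting that they occupy the same column with $b$ below. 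This is the same idea you use, just iterated, and making it explicit closes the argument.
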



\begin{proposition}
\label{p:schur-p-def}
If $\lambda$ is a strict partition of $n$
then
$
P_\lambda = \sum_{T \in \SSMT(\lambda)} x^T = \sum_{T \in \SMT(\lambda)} f_{\Des(T)} = \sum_{T \in \SMT(\lambda)} f_{[n-1]- \Des(T)}
.$
\end{proposition}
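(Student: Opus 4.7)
The plan is to establish the three equalities in turn.

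The first equality, $P_\lambda = \sum_{T \in \SSMT(\lambda)} x^T$, is the classical combinatorial formula for Schur $P$-functions due originally to Schur. It is equivalent to Definition~\ref{schurp-def} by results which may be found in \cite[Theorem 6.2]{Stem} and \cite[{\S}III.8, Eq.~(8.16)]{Macdonald2}; I would simply quote one of these sources for this step, perhaps preceded by a brief remark that the product $G_{r,n}$ in Definition~\ref{schurp-def} is the generating function encoding the freedom to place negative entries in a semi-standard shifted tableau.

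For the second equality, I would use \emph{standardization}. Given $T \in \SSMT(\lambda)$ with $n = |\lambda|$ entries, list the values of $T$ in weakly increasing $\prec$-order as $e_1 \preceq e_2 \preceq \cdots \preceq e_n$, adopting a tie-breaking convention compatible with the semi-standard column/row constraints: among positions holding the same value $k > 0$, sort by row index from top to bottom; among positions holding the same value $-k$, sort by column index from left to right. Replacing each $e_k$ by $\sgn(e_k) \cdot k$ then yields a standard shifted tableau $\std(T) \in \SMT(\lambda)$, and the assignment $T \mapsto (\std(T), (|e_1|, \dots, |e_n|))$ is a bijection from $\SSMT(\lambda)$ onto pairs $(S, (i_1, \dots, i_n))$ where $S \in \SMT(\lambda)$ and $i_1 \leq i_2 \leq \cdots \leq i_n$ are positive integers with $i_k < i_{k+1}$ for every $k \in \Des(S)$. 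Summing $x^T = x_{i_1} x_{i_2} \cdots x_{i_n}$ over such pairs and invoking Definition~\ref{d:fund} gives $P_\lambda = \sum_{T \in \SMT(\lambda)} f_{\Des(T)}$.

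For the third equality, I would apply the standard involution $\omega : \Lambda \to \Lambda$ sending $s_\lambda \mapsto s_{\lambda^T}$, which extends to quasi-symmetric functions and satisfies $\omega(f_{n,S}) = f_{n,\,[n-1]- S}$ for $S \subset [n-1]$; this last identity is a classical property of Gessel's fundamental quasi-symmetric functions. Since $\lambda$ is strict, $\omega(P_\lambda) = P_\lambda$ (as already used in the proof of Theorem~\ref{ut-thm2}, by \cite[Example 3(a), {\S}III.8]{Macdonald2}). Applying $\omega$ to the second equality immediately yields the third.

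The main obstacle is checking the details of standardization: one must verify that the tie-breaking convention above actually produces an element of $\SMT(\lambda)$ (in particular, that main-diagonal positivity is preserved, and that no two entries of equal absolute value violate the standard-tableau constraints), and that the inverse procedure---expanding each entry $\sgn(e_k)\cdot k$ of $\std(T)$ to $\sgn(e_k)\cdot i_k$ for a compatible sequence---recovers a valid semi-standard shifted tableau. These verifications are routine but somewhat delicate owing to the sign and diagonal conventions.
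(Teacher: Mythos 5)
Your overall strategy matches the paper's exactly: cite the literature for the first equality, standardize for the second, and apply the Schur--transpose involution $\omega$ for the third. The treatment of the third equality is fine (you cite the action of $\omega$ on fundamental quasi-symmetric functions as a known fact, while the paper effectively re-derives it from the identity $s_{\mu^T} = \sum_{T\in\SYT(\mu)} f_{[|\mu|-1]-\Des(T)}$; either is acceptable). The problem is in the second equality, which the paper dismisses as ``an exercise'' but which your proposed details get wrong: both of your tie-breaking conventions for standardization are backwards.

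In a semi-standard shifted tableau, the cells carrying a fixed positive value $k$ lie in distinct columns, and reading them left to right by column the row indices weakly decrease; the cells carrying a fixed negative value $-k$ lie in distinct rows, and reading them top to bottom by row the column indices weakly decrease. To avoid introducing spurious descents under conditions (1) and (2) of the paper's definition of $\Des$, repeated positive values must be standardized left to right by column (equivalently bottom to top by row), and repeated negative values top to bottom by row (equivalently right to left by column). You proposed the opposite in both cases. For a concrete failure, take the semi-standard shifted tableau $T$ of shape $(3,1)$ with $T_{11}=T_{12}=1$ and $T_{13}=T_{22}=2$, so $(|e_1|,\ldots,|e_4|)=(1,1,2,2)$. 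Your rule (sorting the two $2$'s top to bottom) yields $\std(T)$ with entries $1,2,3$ in row $1$ and $4$ in row $2$, giving $\Des(\std(T))=\{3\}$; but then the required inequality $i_3<i_4$, i.e.\ $2<2$, fails, so $T$ does not land in your claimed target set. With the correct rule (sorting by column, so the cell $(2,2)$ precedes $(1,3)$), one gets $\std(T)$ with entries $1,2,4$ in row $1$ and $3$ in row $2$, descent set $\{2\}$, and the condition $i_2<i_3$, i.e.\ $1<2$, holds as needed. An analogous counterexample for the negative rule arises from the shape-$(4,2)$ tableau with rows $(1,1,2,-3)$ and $(2,-3)$. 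With the tie-breaking corrected, your standardization bijection goes through.
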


\begin{proof}
We include a proof for completeness.
The first equality is \cite[Eq.\ (8.16$'$), {\S}III.8]{Macdonald2} or \cite[Eq.\ (6.4)]{Stem},
and the second follows as an exercise.
Since the power series $\{f_\a\}$ are linearly independent
and since $P_\lambda$ is invariant under the linear automorphism of $\Lambda$
with $s_\mu \mapsto s_{\mu^T}$  \cite[Example 3(a), {\S}III.8]{Macdonald2},
the third equality follows upon noting that
$s_{\mu^T} = \sum_{T \in \SYT(\mu)} f_{[|\mu|-1]-\Des(T)}$.
\end{proof}

\begin{example} If $\lambda=(1,1)$ then $\SYT(\lambda)$ contains a single element whose descent set is $\{1\}$, so $s_{(1,1)} = \sum_{i<j} x_ix_j$. If $\lambda=(2,1)$ then Example~\ref{smt-ex} shows that $\SMT(\lambda)$ has two elements, whose descent sets are $\{2\}$ and $\{1\}$, so $P_{(2,1)} = 2\sum_{i<j<k} x_ix_jx_k + \sum_{i<j} x_i^2x_j + \sum_{i<j} x_ix_j^2$.
\end{example}

In Section~\ref{insertion-sect}, we will need one other family.
Fix a strict partition $\lambda$.
A \emph{set-valued shifted tableau} $T$ of shape $\lambda$ is a map from  $D_\lambda'$ to the set of finite, nonempty subsets of $\ZZ\setminus \{0\}$. 
A set-valued shifted tableau $T$ of shape $\lambda$ is
\emph{increasing} if each shifted tableau $U$ of shape $\lambda$ with $U_{ij} \in T_{ij}$ for all $(i,j)$ is increasing,
and \emph{standard (of rank $n$)} if each shifted tableau $U$ of shape $\lambda$ with $U_{ij} \in T_{ij}$ for all $(i,j)$ is semi-standard and 
the map $x \mapsto |x|$ is a bijection $\bigsqcup_{(i,j) \in D'_{\lambda}} T_{ij} \to  [n]$,
where $\bigsqcup$ denotes disjoint union. Let $\SetMT_n(\lambda)$ be the set of standard set-valued shifted tableaux of rank $n$.
Any shifted tableau may be viewed as a set-valued shifted tableau whose entries are all singleton sets,
and with respect to this identification it holds that $\SMT(\lambda) = \SetMT_n(\lambda)$ for $n=|\lambda|$.

\subsection{Shifted Hecke insertion}
\label{ss:sh}

We now present the definition of shifted Hecke insertion from \cite{PP}.
The simplest implementation requires three methods: a bumping rule, an insertion rule, and a final algorithm.
In what follows, we write $:=$ to denote the assignment of an expression on the right to a variable on the left.

\begin{algorithm}[Bumping rule] This algorithm takes a number,  a binary digit, and a possibly empty increasing sequence as inputs, and outputs three values of the same types.
\ben
\item[] Inputs: $p \in \PP$, $\Schensted \in \{0,1\}$, and $M = (m_1 < m_2 < \dots < m_n) \in \PP^n$.
\item[] Pseudo-code:
\ben
\item[B1:] If $p > m_n$ then set $M' := (m_1,m_2, \dots, m_n ,p)$ and $q:=0$.
\item[B2:] Else if $p = m_n$ then set $M' := M$ and $q:=0$.
\item[B3:] Else if $p < m_n$: 
\ben
\item[B4:] Let $i \in [n]$ be such that $m_{i-1} < p \leq m_i $ where $m_0 := -\infty$.
\item[B5:] If $i= 1$ then set $\Schensted:=1$.
\item[B6:] If $p=m_i $ then set $M' := M$ and $q := m_{i+1}$.
\item[B7:] Else if $p<m_i$ then set $M' := (m_1, \dots, m_{i-1}, p, m_{i+1}, \dots, m_n)$ and $q := m_i$.
\een
\item[B8:] Return $(q, \Schensted, M')$.
\een
\een
We denote the output of this algorithm as $\Bump(p,\Schensted,M)$.
\end{algorithm}

Informally, when inserting $p$ into $M$, we bump the first entry larger than $p$ and replace it with $p$ if the resulting sequence is increasing.
Here, $q$ is the entry bumped while $\Schensted$ is used in the insertion rule.

\begin{algorithm}[Insertion rule] This algorithm takes a number and an increasing shifted tableau as inputs, and outputs an index, a binary digit, and an increasing shifted tableau.
\ben
\item[]  Inputs: $p \in \PP$ and $P$ an increasing shifted tableau.

\item[] Pseudocode:
\ben
\item[I1:] Set $j := 0$ and $\Schensted:=0$.
\item[I2:] While $p > 0$:
\ben
\item[I3:] Set $j := j + 1$, and define $R$ and $C$ as the $j$th row and column of $P$.
\item[I4:] If $\Schensted = 0$: 
\ben
\item[I5:] Let $(p, \Schensted, R' ) := \Bump(p,\Schensted,R)$.
\item[I6:] Let $P'$ be the shifted tableau given by replacing the $j$th row of $P$ by $R'$.
\een
\item[I7:] Else if $\Schensted = 1$:
\ben
\item[I8:] Let $(p, \Schensted, C' ) := \Bump(p,\Schensted,C)$.
\item[I9:] Let $P'$ be the shifted tableau given by replacing the $j$th column of $P$ by $C'$.
\een
\item[I10:] If $P'$ is increasing then set $P := P'$.
\een
\item[I11:] Return  $(j, \Schensted, P)$.
\een
\een
We denote the output of this algorithm as  $\Insert(p,P)$.
\end{algorithm}

We apply the bumping rule row by row until there is no output ($q = 0$) or $\Schensted = 1$, at which point we begin bumping column by column until there is no output.
Combining the bumping rule and insertion rule as follows gives \emph{shifted Hecke insertion}.
(As its named suggests, this insertion algorithm  is
 a shifted analogue of \emph{Hecke insertion} as defined in \cite{buch2008stable}.)


\begin{algorithm}[Shifted Hecke insertion \cite{PP}] This algorithm takes a word as input and outputs an increasing shifted tableau and a standard set-valued shifted tableau, both of the same shape.
\ben
\item[] Inputs: $\a = (a_1,a_1, \dots, a_n) \in \PP^n$.

\item[] Pseudocode:
\ben
\item[S1:] Set $P := \varnothing$, $Q := \varnothing$, $\lambda := \varnothing$, and $\Schensted := 0$.

\item[S2:] For $i :=1,2,\dots,n$:
\ben
\item[S3:] Set $(j, \Schensted, P):= \Insert(a_i,P)$ and $\lambda := \mathrm{shape}(P)$, and let $R$ and $C$ denote the $j$th row and column of $P$.
\item[S4:] If $\Schensted= 0$, add $i$ to the last position in column $|R|$ of $Q$, so that $\mathrm{shape}(Q) = \lambda$.
\item[S5:] If $\Schensted =1$, add $-i$ to the last position in row $|C|$ of $Q$,  so that $\mathrm{shape}(Q) = \lambda$.
\een
\item[S6:] Return $(P,Q) \in \Inc(\lambda) \times  \SetMT_n(\lambda)$.
\een
\een
We denote the output of this algorithm as  $\SH(\a) = (P_{\SH}(\a),Q_{\SH}(\a)) $,
and refer to $P_{\SH}(\a)$ as the \emph{insertion tableau} and
$Q_{\SH}(\a)$ as the \emph{recording tableau}.
\end{algorithm}

\begin{remark}
In most insertion algorithms, each new entry in the recording tableau goes in the same position as the 
entry just inserted into the insertion tableau.
However, with set-valued recording tableaux, this position need not be a corner.
Steps S4 and S5 resolve this by translating the position of the recording tableau's new entry  to the bottom of its column (row insertion) or end of its row (column insertion).
See Figure~\ref{f:sh} for an example.
\end{remark}

\begin{figure}[h]
\caption{
We compute $\SH(\a) = (P_{\SH}(\a), Q_{\SH}(\a))$ for $\textbf{a} = (5,4,1,3,4,5,2,1,2)$.
For convenience, we write $i'$ in place of $-i$ in all shifted tableaux,
and define $\a[i] = (a_1,a_2, \dots, a_i)$.
}\label{f:sh}
\[
\ba
\ytableausetup{mathmode, boxsize=1.5em}
&P_{\SH}(\textbf{a}[1]) = 
\begin{ytableau}
5
\end{ytableau}
\qquad
&&Q_{\SH}(\textbf{a}[1]) = 
\begin{ytableau}
1
\end{ytableau}
\\
&P_{\SH}(\textbf{a}[2]) = 
\begin{ytableau}
4 & 5
\end{ytableau}
\qquad
&&Q_{\SH}(\textbf{a}[2]) = 
\begin{ytableau}
1 & 2'
\end{ytableau}
\\
&P_{\SH}(\textbf{a}[3]) = 
\begin{ytableau}
1 & 4 & 5
\end{ytableau}
\qquad
&&Q_{\SH}(\textbf{a}[3]) = 
\begin{ytableau}
1 & 2' & 3'
\end{ytableau}
\\
&P_{\SH}(\textbf{a}[4]) = 
\begin{ytableau}
1 & 3 & 5\\
\none & 4
\end{ytableau}
\qquad
&&Q_{\SH}(\textbf{a}[4]) = 
\begin{ytableau}
1 & 2' & 3'\\
\none & 4
\end{ytableau}
\\
&P_{\SH}(\textbf{a}[5]) = 
\begin{ytableau}
1 & 3 & 4\\
\none & 4 & 5
\end{ytableau}
\qquad
&&Q_{\SH}(\textbf{a}[5]) = 
\begin{ytableau}
1 & 2' & 3'\\
\none & 4 & 5
\end{ytableau}
\\
&P_{\SH}(\textbf{a}[6]) = 
\begin{ytableau}
1 & 3 & 4 & 5\\
\none & 4 & 5
\end{ytableau}
\qquad
&&Q_{\SH}(\textbf{a}[6]) = 
\begin{ytableau}
1 & 2' & 3' & 6\\
\none & 4 & 5
\end{ytableau}
\\
&P_{\SH}(\textbf{a}[7]) = 
\begin{ytableau}
1 & 2 & 4 & 5\\
\none & 3 & 5
\end{ytableau}
\qquad
&&Q_{\SH}(\textbf{a}[7]) = 
\begin{ytableau}
1 & 2' & 3' & 67'\\
\none & 4 & 5
\end{ytableau}
\\
&P_{\SH}(\textbf{a}[8]) = 
\begin{ytableau}
1 & 2 & 3 & 4 & 5\\
\none & 3 & 5
\end{ytableau}
\qquad
&&Q_{\SH}(\textbf{a}[8]) = 
\begin{ytableau}
1 & 2' & 3' & 67' & 8'\\
\none & 4 & 5
\end{ytableau}
\\
&P_{\SH}(\textbf{a}[9]) = 
\begin{ytableau}
1 & 2 & 3 & 4 & 5\\
\none & 3 & 5
\end{ytableau}
\qquad
&&Q_{\SH}(\textbf{a}[9]) = 
\begin{ytableau}
1 & 2' & 3' & 67' & 8'\\
\none & 4 & 59'
\end{ytableau}
\ea
\]
\end{figure}

The following key property is \cite[Theorem 5.18]{PP}.
\begin{theorem}[Patrias and Pylyavskyy \cite{PP}]
\label{t:sh-bij}
For all $n \in \NN$,
shifted Hecke insertion is a bijection  
$ \SH : \PP^n \to \bigcup_{\lambda\text{ strict}} \Inc(\lambda) \times \SetMT_n(\lambda).$
\end{theorem}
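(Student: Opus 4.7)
The plan is to prove Theorem~\ref{t:sh-bij} by the standard strategy of constructing an explicit inverse, namely a reverse shifted Hecke insertion algorithm, and verifying that forward and reverse are mutually inverse.

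First I would verify well-definedness: at each iteration of step S2, the object $P$ remains an increasing shifted tableau and $Q$ remains a standard set-valued shifted tableau whose shape agrees with that of $P$. This is essentially a case analysis inside the bumping rule. The non-trivial observations are (i) when $\Schensted = 0$, the modified row $R'$ remains increasing (cases B1, B2, B6, B7 each preserve this), and the replacement of row $j$ by $R'$ cannot destroy column strictness except possibly when a new cell is created on the main diagonal of the shifted shape, at which point $\Schensted$ gets flipped to $1$ via step B5 before any diagonal violation can arise; (ii) when $\Schensted = 1$, the analogous property holds for columns; and (iii) the safeguard at step I10 (``if $P'$ is increasing then set $P := P'$'') handles precisely the configurations where a bump would produce a repeated value in a row or column, in which case $P$ is left unchanged and no cell is added—this is the ``Hecke'' phenomenon. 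In parallel one checks that in steps S4 and S5 the new entry $\pm i$ is added to the unique outer corner of the shape of $P$, and that it may be appended to an existing cell of $Q$ (making $Q$ genuinely set-valued) only when shape$(P)$ did not grow; the positivity/negativity convention together with the diagonal rule ensures $Q$ satisfies all the conditions of Definition of $\SetMT_n(\lambda)$.

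Next I would construct the reverse algorithm. Given $(P,Q) \in \Inc(\lambda) \times \SetMT_n(\lambda)$, the largest $|k|$ appearing in $Q$ is $n$; let $(i,j)$ be the cell of $Q$ containing $\pm n$. If the entry is positive, initiate reverse row bumping starting from the bottom of column $j$ of $P$ (moving upwards through rows), using an inverse bumping rule: in a row $R = (m_1 < \cdots < m_s)$ and given a value $q$ bumped from below, locate the largest $m_i < q$ and replace $m_i$ by $q$, outputting $m_i$ as the next value to bump upward; if $q$ arrived from a cell on the diagonal, switch to reverse column bumping. If the entry $-n$ is negative, start analogously with reverse column bumping from the end of row $i$. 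The output at the top of the tableau is the letter $a_n$, and one removes $\pm n$ from $Q$ (deleting its cell only if it was the sole occupant). Iterating $n$ times recovers the word $\a$. The key point is that the ``if increasing'' safeguard I10 in forward insertion corresponds precisely to the situations where reverse bumping leaves $P$ unchanged, which is detected by the fact that $\pm n$ was appended to an existing cell of $Q$ rather than creating a new one.

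Finally I would show that forward-then-reverse and reverse-then-forward are identity maps, by induction on $n$. The inductive step reduces to showing that one forward insertion of $a_n$ into $(P_{\SH}(\a[n-1]), Q_{\SH}(\a[n-1]))$ followed by one step of reverse insertion returns the starting pair. This is essentially a line-by-line correspondence between the forward bumping rule and its reverse; each of the cases B1, B2, B6, B7 in the bumping rule has a unique inverse case, and the sign of the recording entry records precisely whether the bumping path crossed the diagonal, so there is no ambiguity in whether to begin reverse insertion with a row or a column step. The main obstacle here will be the careful bookkeeping at the diagonal transition and at the Hecke ``no change'' cells, since these are exactly the steps where the shape of $P$ does not grow but $Q$ still acquires a new entry; handling these cases correctly is what distinguishes the set-valued theory from ordinary shifted RSK and is the technical heart of the argument.
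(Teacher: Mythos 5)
The paper does not supply a proof of this statement at all: it is quoted verbatim from Patrias and Pylyavskyy and attributed as \cite[Theorem 5.18]{PP}. So there is nothing in the paper to compare your argument against line by line; the relevant comparison is whether your sketch could stand in for the external proof being cited.

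Your overall strategy --- well-definedness, an explicit reverse algorithm, and mutual inversion by induction on the word length --- is indeed the strategy used in the cited source, and your outline of the forward well-definedness is reasonable. But the description of the reverse algorithm has a genuine gap precisely at the hardest point. You start reverse bumping ``from the bottom of column $j$ of $P$ where $(i,j)$ is the cell of $Q$ containing $\pm n$,'' but this conflates the recording position with the termination position of the forward bumping path. In the forward algorithm, when $\Schensted=0$ and insertion terminates in row $j$, the recording entry is not placed in row $j$; it is placed at the \emph{bottom} of column $|R|$ (step S4), and in the Hecke ``no-change'' case that bottom cell can be several rows below the actual termination row. The reverse algorithm must therefore recover the termination row from the recording cell, the set-valuedness status of that cell, the sign of the entry, and the current state of $P$; your sketch does not explain how this is done, and it is not automatic. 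Relatedly, the assertion that ``each of the cases B1, B2, B6, B7 has a unique inverse case'' is not self-evident in the Hecke cases B2 and B6 where the tableau is left unchanged: one needs a lemma saying that, given the recording data, one can tell apart a genuine bump from a Hecke no-op before running the reverse bump. Those two points --- recovery of the termination position and disambiguation of the Hecke cases --- constitute the technical heart of the cited proof, and your proposal identifies them as ``the technical heart'' but does not actually address them. As written, the proposal is a plausible outline of the known proof rather than a proof.
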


Define a \emph{word} to be a finite sequence of positive integers.
The \emph{descent set} of a word $\a = (a_1, a_2,\dots, a_n)$ is 
$\Des(\a) = \{i \in [n-1]: a_i > a_{i+1}\}$.
Let $T \in \SetMT_n(\lambda)$ be a standard set-valued shifted tableau, and say that
$x$ appears in position $(i,j)$ of  $T$ if $x \in T_{ij}$. 
Following \cite[Section 3.2]{hamaker2015shifted}, we define the \emph{descent set} of $T$
as the set $\Des(T)$ of positive integers $i$ 
which satisfy one of the following mutually exclusive conditions:
(1) $i$ and $i+1$ both appear in $T$ and $i$ is in a row strictly above $i+1$,
(2) $i$ and $-(i+1)$ both appear in $T$,
(3) $-i$ and $-(i+1)$ both appear in $T$ and $-(i+1)$ is in a row strictly above $-i$, or
(4) $-i$ and $-(i+1)$  appear in the same row of $T$ but not  in the same position.
One can check that if every entry of $T$ is a singleton set,
then this definition reduces to the descent set of a standard shifted tableau defined in 
Section~\ref{ss:schur-p2}.
We recall three properties of shifted Hecke insertion from \cite{hamaker2015shifted}.
The first is noted before \cite[Theorem 3.7]{hamaker2015shifted}:

\begin{proposition}[see~\cite{hamaker2015shifted}]
\label{p:sh-des}
For any word $\a$, we have $\Des(\a) = \Des(Q_{\SH}(\a))$.
\end{proposition}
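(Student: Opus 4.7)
The plan is to prove this by induction on the length $n$ of $\a$. The base case $n \leq 1$ is trivial since both descent sets are empty. For the inductive step, write $\a = (a_1,\ldots,a_n)$ and $\a' = (a_1,\ldots,a_{n-1})$, and set $(P,Q) = \SH(\a')$ and $(P^*,Q^*) = \SH(\a)$. Since the $n$th step of the insertion only adds the new label $n$ or $-n$ to $Q$ (either into a new cell, or into an existing cell at the end of a row or column), every label in $\{\pm 1, \ldots, \pm(n-1)\}$ occupies the same cell of $Q^*$ as of $Q$. By the inductive hypothesis $\Des(\a')=\Des(Q)$, so it suffices to show $n-1 \in \Des(\a)$ if and only if $n-1\in\Des(Q^*)$.

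To establish this, I would formulate a comparison lemma describing the insertion paths of the two consecutive entries $a_{n-1}$ and $a_n$ into $P_0 := P_{\SH}(a_1,\ldots,a_{n-2})$. Let $\Schensted_{n-1},\Schensted_n \in \{0,1\}$ denote the final values of the state variable from the two calls to the insertion rule. There are four cases, determined by the pair of labels added to $Q^*$: namely $(n-1,n)$, $(n-1,-n)$, $(-(n-1),n)$, or $(-(n-1),-n)$, corresponding respectively to $(\Schensted_{n-1},\Schensted_n) = (0,0)$, $(0,1)$, $(1,0)$, or $(1,1)$. For each case I would verify (a) which sign combinations are possible when $a_{n-1} \leq a_n$ and when $a_{n-1}>a_n$, and (b) in the possible cases, that the relative placement of the two labels satisfies exactly one of the four conditions in the definition of $\Des(Q^*)$ precisely when $a_{n-1}>a_n$. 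For instance, case $(0,1)$ should occur only under a descent in $\a$ and always matches condition~(2), while case $(1,0)$ should occur only under an ascent and produces no descent in $Q^*$ at position $n-1$ (since no descent condition matches the sign pattern $(-(n-1),n)$).

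The core tool is a monotonicity property of the bumping rule: inserting a weakly larger value into the same row of an increasing shifted tableau either bumps a weakly later position or is appended or absorbed, and this comparison propagates through the column insertion phase as well. The main obstacle will be tracking the paths when one of the two insertions transitions from row to column insertion via step B5 of the bumping rule. That transition is triggered exactly when the inserted value lies at or below the leftmost entry of the current row, and once it happens the column indices visited in the subsequent column phase must be carefully compared to the row indices visited in the other insertion's path. A further complication is that entries of $Q^*$ may be set-valued, so the positional conditions (1) and (3)--(4), as well as the condition~(2) requiring only that both labels appear, must be checked in the borderline configurations where the two new labels land in the same cell or in neighboring cells. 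Once the comparison lemma is verified in all four cases, the proposition follows immediately from the induction.
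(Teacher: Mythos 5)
The paper does not prove this proposition; it cites it as a fact from \cite{hamaker2015shifted} (``noted before Theorem 3.7''), so there is no internal argument to compare against. Your overall strategy is the natural one: induct on the length of $\a$, observe that the recording tableau grows monotonically so that every label in $\{\pm 1,\dots,\pm(n-1)\}$ occupies the same cell of $Q^*$ as of $Q$, and thereby reduce the claim to showing that $n-1\in\Des(\a)$ if and only if $n-1\in\Des(Q_{\SH}(\a))$, a condition that depends only on where the labels $\pm(n-1)$ and $\pm n$ land.

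The gap is that the comparison lemma, which is the actual substance of the proof, is described but never established. You correctly partition into four cases according to the pair $(\Schensted_{n-1},\Schensted_n)$ and correctly observe which sign patterns can satisfy which of clauses (1)--(4) of the set-valued descent condition. But claims such as ``case $(0,1)$ should occur only under a descent'' and ``case $(1,0)$ should occur only under an ascent'' are precisely what must be proved, and each requires a genuine argument about how the insertion path of $a_n$ sits relative to that of $a_{n-1}$ even when one or both paths switch from row to column bumping via step B5. You flag this obstacle explicitly but do not resolve it; the monotonicity one wants here is more delicate than in ordinary Schensted insertion because of that switch, and because $Q^*$ may be set-valued, so in cases $(0,0)$ and $(1,1)$ one must additionally control whether the two labels land in the same cell, the same row, or distinct rows in order to decide among clauses (1), (3), and (4). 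Until the comparison lemma is actually carried out, what you have is a reasonable plan rather than a proof.
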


The following equivalence relation was introduced in~\cite{buch2014k}.

\begin{definition}\label{d:k-knuth}
The \emph{weak $K$-Knuth moves} are the relations on words given by
\begin{center}
\begin{enumerate}
\item $(a,b, \dots) \ksim (b,a, \dots)$
\item $(\dots, a,c,b, \dots) \ksim (\dots ,c,a,b, \dots)$
\item $(\dots ,b,a,c, \dots) \ksim (\dots ,b,c,a, \dots)$
\item $(\dots ,a,b,a, \dots) \ksim (\dots ,b,a,b, \dots)$
\item $(\dots ,a,a, \dots)  \ksim (\dots ,a, \dots)$
\end{enumerate}
\end{center}
for any integers $a < b < c$, where in these expressions corresponding ellipses denote matching subsequences.
Two words $\a$ and $\b$ are \emph{weak $K$-Knuth equivalent}, denoted $\a \ksim \b$, 
if there exists a sequence of weak $K$-Knuth moves transforming $\a$ to $\b$.
\end{definition}

The following statement is \cite[Corollary 2.18]{hamaker2015shifted}; its converse does not hold.

\begin{proposition}[see~\cite{hamaker2015shifted}]
\label{p:k-knuth}
Let $\a,\b$ be words such that $P_{\SH}(\a) = P_{\SH}(\b)$.
Then $\a \ksim \b$.
\end{proposition}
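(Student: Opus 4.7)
The plan is to prove the proposition via canonical representatives. For each increasing shifted tableau $P$, one defines a canonical \emph{reading word} $\mathrm{read}(P)$; a natural candidate is the word obtained by concatenating the rows of $P$ in some fixed order (for example, from bottom to top, each read left to right). The proposition follows once we establish
\begin{enumerate}
\item[(i)] $P_{\SH}(\mathrm{read}(P)) = P$ for every increasing shifted tableau $P$, and
\item[(ii)] $\a \ksim \mathrm{read}(P_{\SH}(\a))$ for every word $\a$,
\end{enumerate}
because then $P_{\SH}(\a) = P_{\SH}(\b) = P$ implies $\a \ksim \mathrm{read}(P) \ksim \b$.

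Claim (i) is a direct verification: provided the reading order is chosen so that each newly inserted entry lands in its intended position of $P$ without triggering a further column bump, the tableau is rebuilt step by step under shifted Hecke insertion. The claim then follows by induction on the number of cells in $P$, with the key observation being that the strictness along rows and columns of $P$, together with the bumping rules B1 and B7, ensures each entry deposits in its target position.

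The substantive work is in (ii), which I would prove by induction on the length of $\a$. Writing $\a = \a' \cdot (x)$ and setting $P' = P_{\SH}(\a')$, the inductive hypothesis gives $\a' \ksim \mathrm{read}(P')$, so it suffices to show
\[
\mathrm{read}(P') \cdot (x) \;\ksim\; \mathrm{read}\bigl(P_{\SH}(\mathrm{read}(P') \cdot (x))\bigr).
\]
This reduces to a \emph{commutation lemma} asserting that the effect of a single insertion on the reading word can be realized by an explicit sequence of weak $K$-Knuth moves from Definition~\ref{d:k-knuth}. The verification proceeds by tracing the bumping path of $x$: ordinary row bumps correspond to Knuth-like moves (2) and (3); column bumps, triggered after the flag $\Schensted$ toggles to $1$, correspond to moves (1) and (4); and the absorbing branches B2 and B6, where the inserted letter is annihilated by an equal one, correspond to applications of the idempotent relation (5).

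The main obstacle will be establishing this commutation lemma. Shifted Hecke insertion comprises two qualitatively different phases---row bumping and column bumping---joined by a transition that depends on whether the inserted letter meets the diagonal. Moreover, the absorbing cases B2 and B6 require the idempotent relation (5), which is absent from ordinary shifted Knuth equivalence. Matching every possible bumping trajectory---especially those that cross the main diagonal or trigger absorption---with a valid sequence of weak $K$-Knuth moves requires a careful case analysis, and this is where the technical weight of the proof concentrates.
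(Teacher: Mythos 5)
The paper does not prove this proposition itself; it cites it as \cite[Corollary 2.18]{hamaker2015shifted}. Your plan is nonetheless the standard route to such a statement, and it mirrors the structure one finds in that reference and in classical analogues for RSK and Edelman--Greene: reduce to a canonical reading word via (i) $P_{\SH}(\rho(P))=P$ (which the paper already records as Proposition~\ref{p:reading}) and (ii) $\a \ksim \rho(P_{\SH}(\a))$, then compose. The reduction of (ii) to the single-letter commutation statement
$\rho(P')\cdot x \ksim \rho\bigl(\Insert(x,P')\bigr)$
is correctly set up.

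The gap is that you have not actually proved the commutation lemma, and you say so yourself. That lemma is the entire technical content of the proposition: everything else is bookkeeping. Calling it ``a careful case analysis'' and gesturing at which Knuth moves should correspond to which branches of the bumping algorithm does not constitute a proof; the correspondence is genuinely delicate precisely at the points you flag (the row-to-column transition at the diagonal, and the absorbing branches B2/B6 that require the idempotent relation~(5)), and a wrong pairing there would invalidate the whole argument. As submitted, the proposal is a plan, not a proof. Two smaller points to nail down if you carry this out: first, the inductive step implicitly uses that weak $K$-Knuth equivalence is stable under appending a letter on the right (i.e., $\a' \ksim \c$ implies $\a'\cdot x \ksim \c \cdot x$), which holds because moves (2)--(5) are local and move (1) acts only at the left end, but this should be stated; second, claim (i) is not quite ``a direct verification'' --- it requires checking that no spurious column bump is triggered when reading rows bottom-to-top, and it is worth either proving it or, since it appears as Proposition~\ref{p:reading} in the paper, simply citing it rather than redoing it.
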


For an integer-valued tableau $T$, define  $\rho(T)$ as  the sequence $(a_1,a_2,\dots,a_k)$ given by reading the rows of $T$ from bottom to top, reading the entries in each row from left to right.
For example, the reading word of $P_{\SH}(\a)$ in Figure~\ref{f:sh}
is $(3,5,1,2,3,4,5)$.
The following is implicit in \cite{hamaker2015shifted}.

\begin{proposition}[See \cite{hamaker2015shifted}]
\label{p:reading}
If $\lambda$ is a strict partition and $P \in \Inc(\lambda)$, then $P_{\SH}(\rho(P)) = P$.

\end{proposition}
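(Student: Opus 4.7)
The plan is to induct on the number of rows $k$ of $P$. The base case $k = 1$ is immediate: $\rho(P)$ is strictly increasing because rows of an increasing shifted tableau are strictly increasing, so every call to the bumping rule during insertion falls under step B1 and simply appends the new entry to the end of row $1$, reproducing $P$.

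For the inductive step with $k \geq 2$, I would decompose $\rho(P) = R \cdot \rho(P')$, where $R$ is the bottom row of $P$ read left-to-right and $P'$ is the increasing shifted subtableau of $P$ comprising rows $1, \ldots, k-1$. Inserting the strictly increasing word $R$ first produces a single-row tableau $T_0 = R$ (by the base case), so the task reduces to verifying that applying shifted Hecke insertion to $\rho(P')$ starting from $T_0$ yields $P$. I would formulate this as an auxiliary claim: if $U$ is a one-row increasing shifted tableau and $Q$ is an increasing shifted tableau such that placing $U$ beneath $Q$ as a new bottom row yields an increasing shifted tableau $Q \cup U$, then shifted Hecke insertion applied to $\rho(Q)$ starting from $U$ outputs $Q \cup U$. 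With this claim in hand for $Q = P'$ and $U = R$, the induction closes.

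The auxiliary claim itself would be established by a secondary induction on the number of rows of $Q$, analyzing the bumping behavior one letter of $\rho(Q)$ at a time. Two qualitatively distinct cases arise: (i) inserting a diagonal entry $Q_{i,i}$---the smallest entry of its row---which displaces the current leftmost entry of the top row and triggers $\Schensted := 1$ via step B5, initiating a column-bumping cascade that threads through columns $2, 3, \ldots$ and terminates by appending an entry to an empty column position; and (ii) inserting a non-diagonal entry $Q_{i,j}$ with $j > i$, which triggers a pure row-bump with no $\Schensted$ flip, displacing the entry directly above it in the target shape. The main obstacle is the detailed bookkeeping needed to verify that at each intermediate stage the candidate tableau $P'$ computed in step I10 is increasing (so that the update $P := P'$ actually occurs) and that the original entries of $U$ migrate into the correct positions of the bottom row of $Q \cup U$; this verification rests essentially on the column-strictness inequalities built into $Q \cup U$ being an increasing shifted tableau, together with the delicate interplay between row-bumps and column-bumps in the $\Insert$ subroutine.
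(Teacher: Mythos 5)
The paper does not prove this statement itself; it attributes it to the external reference \cite{hamaker2015shifted} (``implicit in''), so there is no internal argument against which to compare.

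Your outline has the right ingredients (read $\rho(P)$ row by row from the bottom, track bump paths, distinguish diagonal from non-diagonal entries), but the inductive scaffolding does not close up as written. Two issues.

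First, the top-level induction on the number of rows $k$ of $P$ is not doing any work: the inductive hypothesis would assert $P_{\SH}(\rho(P'))=P'$, i.e.\ that inserting $\rho(P')$ into the \emph{empty} tableau recovers $P'$, whereas what you actually need after processing the bottom row $R$ is that inserting $\rho(P')$ into the nonempty one-row tableau $T_0=R$ recovers $P$. The hypothesis is never invoked; the whole burden falls on the auxiliary claim.

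Second, and more seriously, the auxiliary claim is formulated only for a one-row starting tableau $U$, yet the secondary induction on the number of rows of $Q$ immediately leaves that regime. After inserting the bottom row $R'$ of $Q$ into $U$, the working tableau has two rows ($R'$ stacked above $U$), and you then need to insert $\rho(Q')$ (the remaining rows of $Q$) into this two-row tableau. The auxiliary claim's hypothesis no longer applies, so the secondary induction does not close. The fix is to strengthen the claim to allow $U$ to be an arbitrary increasing shifted tableau — concretely, a row-insertion lemma of the form: if $T$ is an increasing shifted tableau and $R$ is a strictly increasing sequence such that placing $R$ as a new top row (shifting $T$ down one row) yields an increasing shifted tableau $R\cup T$, then inserting the letters of $R$ from left to right into $T$ produces $R\cup T$. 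Iterating this lemma from $T=\varnothing$ upward through the rows of $P$ gives the proposition directly, with no outer induction needed. The nontrivial content of the proof is precisely this lemma — the bump-path analysis you defer to ``detailed bookkeeping'' — and it is genuinely delicate: the intermediate working tableaux encountered while inserting the letters of $R$ are \emph{not} restrictions of $R\cup T$ to subsets of cells (for instance, inserting the first cell of $R$ typically launches a column-bumping cascade that temporarily scrambles an entire diagonal), so one has to verify increasingness and correct final placement at every intermediate stage. As it stands, the proposal identifies where the difficulty lies but does not resolve it.
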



\begin{lemma}
\label{l:knuth-inv}
Let $\a = (a_1, a_2,\dots ,a_k)$ and $\b = (b_1,b_2, \dots, b_l)$ be words with  $\a \ksim \b$.
If $v,w \in S_\infty$  are given by $v = s_{a_1} \circ  s_{a_2}\circ \dots \circ s_{a_k}$ and $w = s_{b_1} \circ s_{b_2}\circ \dots \circ s_{b_l}$, then $v^{-1} \circ v = w^{-1} \circ w$.

\end{lemma}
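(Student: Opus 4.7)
The plan is to verify, for each of the five weak $K$-Knuth moves of Definition~\ref{d:k-knuth}, that applying it to the word defining $v$ as a Demazure product preserves the involution $v^{-1} \circ v$. The crucial observation is that moves (2), (3), (4), and (5) actually preserve the Demazure product $v$ itself, so only move (1) requires nontrivial work.

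To see this, I would first recall that the Demazure product on $S_\infty$ is characterized by $s_i \circ s_i = s_i$, by the commutations $s_i \circ s_j = s_j \circ s_i$ when $|i-j|\geq 2$, and by the braid relations $s_i \circ s_j \circ s_i = s_j \circ s_i \circ s_j$ when $|i-j|=1$, together with the antihomomorphism property $(u \circ v)^{-1} = v^{-1} \circ u^{-1}$. Move (5) preserves $v$ by idempotence. Moves (2) and (3) preserve $v$ because the hypothesis $a<b<c$ forces $|a-c|\geq 2$, so the two letters being swapped commute in the Hecke monoid. Move (4) preserves $v$ because the identity $s_a \circ s_b \circ s_a = s_b \circ s_a \circ s_b$ holds for \emph{all} pairs of simple transpositions: when $|a-b|=1$ this is the braid relation, and when $|a-b|\geq 2$ both sides collapse to $s_a \circ s_b = s_b \circ s_a$ via commutation and idempotence.

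For move (1), writing $\a = (a, b, c_1, \dots, c_{k-2})$, $\b = (b, a, c_1, \dots, c_{k-2})$, and $u = s_{c_1} \circ \cdots \circ s_{c_{k-2}}$, the antihomomorphism property and idempotence give
\[
v_\a^{-1} \circ v_\a = u^{-1} \circ (s_b \circ s_a \circ s_b) \circ u
\qquand
v_\b^{-1} \circ v_\b = u^{-1} \circ (s_a \circ s_b \circ s_a) \circ u,
\]
after absorbing the two adjacent copies of $s_a$ or $s_b$. The same generalized braid identity used above for move (4) then shows these are equal. Since $\ksim$ is generated by the five moves, the lemma will follow by induction on the number of moves needed to convert $\a$ into $\b$. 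The only minor obstacle is verifying the generalized braid identity $s_a \circ s_b \circ s_a = s_b \circ s_a \circ s_b$ in each of the cases $|a-b|=1$, $|a-b|\geq 2$, and $a=b$, all of which are immediate.
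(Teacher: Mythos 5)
Your proof is correct and follows essentially the same strategy as the paper's: observe that moves (2)--(5) actually fix the Demazure product $v$ itself (so they trivially fix $v^{-1}\circ v$), then handle move (1) by a direct computation. The paper states this tersely; you fill in the details, in particular spelling out the useful ``generalized braid identity'' $s_a\circ s_b\circ s_a = s_b\circ s_a\circ s_b$ valid for all $a,b$ (braid when $|a-b|=1$; collapse via commutation and idempotence when $|a-b|\geq 2$; idempotence when $a=b$), and using the antihomomorphism property $(u\circ v)^{-1}=v^{-1}\circ u^{-1}$ together with $s_a\circ s_a=s_a$ to reduce move (1) to that identity. The reduction to a single weak $K$-Knuth move by induction is also the same.
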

\begin{proof}
We may assume that $\a$ and $\b$ differ by a single weak $K$-Knuth move. 
If this move is (2)-(5) in Definition~\ref{d:k-knuth} then $v = w$. In the remaining case, one can check directly that $v^{-1} \circ v = w^{-1} \circ w$.
\end{proof}

\subsection{Involution Coxeter-Knuth insertion}
\label{ss:inv-insert}

Recall the definition of $\iR(y)$ from the introduction.
Restricting shifted Hecke insertion to this set 
gives both a shifted variant of Edelman-Greene insertion~\cite{EG} and a ``reduced word'' variant of Sagan-Worley insertion~\cite{sagan1987shifted,worley1984theory}. 
To refer to this map, we introduce the following terminology. 
\begin{definition}
For $y \in \I_\infty$ with $n=\ellhat(y)$,   \emph{involution Coxeter-Knuth insertion} is
the map
\be\label{ick}
 \iR(y) \longrightarrow \PP^n \xrightarrow{\SH}  \bigcup_{\lambda\text{ strict}} \Inc(\lambda) \times \SetMT_n(\lambda)
 \ee
where the first arrow is the inclusion $(s_{a_1},s_{a_2},\dots,s_{a_n}) \mapsto (a_1,a_2,\dots,a_n)$.
\end{definition}

 With slight abuse of notation, we denote the map \eqref{ick} also by $\SH$.
 For $\a \in \iR(y)$, define $\hat P(\a)$  and $\hat Q(\a)$ as the increasing/set-valued shifted tableaux
 such that $(\hat P(\a), \hat Q(\a)) = \SH(\a)$.
In the following results, just to make our notation consistent,
 redefine $\rho(T)$ for an integer-valued tableau $T$ to be the sequence $(s_{a_1},s_{a_2},\dots,s_{a_n})$
 where $(a_1,a_2,\dots,a_n)$ is the usual reading word of $T$.
 
\begin{lemma}\label{l:t-insertion}
Let $y \in \I_{\infty}$ and $\a \in \iR(y)$. Then $ \rho(\hat P(\a)) \in \iR(y)$ and  $\hat Q(\a) \in \bigcup_{\lambda\text{ strict}} \SMT(\lambda).$
\end{lemma}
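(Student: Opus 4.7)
The plan is to combine Propositions~\ref{p:reading} and \ref{p:k-knuth} with Lemma~\ref{l:knuth-inv}, using the length-minimality that defines $\cA(y)$ to pin down the shape of the insertion tableau. Let $n = \ellhat(y)$ and write $\SH(\a) = (\hat P(\a), \hat Q(\a))$ with common shape $\lambda$. Each step of shifted Hecke insertion adds at most one cell, so $|\lambda| \leq n$, and since $\hat Q(\a) \in \SetMT_n(\lambda)$ distributes $n$ marks (in absolute value) over the $|\lambda|$ cells, the condition $\hat Q(\a) \in \SMT(\lambda)$ is equivalent to $|\lambda| = n$. The whole lemma will therefore follow once I establish $|\lambda| = n$.

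To do that, I would take $\b = \rho(\hat P(\a))$, which has $|\lambda|$ letters. Proposition~\ref{p:reading} gives $P_{\SH}(\b) = \hat P(\a) = P_{\SH}(\a)$, so Proposition~\ref{p:k-knuth} gives $\a \ksim \b$. Lemma~\ref{l:knuth-inv} then forces the Demazure products $v = s_{a_1} \circ \dots \circ s_{a_n}$ and $w = s_{b_1} \circ \dots \circ s_{b_{|\lambda|}}$ to satisfy $v^{-1}\circ v = w^{-1}\circ w$. Since $\a \in \iR(y)$, we have $v \in \cA(y)$, hence $v^{-1}\circ v = y$, and thus $w^{-1}\circ w = y$ as well. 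On one hand $\ell(w) \leq |\lambda|$, because any $\circ$-product of $k$ simple transpositions has length at most $k$; on the other hand $\ell(w) \geq \ellhat(y) = n$ by the very definition of $\cA(y)$ as minimum-length representatives. Combined with $|\lambda| \leq n$ this forces $n \leq \ell(w) \leq |\lambda| \leq n$, so $|\lambda| = \ell(w) = n$.

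Both assertions of the lemma now drop out. First, $|\lambda| = n$ means every cell of $\hat Q(\a)$ holds a singleton, so $\hat Q(\a) \in \SMT(\lambda)$. Second, $\ell(w) = |\lambda|$ says that $\b$ is reduced as a simple-transposition word for $w$, and $\ell(w) = n = \ellhat(y)$ together with $w^{-1}\circ w = y$ says $w \in \cA(y)$, so $\rho(\hat P(\a)) = \b \in \iR(y)$. The main thing to be careful about is the direction of the length chain $n \leq \ell(w) \leq |\lambda| \leq n$, which is what turns the soft statement ``insertion plus reading preserves the $\ksim$-class'' into the hard equality $|\lambda| = n$; the rest of the proof is a direct appeal to already-stated results, with no fresh analysis of the insertion algorithm itself.
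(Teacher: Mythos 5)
Your proof is correct and follows essentially the same route as the paper's: both set $\b = \rho(\hat P(\a))$, invoke Propositions~\ref{p:reading} and \ref{p:k-knuth} to get $\a \ksim \b$, apply Lemma~\ref{l:knuth-inv} to conclude $w^{-1}\circ w = y$, and then use minimality to force the shape to have exactly $n$ cells. Your write-up just makes the length chain $n \leq \ell(w) \leq |\lambda| \leq n$ more explicit than the paper's terser ``Since $v \in \cA(y)$, this implies that $n=m$,'' which is a nice clarification but not a different argument.
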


\begin{proof}
Note that $\hat Q(\a)$ is a standard set-valued shifted tableau by  Theorem~\ref{t:sh-bij}, and so belongs to $\SMT(\lambda)$ for some $\lambda$
if and only if all of its entries are singleton sets.
Write $\a = (s_{a_1},s_{a_2},\dots, s_{a_n})$ and
define $\b = (s_{b_1},s_{b_2}, \dots, s_{b_m}) = \rho(\hat P(\a))$. By definition
 $n \geq m$, and it holds that all entries of $\hat Q(\a)$ are singletons if and only if $n=m$.
Let
$v=s_{a_1}\circ s_{a_2}\circ \dots \circ s_{a_n}$ and $w = s_{b_1} \circ s_{b_2}\circ \dots \circ s_{b_m}$.
Propositions~\ref{p:k-knuth} and \ref{p:reading} 
imply that $(a_1,a_2,\dots,a_n) \ksim (b_1,b_2,\dots,b_m)$,
so 
 $y = v^{-1}\circ v = w^{-1} \circ w$ by Lemma~\ref{l:knuth-inv}.
 Since $v \in \cA(y)$, this implies that $n=m$, so $\b \in \iR(y)$ as desired.
 \end{proof}

Putting together all of the preceding facts,
we arrive at the main result of this section.

\begin{theorem}
\label{t:insertion}
Let $y \in \I_{\infty}$.
Then involution Coxeter-Knuth insertion is a bijection
$
\iR(y) \to
\bigcup_{\lambda} \left\{(P,Q) \in \Inc(\lambda) \times \SMT(\lambda): \rho(P) \in \iR(y)\right\}
$
where the union is  over strict partitions $\lambda$ of $\ellhat(y)$.
\end{theorem}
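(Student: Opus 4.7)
The plan is to deduce Theorem~\ref{t:insertion} from the shifted Hecke insertion bijection of Theorem~\ref{t:sh-bij} by restricting it to $\iR(y)\subset \PP^n$, where $n=\ellhat(y)$. The forward containment is already handled by Lemma~\ref{l:t-insertion}: for $\a \in \iR(y)$, we have $\rho(\hat P(\a)) \in \iR(y)$, and $\hat Q(\a)$ has only singleton entries, so $\hat Q(\a) \in \SMT(\lambda)$ for some strict partition $\lambda$. Moreover $|\lambda|$ equals the total number of cells of $\hat Q(\a)$, which equals $|\a|=\ellhat(y)$. Thus $\SH$ restricts to an injective map from $\iR(y)$ into the claimed target set, with injectivity inherited from Theorem~\ref{t:sh-bij}.

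The only real work is surjectivity. Given a strict partition $\lambda$ of $\ellhat(y)$ and a pair $(P,Q) \in \Inc(\lambda)\times \SMT(\lambda)$ with $\rho(P) \in \iR(y)$, I let $\a = \SH^{-1}(P,Q)$ and want to show $\a \in \iR(y)$. First, by Proposition~\ref{p:reading} we have $P_{\SH}(\rho(P)) = P = P_{\SH}(\a)$, so Proposition~\ref{p:k-knuth} gives $\a \ksim \rho(P)$. Writing $\a = (s_{a_1},\dots,s_{a_n})$ and $\rho(P) = (s_{b_1},\dots,s_{b_m})$, and setting $v = s_{a_1}\circ\cdots\circ s_{a_n}$ and $w = s_{b_1}\circ\cdots\circ s_{b_m}$, Lemma~\ref{l:knuth-inv} yields $v^{-1}\circ v = w^{-1}\circ w$. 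Since $\rho(P) \in \iR(y)$, that common value equals $y$, so $v^{-1}\circ v = y$.

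The final ingredient is a length count. The word $\a$ has length $n = |\lambda| = \ellhat(y)$ (using that $Q$ is a standard rather than set-valued shifted tableau). The Demazure product always satisfies $\ell(v) \leq n$, with equality exactly when $\a$ is a reduced word for $v$. On the other hand, by definition of $\ellhat$, any $u \in S_\infty$ with $u^{-1}\circ u = y$ has $\ell(u) \geq \ellhat(y)$, so $\ell(v) \geq \ellhat(y) = n$. Hence $\ell(v) = n$, meaning $\a \in \cR(v)$ and $v \in \cA(y)$, so $\a \in \iR(y)$ as required. This establishes surjectivity, and combined with the injectivity above completes the proof.

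The main obstacle I anticipate is simply assembling the existing lemmas in the right order; no new combinatorics is needed, since the Demazure/length identity $v^{-1}\circ v = y$ together with $|\a| = \ellhat(y)$ forces $\a$ to be a reduced word for an atom of $y$. The one subtle point worth stressing is the role of $Q$ being in $\SMT(\lambda)$ rather than $\SetMT_n(\lambda)$: without singletons, the length of $\a$ could exceed $|\lambda|$, and the equality $\ell(v)=n$ that makes the Demazure product reduced would fail.
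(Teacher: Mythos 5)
Your proof is correct and follows essentially the same route as the paper: restrict the shifted Hecke insertion bijection of Theorem~\ref{t:sh-bij}, get injectivity and the forward containment from Lemma~\ref{l:t-insertion}, and for surjectivity combine Propositions~\ref{p:k-knuth} and \ref{p:reading} with Lemma~\ref{l:knuth-inv} to show $v^{-1}\circ v = y$, then use the length count $|\a|=|\lambda|=\ellhat(y)$ to conclude $v\in\cA(y)$. Your write-up is actually slightly more explicit than the paper's (the paper leaves the "$\ell(v)\geq\ellhat(y)$, hence $\a$ is reduced" step implicit), but the argument is the same.
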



\begin{proof}
The given map is a well-defined injection by Theorem~\ref{t:sh-bij}
and Lemma~\ref{l:t-insertion}.
To see that it is surjective, let $\lambda$ be a strict partition
and suppose $(P,Q) \in \Inc(\lambda) \times \SMT(\lambda)$ is such that $\rho(P) \in \iR(y)$,
so that $|\lambda| = \ellhat(y)$.
Since $\SH$ is a bijection,
 there exists a unique word $\a$ with $P_{\SH}(\a) = P$ and $Q_{\SH}(\a) = Q$.
No entry of $Q$ contains multiple values, so
  the length of $\a$ must also be $|\lambda|=\ellhat(y)$.
 By Propositions~\ref{p:k-knuth} and \ref{p:reading} and Lemma~\ref{l:knuth-inv},  replacing the entries of $\a$ by
  simple transpositions therefore gives an element of $\iR(y)$ whose image under $\SH$ is $(P,Q)$.
\end{proof}

\begin{remark}
These results show that involution Coxeter-Knuth insertion may be defined by a 
slightly simpler procedure than $\SH$.
Since $\hat Q(\a) \in \bigcup_\lambda \SMT(\lambda)$ for $\a \in \iR(y)$,
when computing involution Coxeter-Knuth insertion the following holds: 
(1) step B2 is superfluous in the Bumping rule, 
(2) in step I10 of the Insertion rule, $P'$ is always increasing,
and (3) in Steps S4/5 of shifted Hecke insertion, the last position in column/row $j$ is also the last position
in its respective row/column.
\end{remark}

\begin{example}
\label{e:sck}
For the involution word $\a = (s_3,s_5,s_4,s_1,s_2,s_3) \in \cR(246135)\subset \iR(456123)$,
 the sequence of tableaux obtained by involution Coxeter-Knuth insertion is as follows:
\[
\barr{lllllllllll}
\ytableausetup{boxsize = .5cm}
\begin{ytableau}
3
\end{ytableau}
&\to&
\begin{ytableau}
3 & 5
\end{ytableau} 
&\to&
\begin{ytableau}
3 & 4\\
\none & 5
\end{ytableau}
&\to&
\begin{ytableau}
1 & 3 & 4\\
\none & 5
\end{ytableau}
&\to&
\begin{ytableau}
1 & 2 & 4\\
\none & 3 & 5
\end{ytableau}
&\to&
\begin{ytableau}
1 & 2 & 3\\
\none & 3 & 4\\
\none & \none & 5
\end{ytableau} = P_{\SH}(\a)
\\
\\
\begin{ytableau}
1
\end{ytableau}
&\to&
\begin{ytableau}
1 & 2
\end{ytableau} 
&\to&
\begin{ytableau}
1 & 2\\
\none & 3
\end{ytableau}
&\to&
\begin{ytableau}
1 & 2 & 4'\\
\none & 3
\end{ytableau}
&\to&
\begin{ytableau}
1 & 2& 4'\\
\none & 3 & 5'
\end{ytableau}
&\to&
\begin{ytableau}
1 & 2 & 4'\\
\none & 3 & 5'\\
\none & \none & 6
\end{ytableau} = Q_{\SH}(\a).
\earr
\]
\end{example}


\begin{corollary}\label{t:insertion-cor}
Let $y \in \I_{\infty}$.
Then $\iF_y = \sum_\lambda  \beta_{y,\lambda} P_\lambda$ where the sum is over all strict partitions $\lambda$ and
$ \beta_{y,\lambda}$ is the number of increasing shifted tableaux $P$ of shape $\lambda$  with $\rho(P) \in \iR(y)$.
\end{corollary}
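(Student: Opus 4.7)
The plan is to combine Theorem~\ref{t:insertion} with the fundamental quasi-symmetric expansion of $P_\lambda$ from Proposition~\ref{p:schur-p-def}. I would begin with the defining identity $\iF_y = \sum_{\a \in \iR(y)} f_\a$ from Definition~\ref{intro-hatF-def}. By the remark after Definition~\ref{d:fund}, each $f_\a$ equals $f_{n, S(\a)}$ where $n = \ellhat(y)$ and $S(\a) = \{i \in [n-1] : a_i < a_{i+1}\}$ is the ascent set of $\a$; since reduced (and hence involution) words never have $a_i = a_{i+1}$, this ascent set coincides with $[n-1] \setminus \Des(\a)$.

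Next I apply the bijection of Theorem~\ref{t:insertion} to partition $\iR(y)$ into fibers indexed by pairs $(P,Q)$ with $P \in \Inc(\lambda)$ satisfying $\rho(P) \in \iR(y)$ and $Q \in \SMT(\lambda)$, as $\lambda$ ranges over strict partitions of $\ellhat(y)$. This rewrites
\[
\iF_y = \sum_\lambda \ \sum_{\substack{P \in \Inc(\lambda) \\ \rho(P) \in \iR(y)}} \ \sum_{Q \in \SMT(\lambda)} f_{n,\ [n-1] \setminus \Des(\SH^{-1}(P,Q))}.
\]
Proposition~\ref{p:sh-des} then supplies the crucial identity $\Des(\a) = \Des(\hat Q(\a))$ for any word $\a$, so in each inner summand the set $[n-1] \setminus \Des(\SH^{-1}(P,Q))$ equals $[n-1] \setminus \Des(Q)$, and the dependence on $P$ disappears from the fundamental quasi-symmetric function.

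The innermost sum $\sum_{Q \in \SMT(\lambda)} f_{n,\ [n-1]\setminus \Des(Q)}$ now matches the third expression for $P_\lambda$ in Proposition~\ref{p:schur-p-def}, and the middle sum collapses to $\beta_{y,\lambda}$ by its very definition, yielding $\iF_y = \sum_\lambda \beta_{y,\lambda} P_\lambda$. Since the substantive work is already packaged in Theorem~\ref{t:insertion} and Propositions~\ref{p:sh-des} and~\ref{p:schur-p-def}, there is no serious obstacle. The one delicate point is the identification of $f_\a$ with $f_{n,\ [n-1]\setminus \Des(\a)}$, which is why it matters that involution words never repeat consecutive letters; this is also precisely why Proposition~\ref{p:schur-p-def} provides the dual expansion with $[n-1]\setminus \Des(T)$ rather than $\Des(T)$.
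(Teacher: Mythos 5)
Your proof is correct and follows the same route as the paper: both start from $\iF_y = \sum_{\a \in \iR(y)} f_\a$, rewrite $f_\a$ as $f_{n,[n-1]\setminus\Des(\a)}$ using the fact that reduced words have no repeated consecutive letters, then combine the bijection of Theorem~\ref{t:insertion}, the descent-preservation of Proposition~\ref{p:sh-des}, and the complemented quasi-symmetric expansion of $P_\lambda$ in Proposition~\ref{p:schur-p-def}. Your write-up simply expands the paper's one-line ``immediate from'' into an explicit chain of equalities, which is useful but not a different argument.
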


\begin{proof}
Since   $\iF_y = \sum_{\a \in \iR(y)}f_\a = \sum_{\a \in \iR(y)} f_{[n-1]-\Des(\a)}$ for $n=\ellhat(y)$,
where  $\Des(\a)$ is defined in the usual way,
the result is immediate from 
Propositions~\ref{p:schur-p-def} and \ref{p:sh-des} and Theorem~\ref{t:insertion}.
\end{proof}

\begin{corollary}
Involution Coxeter-Knuth insertion is a bijection $\iR(w_n) \to \SMT(\hat\delta_n)$ for each $n \in \NN$,
where $\hat\delta_n$ denotes the
strict partition $(n-1,n-3,n-5,\dots)$.
\end{corollary}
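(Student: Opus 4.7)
The plan is to combine the general bijection in Theorem~\ref{t:insertion} with the explicit computation of $\iF_{w_n}$ in Corollary~\ref{intro-wn-cor}. Applying Theorem~\ref{t:insertion} with $y = w_n$ gives a bijection from $\iR(w_n)$ onto the set of pairs $(P,Q)$ where $\lambda$ is a strict partition of $\ellhat(w_n)$, $P \in \Inc(\lambda)$ satisfies $\rho(P) \in \iR(w_n)$, and $Q \in \SMT(\lambda)$. So it suffices to show that the only $\lambda$ that contributes is $\hat\delta_n$, and that for this $\lambda$ there is a \emph{unique} such insertion tableau $P$; the projection onto the recording tableau will then be the required bijection.

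First I would invoke Corollary~\ref{t:insertion-cor}, which gives
\[
\iF_{w_n} = \sum_\lambda \beta_{w_n,\lambda} P_\lambda,
\]
where $\beta_{w_n,\lambda}$ counts the increasing shifted tableaux $P$ of shape $\lambda$ with $\rho(P) \in \iR(w_n)$. Next I would appeal to Corollary~\ref{intro-wn-cor}, which identifies $\iF_{w_n} = P_{\hat\delta_n}$. Since the Schur $P$-functions are linearly independent in $\Lambda$, comparing coefficients forces $\beta_{w_n,\hat\delta_n} = 1$ and $\beta_{w_n,\lambda} = 0$ for every other strict partition~$\lambda$.

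Consequently there is a unique $P_0 \in \Inc(\hat\delta_n)$ with $\rho(P_0) \in \iR(w_n)$, and no such $P$ exists for any other strict shape. Feeding this back into Theorem~\ref{t:insertion}, the target of involution Coxeter-Knuth insertion restricted to $\iR(w_n)$ collapses to $\{P_0\} \times \SMT(\hat\delta_n)$. The map $\a \mapsto \hat Q(\a) = Q_{\SH}(\a)$ is therefore a bijection $\iR(w_n) \to \SMT(\hat\delta_n)$, which is exactly the statement of the corollary. No significant obstacle is expected here: all the substantive work has already been done in Theorem~\ref{t:insertion} and in the derivation of the closed form for $\iF_{w_n}$, so the argument reduces to extracting the coefficient of $P_{\hat\delta_n}$ and invoking the linear independence of the Schur $P$-basis.
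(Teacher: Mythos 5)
Your proposal is correct and essentially identical in structure to the paper's own proof, which is just the one-line observation that the statement follows from Theorem~\ref{t:insertion}, Corollary~\ref{t:insertion-cor}, and the identity $\iF_{w_n} = P_{(n-1,n-3,n-5,\dots)}$. You have simply unpacked those same references in more detail (in particular, spelling out that linear independence of the $\{P_\lambda\}$ forces $\beta_{w_n,\lambda}$ to concentrate on $\lambda = \hat\delta_n$ with value one, so that the insertion tableau is unique and the recording tableau ranges bijectively over $\SMT(\hat\delta_n)$), which is a faithful expansion rather than a different route.
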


\begin{proof}
This follows from Theorem~\ref{t:insertion} and Corollary~\ref{t:insertion-cor} since $\iF_{w_n} = P_{(n-1,n-3,n-5,\dots)}$.
\end{proof}

%
%
%

Define \emph{shifted Coxeter-Knuth equivalence} to be the equivalence relation generated by (1)-(4) in Definition~\ref{d:k-knuth}.
We conjecture this analogue of both \cite[Theorem 6.24]{EG} and \cite[Theorem~7.2]{sagan1987shifted}.

\begin{conjecture}
\label{c:ick}
Two involution words $\a$, $\b$  are shifted Coxeter-Knuth equivalent if and only if 
they have the same insertion tableau under the map $\SH$, that is, $P(\a) = P(\b)$.
\end{conjecture}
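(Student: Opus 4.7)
The plan is to establish both implications of the conjectured biconditional, leveraging the fact that involution words in $\iR(y)$ all have length $\ellhat(y)$, so that relation (5) of Definition~\ref{d:k-knuth}, which changes word length, plays no role within a fixed $\iR(y)$. The strategy mirrors Edelman and Greene's original argument for ordinary Coxeter-Knuth equivalence and Sagan and Worley's for shifted tableaux, adapted to the Hecke-flavored shifted insertion of Patrias and Pylyavskyy.

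For the forward direction, I would verify that each of the four moves (1)--(4) preserves the insertion tableau $\hat P$. Because shifted Hecke insertion processes letters one at a time, this reduces to a local claim: if $P$ is an increasing shifted tableau and $a<b<c$ are positive integers, then inserting the two sides of any of the four moves into $P$ produces the same tableau. The verification is a finite case analysis governed by whether bumping begins in row or column mode, where the bumping paths cross the main diagonal (if at all), and which entries of $P$ coincide with the inserted values. Each of moves (1)--(4) corresponds to a small and tractable collection of bumping configurations.

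For the reverse direction, my strategy is to prove the stronger statement that every $\a \in \iR(y)$ is shifted Coxeter-Knuth equivalent to the reading word $\rho(\hat P(\a))$. Granting this, if $\hat P(\a) = \hat P(\b) = P$, then both $\a$ and $\b$ are CK-equivalent to $\rho(P)$ and hence to each other. To prove the stronger statement I would induct on $\ellhat(y)$: for $\a = (s_{a_1},\dots,s_{a_n})$, apply the inductive hypothesis to the prefix $\a' = (s_{a_1},\dots,s_{a_{n-1}})$ to replace its first $n-1$ letters with $\rho(\hat P(\a'))$, and then exhibit an explicit sequence of CK moves that absorbs $s_{a_n}$ into the resulting reading word by simulating the bumping chain of inserting $a_n$ into $\hat P(\a')$. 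Each row- or column-bump event should correspond to a small local reordering of the reading word expressible via (1)--(4).

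The main obstacle I expect is the inductive step of the reverse direction, particularly the moment when the bumping chain crosses the main diagonal and the insertion algorithm toggles from row to column mode. A naive translation of this transition into K-Knuth moves would invoke the length-changing relation (5), which is not allowed. I expect the resolution to exploit that, for involution words, the configuration of letters produced at such a transition always contains a braid pattern $aba$ or $bab$, so that relation (4) paired with the shuffle moves (2) and (3) effectively substitutes for (5). Making this substitution precise and uniform across all diagonal-crossing cases, while also confirming that Proposition~\ref{p:k-knuth} reverses once relation (5) is dropped and the input is restricted to $\iR(y)$, will be the technical heart of the argument.
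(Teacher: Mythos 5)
The paper does not prove this statement: it is presented explicitly as a conjecture (the environment is \texttt{conjecture}, and the surrounding text reads ``We conjecture this analogue of\ldots'' and ``If this conjecture were true, then\ldots''). There is therefore no proof in the paper against which to compare your attempt.

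As a plan of attack, what you outline is reasonable and mirrors the classical strategies of Edelman--Greene for the unshifted reduced-word case and Sagan--Worley for shifted tableaux. You correctly observe that Proposition~\ref{p:k-knuth} only yields weak $K$-Knuth equivalence, which is generated by moves (1)--(5) and is strictly coarser than shifted Coxeter--Knuth equivalence (moves (1)--(4) only), so the reverse direction of the conjecture genuinely needs new input rather than a citation. You also correctly locate the hard point: simulating the bumping chain by moves (1)--(4) across the row-to-column toggle at the main diagonal without invoking the length-changing relation (5).

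That said, what you have written is a proposal, not a proof, and the gaps lie exactly where the content would be. In the forward direction you assert a ``finite case analysis'' but do not carry it out; the cases are not as tame as your summary suggests, because shifted Hecke insertion can pass through mid-row Hecke absorptions (step B6 of the bumping rule can still fire for involution words, even though step B2 never does, by the remark after Theorem~\ref{t:insertion}), and the row/column toggle is triggered by a global condition on the bumping path. In the reverse direction, the crucial inductive step---showing that $\rho(\hat P(\a'))\cdot a_n$ is shifted Coxeter--Knuth equivalent to $\rho(\hat P(\a'\cdot a_n))$---is handled only by stating an expectation that the diagonal-crossing configurations always contain a braid pattern amenable to move (4). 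Whether that is true in every configuration is exactly the open question, and your write-up does not verify it. You have identified the right problem and the right obstructions, but the argument is not supplied; as written this cannot be accepted as a resolution of the conjecture.
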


If this conjecture were true, then one would be able to apply the approach outlined in~\cite{hamaker2014relating} to relate shifted Coxeter-Knuth insertion to involution Little bumps, as defined in~\cite{HMP3}.


\begin{thebibliography}{99}

\bibitem{AS} F. Ardila and L. G. Serrano, Staircase skew Schur functions are Schur $P$-positive,
\emph{J. Algebr. Combin.} \textbf{36} (2012), 409--423.


\bibitem{BH} S. Billey and M. Haiman, Schubert polynomials for the classical groups,
\emph{J. Amer. Math. Soc.} \textbf{8} (1995), 443--482.

\bibitem{BJS} S. C. Billey, W. Jockusch, and R. P. Stanley, Some Combinatorial Properties of
Schubert Polynomials, \emph{J. Algebr. Combin.} \textbf{2} (1993), 345--374.

\bibitem{Brion98} M. Brion, The behaviour at infinity of the Bruhat decomposition, \emph{Comment. Math. Helv.} \textbf{73}(1) (1998), 137--174.


\bibitem{buch2008stable}
A. S. Buch, A. Kresch, M. Shimozono, H. Tamvakis, and A. Yong, 
Stable Grothendieck polynomials and K-theoretic factor sequences,
\emph{Math. Ann.} \textbf{340}(2) (2008), 359--382.

\bibitem{buch2014k} A. S. Buch and M. J. Samuel, $K$-theory of minuscule varieties,
\emph{J. Reine Angew. Math.} \textbf{2016}(719) (2014), 133--171.

\bibitem{CJ} M. B. Can and M. Joyce, Weak order on complete quadrics, \emph{Trans. Amer. Math. Soc.} \textbf{365} (2013), no. 12, 6269--6282.

\bibitem{CJW} M. B. Can, M. Joyce, and B. Wyser, Chains in weak order posets associated to
involutions, \emph{J. Combin. Theory Ser. A} \textbf{137} (2016), 207--225.

\bibitem{DeWitt} E. A. DeWitt, 
Identities Relating Schur $s$-Functions and $Q$-Functions,
Ph.D. thesis, Department of Mathematics, University of Michigan, 2012.

\bibitem{EG} P. Edelman and C. Greene, Balanced tableaux, \emph{Adv. Math.} \textbf{63} (1987), 42--99.

\bibitem{hamaker2015shifted} Z. Hamaker, A. Keilthy, R. Patrias, L. Webster, Y. Zhang, and S. Zhou,
Shifted Hecke insertion and the $K$-theory of OG$(n,2n+1)$,
 \emph{J. Combin. Theory Ser. A} \textbf{151} (2017), 207--240.

\bibitem{HMP1} Z. Hamaker, E. Marberg, and B. Pawlowski, Involution words: counting problems and
connections to Schubert calculus for symmetric orbit closures, preprint (2015), {\tt arXiv:1508.01823}.

\bibitem{HMP2} Z. Hamaker, E. Marberg, and B. Pawlowski, Involution words II: braid relations and
atomic structures, \emph{J. Algebr. Comb.} \textbf{45} (2017), 701--743.

\bibitem{HMP3} Z. Hamaker, E. Marberg, and B. Pawlowski,
Transition formulas for involution Schubert polynomials, preprint (2016), {\tt  arXiv:1609.09625}.

\bibitem{HMP5} Z. Hamaker, E. Marberg, and B. Pawlowski,
Fixed-point-free involutions and Schur $P$-positivity, preprint (2017).

\bibitem{hamaker2014relating} Z. Hamaker and B. Young, Relating Edelman-Greene insertion to the Little map,
\emph{J. of Algebr. Combin.} \textbf{40} (2014), 693--710.

\bibitem{HuZhang} J. Hu and J. Zhang, On involutions in symmetric groups and a conjecture of 
Lusztig, \emph{Adv. Math.} \textbf{287} (2016), 1--30.

\bibitem{HuZhang2} J. Hu and J. Zhang, On involutions in Weyl groups, 
\emph{J. Lie Theory} \textbf{27} (2017), 671--706.

\bibitem{H1} A. Hultman, Fixed points of involutive automorphisms of the Bruhat order,
\emph{Adv. Math.} \textbf{195} (2005), 283--296.

\bibitem{H2} A. Hultman, The combinatorics of twisted involutions in Coxeter groups,
\emph{Trans. Amer. Math. Soc.} \textbf{359} (2007), 2787--2798.

\bibitem{H3} A. Hultman, Twisted identities in Coxeter groups, \emph{J. Algebr. Combin.} \textbf{28}
(2008), 313--332.

\bibitem{H4} A. Hultman and K. Vorwerk, Pattern avoidance and Boolean elements in the Bruhat order on
involutions, \emph{J. Algebr. Combin.} \textbf{30} (2009), 87--102.

\bibitem{Incitti1} F. Incitti, The Bruhat Order on the Involutions of the Symmetric Group,
\emph{J. Algebr. Combin.} \textbf{20} (2004), 243--261.

\bibitem{J16}  T. J\'ozefiak, Schur $Q$-functions and cohomology of isotropic Grassmannians,
\emph{Math. Proc. Camb. Phil. Soc.} \textbf{109} (1991), 471--478.

\bibitem{Knutson} A. Knutson, Schubert polynomials and symmetric functions, notes for the Lisbon
Combinatorics Summer School (2012), available online at \url{http://www.math.cornell.edu/~allenk/}.



\bibitem{Lam2} T. Lam, Affine Stanley symmetric functions, \emph{Amer. J. Math.} \textbf{128} (2006), no. 6, 1553--1586.

\bibitem{LS} A. Lascoux and M.-P. Sch\"utzenberger, Schubert polynomials and the
Littlewood-Richardson rule, \emph{Lett. Math. Phys.} \textbf{10}  (1985), no. 2, 111--124.

\bibitem{Little} D. P. Little, Combinatorial aspects of the Lascoux-Sch\"utzenberger tree,
\emph{Adv. Math.}, \textbf{174} (2003), no. 2, 236--253.


\bibitem{LV1} G. Lusztig, A bar operator for involutions in a Coxeter group, \emph{Bull. Inst. Math. Acad. Sinica (N.S.)} \textbf{7} (2012), 355--404.

\bibitem{LV2} G. Lusztig and D. A. Vogan, Hecke algebras and involutions in Weyl groups, 
\emph{Bull. Inst. Math. Acad. Sinica (N.S.)} \textbf{7} (2012), 323-354.

\bibitem{LV3} G. Lusztig and D. A. Vogan, Hecke algebras and involutions in Coxeter groups,
\emph{Representations of Reductive Groups},
Progress in Mathematics \textbf{312} (2015), 365--398.

\bibitem{Macdonald} I. G. Macdonald, Notes on Schubert Polynomials, Laboratoire de combinatoire et
d'informatique math\'ematique (LACIM), Universit\'e du Qu\'ebec a Montr\'eal, Montreal, 1991.

\bibitem{Macdonald2} I. G. Macdonald, Symmetric Functions and Hall Polynomials, 2nd ed., Oxford
University Press, New York, 1999.

\bibitem{Manivel} L. Manivel, Symmetric Functions, Schubert Polynomials, and Degeneracy Loci,
American Mathematical Society, 2001.

\bibitem{Marberg} E. Marberg, Positivity conjectures for  Kazhdan-Lusztig theory on twisted involutions: the universal case,
\emph{Represent. Theory} \textbf{18} (2014), 88--116.


\bibitem{MorseSchilling} J. Morse and A. Schilling, Crystal approach to affine Schubert calculus, 
\emph{IMRN} \textbf{2016}(8) (2016) 2239--2294.

\bibitem{PP} R. Patrias and P. Pylyavskyy, Dual Filtered Graphs,
preprint (2014), {\tt arXiv:1410.7683}.

\bibitem{P2} P. Pragacz, Algebro-geometric applications of Schur $S$- and $Q$-polynomials,
\emph{S\'eminaire d'Alg\`ebre Dubreil-Malliavin} 1989--90, Springer Lecture Notes \textbf{1478},
130--191.

\bibitem{RichSpring} R. W. Richardson and T. A. Springer, The Bruhat order on symmetric varieties,
\emph{Geom. Dedicata} \textbf{35} (1990), 389--436. 

\bibitem{RichSpring2} R. W. Richardson and T. A. Springer, Complements to: The Bruhat order on
symmetric varieties, \emph{Geom. Dedicata} \textbf{49} (1994), 231--238.

\bibitem{sagan1987shifted} B. Sagan,
Shifted tableaux, Schur Q-functions, and a conjecture of R. Stanley,
\emph{J. Combin. Theory Ser. A} \textbf{45} (1987), 62--103.

\bibitem{Schur} I. Schur, \"Uber die Darstellung der symmetrischen und der alternierenden Gruppe
durch gebrochene lineare Substitutionen, \emph{J. Reine Angew. Math.} \textbf{139} (1911), 155--250.

\bibitem{OEIS} N. J. A. Sloane, editor (2003), The On-Line Encyclopedia of Integer Sequences, published electronically at 
\url{http://oeis.org/}.

\bibitem{Stan} R. P. Stanley, On the number of reduced decompositions of elements of Coxeter groups.
\emph{European J. Combin.} \textbf{5} (1984), 359--372.

\bibitem{Stem} J. R. Stembridge, Shifted tableaux and the projective representations of symmetric
groups, \emph{Adv. Math.} \textbf{74} (1989), 87--134.



\bibitem{worley1984theory} D. R. Worley, A theory of shifted Young tableaux, PhD Thesis, Department of Mathematics,
Massachusetts Institute of Technology, 1984.

\bibitem{WY} B. J. Wyser and A. Yong, Polynomials for symmetric orbit closures in the flag variety,
\emph{Transform. Groups} \textbf{22} (2017), 267--290.


\end{thebibliography}
\end{document}